\documentclass[11pt]{amsart} 
\usepackage{amscd,amssymb,amsxtra}
\usepackage{mathrsfs}  
\DeclareSymbolFontAlphabet{\mathrsfs}{rsfs}
\usepackage[mathscr]{eucal} 
\usepackage{comment}
\usepackage{color}
\usepackage{enumitem} 
\usepackage{marginnote}
\usepackage{quiver}

\usepackage{xspace}

\setlength{\textwidth}{6.5truein} \setlength{\hoffset}{-.5truein} 
\setlength{\textheight}{8.9truein} \setlength{\voffset}{-.4truein} 
\setlength{\abovedisplayskip}{18pt plus4.5pt minus9pt}
\setlength{\belowdisplayskip}{\abovedisplayskip}
\setlength{\abovedisplayshortskip}{0pt plus4.5pt}
\setlength{\belowdisplayshortskip}{10.5pt plus4.5pt minus6pt}

\makeatletter
\let\@secnumfont\bfseries
\def\section{\@startsection{section}{1}%
  \z@{4\linespacing\@plus\linespacing}{\linespacing}%
  {\bfseries\centering}}
\def\introsection{\@startsection{section}{1}%
  \z@{3\linespacing\@plus\linespacing}{\linespacing}%
  {\bfseries\centering}}
\def\subsection{\@startsection{subsection}{2}%
   \z@{1.25\linespacing\@plus.7\linespacing}{.5\linespacing}%
   {\normalfont\bfseries}}
\def\subsectionsinline{\def\subsection{\@startsection{subsection}{2}%
  \z@{1\linespacing\@plus.7\linespacing}{-.5em}%
  {\normalfont\bfseries}}}

\def\thmqed{\qed\vskip1.5em}

\newenvironment{qedequation}{%
   \pushQED{\qed}%
   \incr@eqnum
   \mathdisplay@push
   \st@rredfalse \global\@eqnswtrue
   \mathdisplay{equation}%
}{%
   \endmathdisplay{equation}%
   \mathdisplay@pop
   \ignorespacesafterend
   \popQED\@endpefalse
}

\makeatother

\theoremstyle{definition}
\newtheorem{definition}[equation]{Definition}
\newtheorem{defn}[equation]{Definition}
\newtheorem{example}[equation]{Example}

\newtheorem{construction}[equation]{Construction}
\newtheorem{constr}[equation]{Construction}

\newtheorem{convention}[equation]{Convention}

\newtheorem*{definition*}{Definition}
\newtheorem*{example*}{Example}
\newtheorem*{problem*}{\color{blue}Problem}
\newtheorem*{probsec*}{\color{blue}Problem}
\newtheorem*{exercise*}{Exercise}
\newtheorem*{question*}{\color{blue}Question}
\newtheorem*{project*}{\color{blue}Project}
\newtheorem*{construction*}{Construction}
\newtheorem*{notation*}{Notation}

\theoremstyle{remark}

\newtheorem{remark}[equation]{Remark}

\newtheorem*{note*}{Note}
\newtheorem*{remark*}{Remark}
\newtheorem*{data*}{Data}

\theoremstyle{plain}
\newtheorem{theorem}[equation]{Theorem}
\newtheorem{thm}[equation]{Theorem}
\newtheorem{corollary}[equation]{Corollary}
\newtheorem{cor}[equation]{Corollary}
\newtheorem{lemma}[equation]{Lemma}
\newtheorem{lem}[equation]{Lemma}
\newtheorem{proposition}[equation]{Proposition}
\newtheorem{prop}[equation]{Proposition}

\newtheorem{assumption}[equation]{Assumption}

\newtheorem*{theorem*}{Theorem}
\newtheorem*{corollary*}{Corollary}
\newtheorem*{lemma*}{Lemma}
\newtheorem*{proposition*}{Proposition}
\newtheorem*{conjecture*}{Conjecture}
\newtheorem*{claim*}{Claim}
\newtheorem*{proposal*}{Proposal}
\newtheorem*{conclusion*}{Conclusion}
\newtheorem*{hypothesis*}{Hypothesis}
\newtheorem*{assumption*}{Assumption}

\newenvironment{proof*}[1][\proofname]{
  \begin{proof}[#1]}{  \renewcommand\qedsymbol{\relax}
\end{proof}}

\numberwithin{equation}{section}

\definecolor{refkey}{rgb}{0,.6,.4}

\renewcommand{\:}{\colon}
\renewcommand{\AA}{{\mathbb A}}

\DeclareMathOperator{\Aut}{Aut}
\newcommand{\CC}{{\mathbb C}}
\newcommand{\CP}{{\mathbb C\mathbb P}}

\newcommand{\HH}{{\mathbb H}}
\DeclareMathOperator{\Hom}{Hom}
\DeclareMathOperator{\id}{id}

\newcommand{\PP}{{\mathbb P}}
\DeclareMathOperator{\pt}{pt}

\newcommand{\RP}{{\mathbb R\mathbb P}}
\newcommand{\RR}{{\mathbb R}}
\newcommand{\TT}{\mathbb T}
\DeclareMathOperator{\Spin}{Spin}

\DeclareMathOperator{\tr}{tr}

\newcommand{\ZZ}{{\mathbb Z}}

\newcommand{\chiup}{\raise.5ex\hbox{$\chi$}}
\newcommand{\cir}{S^1}

\newcommand{\inv}{^{-1}}
\DeclareRobustCommand{\mstrut}{^{\vphantom{1*\prime y\vee M}}}

\newcommand{\res}[1]{\negmedspace\bigm|\mstrut_{#1}}
\newcommand{\Res}[1]{\negmedspace\biggm|\mstrut_{#1}}
\newcommand{\temsquare}{\raise3.5pt\hbox{\boxed{ }}}

\newcommand{\zmod}[1]{\ZZ/#1\ZZ}

\newcommand{\zt}{\zmod2}

\newcommand{\hneg}{\mkern-.5\thinmuskip}
 
\DeclareFontFamily{U}{mathx}{}
\DeclareFontShape{U}{mathx}{m}{n}{<-> mathx10}{}
\DeclareSymbolFont{mathx}{U}{mathx}{m}{n}
\DeclareMathAccent{\widehat}{0}{mathx}{"70}
\DeclareMathAccent{\widecheck}{0}{mathx}{"71}
 
\DeclareMathSymbol{\bigtimes}{1}{mathx}{"91}

\usepackage[all,2cell]{xy}\renewcommand{\cir}{\ensuremath{S^1}}
\UseAllTwocells
\usepackage{tikz}
\usetikzlibrary{tikzmark,calc}
\usepackage{tikz-cd}
\usepackage{graphicx}\usepackage{epsf}
\usepackage{trimclip}
\usepackage[colorlinks,backref=page,citecolor=refkey]{hyperref}

\definecolor{refkey}{rgb}{0,.8,.2}\definecolor{labelkey}{rgb}{1,0,0} 


\DeclareMathOperator{\SO}{SO}

\let\O\relax
\DeclareMathOperator{\O}{O}

\DeclareMathOperator{\SU}{SU}
\DeclareMathOperator{\GL}{GL}
\DeclareMathOperator{\SL}{SL}
\DeclareMathOperator{\PSL}{PSL}

\newcommand{\tatp}{\tau \mstrut _{t'}}
\newcommand{\tat}{\tau \mstrut _t}
\DeclareMathOperator{\Ad}{Ad}
\DeclareMathOperator{\Area}{Area}
\DeclareMathOperator{\Arf}{Arf}
\DeclareMathOperator{\Bord}{Bord}
\DeclareMathOperator{\Bun}{Bun}
\DeclareMathOperator{\Conv}{Conv}
\DeclareMathOperator{\Det}{Det}
\DeclareMathOperator{\Euler}{Euler}
\DeclareMathOperator{\Lie}{Lie}
\DeclareMathOperator{\Line}{Line}
\DeclareMathOperator{\Li}{Li}
\DeclareMathOperator{\Man}{Man}
\DeclareMathOperator{\Pin}{Pin}
\DeclareMathOperator{\Vol}{Vol}

\DeclareMathOperator{\hol}{hol}
\DeclareMathOperator{\pro}{proj}
\DeclareMathOperator{\sSet}{sSet}
\DeclareMathOperator{\supp}{supp}
\DeclareMathOperator{\trace}{trace}
\newcommand{\BC}{B\Cx}
\newcommand{\BFC}{B^{}_{\delta }\Cx}
\newcommand{\BNC}{B^{}_{\nabla }\Cx}
\newcommand{\BNG}{B^{}_{\nabla }G}
\newcommand{\BNS}{B^{}_{\nabla }\!\SLC}
\newcommand{\BNU}{B^{}_{\nabla }U}
\newcommand{\BNZ}{B^{}_{\nabla }\ZZ}
\newcommand{\BSLCd}{B\!\SLC^\delta }
\newcommand{\BSLC}{B\!\SLC}
\newcommand{\Bbmut}{B\!\bmut}
\newcommand{\fBbmut}{B\!\fbmut}
\newcommand{\CE}{\widecheck{E}_{\CC}}

\newcommand{\CS}{CS}
\newcommand{\CZo}{\CC/\Zo}
\newcommand{\CZ}{\CC/\ZZ}
\newcommand{\Cx}{\CC^\times}
\newcommand{\ENC}{E^{}_{\nabla }\Cx}
\newcommand{\ENG}{E^{}_{\nabla }G}
\newcommand{\ENS}{E^{}_{\nabla }\!\SLC}

\newcommand{\F}[1]{\rF\mstrut _{\!#1}}
\newcommand{\GLC}{\GL_2\!\CC}
\newcommand{\Gd}{G^\delta }
\newcommand{\Gq}{\mathscr{G}_{(q)}}
\newcommand{\Hext}{\widehat{H_1(\tY)}}
\newcommand{\IZ}{I\ZZ}
\newcommand{\LC}{\Line\mstrut _\CC}
\newcommand{\MSO}{M\!\SO}
\newcommand{\MSpin}{M\!\Spin}
\newcommand{\PSLC}{\PSL_2\!\CC}
\newcommand{\RZ}{\RR/\ZZ}
\newcommand{\SCx}{\rS_{\Cx}}
\newcommand{\SLC}{\SL_2\!\CC}
\newcommand{\Tu}{\Theta^{\textnormal{univ}}}
\newcommand{\VV}{\mathbb{V}}
\newcommand{\Zo}{\ZZ(1)}
\newcommand{\Zt}{\ZZ(2)}
\newcommand{\aX}{\vec{\sX}}
\newcommand{\ain}{\alpha ^{\infty}}
\newcommand{\bC}{\partial Y }
\newcommand{\bM}{\partial M }
\newcommand{\bS}{\partial C }
\newcommand{\bX}{\partial X }
\newcommand{\bY}{\mathscr{M}}
\newcommand{\bg}{\bar{\gamma }}
\newcommand{\bmut}{\bmu 2}
\newcommand{\fbmut}{\fbmuu_{2}}
\newcommand{\bmu}[1]{\bmuu _{#1}}
\newcommand{\brdf}{\Bord _{\langle 2,3  \rangle}(\GL^+_3\!\RR\times G^\delta )}
\newcommand{\brdn}{\Bord_{\langle 2,3  \rangle}(\GL^+_3\!\RR\times G^\nabla )}
\newcommand{\cCC}[1]{\widecheck{C}\mstrut _{\CC}(#1)}
\newcommand{\cCd}[1]{\widecheck{C}^{#1}}
\newcommand{\cCc}[2]{\widecheck{C}(#1)^{#2}}

\newcommand{\cGq}[1]{\widecheck{\mathscr{G}}_{(q)}(#1)}
\newcommand{\cG}[2]{\widecheck{\mathscr{G}}_{(#1)}(#2)}
\newcommand{\cHC}{\widecheck{H}_{\CC}}
\newcommand{\cH}{\widecheck{H}}
\newcommand{\cZd}[1]{\widecheck{Z}^{#1}}
\newcommand{\cZ}[2]{\widecheck{Z}(#1)^{#2}}
\newcommand{\cb}{\widecheck{\beta }}
\newcommand{\cct}{\widecheck{c}_2}
\newcommand{\cc}{\widecheck{c}}
\newcommand{\cdr}{\iota\mstrut _{\partial /\partial r}}

\newcommand{\cdt}{\iota\mstrut _{\partial /\partial t}}

\newcommand{\ce}{\widecheck{\eta}}
\newcommand{\chb}{\widecheck{h}^{\bullet }}
\newcommand{\cl}{\widecheck{\lambda }}
\newcommand{\corp}[2]{^{#1}_{#2}}
\newcommand{\corn}[2]{^{#1}_{-#2}}
\newcommand{\co}{\widecheck{\omega }}
\newcommand{\cq}{\widecheck{q}}
\newcommand{\cs}{\widecheck{\sigma }}
\newcommand{\ct}{\widecheck{\tau}}
\newcommand{\cz}{\widecheck{0}}
\newcommand{\ddt}{\partial /\partial t}
\newcommand{\din}{\delta ^{\infty}}

\newcommand{\eCx}{\epsilon \mstrut _{\Cx}}
\newcommand{\eSLi}{\epsilon ^\infty _{\SLC}}
\newcommand{\eSL}{\epsilon \mstrut _{\SLC}}
\newcommand{\emti}{\epsilon ^{\infty} _{\!\scalebox{0.8}{$\bmut$}}}
\newcommand{\emt}{\epsilon \mstrut _{\!\scalebox{0.8}{$\bmut$}}}
\newcommand{\fg}{\mathfrak{g}}
\newcommand{\fgl}{\mathfrak{gl}}
\newcommand{\form}{\langle -,-  \rangle}

\newcommand{\gE}[1]{\lambda \mstrut _{E_{#1}}}
\newcommand{\hY}{\widehat{\bY}}
\newcommand{\hG}{\widehat{G}}
\newcommand{\hH}{\widehat{H}}
\newcommand{\hT}{\widehat{T}}

\newcommand{\hb}{h^{\bullet }}
\newcommand{\hc}{\hat{c}}
\newcommand{\hl}{\widehat\lambda }
\newcommand{\holL}{\hol\mstrut _{L}}
\newcommand{\ho}{z }

\newcommand{\iQ}{\iota(\Theta _Q)}
\newcommand{\lG}{\lambda \mstrut _{\hG}}
\newcommand{\mstrat}[1]{M _{#1}}
\newcommand{\proj}[1]{\pro_{\mskip0.5\thinmuskip\ell _{#1}}}
\newcommand{\rF}{\mathrsfs{F}}
\newcommand{\rS}{\mathrsfs{S}}

\newcommand{\sB}{\mathscr{B}}
\newcommand{\sE}{\mathscr{E}}

\newcommand{\sL}{\mathscr{L}}

\newcommand{\sN}{\mathscr{N}}
\newcommand{\sQ}{\mathcal{Q}}
\newcommand{\sT}{\mathscr{T}}

\newcommand{\sU}{\mathcal{U}}
\newcommand{\sX}{\mathscr{X}}
\newcommand{\slc}{\mathfrak{s}\mathfrak{l}_2\CC}
\newcommand{\sqmo}{\sqrt{-1}}
\newcommand{\strat}[1]{Y _{#1}}
\newcommand{\tD}{\widetilde{\tetra }}
\newcommand{\tM}{\widetilde{M }}
\newcommand{\tS}{\widetilde{S }}
\newcommand{\tX}{\widetilde{X }}
\newcommand{\tY}{\widetilde{Y }}
\newcommand{\tain}{\ta ^{\infty}}
\newcommand{\ta}{{}^{\textnormal{tt}}\hneg\alpha }

\newcommand{\tl}{\tilde\lambda }
\newcommand{\torus}{\cir\times \cir}

\newcommand{\tpi}{2\pi \sqmo}
\newcommand{\tstrat}[1]{\widetilde Y _{#1}}

\newcommand{\tom}{\tilde{\omega }}
\newcommand{\tz}{{}^{\textnormal{tt}}\hneg\zeta }
\newcommand{\zoM}{[0,1]\times M}
\newcommand{\zo}{[0,1]}
\newcommand{\ztil}{\widetilde{\zeta }}
\renewcommand{\Im}{\text{Im}}
\newcommand{\sV}{s\hspace{-.1ex}\VV}
\newcommand{\GG}{\raisebox{.3ex}{\scalebox{.65}{$|$}}\hspace{-.1em}{\Gamma}}
\DeclareMathOperator{\Cliff}{Cliff}
\newcommand{\spfl}[1]{\Phi _{\hneg#1}}
\newcommand{\sC}{\mathscr{C}}
\newcommand{\bcir}{S^1_{\textnormal{bounding}}}
\newcommand{\nbcir}{S^1_{\textnormal{nonbounding}}} 
\newcommand{\bDt}{\partial \tilD}
\newcommand{\bD}{\partial D}
\newcommand{\tilD}{\widetilde{D}}

\newcommand{\cQ}{{\mathcal Q}}

\newcommand{\C}{\ensuremath{\mathbb C}}
\newcommand{\scrF}{{\rF}}
\newcommand{\scrT}{{\sT}}
\newcommand{\scrS}{{\rS}}

\newcommand{\eps}{\epsilon}
\newcommand{\cL}{\ensuremath{\mathcal L}}
\newcommand{\cN}{\ensuremath{\mathcal N}}
\newcommand{\insfig}[3]{\begin{figure}[ht] \centering \includegraphics[scale=#2]{figures/#1.pdf} \caption{#3}\label{fig:#1}  \end{figure}}
\newcommand{\insfigpng}[3]{\begin{figure}[ht] \centering \includegraphics[scale=#2]{figures/#1.png} \caption{#3}\label{fig:#1}  \end{figure}}
\newcommand{\cA}{\ensuremath{\mathcal A}}
\newcommand{\tDelta}{{\widetilde \Delta}}

\newcommand{\vertices}{{\mathbf {vertices}}}

\newcommand{\edges}{{\mathbf {edges}}}
\newcommand{\faces}{{\mathbf {faces}}}
\newcommand{\tetrahedra}{{\mathbf {tets}}}
\newcommand{\tet}{{\tetra}}

\newcommand{\rest}{{\mathrm{rest}}}

\newcommand{\gea}{{\ge \threeastrat}}
\newcommand{\geb}{{\ge \twobstrat}}
\newcommand{\getwoa}{{\ge \twoastrat}}
\newcommand{\Z}{\ensuremath{\mathbb Z}}
\newcommand{\twoastrat}{{-2\mathrm{a}}}
\newcommand{\twobstrat}{{-2\mathrm{b}}}
\newcommand{\threeastrat}{{-3\mathrm{a}}}
\newcommand{\threebstrat}{{-3\mathrm{b}}}
\newcommand{\IP}[1]{\langle#1\rangle}
\newcommand{\tw}{\mathrm{tw}}
\newcommand{\Vline}{$\VV$-line\xspace}
\newcommand{\Vlines}{$\VV$-lines\xspace}
\newcommand{\I}{{\sqrt{-1}}}
\newcommand{\R}{\ensuremath{\mathbb R}}
\newcommand{\de}{\mathrm{d}}

\newcommand{\rmout}{{\mathrm {out}}}
\newcommand{\rmin}{{\mathrm {in}}}
\newcommand{\e}{{\mathrm e}}
\newcommand{\tH}{{\widetilde H}}
\newcommand{\tils}{{\widetilde s}}
\newcommand{\half}{\ensuremath{\frac{1}{2}}}

\DeclareMathOperator{\im}{Im}
\newcommand{\cC}{\ensuremath{\mathcal C}}
\newcommand{\cO}{\ensuremath{\mathcal O}}

\newcommand\dhxrightarrow[2][]{%
  \mathrel{\ooalign{$\xrightarrow[#1\mkern4mu]{#2\mkern4mu}$\cr%
  \hidewidth$\rightarrow\mkern4mu$}}
}
  \begin{document}

\abovedisplayskip18pt plus4.5pt minus9pt
\belowdisplayskip \abovedisplayskip
\abovedisplayshortskip0pt plus4.5pt
\belowdisplayshortskip10.5pt plus4.5pt minus6pt
\baselineskip=15 truept
\marginparwidth=55pt

\makeatletter
\renewcommand{\tocsection}[3]{%
  \indentlabel{\@ifempty{#2}{\hskip1.5em}{\ignorespaces#1 #2.\;\;}}#3}
\renewcommand{\tocsubsection}[3]{%
  \indentlabel{\@ifempty{#2}{\hskip 2.5em}{\hskip 2.5em\ignorespaces#1%
    #2.\;\;}}#3} 
\renewcommand{\tocsubsubsection}[3]{%
  \indentlabel{\@ifempty{#2}{\hskip 5.5em}{\hskip 5.5em\ignorespaces#1%
    #2.\;\;}}#3} 
\def\@makefnmark{%
  \leavevmode
  \raise.9ex\hbox{\fontsize\sf@size\z@\normalfont\tiny\@thefnmark}} 
\def\multfoot{\textsuperscript{\tiny\color{red},}}
\def\footref#1{$\textsuperscript{\tiny\ref{#1}}$}
\makeatother

\newcommand{\bmuu}{\mbox{$\raisebox{-.07em}{\rotatebox{9.9}
  {\tiny {\bf /}
  }}\hspace{-0.53em}\mu\hspace{-0.88em}\raisebox{-0.98ex}{\scalebox{2} 
  {$\color{white}\phantom{.}$}}\hspace{-0.416em}\raisebox{+0.88ex}
  {$\color{white}\phantom{.}$}\hspace{0.46em}$}} 
 
\newcommand{\fbmuu}{\mbox{$\raisebox{-.07em}{\rotatebox{9.9}
  {\scalebox{0.8}{\tiny {\bf /}}
  }}\hspace{-0.59em}\mu\hspace{-0.88em}\raisebox{-0.98ex}{\scalebox{2} 
  {$\color{white}\phantom{.}$}}\hspace{-0.416em}\raisebox{+0.88ex}
  {$\color{white}\phantom{.}$}\hspace{0.46em}$}} 

\newcommand{\tetrah}{\tikzmarknode[text opacity=0,path picture={ \draw[line join=bevel] let \p1=($(path picture bounding box.north)-(path picture bounding box.south)$),\p2=($(path picture bounding box.east)-(path picture bounding box.west)$),\n1={0.5*min(\x2,\y1)},\n2={0.42*max(\x2,\y1)-\n1} in ([yshift=-\n2]path picture bounding box.center) coordinate(aux0)  -- ++ (200:\n1) --  ($(aux0)+(80:\n1)$) -- (aux0) -- ++ (320:\n1)  -- ($(aux0)+(80:\n1)$)  ($(aux0)+(320:\n1)$) --  ($(aux0)+(200:\n1)$); }]{aux}{W}}
\newcommand{\tetra}{\!\hneg\trimbox{0pt 2pt}{\raisebox{-.5ex}{\scalebox{1.3}{\rotatebox{10}{\tetrah}}}}\!}
\newcommand{\stett}{\!\!\hneg\trimbox{0pt 2pt}{\raisebox{-.5ex}{\scalebox{1}{\rotatebox{10}{\tetrah}}}}\!}
\newcommand{\sstett}{\!\!\hneg\trimbox{0pt 2pt}{\raisebox{-.5ex}{\scalebox{.7}{\rotatebox{10}{\tetrah}}}}\!}
\newcommand{\stet}{\mstrut _{\stett}}
\newcommand{\sstet}{\mstrut _{\sstett}}
\newcommand{\stetn}[1]{\mstrut _{\stett_{#1}}}

\setcounter{tocdepth}{2}  



 \title[Spectral Networks and Chern-Simons Invariants]{3d Spectral Networks and Classical Chern-Simons theory}
 \author[D. S. Freed]{Daniel S.~Freed}
 \thanks{This material is based upon work supported by the National Science
Foundation under Grant Numbers DMS-1611957, DMS-1711692, DMS-2005286,
DMS-2005312, and by the Simons Fellowship in Mathematics.  This work was
initiated at the Aspen Center for Physics, which is supported by National
Science Foundation grant PHY-1607611.}
 \address{Department of Mathematics \\ University of Texas \\ Austin, TX
78712} 
 \email{dafr@math.utexas.edu}
 \author[A. Neitzke]{Andrew Neitzke}
 \address{Department of Mathematics \\ Yale University \\ 10 Hillhouse Avenue\\
New Haven, CT 06511} 
 \email{andrew.neitzke@yale.edu}
 \dedicatory{To Chern, who taught us all}
 \date{August 13, 2022}
 \begin{abstract} 
 We define the notion of spectral network on manifolds of dimension 
 $\le 3$. For a manifold $X$ equipped with a spectral network, we 
 construct equivalences between Chern-Simons invariants of
 flat $\SLC$-bundles over $X$ and Chern-Simons invariants of 
 flat $\C^\times$-bundles over
 ramified double covers $\widetilde X$. 
 Applications include a new viewpoint on dilogarithmic formulas for
 Chern-Simons invariants of flat $\SLC$-bundles over triangulated 3-manifolds,
 and an explicit description of Chern-Simons lines of flat $\SLC$-bundles
 over triangulated surfaces. Our constructions heavily exploit the locality
 of Chern-Simons invariants, expressed in the language of extended (invertible)
 topological field theory.
 \end{abstract}
\maketitle 

{\small\tableofcontents}

   \section{Introduction}\label{sec:1}

A classical formula of Lobachevsky-Milnor-Thurston \cite[Chapter~7]{T2}
expresses the volume of a tetrahedron, i.e., 3-simplex, in hyperbolic space
in terms of a dilogarithm function.  It follows that the volume of a
triangulated hyperbolic 3-manifold is a sum of real parts of dilogarithms.
Thurston observed that the Chern-Simons invariant of the associated flat
$\PSLC$-connection has real part equal to the volume, and Meyerhoff~\cite{Me}
extended this to hyperbolic 3-manifolds with cusps.  These ideas have been
refined and extended since their introduction in the late 1970's and early
1980's, as we briefly review in Section~\ref{sec:2}.  After much work, in particular
by Neumann~\cite{Neu}, by the early 2000's the Chern-Simons invariant of a
flat $\PSLC$-connection on a closed oriented 3-manifold was expressed as a
sum of \emph{complex} dilogarithms.  In a closely related development over
the past 20~years, Fock and Goncharov~\cite{FG1} studied special
\emph{cluster} coordinate systems on the moduli space of flat bundles on
a compact oriented 2-manifold with punctures.  The moduli space is
symplectic, and the overlap functions---cluster transformations---between
different coordinate systems are generated by essentially the same complex
dilogarithms. These dilogarithms also serve as transition functions
defining a canonical prequantum line bundle over the moduli space~\cite{FG2}. 
In this paper we introduce new perspectives and techniques
into this circle of ideas.  Our work is inspired by two distinct sources:
\emph{spectral networks} and \emph{invertible field theories}.  Both
originated in physics and both have well-developed mathematical
underpinnings.

\emph{Spectral networks} on 2-manifolds were introduced by
Gaiotto-Moore-Neitzke~\cite{GMN1} as part of their study of four-dimensional
supersymmetric gauge theories.  For our purposes the key point is that, given
a spectral network on a surface $Y$, one can define the notion of stratified
abelianization \cite{GMN1,HN}: this is a linkage between a flat
${\mathrm {GL}}_N \CC$-connection on $Y$ and a $\C^\times$-connection on a ramified covering
$\widetilde Y \to Y$.  This notion has been useful in various contexts, e.g.~
in exact WKB analysis and hyperk\"ahler geometry of moduli of Higgs bundles;
it also gives a reinterpretation of the cluster coordinates of
Fock-Goncharov.  In Section~\ref{sec:4} we extend the notion of a spectral
network and stratified abelianization from 2~dimensions to all
dimensions~$\le3$.  In particular, in \S\ref{subsec:4.3} we express the data
of a spectral network on a smooth manifold as a certain type of
stratification together with a double cover over a dense subset and a section
of the double cover over a certain codimension one subset.  We use it to set
up stratified abelianization for the rank one complex Lie groups~$\GLC$,
$\SLC$, and~ $\PSLC$.  In particular, we construct canonical spectral
networks associated to triangulations and ideal triangulations of 2-~and
3-manifolds.\footnote{We allow the intermediate case of \emph{semi-ideal
triangulations} in which both ideal and interior vertices are allowed; see
Definitions~\ref{thm:76} and ~\ref{thm:78}.}
 
\insfig{simplex-networks-2d-3d}{0.5}{Left: the canonical 2d spectral network in 
a triangle. Right: the canonical 3d spectral network in a tetrahedron. Its restriction
to each face is the canonical 2d network in a triangle.}

The Chern-Simons invariant was introduced in 1971~\cite{CS1,CS2}, and it was
fairly quickly expressed by Cheeger-Simons~\cite{ChS} in terms of their novel
\emph{differential characters}, an amalgam of integral cohomology and
differential forms.  For flat connections, which are our main focus here, the
differential characters are induced from a cohomology class on a classifying
space~\cite{Ch,D,DS}.  With the advent of \emph{quantum} Chern-Simons
invariants~\cite{W}, it became clear that the \emph{classical} Chern-Simons
invariants share the locality properties of the quantum
invariants~\cite{F1,F6,F2,RSW}.  Furthermore, this locality of the classical
invariants is similar to the locality of the integral of a differential form
on a smooth manifold~$M$: if $M$~is expressed as a union $M=\bigcup_{i}M_i$
of manifolds with corners glued along positive codimension submanifolds with
corners, then the integral over~$M$ is the sum of the integrals over
the~$M_i$.  The fullest expression of this locality is in terms of
\emph{invertible field theories}.  They are constructed using the theory of
\emph{generalized differential cocycles}~\cite{HS,BNV,ADH}, and that theory
in turn is a fully local version of the Cheeger-Simons differential
characters.  We give brief introductions to these ideas in
Appendices~\ref{sec:9} and~\ref{sec:10}.
 
These two lines of development lead to the motivating idea behind our
theorems: a stratified abelianization of classical $\SLC$~Chern-Simons theory
for flat connections.  For a manifold $X$ equipped with stratified abelianization
data (defined in \S\ref{sec:4}), this amounts to an equivalence between
the Chern-Simons invariant of an $\SLC$-bundle over $X$ and that of
a $\C^\times$-bundle over a ramified double cover $\widetilde X$.
We develop two main applications:
(1)~a formula for the Chern-Simons line of a flat $\SLC$-connection on a
closed\footnote{\label{bdyu}More generally, we treat flat $\SLC$-connections
on a compact oriented 2-manifold with boundary whose holonomies around
boundary components are unipotent.} oriented 2-manifold~$Y$, derived from the
simpler and more explicit $\Cx$~ Chern-Simons theory applied to a branched
double covering manifold~$\tY$ (Theorem~\ref{thm:cs-line-explicit}); and
(2)~a derivation of the formula for the Chern-Simons invariant of a flat
$\SLC$-connection on a closed\footnote{with extensions as in~\footref{bdyu}}
3-manifold~$M$ as a sum of complex dilogarithms (Theorem
\ref{thm:cs-invariant}).

Here is the rough strategy for~(1), which we develop in Section~\ref{sec:8a}.
Let $Y$~be a closed 2-manifold equipped with a flat principal $\SLC$-bundle
$P\to Y$.  First, choose a triangulation and, over each vertex, a line in the
fiber of the complex 2-plane bundle associated to $P\to Y$; require that this
data satisfy a genericity condition (Assumption~\ref{thm:74}).  The
stratified abelianization derived from the spectral network associated to the
triangulation yields an isomorphism of the Chern-Simons line~ $\F{\SLC}(Y;P)$
with the Chern-Simons line $\SCx(\tY;A)$, where $A\to \tY$ is a flat
$\C^\times$-bundle over a branched double cover $\tY$ of $Y$.  We give a
concrete description of $\SCx(\tY;A)$ in terms of various auxiliary data:
orientations of the edges of the triangulation, a nonzero vector in the line
at each vertex, etc.  Each set of choices trivializes~$\SCx(\tY;A)$, and we
deploy $\Cx$~Chern-Simons theory in its local form to compute explicit
formul\ae\ for the ratio of trivializations under changes of auxiliary data.
Out of this we construct a groupoid whose points are sets of auxiliary data
and whose morphisms are changes of the data.  In this form our description of
the Chern-Simons lines makes contact with dilogarithmic constructions of 
line bundles over cluster varieties in the literature; 
see the discussion in \S\ref{sec:final-cs-line}.
 
For our result~(2), which is the subject of Section~\ref{sec:cs-3-manifolds},
we proceed as in~(1) to choose a triangulation and lines over the
vertices.\footnote{For ideal vertices we choose a flat section of the
associated $\CP^1$-bundle over the corresponding boundary component of~$M$.}
We excise an open ball from the center of each tetrahedron in the
triangulation of the 3-manifold~$M$.  The boundary 2-sphere of each ball is
ramified double covered by a 2-torus in the standard way: there are 4~branch
points.  The $\Cx$~Chern-Simons invariant on the branched double cover
localizes with a contribution from each tetrahedron that we identify as a
complex dilogarithm.  This led us to a new construction of the dilogarithm
function in terms of (classical) $\Cx$~Chern-Simons theory, which we worked
out in~\cite{FN} and which we apply here.
 
Spin structures are used in our stratified abelianization for a simple
reason.  The generating level of $\SLC$~Chern-Simons theory, when restricted
to the maximal torus $\Cx\subset \SLC$, is \emph{half}\footnote{There is a
minus sign at stake here: see Convention~\ref{thm:69} and Convention~\ref{thm:71}
for our choices.} the usual generating level of
$\Cx$~Chern-Simons theory; see equation~\eqref{eq:47}.  The division by~2 is
effected by passing to spin manifolds.  Just as on oriented manifolds
Chern-Simons is a secondary invariant of characteristic classes in integer
cohomology, on spin manifolds there are secondary invariants of
characteristic classes in $KO$-theory.  Here we use a simple 2-stage
Postnikov truncation of~$KO$-theory that we describe in~\S\ref{subsec:5.2}.
The $\SLC$~Chern-Simons theory does not require a spin structure, so
necessarily the results of our computations are independent of the choice of
spin structure on the base, but the intermediate formul\ae\ on the ramified
double cover require us to keep careful track of spin structures there.
 
The stratified abelianization---the production of a flat $\Cx$-connection
from a flat $\SLC$-connection---gives new geometric meaning to some aspects
of the standard constructions.  For example, the \emph{shape parameters} in
Thurston's theory~\cite[\S4.1]{T2} are now holonomies of the flat
$\Cx$-connection around certain loops in the total space of the branched
double cover.  Furthermore, Thurston's \emph{gluing
equations}~\cite[\S4.2]{T2} are a simple relation in the first homology group
of that manifold; see Remark~\ref{thm:67}.  Neumann's ``combinatorial
flattenings''~\cite[\S3]{Neu} correspond to global sections of the principal
$\Cx$-bundle over the branched double cover~$\tM$ (with balls excised).
 
In the course of our work we produced computer programs to implement our
formulas for the Chern-Simons invariants of 3-manifolds.  We have made those
programs and computations available as ancillary files in the arXiv version
of this paper.
 
We conclude this introduction with a brief roadmap to the parts of the paper
not yet discussed.  Section~\ref{sec:3} is a brief recollection of
the Chern-Simons invariant in 3~dimensions, including its status as the
partition function of an invertible field theory.  As a theory of a single
flat connection, this field theory is \emph{topological}; as such it has a
formulation in homotopy theory.  However, it is not topological as a theory
of families of flat connections, and for that reason it requires the setting
indicated in Appendix~\ref{sec:10}.  Section~\ref{sec:5} begins with
cohomological computations relating levels of Chern-Simons theory for
different subgroups of~$\SLC$, both in the oriented and spin cases.  Then we
review the role of differential cochains and prove an important result
(Theorem~\ref{thm:44}) which essentially says that the Chern-Simons theory is
unchanged as connections move in unipotent directions in~$\SLC$.  We also
prove some theorems about the spin $\Cx$~Chern-Simons theory that are
important for our computations.  Section~\ref{sec:5} concludes with a
global---as opposed to stratified---abelianization theorem.
Section~\ref{sec:7a} introduces the auxiliary data we impose on a
triangulated manifold.  Then we prove important technical results which
underpin the abelianization of the Chern-Simons line.  As stated earlier, our
main theorems are in Sections~\ref{sec:8a} and~\ref{sec:cs-3-manifolds}.
We conclude in Section~\ref{sec:11} with suggestions for ambitious
readers who would like to extend our work in new directions.  Finally,
Appendix~\ref{sec:C11} takes up additional $\zt$-gradings in spin
Chern-Simons theory which we suppress in the main text; there we prove a
spin-statistics result which justifies that suppression.
 
March\'e's approach in~\cite{M} is a close cousin to our derivation of the
formula for the Chern-Simons invariant in~ \S\ref{sec:cs-3-manifolds}.
Our stratified abelianization is a \emph{classical} version of a
\emph{quantum} abelianization proposed by
Cecotti-C\'ordova-Vafa~\cite[\S7]{CCV}.

Over the long period in which this work was carried out we benefited from the
comments and insights of many colleagues, including Clay C\'ordova, Tudor
Dimofte, Stavros Garoufalidis, Matthias Goerner, 
Alexander Goncharov, Pavel Safronov, Joerg
Teschner, Christian Zickert.  We warmly thank them all, named and unnamed.

   \section{Hyperbolic volumes and Chern-Simons invariants}\label{sec:2}

As a warmup suppose $Y ^2$~is a complete hyperbolic 2-manifold with finite
area and finitely generated fundamental group.  Then the Gauss-Bonnet theorem
states that $\Area(Y )=-2\pi \Euler(Y )$ is a topological
invariant~\cite{Ro}.  Furthermore, $Y $~is the interior of a compact
surface.  The classification of surfaces shows that the possible areas form a
discrete subset of~$\RR$. 
 
Now suppose $X^3$~is a complete oriented hyperbolic 3-manifold with finite
volume and finitely generated fundamental group.  Then $X$~is the interior of
a compact 3-manifold whose boundary is a union of
tori~\cite[Proposition~5.11.1]{T2}.  Mostow rigidity~\cite{Mo,Pr} asserts
that $\Vol(X)$~is again a topological invariant.  Jorgenssen-Thurston proved
basic properties of this invariant~\cite{T1}.  For example, the set of
hyperbolic volumes is a well-ordered subset of~$\RR$, and there is a finite
set of hyperbolic 3-manifolds of a given volume.  The volume is an important
invariant which orders hyperbolic 3-manifolds by complexity.  The
``simplest'' is the Weeks manifold of volume~0.9427\dots, the minimal volume
closed orientable hyperbolic 3-manifold~\cite{GMM}.  Further analytic
properties of the set of hyperbolic volumes were explored early on
in~\cite{NZ,Y}.
 
There is a classical formula for the volume of an ideal tetrahedron~$\tetra
\subset \HH^3$ in hyperbolic 3-space; it can be used to compute the volume of
an ideally triangulated hyperbolic 3-manifold.  Suppose the vertices
of~$\tetra $ are distinct points $Z_0,Z_1,Z_2,Z_3\in \CP^1=\partial \HH^3$.
Introduce the \emph{Bloch-Wigner dilogarithm function}~\cite[\S3]{Z} 
  \begin{equation}\label{eq:1}
     \begin{aligned} D\:\CP^1\setminus \{0,1,\infty \}&\longrightarrow \RR \\
      z&\longmapsto \Im \,\Li_2(Z) \,+\,\log|z|\arg(1-z),\end{aligned} 
  \end{equation}
where $\Li_2$~is the classical dilogarithm, defined for~$|z|<1$ by the power
series 
  \begin{equation}\label{eq:2}
     \Li_2(z) = \sum\limits_{n=1}^{\infty}\frac{z^n}{n^2} 
  \end{equation}
and analytically continued to~$\CC\setminus [1,\infty )$.  Let
  \begin{equation}\label{eq:3}
     z=\frac{(Z_0-Z_2)(Z_1-Z_3)}{(Z_0-Z_3)(Z_1-Z_2)} 
  \end{equation}
be the cross-ratio of the vertices of~$\tetra $.

  \begin{theorem}[Lobachevsky,
  Milnor-Thurston~\hbox{\cite[Chapter~7]{T2}}]\label{thm:1}
   \quad $\Vol(\tetra )=|D(z)|$.  
  \end{theorem}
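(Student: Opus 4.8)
The plan is to reduce, using the isometry invariance of hyperbolic volume, to proving $\Vol(\tetra)=D(z)$ for $z$ in the open upper half-plane $\HH$, and then to establish this equality by first computing $\Vol(\tetra)$ as a sum of values of the Lobachevsky function (Milnor's formula) and then matching the result to $D(z)$. Since $\PSLC=\mathrm{Isom}^+(\HH^3)$ acts triply transitively on $\CP^1=\partial\HH^3$, after an orientation-preserving isometry I may assume $Z_0,Z_1,Z_3=0,1,\infty$ and $Z_2=z$, the cross-ratio~\eqref{eq:3}. The antiholomorphic involution $w\mapsto\bar w$ of $\CC$ extends to an orientation-reversing isometry of $\HH^3$ fixing $0,1,\infty$, so $\Vol(\tetra)$ is unchanged when $z$ is replaced by $\bar z$; thus I may take $z$ in the closed upper half-plane. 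If $z\in\RR$ the four points are concircular, $\tetra$ is flat, and $D(z)=0$ (using that $\Li_2$ is real on $(-\infty,1)$, that $\im\Li_2(x+i0)=\pi\log x$ for $x>1$, and that $\arg(1-z)\in\{0,-\pi\}$), so both sides vanish. Since $D>0$ on $\HH$ and $D(\bar z)=-D(z)$, the theorem follows once $\Vol(\tetra)=D(z)$ is proved for $z\in\HH$.

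For the main computation I work in the upper half-space model of $\HH^3$ with $\infty$ as the distinguished vertex. Then $\tetra$ is the region over the Euclidean triangle $T=\triangle(0,1,z)$, between the three vertical planes over the sides of $T$ and above the hemisphere over the circumcircle of $T$; integrating out the vertical coordinate gives $\Vol(\tetra)=\tfrac12\int_T(R^2-|w-c|^2)^{-1}\,dx\,dy$, with $c,R$ the circumcenter and circumradius and $w=x+iy$. Joining $c$ to the three vertices and the three edge-midpoints of $T$ cuts it into six right triangles, each with a vertex at $c$, its right angle at an edge-midpoint, and angle $\phi$ at $c$ equal to one of the Euclidean angles $\alpha,\beta,\gamma$ of $T$ (each occurring twice); for such a piece the distance from $c$ to the opposite side is $d=R\cos\phi$, and passing to polar coordinates at $c$ and using $\cos^2\psi-\cos^2\phi=\sin(\phi+\psi)\sin(\phi-\psi)$ one gets
\[
  \int_{\mathrm{piece}}\frac{dx\,dy}{R^2-|w-c|^2}\;=\;\frac12\int_0^\phi\log\frac{\cos^2\psi}{\sin(\phi+\psi)\sin(\phi-\psi)}\,d\psi\;=\;\Lambda(\phi),
\]
where $\Lambda(\theta)=-\int_0^\theta\log|2\sin t|\,dt$; the last equality uses only that $\Lambda$ is odd and $\pi$-periodic with $\Lambda(\pi/2)=0$, together with the duplication identity $\Lambda(\theta)+\Lambda(\theta+\tfrac\pi2)=\tfrac12\Lambda(2\theta)$ (itself a consequence of $4\sin\theta\cos\theta=2\sin2\theta$). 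Summing the six pieces gives Milnor's formula $\Vol(\tetra)=\Lambda(\alpha)+\Lambda(\beta)+\Lambda(\gamma)$. Finally, the Euclidean angle of $T$ at a vertex is the dihedral angle of $\tetra$ along the opposite vertical edge (two vertical planes meet at the angle between their ground lines), and computing these at $0,1,z$ identifies them with $\arg z$, $\arg\tfrac1{1-z}$, $\arg\tfrac{z-1}{z}$, the arguments of the three shape parameters; for $z\in\HH$ all three of $z$, $\tfrac1{1-z}$, $\tfrac{z-1}{z}$ lie in $\HH$ and their arguments sum to $\pi$. Hence $\Vol(\tetra)=\Lambda(\arg z)+\Lambda(\arg\tfrac1{1-z})+\Lambda(\arg\tfrac{z-1}{z})$.

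It remains to identify the right side with $D(z)$ on the connected set $\HH$, where both functions are real-analytic. From $\tfrac{d}{dz}\Li_2(z)=-z^{-1}\log(1-z)$ one computes $dD=\log|z|\,d\arg(1-z)-\log|1-z|\,d\arg z$; on the other side, $\Lambda'(\theta)=-\log|2\sin\theta|$, $|2\sin\arg w|=2\,\im w/|w|$ for $w\in\HH$, and $\arg z+\arg\tfrac1{1-z}+\arg\tfrac{z-1}{z}=\pi$ together yield exactly the same $1$-form for the differential of the Lobachevsky sum. The two functions therefore differ by a constant, and both tend to $0$ as $z$ approaches $\RR\cup\{\infty\}$, so they are equal. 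This argument also absorbs the obtuse-triangle case I glossed over in the decomposition: when $T$ is obtuse the circumcenter lies outside $T$ and the decomposition acquires a sign, but the contribution of the side opposite the obtuse angle $A$ then enters as $-\Lambda(\pi-A)=\Lambda(A)$ and Milnor's formula is unaffected; alternatively $\Vol(\tetra)$ is real-analytic in $z\in\HH$ and already agrees with $D(z)$ on the open locus where $T$ is acute. The substantive part of the argument is the polar-coordinate evaluation of the six pieces and the identification of the three angles with the arguments of the shape parameters; the surrounding reductions and the obtuse-case bookkeeping are routine.
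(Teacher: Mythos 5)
The paper itself gives no proof of this statement—it is quoted as a classical result with a citation to Thurston's notes—and your argument is exactly the classical Milnor–Thurston proof found there: normalize the ideal vertices to $0,1,z,\infty$, integrate $dV=dx\,dy\,dt/t^{3}$ over the region above the Euclidean triangle and the circumscribed hemisphere, decompose into six right triangles at the circumcenter to obtain Milnor's formula $\Lambda(\alpha)+\Lambda(\beta)+\Lambda(\gamma)$ (with the obtuse case handled by the signed decomposition), and identify that sum with $D(z)$ by matching differentials and boundary values; all of these steps check out. The only nit is that with the paper's cross-ratio convention \eqref{eq:3}, sending $Z_0,Z_1,Z_3$ to $0,1,\infty$ places $Z_2$ at $z/(z-1)$ rather than at $z$, so either permute the normalization (e.g.\ send $Z_0,Z_1,Z_3$ to $0,\infty,1$) or invoke the invariance of $|D|$ under the six cross-ratio variants—a cosmetic fix that does not affect the argument.
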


In this PhD thesis Meyerhoff~\cite{Me} initiated the detailed study of the
Chern-Simons invariant $\CS(X)\in \RR/\Zo$ of of the Levi-Civita
connection~$\Theta _{LC}$ of a closed oriented hyperbolic 3-manifold.  Here
and throughout we deploy the notation
  \begin{equation}\label{eq:83}
      \Zo=2\pi \sqmo\ZZ,\qquad \ZZ(n)=\Zo^{\otimes n} = (2\pi
      \sqmo)^n\ZZ,\qquad n\in \ZZ^{\ge1}.  
  \end{equation}
  This \emph{real} Chern-Simons invariant is the real part of the
\emph{complex} Chern-Simons invariant of the associated flat
$\PSLC$-connection~$\Theta $.  Recall that the $\SO_3$-bundle $\sB_{\SO}(X)\to
X$ of frames carries not only the Levi-Civita connection~$\Theta _{LC}$ but
also the $\RR^3$-valued ``soldering form''~$\theta $; the complex combination
$\Theta =\Theta _{LC} +\sqrt{-1}\,\theta$ is a \emph{flat} connection on the
associated principal $\PSLC$-bundle:
   $$ \begin{gathered}
     \xymatrix{\mathscr{B}_{SO}(X)\ar[dr]_{SO_3}\ar@{^{(}->}[rr]&&P
     \ar[dl]^{\PSLC}\\&X}
     \end{gathered} $$ 
The exponentiated complex Chern-Simons invariant, which we review
in~\S\ref{sec:3}, satisfies
  \begin{equation}\label{eq:4}
     \F{\PSLC}(X;\Theta )=\exp\left(\Vol(X)+\sqrt{-1}CS(X)\right)\; \in \Cx.
  \end{equation}
Our focus in this paper is the complex Chern-Simons invariant for arbitrary
flat connections, mainly for structure group~$\SLC$.  (In the intrinsic case
of connections on the frame bundle, the passage from~$\PSLC$ to~$\SLC$ is the
introduction of a spin structure.)
 
Just as the real volume is related to a real dilogarithm~\eqref{eq:1}, so too
the complex Chern-Simons invariant is related to a complex dilogarithm, the
\emph{enhanced Rogers dilogarithm}.  Let 
  \begin{equation}\label{eq:5}
     \bY = \{(z_1,z_2)\in \Cx\times \Cx: z_1+z_2=1\} 
  \end{equation}
and 
  \begin{equation}\label{eq:6}
     \hY = \{(u_1,u_2)\in \CC\times \CC : e^{u_1} + e^{u_2}=1\}. 
  \end{equation}
Then $\bY\approx\CP^1\setminus \{0,1,\infty \}$ and $\hY\to\bY$ is a
universal abelian covering map with Galois group isomorphic to~$\ZZ\times
\ZZ$.  The dilogarithm in question~\cite[\S4]{ZG}, \cite[\S II.1.B]{Z},  is the
unique function
  \begin{equation}\label{eq:7}
     L\:\hY\longrightarrow \CC/\Zt, 
  \end{equation}
which satisfies the differential equation
  \begin{equation}\label{eq:8}
     dL=(u_1\,du_2-u_2\,du_1)/2 
  \end{equation}
and $\lim\limits_{}L(u_1,u_2)=0$ as $u_1\to\infty $ and~$u_2\to0$.  (We
encounter variants in~\S\S\ref{sec:7a}--\ref{sec:cs-3-manifolds}.)  The
imaginary part of~$L$ is the Bloch-Wigner function~\eqref{eq:1} plus
$\Im(\overline{u_1}\,u_2)/2$.  See~\cite{FN} for a construction of the
enhanced Rogers dilogarithm using Chern-Simons invariants for
$\Cx$-connections.
 
Let $\BSLCd$ denote the classifying space of flat $\SLC$-bundles.\footnote{We
use standard terminology: `flat' is a \emph{structure}---a flat
connection---on a principal bundle.}  The
universal Chern-Simons class for flat bundles 
  \begin{equation}\label{eq:9}
     \hc_2\in H^3(\BSLCd;\CZo) 
  \end{equation}
was constructed by Cheeger-Simons~\cite{ChS} and is known as the
Cheeger-Chern-Simons class.  It has an expression in terms of the
dilogarithm~\eqref{eq:7}, going back to work of Dupont and collaborators in
the~1980's; see~\cite{D,DS}.  The most precise relationship can be found
in~\cite[\S4]{DZ}, which is based on~\cite{Neu}; see both papers for exact
statements, history, and extensive references.

In the early 2000's, the formula for the Chern-Simons invariant of a flat
connection on a 3-manifold as a sum of dilogarithms was taken up again in
such works as \cite{Neu,DZ,Zi,GTZ,DGG}.  The formula for flat
$\PSLC$-connections on closed 3-manifolds is in~\cite{Neu}; the formula for
flat $\SLC$-connections on closed 3-manifolds is in~\cite{DZ}.  The formula
for boundary-unipotent flat $\SL_N\hneg\CC$-connections appears
in~\cite{GTZ}; for boundary-unipotent flat $\PSLC$-connections it is in the
earlier paper~\cite{Zi}. 

  \begin{remark}[]\label{thm:75}
 These previous works rely on global ordering data/conditions on the vertices
of a triangulation or ideal triangulation of the 3-manifold.  By contrast, in
our work we only use edge orientations with no constraints.  
As a consequence, our formula in Theorem~\ref{thm:cs-invariant} is a bit more
complicated: it involves four variants of the dilogarithm, and also some cube
roots of unity enter from a $\bmu3$-symmetry not present in earlier
approaches.
  \end{remark}

   \section{Chern-Simons as a topological field theory}\label{sec:3}

The integral of a differential form over a smooth manifold~$M$ is
\emph{local}: if $M=\bigcup M_i$ is a finite union of submanifolds, possibly
with boundaries and corners, and if $M_i\cap M_j$ has measure zero for
all~$i\neq j$, then the integral over~$M$ is the sum of the integrals
over~$M_i$.  The exponentiated Chern-Simons invariant of a connection on a
principal bundle $P\to M$ is not the integral of a differential form on~$M$,
yet it still satisfies strong locality properties: it is the \emph{partition
function} of an \emph{invertible field theory}.  We review this aspect of
Chern-Simons invariants.  See~\cite[\S2]{FN} for an exposition of the theory
with gauge group~$\Cx$.
 
Let $G$~be a Lie group with finitely many components, called the \emph{gauge
group}, and let $\mathfrak{g}$~be its Lie algebra.  In this section we make
the simplifying assumption that $G$~is simply connected.  Let $\pi \:P\to M$
be a principal $G$-bundle with connection\footnote{We use `$G$-connection' as
a shorthand for `principal $G$-bundle with connection'.} $\Theta \in \Omega
^1_P(\mathfrak{g})$.  Suppose
  \begin{equation}\label{eq:19}
      \form\:\mathfrak{g}\times \mathfrak{g}\to\CC 
  \end{equation}
is a $G$-invariant symmetric bilinear form on the Lie algebra~$\mathfrak{g}$.
Chern-Simons~\cite{CS2} define a scalar 3-form on the total space~$P$,
  \begin{equation}\label{eq:10}
     \eta (\Theta ) = \langle \Theta \wedge \Omega \rangle-\frac 16 \langle
     \Theta \wedge [\Theta \wedge \Theta ] \rangle \; \in \Omega ^3_P, 
  \end{equation}
where $\Omega =d\Theta +\frac 12[\Theta \wedge \Theta ]\in \Omega
^2_P(\mathfrak{g})$ is the curvature of~$\Theta $.  If $\dim M\le 3$, then
$\eta (\Theta )$~is closed.  Also, in that case the simple connectivity
of~$G$ ensures the existence of sections $s\:M\to P$ of $\pi \:P\to M$.  If
$M=X$~is a closed oriented 3-manifold, then 
  \begin{equation}\label{eq:11}
     \int_{X}s^*\eta (\Theta )\; \in \CC 
  \end{equation}
is unchanged under a homotopy of~$s$, since $\eta (\Theta )$~is closed.  The
space of sections is generally not connected, so to ensure that
\eqref{eq:11}~is independent of~$s$ we make two modifications: (i)~we impose
an integrality hypothesis on~$\form$, and (ii)~we reduce the integral
to~$\CC/\Zo$.  The integrality condition lies in topology if $G$~is compact
or $G$~is complex, which we now assume.  Namely, the vector space of
forms~$\form$ is canonically isomorphic\footnote{The isomorphism maps a
form~$\form$ to the de Rham cohomology class of $\langle \Omega ,\Omega
\rangle$, where $\Omega $~is the curvature of a universal principal
$G$-connection over~$BG$.}  to~$H^4(BG;\CC)$, where $BG$~is the classifying
space of~$G$.  The image of $H^4(BG;\ZZ)\to H^4(BG;\CC)$ is a lattice of
integral forms.\footnote{There is also a distinguished cone of forms whose
restriction to a maximal compact Lie subalgebra is positive definite.  For
$G$~connected, the map $H^4(BG;\ZZ)\to H^4(BG;\CC)$ is injective:
$H^4(BG;\ZZ)$~is torsionfree.}  Then if $\form$~is an integral form,
  \begin{equation}\label{eq:12}
     \tpi\int_{X}s^*\eta (\Theta )\mod{\Zo} 
  \end{equation}
is independent of~$s$.  Define the \emph{exponentiated Chern-Simons
invariant}\footnote{The notation is deficient, as it does not include the
form~$\form$, but the choice should be clear from the context.}
  \begin{equation}\label{eq:13}
     \F G (X;\Theta)  = \exp\left(\tpi\int_{X}s^*\eta (\Theta )
     \right)\;\in \Cx. 
  \end{equation}

  \begin{remark}[]\label{thm:31}
 The exponentiated Chern-Simons invariant is defined without the simple
connectivity assumption on~$G$.  In that case the form $\langle -,-  \rangle$ is
replaced by a class in~$H^4(BG;\ZZ)$, called the \emph{level}.
See~\cite[Appendix]{F4} for the general construction. 
  \end{remark}

  \begin{example}[$G=\SLC$]\label{thm:2}
 The special linear group $G=\SLC$ is a matrix group with Lie algebra
$\mathfrak{g}=\mathfrak{s}\mathfrak{l}_2\CC$ the space of $2\times 2$
traceless complex matrices.  There is an isomorphism $H^4(\BSLC;\ZZ)\cong
\ZZ$ with generator
  \begin{equation}\label{eq:14}
     \langle A,B \rangle = - \frac 1{8\pi ^2}\trace(AB),\qquad A,B\in
     \mathfrak{s}\mathfrak{l}_2\CC, 
  \end{equation}
the complex image of $-c_2\in H^4(\BSLC;\ZZ)$.  On the trivial bundle
over~$X$ an $\SLC$-connection is a traceless matrix of 1-forms $A\in
\Omega ^1_X(\mathfrak{sl}_2\CC)$ and the Chern-Simons invariant~\eqref{eq:12}
is 
  \begin{equation}\label{eq:33}
     \frac{1}{4\pi \sqmo}\int_{X}\trace\left( A\wedge dA + \frac 23\,A\wedge
     A\wedge A \right) \mod{\Zo}. 
  \end{equation}

  \end{example}

We remark that our choice of $-c_2$ is motivated by \eqref{eq:4}; if we chose $c_2$ instead, we would have an extra minus sign in that equation.

Now suppose $X'$~is a compact 3-manifold with boundary, and let $\Theta '$~be
a $G$-connection on~$X'$ for $G$~a simply connected
Lie group.  We define $\F G(X';\Theta ')$ so that if $X=X_1\cup_NX_2$ is a
decomposition of a closed oriented 3-manifold~$X$ along an embedded closed
codimension one oriented submanifold~$N$, then
  \begin{equation}\label{eq:15}
     \F G (X;\Theta) = \F G(X_1;\Theta _1)\cdot \F G(X_2;\Theta _2), 
  \end{equation}
where $\Theta _i=\Theta \res{X_i}$.  If $\partial X'\neq \emptyset $ then
\eqref{eq:12}~is not independent of~$s$; it depends on~$s\res{\partial X'}$.
That dependence satisfies a cocycle relation which leads to the construction
of a complex line~$\F G(\partial X';\partial \Theta ')$ which only
depends on $\partial \Theta '=\Theta '\res{\partial X'}$.  The exponentiated
Chern-Simons invariant~$\F G(X';\Theta ')$ is an element of~$\F G
(\partial X';\partial \Theta ')$, and \eqref{eq:15}~is satisfied if the dot
on the right hand side is interpreted as the pairing of this line with its
dual; see~\cite[\S2]{F1}.

We summarize the situation in the language of field theory.  Let $\brdn$ be
the bordism category with objects closed oriented 2-manifolds~$Y$ equipped
with a $G$-connection~$\Theta _Y$.  (The notation indicates the structure
group of the manifold, and the superscript~$\nabla $ connotes the connection
on the principal $G$-bundle.)  A morphism $(Y_0,\Theta _0)\to (Y_1,\Theta
_1)$ in $\brdn$ is then a compact oriented 3-manifold~$X$ equipped with a
$G$-connection~$\Theta _X$, together with a diffeomorphism $-Y_0\amalg
Y_1\xrightarrow{\;\cong \;}\partial X$, and an isomorphism $\Theta _0\amalg
\Theta _1\xrightarrow{\;\cong \;}\partial \Theta _X$.  (These diffeomorphisms
need to be on collar neighborhoods---or germs of collar neighborhoods---of
the boundary.)  As usual in bordism categories, composition of morphisms is
defined by gluing bordisms, and there is a symmetric monoidal structure given
by disjoint union.  Let $\LC$ denote the groupoid whose objects are
1-dimensional complex vector spaces, and whose morphisms are invertible
linear maps.  It is a Picard groupoid under tensor product of lines.

  \begin{theorem}[]\label{thm:3}
 The exponentiated Chern-Simons invariant is a symmetric monoidal functor 
  \begin{qedequation}\label{eq:16}
     \F G \:\brdn\longrightarrow \LC. \qedhere
  \end{qedequation}
  \end{theorem}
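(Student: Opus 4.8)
The plan is to build the functor directly from the Chern-Simons $3$-form $\eta(\Theta)\in\Omega^3_P$ of \eqref{eq:10}, using only two inputs: that $\eta(\Theta)$ is closed when $\dim M\le 3$, and that $\form$ is integral, so that $\tpi\int_{X}s^{*}\eta(\Theta)\in\Zo$ whenever $X$ is closed oriented $3$-dimensional. (A detailed treatment is \cite[\S2]{F1}.) \emph{Objects.} For a closed oriented $2$-manifold $(Y,\Theta_Y)$, sections $s\colon Y\to P$ of $\pi$ exist by simple connectivity of $G$ and $\dim Y\le 3$. Given sections $s_0,s_1$, choose a section $\tilde s$ of the pullback bundle over $[0,1]\times Y$ restricting to $s_i$ over $\{i\}\times Y$, and set $\lambda(s_0,s_1)=\exp\bigl(\tpi\int_{[0,1]\times Y}\tilde s^{*}\eta(p^{*}\Theta_Y)\bigr)$. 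Two choices of $\tilde s$ glue (with a sign) over $\cir\times Y$, so their integrals differ by an element of $\Zo$ and $\lambda(s_0,s_1)$ is well defined; concatenating interpolations over $[0,1]\times Y$ and $[1,2]\times Y$ and using additivity of the integral gives the cocycle identity $\lambda(s_0,s_1)\lambda(s_1,s_2)=\lambda(s_0,s_2)$. Define $\F G(Y;\Theta_Y)$ to be the complex line with a generator $[s]$ for each section and relations $[s_1]=\lambda(s_0,s_1)[s_0]$. The construction is natural in diffeomorphisms of $Y$ and isomorphisms of $\Theta_Y$, so $\F G$ is defined on objects and isomorphisms of $\brdn$.

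\emph{Bordisms.} From $\int_{-Y}=-\int_{Y}$ and $\int_{Y\amalg Y'}=\int_{Y}+\int_{Y'}$ one gets canonical isomorphisms $\F G(-Y;\Theta)\cong \F G(Y;\Theta)^{*}$ and $\F G(Y\amalg Y';\Theta\amalg\Theta')\cong \F G(Y;\Theta)\otimes \F G(Y';\Theta')$. Hence for a bordism $X\colon(Y_0,\Theta_0)\to(Y_1,\Theta_1)$, with $\partial X\cong -Y_0\amalg Y_1$, the boundary line is $\F G(\partial X;\partial\Theta_X)\cong \Hom\bigl(\F G(Y_0;\Theta_0),\F G(Y_1;\Theta_1)\bigr)$. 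Pick a section $s\colon X\to P$; it restricts to sections $s_i$ over $Y_i$, and I set $\F G(X;\Theta_X):=\exp\bigl(\tpi\int_{X}s^{*}\eta(\Theta_X)\bigr)\cdot\bigl([s_0]^{\vee}\otimes[s_1]\bigr)$, i.e.\ the linear map $[s_0]\mapsto\exp(\tpi\int_{X}s^{*}\eta)\,[s_1]$. Independence of $s$ follows from Stokes' theorem on a collar of $\partial X$: replacing $s$ by $s\cdot g$ changes $\exp(\tpi\int_X s^{*}\eta)$ by exactly the factor by which $[s_0]^{\vee}\otimes[s_1]$ changes, by the definition of $\lambda$. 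The resulting map is nonzero, hence invertible, since its values lie in $\Cx$; this is what lands $\F G$ in the groupoid $\LC$.

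\emph{Functoriality.} The cylinder $[0,1]\times Y$ with the pullback connection maps to $\id_{\F G(Y;\Theta_Y)}$: this is immediate, since the definition of $\lambda$ is precisely the integral of an interpolating section over $[0,1]\times Y$. For composition, let $X\colon Y_0\to Y_1$ and $X'\colon Y_1\to Y_2$ glue to $X''=X\cup_{Y_1}X'$. A section over $Y_1\subset\partial X$ extends to a section over $X$: after a global trivialization it is a map $Y_1\to G$, and the obstructions to extending over the $k$-cells of $(X,Y_1)$ lie in $H^{k}(X,Y_1;\pi_{k-1}G)$ for $k\le3$, all of which vanish since $\pi_0 G=\pi_1 G=\pi_2 G=0$. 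So choose sections $s$ on $X$ and $s'$ on $X'$ agreeing on $Y_1$; by the germ-of-collar compatibility built into morphisms of $\brdn$ they glue to a smooth section $s''$ on $X''$. Then $\int_{X''}(s'')^{*}\eta=\int_{X}s^{*}\eta+\int_{X'}(s')^{*}\eta$, while the evaluation $\F G(Y_1)^{*}\otimes\F G(Y_1)\to\CC$ implicit in composition in $\LC$ produces the factor $1$ because $s$ and $s'$ restrict to the same section of $P\to Y_1$; hence $\F G(X'';\Theta'')=\F G(X';\Theta')\circ\F G(X;\Theta)$. (In particular, gluing $X$ to $-X$ gives a cylinder, so $\F G(-X)=\F G(X)^{-1}$.) Finally, disjoint union of bordisms goes to tensor product of maps by additivity of integrals together with the isomorphisms above, and the braiding bordism interchanging two objects goes to the symmetry of $\LC$; these last checks are routine, so $\F G$ is a symmetric monoidal functor.

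\emph{Main obstacle.} The one step requiring genuine care is the gluing law: one must verify that the restriction map from sections of $P\to X$ to sections over $Y_1$ is surjective (the obstruction count above), that the germ-of-collar data makes the glued section genuinely smooth, and—most tediously—that every line identification (the orientation-reversal duals, the disjoint-union tensor factors, and the evaluation pairing $\F G(Y_1)\otimes\F G(Y_1)^{*}\to\CC$) is tracked with consistent orientations and orderings so that the signs and factors cancel as claimed. Once this bookkeeping is fixed, every assertion reduces to Stokes' theorem, additivity of the integral, and the integrality of $\form$.
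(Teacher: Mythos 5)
Your strategy is the same one the paper relies on (the paper gives no argument for Theorem~\ref{thm:3} beyond the construction sketched before it and the reference to~\cite{F1}): build the boundary line from a cocycle attached to sections, define the value on a bordism from a global section, and verify gluing. But as written your conventions are inconsistent, and the computation everything rests on is only gestured at. The actual change-of-section formula — obtained by applying Stokes on $[0,1]\times X$ to $S^*\eta$ for an interpolating section $S$ (this form is closed because the Chern--Weil $4$-form of the pulled-back connection is pulled back from the $3$-manifold $X$), or in general from the gauge-variation formula — reads $\exp\bigl(\tpi\int_X (s')^*\eta\bigr)=\exp\bigl(\tpi\int_X s^*\eta\bigr)\cdot\lambda^{}_{Y_1}(s_1,s_1')\,\lambda^{}_{Y_0}(s_0,s_0')^{-1}$ with your $\lambda$. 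Feeding this into your definitions, with the relation $[s_1]=\lambda(s_0,s_1)[s_0]$, the element $\exp\bigl(\tpi\int_X s^*\eta\bigr)\,[s_0]^{\vee}\otimes[s_1]$ is \emph{not} independent of $s$ (it changes by the square of the boundary factor), and the cylinder is sent to multiplication by $\lambda(s_0,s_1)^2$, not to the identity. The repair is to invert the cocycle in the relations, $[s_1]=\lambda(s_0,s_1)^{-1}[s_0]$ — equivalently, model $\F G(Y;\Theta_Y)$ as functions $f$ on sections with $f(s_1)=\lambda(s_0,s_1)f(s_0)$ and let $[s]$ be the function normalized at $s$ — after which both of your checks do go through.

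Second, ``independence of $s$ follows from Stokes' theorem on a collar of $\partial X$'' is not the mechanism, and it hides where integrality is actually used on bordisms. When $s$ and $s'=s\cdot g$ are not homotopic as sections (possible, since $\pi_3G\neq0$), no interpolating section over $[0,1]\times X$ exists; one must instead use the explicit variation formula, whose bulk term is the Wess--Zumino $3$-form of $g$, and it is precisely the integrality of $\form$ that puts its contribution in $\Zo$ — so integrality enters for manifolds with boundary, not only for closed $X$. Relatedly, the well-definedness of $\lambda$ needs more than ``the glued integrals over $\cir\times Y$ differ by an element of $\Zo$'': the closed-manifold invariant of $\cir\times Y$ with the pulled-back connection must equal $1$, which follows from the (missing) one-line observation that a section pulled back from $Y$ pulls $\eta$ back through a $3$-form on the $2$-manifold $Y$, hence to zero. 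These are exactly the points carried out in \cite{F1}, which you and the paper both cite; with them supplied and the cocycle convention corrected, your argument becomes the standard one.
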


\noindent
 So $\F G $~is an \emph{invertible field theory}, called \emph{classical
Chern-Simons theory}; see~\cite{HS,F2}.
 
Our interest in this paper is the restriction to \emph{flat} $G$-connections
  \begin{equation}\label{eq:17}
     \F G \:\brdf\longrightarrow \LC. 
  \end{equation}
This restricted theory is \emph{topological} in a restricted sense---at least
on single manifolds (see Remark~\ref{thm:38} below).  Namely, the domain
bordism category has no continuously varying parameters.\footnote{More
precisely, for any~$t\in \AA^1$ the restriction map $\Bun_{\Gd}(\AA^1\times
M)\to\Bun_{\Gd}(\{t\}\times M)$ on \emph{flat} connections is an equivalence
of stacks.}  There is a well-developed mathematical theory of topological
field theories.  In this topological case it is technically easier to
implement strong locality in the form of an \emph{extended} field theory.
For \emph{invertible} topological theories, homotopy-theoretic methods can be
brought to bear~\cite{FHT,FH1}: an invertible topological field theory can be
realized as a map of spectra.  The domain is a bordism spectrum and the
codomain a spectrum of ``higher lines''.  In that context, for $G=\SLC$ the
extended version of~ \eqref{eq:17} is realized as the composition
  \begin{equation}\label{eq:18}
     \MSO\wedge B\Gd_+ \xrightarrow{\;\;\id\wedge (-\hc_2)\;\;} \MSO\wedge
     (H\CZo_3)_+\xrightarrow{\;\;\;\int_{}\;\;\;}\Sigma ^3H\CZo. 
  \end{equation}
Here $\MSO$~is the Thom spectrum of oriented manifolds, $H\CZo$~is the
Eilenberg-MacLane spectrum associated to the abelian group~$\CZo$, the
cohomology class~$\hc_2$ is introduced in~\eqref{eq:9}, and
$(H\CZo_3)_+$~denotes the 3-space of the spectrum~$H\CZo_3$.  The first map
is the Cheeger-Simons class~\eqref{eq:9}, and the second is integration;
see~\cite[\S4.10]{HS}.  The induced map on~$\pi _3$ is a bordism invariant of
closed oriented 3-manifolds equipped with a \emph{flat} connection.  The
map~\eqref{eq:18} extends this bordism invariant to an invertible topological
field theory, thereby exhibiting its full locality.

  \begin{remark}[]\label{thm:38}
 Our analysis involves parametrized families of flat connections, that is,
connections on the total space of a fiber bundle $\pi \:M\to S$ which are
flat along the fibers of~$\pi $.  Such connections need not be flat on~$M$,
and for that reason we need more than the homotopy theoretic
map~\eqref{eq:18}, since the latter only incorporates families of flat
connection in which the connection is flat on the total space~$M$.  It is in
this broader sense that the invertible field theory~\eqref{eq:17} is not
topological.  We explain this in Appendix~\ref{sec:10}.
  \end{remark} 

  \begin{remark}[]\label{thm:73}
 In codimension~1---on closed surfaces---we wrote in~\eqref{eq:17} that the
theory~$\F G$ has values complex lines.  Similarly, in codimension~2---on
closed 1-manifolds---we take the values of the theory to be
\emph{$\VV$-lines}, i.e., invertible modules over the tensor category
$\VV$ of complex vector spaces.
  \end{remark}

There is a spin variant of Chern-Simons theory, which we discuss
in~\S\ref{subsec:5.2} in a special case.

   \section{Stratified abelianization and spectral networks}\label{sec:4}

We begin in~\S\ref{subsec:4.1} with an elementary concrete example of
stratified abelianization which motivates all that follows.  Here one
explicitly sees the monodromy around branch points (Lemma~\ref{thm:5}) and
the unipotent automorphism when crossing a wall (Equation~\eqref{eq:30}).  We
abstract this into a general definition in~\S\ref{subsec:4.3}.  We describe
the data of a spectral network as the specification of a particular type of
stratified manifold.  This is all for rank one Lie groups.
In~\S\ref{subsec:4.4} we construct a spectral network and stratified
abelianization for a triangulated surface, and in~\S\ref{subsec:4.2} we do
the same for a triangulated 3-manifold.  An important example is a 2-sphere
triangulated by the boundary of a tetrahedron---this is the boundary of a
3-simplex, which we encounter at the center of each 3-dimensional tetrahedron
in the triangulation of a 3-manifold---and we prove an important relation in
the stratified abelianization in Proposition~\ref{thm:12}.  Our setup here
makes contact with cross ratios (Remark~\ref{thm:51}) and the Thurston gluing
equations (Remark~\ref{thm:67}).

  \subsection{2-dimensional spectral networks: motivation}\label{subsec:4.1}

 To motivate stratified abelianization, begin with an invertible
$2\times 2$~complex matrix~$A\in \GLC$.  For a geometric take, let
$E\to\cir$ be a rank~2 flat complex vector bundle with holonomy~$A$.  Then
$A$~is diagonalizable if and only if there exist
  \begin{align}
      L&\xrightarrow{\;\;\phantom{\cong }\;\;} \cir\amalg \cir \label{eq:22}\\ 
     \pi _*L&\xrightarrow{\;\;\cong \;\;}E,  \label{eq:23}
  \end{align}
where $ \pi \:\cir\amalg \cir\to \cir $~is the product\footnote{Note the
sheets are not ordered.} double cover, \eqref{eq:22}~is a (flat) line bundle,
and \eqref{eq:23}~is an isomorphism.  If so, then the projectivization $\PP
E\to\cir$ has two distinguished horizontal sections; the line bundle
$L\to\cir\amalg \cir$ is isomorphic to the restriction of the tautological
line bundle $\sL\to\PP E$ to the union of the images of those sections.  If
$A$~is not diagonalizable, then the existence of an eigenvector implies that
$\PP E\to\cir$ has a unique flat section.
 
\insfig{hole-torus-holonomies}{0.4}{A flat bundle over the one-holed torus.}

Now consider two invertible matrices $A_1,A_2\in \GLC$.  If
$A_1A_2=A_2A_1$, then there is a flat rank~2 complex vector bundle
$E\to\torus$ with holonomies~$A_1,A_2$ about chosen based loops
generating~$\pi _1(\cir\times \cir)$.  Then---assuming each of $A_1,A_2$ is
diagonalizable---there is a global abelianization based on the product double
cover of~$\torus$.  Our story begins when $A_1A_2\neq A_2A_1$.  In this
situation the matrices~$A_1,A_2$ determine a flat rank~2 complex vector
bundle $E\to Y $ over the compact surface $Y =\torus\setminus D^2$, as evoked
by Figure~\ref{fig:hole-torus-holonomies}.  Let $x\in \partial Y $ be a basepoint.  There is no
hope of a global abelianization.  Instead, consider the ideal triangulation
of~$Y $ depicted in Figure~\ref{fig:two-triangles-strata}.  If we collapse the
boundary~$\partial Y$, there are 2~ triangles, glued along 3~edges; each
vertex is the point at ``infinity'' in~$Y/\partial Y$.  We interpret
Figure~\ref{fig:two-triangles-strata} as a stratification
  \begin{equation}\label{eq:24}
     Y = Y _0\amalg Y _{-1}\amalg Y _{-2}.
  \end{equation}
The codimension~2 stratum~$\strat{-2}$ consists of two points, one interior
to each face.  The codimension~1 stratum~$\strat{-1}$ is the union of six
line segments, joining the codimension~2 stratum to the vertices.  The
generic stratum~$\strat0$ is the complement of the lower dimensional strata.

\insfig{two-triangles-strata}{0.5}{The stratification $Y = Y _0\amalg Y _{-1}\amalg Y
  _{-2}$. The orange points make up $Y_{-2}$; the black segments (walls) make up $Y_1$;
  the rest (including the gray edges of the triangles and the blue boundary
  arcs) is $Y_0$.}

The first step in stratified abelianization is the choice of a parallel
section of the associated projective bundle over~$\partial Y $, equivalently
an eigenline $K\subset E_x$ of the commutator $A\mstrut _1A\mstrut _2A_1\inv
A_2\inv $.  The generic stratum $Y _0=R^{(1)}\amalg R^{(2)}\amalg R^{(3)}$
has three contractible components, and for each $i\in \{1,2,3\}$ the
intersection $R^{(i)}\cap\partial Y $ has two contractible components; see
Figure~\ref{fig:computing-gluing}.  By parallel transport from~$\bS$ we obtain for each
$i\in \{1,2,3\}$ two parallel sections of $\PP E\res{R^{(i)}}\to R^{(i)}$.
In the second drawing of Figure~\ref{fig:computing-gluing} the two sections in each
component~$R$ of~$Y_0$ are labeled by the two vertices in the closure of~$R$.

  \begin{assumption}[genericity]\label{thm:4}
 For each $i\in \{1,2,3\}$, these sections are distinct. 
  \end{assumption}

\noindent
 Then, as in the 1-dimensional case, construct a global abelianization over
the generic stratum:
  \begin{align}
      \phantom{MMMM}\pi \:\tY_0&\longrightarrow Y _0&&\textnormal{double
      cover} \phantom{MMMMMMMMM}\label{eq:25}\\   
      L&\longrightarrow \tY_0&&\textnormal{flat line bundle} \label{eq:26}\\ 
      \pi _*L&\xrightarrow{\;\;\cong \;\;}E\res{Y _0}
      &&\textnormal{isomorphism of flat bundles} \label{eq:27} 
  \end{align}
The map~$\pi $ is the restriction of $\PP E\to Y$ over the image of the two
sections, and the line bundle~\eqref{eq:26} is the restriction of the
tautological line bundle $\sL\to\PP E$ to $\tY_0\subset \PP E$.  The
genericity assumption allows us to construct~\eqref{eq:27} from the embedding
$\sL\to\PP E\times E$.
 
\insfig{computing-gluing}{0.4}{Computing the unipotent gluing and
holonomy.}
 
As a preliminary, suppose $\ell _1,\ell _2,\ell _3$ are three distinct lines
in a 2-dimensional vector space~$F$.  Define
  \begin{equation}\label{eq:29}
     \proj1\:\ell _2\longrightarrow \ell _3 
  \end{equation}
as the composition $\ell _2\lhook\joinrel\xrightarrow{\phantom{WI}}
F\dhxrightarrow{\;\ell _1\;}\ell _3$, where the second map is projection with
kernel~$\ell _1$; the composition is an isomorphism.  

Our task is to extend the abelianization to a structure over the lower
strata.  Fix a component~$I $ of~$\strat{-1}$ and let $R,R'$~be the
components of~$\strat0$ on either side of~$I $.  The intersection point~$I
\cap\partial Y$ picks out contiguous components of~$R\cap\partial Y$ and
$R'\cap\partial Y$.  Glue the corresponding sheets of the double
cover~\eqref{eq:25} along~$I $; there is a distinguished sheet along~$I $
from the contiguous components.  In this manner extend~\eqref{eq:25} to a
double cover
  \begin{equation}\label{eq:28}
     \pi \:\tstrat{\ge-1}\longrightarrow \strat{\ge-1}
  \end{equation}
together with a section~$s$ of~$\pi$ over $\strat{-1}$.  Next,
extend~\eqref{eq:26} to a flat line bundle
  \begin{equation}\label{eq:31}
     L\longrightarrow \tstrat{\ge-1}
  \end{equation}
as follows.  (We refer to Figure~\ref{fig:computing-gluing}.)  In passing from~$R$ to~$R'$,
on the sheet obtained by parallel transport from vertex~1 glue $L\to \tY_0$
via the identity.  Cover the identification of the sheet~2 in~$R$ and the
sheet~3 in~$R'$ with the isomorphism~\eqref{eq:29} of the line bundle
$L\to\tstrat0$ across the segment in~$\tstrat{-1}$.

We compare the isomorphisms~\eqref{eq:27} on each side of~$\strat{-1}$.  By
construction, the unipotent automorphism passing from region~$R$ to
region~$R'$ is
  \begin{equation}\label{eq:30}
     \begin{aligned} \ell _1\oplus \ell _2&\longrightarrow \ell _1\oplus \ell _3 \\
      \xi _1+\xi _2&\longmapsto \xi _1+\proj1(\xi _2)\end{aligned} 
  \end{equation}

  \begin{lemma}[]\label{thm:5}
 Let~$\lambda  $ be the link of $\strat{-2}\subset Y $ and $\lambda  _0\subset
\lambda  $ a component of~$\lambda  $.

 \begin{enumerate}[label=\textnormal{(\roman*)}]

 \item The restriction of the double cover~\eqref{eq:28} to~$\lambda  _0$ is
nontrivial.

 \item The holonomy of~\eqref{eq:31} about~$\pi \inv (\lambda  _0)$ is~$-\id$.

 \end{enumerate}
  \end{lemma}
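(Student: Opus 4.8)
\emph{Plan.}~Both assertions are local near $\strat{-2}$, so I would work in a small disk $D$ about one of its two points, the centre of a triangular face $T$. Inside $D$ the stratification is the restriction of the canonical 2d network in $T$: three walls $w_1,w_2,w_3$ run from the centre to the vertices $1,2,3$ of $T$ and cut $D\setminus\{\text{centre}\}$ into three sectors, the one between $w_i$ and $w_j$ lying in the component of $\strat0$ that carries the two horizontal sections of $\PP E$ labelled $i$ and $j$. Since $D$ is contractible I trivialize $E\res{D}$ and view these sections as three fixed lines $\ell_1,\ell_2,\ell_3$ in the $2$-dimensional fibre $F$; by Assumption~\ref{thm:4} they are pairwise distinct, i.e.\ $\langle e_i,e_j\rangle\neq0$ whenever $i\neq j$, where $\langle-,-\rangle$ is an area form on $F$ and $e_i\in\ell_i$ a generator. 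The component $\lambda_0$ of the link is a small circle in $D\setminus\{\text{centre}\}$ meeting each $w_k$ once; fix an orientation of $\lambda_0$ and the cyclic order in which it crosses $w_2$, then $w_3$, then $w_1$ (the conclusion is insensitive to these choices: reversing the orientation inverts the holonomy, and $-\id$ is its own inverse).

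For (i), I track one sheet of the double cover~\eqref{eq:28} once around $\lambda_0$. By construction, at each wall the sheet labelled by the common vertex of the two adjacent sectors is glued by the identity, while the other two sheets are interchanged; thus crossing $w_2$ swaps the sheets labelled $1$ and $3$, crossing $w_3$ swaps $1$ and $2$, and crossing $w_1$ swaps $2$ and $3$. The composite of these three transpositions sends sheet $1$ to sheet $2$, so the monodromy of~\eqref{eq:28} around $\lambda_0$ is the nontrivial transposition. This proves (i) and shows moreover that $\pi\inv(\lambda_0)$ is a single circle, a connected double cover of $\lambda_0$.

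For (ii), the holonomy of~\eqref{eq:31} around $\pi\inv(\lambda_0)$ is then the composite of the six wall-crossing isomorphisms of $L$ picked up over two trips around $\lambda_0$. By the rule defining~\eqref{eq:31}, the isomorphism on the common-vertex sheet is the identity, and the one relating the non-common line of a sector to the non-common line of the next is the projection onto the latter with kernel the common line, as in~\eqref{eq:29}. Repeating the bookkeeping of (i) — this is the step where care is needed, keeping straight which sheet is which and in which order the walls occur — three of the six factors are identities, and the holonomy reduces to the automorphism
\[
 \ell_1 \xrightarrow{\ \proj2\ } \ell_3 \xrightarrow{\ \proj1\ } \ell_2 \xrightarrow{\ \proj3\ } \ell_1 ,
\]
where $\proj{k}\colon\ell_i\to\ell_j$ is the isomorphism $\ell_i\hookrightarrow F\twoheadrightarrow\ell_j$ with kernel $\ell_k$ (so $\proj1\colon\ell_3\to\ell_2$ here is the inverse of the map~\eqref{eq:29}). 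Finally I compute it: from $\proj{k}(v)=\frac{\langle v,e_k\rangle}{\langle e_j,e_k\rangle}\,e_j$ for $v\in\ell_i$ one gets
\[
 e_1 \longmapsto \frac{\langle e_1,e_2\rangle}{\langle e_3,e_2\rangle}\cdot\frac{\langle e_3,e_1\rangle}{\langle e_2,e_1\rangle}\cdot\frac{\langle e_2,e_3\rangle}{\langle e_1,e_3\rangle}\;e_1 = -\,e_1 ,
\]
the sign coming from the antisymmetry $\langle a,b\rangle=-\langle b,a\rangle$ applied to the three denominators. Since $\pi\inv(\lambda_0)$ is connected and $-\id$ is central, the holonomy is $-\id$, proving (ii). I expect the only real obstacle to be the combinatorial bookkeeping in the first half of (ii); the conceptual payoff is that the three sign flips here are the geometric origin of the half-integral level that forces spin structures into the story (cf.\ the Introduction).
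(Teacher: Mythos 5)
Your proof is correct and follows essentially the same route as the paper: after the wall-crossing bookkeeping, the holonomy around $\pi\inv(\lambda_0)$ is the composition $\ell_1\xrightarrow{\proj2}\ell_3\xrightarrow{\proj1}\ell_2\xrightarrow{\proj3}\ell_1$ interleaved with three identity gluings, exactly as in~\eqref{eq:32}. The only cosmetic differences are that you evaluate the sign with an area form $\langle-,-\rangle$ while the paper chases vectors via the decomposition $\xi_1=\xi_2+\xi_3$, and that you spell out part~(i), which the paper declares straightforward and omits.
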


\insfig{composing-projections}{0.45}{Computing holonomy by composing projections.}

  \begin{proof}
 The proof of~(i) is straightforward, so we omit it.  For~(ii), the holonomy
about~$\pi \inv (\lambda _0)$ is the composition
  \begin{equation}\label{eq:32}
      \ell _1\xrightarrow{\;\;\id\;\;}\ell _1\xrightarrow{\;\proj2\;}\ell _3\xrightarrow{\;\;\id\;\;}
     \ell _3\xrightarrow{\;\proj1\;}\ell _2\xrightarrow{\;\;\id\;\;}\ell _2
     \xrightarrow{\;\proj3\;}\ell _1.
  \end{equation}
Fix $\xi _1\in \ell _1$, and let $\xi _2\in \ell _2$, $\xi _3\in \ell _3$ be
the unique vectors such that $\xi _1=\xi _2+\xi _3$.  Then
under~\eqref{eq:32} 
  \begin{qedequation}\label{eq:127}
     \xi _1\longmapsto \xi _1\longmapsto \xi _3\longmapsto \xi _3\longmapsto
     -\xi _2\longmapsto -\xi _2\longmapsto -\xi _1. 
  \qedhere\end{qedequation}
  \renewcommand{\qedsymbol}{}
  \end{proof}

  \subsection{Stratifications, spectral networks, and
  abelianization}\label{subsec:4.3} 

 The double cover~\eqref{eq:28} together with the section over~$\strat{-1}$
is called a \emph{spectral network} (subordinate to the
stratification~\eqref{eq:24}).  Components of~$\strat{-1}$ are the
\emph{walls} and $\strat{-2}$~is the \emph{branch locus}.  Notice that
Lemma~\ref{thm:5}(i) implies that $\tstrat{\ge-1}\to\strat{\ge-1}$ extends to
a \emph{branched} double cover $\tY\to Y $ with branch locus~$\strat{-2}$.
The stratified abelianization of $E\to Y $ is the data:

\medskip
  \begin{itemize}

 \item the flat line bundle $L\to\tstrat{\ge-1}$

 \item the isomorphism~\eqref{eq:27} on~$\strat0$

 \item the unipotent gluing~\eqref{eq:30} on~$\strat{-1}$

  \end{itemize}
\medskip

\noindent
 In this subsection we give formal definitions of this structure which apply
in some generality.

Two-dimensional spectral networks were introduced by
Gaiotto-Moore-Neitzke~\cite{GMN1} in their study of supersymmetric
4-dimensional gauge theories.  They have motivated many mathematical
constructions and conjectures since, related to hyperk\"ahler geometry,
enumerative invariants, and asymptotic analysis of complex ODE, among others.

   \subsubsection{Stratifications}\label{subsubsec:4.3.1}
 We use the definition~\cite[4.3.2]{L}.  In that approach a type~$\aX$ of
stratified manifold of dimension~$n$ is defined from the \emph{top down}.
Namely, begin with a geometric structure\footnote{That is, a topological
space~$\sX$ equipped with a continuous map $\sX\to BO_n$.  An
$n$-manifold~$M$ with an $\sX$-structure is equipped with a lift $M\to\sX$ of
the classifying map of its tangent bundle.}  for the generic stratum in
codimension~0.  Then for each $1\le k\le n$ specify the geometric structure
and link of a codimension~$k$ stratum; the link is an $\aX$-stratified
$(k-1)$-dimensional manifold.  An $\aX$-stratified manifold of dimension~$\le
n$ is built from the \emph{bottom up}: first the highest codimension strata
are specified, then higher strata with the proper link are glued in
successively.  This heuristic depiction is fleshed out precisely
in~\cite[4.3.2]{L}, and the heuristic specifications of the following
definition can easily be formulated in that precise framework.

  \begin{definition}[]\label{thm:32}
 An \emph{SN-stratification} on a manifold-with-corners 
 of dimension~$\le3$  
has the following specifications.

 \begin{enumerate}[label=\textnormal{(\roman*)}]

 \item \emph{codimension 0}: a codimension~0 smooth manifold;

 \item \emph{codimension 1}: a codimension~1 submanifold---the
link is a 0-sphere;

 \item \emph{codimension 2}: a
\emph{Type}\footnote{Mnemonic: Type~a is ``anodyne'', Type~b is ``branch.''}~\emph{a}
codimension~2 stratum has link a circle with an arbitrary
codimension~1 submanifold consisting of a finite set of points; a
\emph{Type~b} codimension~2 stratum has link a circle with a
codimension~1 submanifold consisting of 3~points;

 \item \emph{codimension 3}: a \emph{Type a} codimension~3 stratum has link a 2-sphere with an SN-stratification consisting of a codimension~1 trivalent graph whose vertices are of Type~a; a \emph{Type b} codimension~3 stratum has link
a 2-sphere with the standard SN-stratification of the boundary of a
tetrahedron (Construction~\ref{thm:35} below).

 \end{enumerate}

We use the term \emph{SN-stratified manifold} for a manifold equipped with an
SN-stratification. 
  \end{definition}

\insfig{links-codim-2}{0.4}{Links of codimension~2 strata. For Type a the link can contain 
an arbitrary number of points of $M_{-1}$, while for Type b it must contain exactly three.}

  \begin{remark}[]\label{thm:59}
 There is a generalization of Definition~\ref{thm:32} to manifolds with
boundary and corners.  The key point is that the SN-strata intersect
boundaries and corners transversely.  
  \end{remark}

 In \S\S\ref{subsec:4.4}, \ref{subsec:4.2} we define canonical
SN-stratifications associated to semi-ideal triangulations of 2-~and 3-manifolds.

  \begin{remark}[]\label{thm:33}
 An SN-stratified manifold~$M$ is decomposed as a disjoint union 
  \begin{equation}\label{eq:82}
     M=\mstrat0\,\amalg\, \mstrat{-1}\,\amalg\, \mstrat{\twoastrat}\,\amalg\,
      \mstrat{\threeastrat}\,\amalg\,\mstrat{\twobstrat} \, \amalg\, \mstrat{\threebstrat} 
  \end{equation}
where $\mstrat{\twoastrat}$ is the union of codimension~2 strata of Type~a,
$\mstrat{\twobstrat}$ is the union of codimension~2 strata of Type~b,
and likewise $\mstrat{\threeastrat}$, $\mstrat{\threebstrat}$ are the unions
of codimension~3 strata.  
This unusual
notation is convenient for subsequent definitions: the Type~a strata of
codimension~2,3 behave as singular parts of the codimension~1 strata.  Hence
define  
  \begin{equation}\label{eq:84}
     \begin{aligned} \mstrat{\gea} &= \mstrat0\amalg \mstrat{-1}\amalg
      \mstrat{\twoastrat} \amalg \mstrat{\threeastrat}, \\ \mstrat{\geb} &= \mstrat0\amalg \mstrat{-1}\amalg
      \mstrat{\twoastrat} \amalg \mstrat{\threeastrat} \amalg \mstrat{\twobstrat}. \end{aligned} 
  \end{equation}
  \end{remark}

   \subsubsection{Rank one Lie groups}\label{subsubsec:4.3.2}
  Spectral networks and abelianization data are conveniently formalized in
terms of a triple of complex Lie groups $G\supset H\supset T$ in which $T$~is
a (complex) maximal torus of~$G$ and $H$~its normalizer.  In this paper we
restrict to the groups $\GLC$, $\SLC$, and very occasionally $\PSLC$.  For
$G=\GLC$ we choose $T\cong \Cx\times \Cx$ the subgroup of diagonal matrices;
then its normalizer is
  \begin{equation}\label{eq:34}
     H=\left\{ \begin{pmatrix} *&0\\0&* \end{pmatrix} \right\} \cup \left\{
     \begin{pmatrix} 0&*\\ *&0 \end{pmatrix} \right\} \subset \GLC, 
  \end{equation}
a 2-component Lie group with identity component~$T$.  Choose the diagonal
matrices to be the maximal torus of~$\SLC$ and its image in~$\PSLC$ to be the
maximal torus in the projective linear group; in each case the normalizer~$H$
of~$T$ has two components.  Let $U\subset \GLC$ be the subgroup
  \begin{equation}\label{eq:35}
     U=\left\{ \begin{pmatrix} 1&z\\0&1 \end{pmatrix}:z\in \CC \right\} 
  \end{equation}
of upper triangular unipotent matrices.  Then $U\subset \SLC$ as well, and
$U$~projects to a unipotent subgroup of~$\PSLC$.  

  \begin{remark}[]\label{thm:8}
 Each of the three groups~$G$ acts on the projective line $\PP(\CC^2)=\CP^1$.
In each case $H$~is the stabilizer subgroup of the 2-point subset $A\subset
\CP^1$ of the axes in~$\CC^2$, and $T$~is the subgroup of elements of~$H$
that act as the identity on~$A$.  The stabilizer of the first axis $\ell
\subset \CC^2$ is a Borel subgroup~$B\subset G$, and there is a
diffeomorphism $G/B\approx \CP^1$.  Then for $G=\GLC$ or $\SLC$, the
Borel~ $B$ acts linearly on~$\ell $ and $\CC^2/\ell $, and $U\subset B$ is
the subgroup of elements that act trivially on both~$\ell $ and $\CC^2/\ell
$.
  \end{remark}

   \subsubsection{Definition of spectral networks and stratified
   abelianization data}\label{subsubsec:4.3.3}
 Assume $G\supset H\supset T$ is one of the three triples defined
in~\S\ref{subsubsec:4.3.2}.  We refer to it as the pair~$(G,T)$, since $H$~is
determined as the normalizer of~$T\subset G$.  Some
notation: If $Q\to M$ is a principal $H$-bundle, then we denote by
$\iota(Q)=Q\times _HG\to M$ its ``inflation'' to a principal $G$-bundle.
Also, if $w\subset \mstrat{-1}$ is a wall (a component),  and $R\to w$ is a
principal $T$-bundle, then there is an associated fiber bundle of groups 
  \begin{equation}\label{eq:180}
     U_w = R\times \mstrut _{T}U\longrightarrow w, 
  \end{equation}
where $T$~acts on~$U$ by conjugation.

The following definition applies to rank one groups, as does
Definition~\ref{thm:32}; there are stratifications, spectral networks, and
stratified abelianization data in higher rank as well \cite{GMN1,GMN2,LP,IM}.

  \begin{definition}[]\label{thm:6}
 Let $M$~be a compact manifold of dimension~$\le 3$ with boundary.  Suppose
$M$~is equipped with an SN-stratification $M\setminus \bM=M_0\amalg
M_{-1}\amalg \mstrat{\twoastrat}\amalg \mstrat{\threeastrat} \amalg \mstrat{\twobstrat}\amalg \mstrat{\threebstrat}$.  

 \begin{enumerate}[label=\textnormal{(\roman*)}]

 \item A \emph{spectral network $\sN=(\pi ,s)$ subordinate to the
stratification of~$M$} is:

  \begin{itemize}

 \item a double cover $\pi \:\tM_{\gea}\to \mstrat{\gea}$ which restricts
nontrivially to the link of each point in~$\mstrat{\twobstrat}$

 \item a section~$s$ of~$\pi $ over~$\mstrat{-1}\amalg  \mstrat{\twoastrat} \amalg \mstrat{\threeastrat}$

  \end{itemize}

 \item \emph{Stratified abelianization data $\cA=(P,Q,\mu ,\theta )$ of
type~$(G,T)$ over~$(M,\sN)$} is the data:

  \begin{itemize}

 \item a principal $G$-bundle $P\to M$ with flat connection

 \item a principal $H$-bundle $Q\to\mstrat{\gea}$ with flat connection

 \item an isomorphism of double covers $\mu \:\tM_{\gea}\to Q/T$
over~$\mstrat{\gea}$

 \item a flat isomorphism $\theta \:\iota(Q)\to P$ over~$M_0$

  \end{itemize}

\noindent
 We require that the discontinuity of~$\theta $ lie in~$U_w\to w$ as we cross
a point of the wall~$w\subset M_{-1}$.

 \end{enumerate}
  \end{definition}

\noindent
 Observe that the section~$s$ reduces the restriction of $Q\to
\mstrat{\gea}$ over $\mstrat{-1}\amalg \mstrat{\twoastrat} \amalg \mstrat{\threeastrat}$ to a principal
$T$-bundle; on a wall $w\subset M_{-1}$ the fiber bundle of groups $U_w\to w$
is defined in~\eqref{eq:180}.  Stratified abelianization 
data over a given $(M,\cN)$ form a category; we leave to the reader the definition
of the morphisms.
Our usage of the term `spectral network' often
includes the underlying SN-stratification.

  \begin{remark}[]\label{thm:7}
 Definition~\ref{thm:6} is adequate for our purposes but 
 does not capture the most general rank one spectral
networks which can occur in nature, e.g. from trajectory structures of
meromorphic or holomorphic quadratic differentials on Riemann surfaces; 
see ~\cite{GMN2,HN,Fe}, for example.
  \end{remark}

  \subsection{2-dimensional spectral networks from triangulations}\label{subsec:4.4}

Let $\AA^2$~denote the standard affine plane.  Denote the convex hull of a
subset~$T\subset \AA^2$ as~$\Conv(T)$.  An affine triangle~$\hat \Delta $ is the
convex hull $\Conv(p_0,p_1,p_2)$ of three non-collinear points~$p_0,p_1,p_2$.
Fix some $\varepsilon \in (0,\frac12)$, say $\varepsilon = \frac{1}{10}$. 
The truncated triangle $\Delta \subset \hat \Delta$ is the convex hull 
of the six points $(1-\varepsilon) p_i + \varepsilon p_j$ for $i \neq j$, 
as shown in Figure~\ref{fig:truncated-triangle}.
\insfig{truncated-triangle}{0.4}{The truncated affine triangle $\Delta$ contained in the affine triangle $\hat\Delta$.}
We will sometimes refer to ``edges'' or ``vertices'' of $\Delta$, meaning 
the corresponding edges or vertices of $\hat\Delta$.

Let $S^\circ$~ be the quotient of a finite union of disjoint truncated affine
triangles~$\{\Delta_i\}_{i\in I}$ 
whose edges are identified in pairs via
affine isomorphisms.
Then $S^\circ$~ can 
be given the structure of a smooth compact 2-manifold with boundary.\footnote{Indeed, 
since edges are identified in pairs, a neighborhood of any point on a glued edge
is a disc; moreover the link of a vertex is easily seen to be a circle.}
The gluing of edges induces an equivalence relation on the $3N$ vertices of the
$N$ triangles $\Delta_i$; 
each equivalence class of vertices
corresponds to a boundary component of $S^\circ$, with the topology of a circle.
We sometimes call such an equivalence class a ``glued vertex'' or simply a ``vertex''.
Partition the glued vertices into two subsets of
\emph{interior} and \emph{ideal} vertices.
Let $S$ be a space 
obtained by gluing a copy of the standard disc to each boundary component 
of $S^\circ$ corresponding
to an interior vertex. $S$ is a smooth compact 2-manifold with boundary;
$\pi_0(\partial S)$ is canonically identified with the set of ideal
vertices.

  \begin{definition}[]\label{thm:76}
  Let $Y$ be a compact 2-manifold with boundary. A \emph{semi-ideal triangulation}
  of $Y$ is a diffeomorphism $S \to Y$, where $S$ is a
  space of the sort just described. The semi-ideal triangulation
  is called \emph{ideal} if all vertices are ideal, and just a \emph{triangulation}
  if all vertices are interior.
  \end{definition}


\insfig{triangle-stratification}{0.4}{The SN-stratification of a truncated affine triangle.}

\begin{construction}[SN-stratification of a truncated triangle]\label{thm:77}
 A truncated affine triangle $\Delta$ carries a canonical
SN-stratification $\Delta = \Delta _0\amalg \Delta _{-1}\amalg \Delta _{\twobstrat}$ as follows. Let $c=(p_0+p_1+p_2)/3$~be the
barycenter of~$\Delta $.  Set $\Delta _{\twobstrat}=\{c\}$; 
the stratum~$\Delta _{-1}$ is the union of the three
line segments $(\Conv(p_i,c) \cap \Delta) \setminus \Delta \mstrut _{\twobstrat}$, $i=0,1,2$;
and $\Delta _0$~is the complement of~$\Delta _{\le-1}$.  This
SN-stratification is depicted in Figure~\ref{fig:triangle-stratification}.
  \end{construction}

\begin{construction}[SN-stratification of the standard disc] \label{constr:sn-disc}
Let $D$ be the standard closed disc. For any finite subset $W \subset \partial D$
we obtain an SN-stratification $D = D_0 \amalg D_{-1} \amalg D_{\twoastrat}$ as follows.
Let $c$ be the center of $D$. Then $D_{\twoastrat} = \{c\}$; $D_{-1}$ is the union of line
segments connecting $c$ to each point of $W$; and $D_0$ is the complement of $D_{\le -1}$.
See Figure~\ref{fig:stratified-disc}.
\end{construction}

\insfig{stratified-disc}{0.4}{An SN-stratification of the disc, determined by a finite subset of the boundary circle.}
 
  \begin{construction}[SN-stratification of a triangulated
  surface]\label{thm:34}

Let $Y$~be a closed 2-manifold equipped with a semi-ideal 
triangulation~$\sT$.  Then transport of the SN-stratifications on 
the truncated triangles and the discs around interior vertices
defines an SN-stratification of~$Y$. Figure \ref{fig:two-triangles-strata}
is an example where there are no interior vertices.
See Figure~\ref{fig:disc-glued-in} for an example with an interior vertex.

\insfig{disc-glued-in}{0.4}{A portion of the SN-stratification of a semi-ideally triangulated 
closed 2-manifold with an interior vertex (center).}

  \end{construction} 

  \begin{construction}[spectral network on a triangulated surface]\label{thm:9}
 Let 
$\Delta =\Delta _0\amalg \Delta _{-1}\amalg \Delta_{\twobstrat}$ 
be the canonical SN-stratification of a truncated affine triangle 
(Figure~\ref{fig:triangle-stratification}).  
Each component of~$\Delta _0$ has boundary containing 
precisely one edge of $\Delta$, with two vertices. Let $\pi
\:\tDelta_0\to\Delta _0$ be the (trivializable) double cover whose fiber
consists of those two vertices.  Each component of~$\Delta _{-1}$ is in the
closure of two components of~$\Delta _0$, with one vertex in common.  
Glue the corresponding sheets of the double cover to
define $\pi \:\tDelta_{\ge-1}\to\Delta _{\ge-1}$ together with a section~$s$
over~$\Delta _{-1}$, i.e., a spectral network on ~$\Delta$.  
This construction glues across edges and extends to discs around interior
vertices, and thus transports to give a spectral network $\cN^{\sT}$ over 
a semi-ideally triangulated surface~$(Y,\sT)$.
  \end{construction}

  \begin{construction}[stratified abelianization data on a semi-ideally
triangulated surface]\label{thm:14}

   Consider first $G=\GLC$ or $G=\SLC$.  Assume that $Y$~has no closed
components and is equipped with a semi-ideal triangulation~$\sT $.  Let $P\to
Y$ be a flat principal $G$-bundle. On each component of~$\partial Y$,
choose a flat section of the associated $\CP^1$-bundle $P/B\res{\bC}\to\bC$,
as in~\S\ref{subsec:4.1}; see Remark~\ref{thm:8} for the definition of the
Borel subgroup~$B\subset G$. Also choose an element of the 
fiber of $P/B$ over each interior vertex. Use parallel transport---as
in~\S\ref{subsec:4.1}---to obtain two flat sections~$s,s'$ of
$P/B\res{Y_0}\to Y_0$.   The following is a generalization of Genericity
Assumption~\ref{thm:4}:  

  \begin{assumption}[genericity]\label{thm:74}
 The sections~$s,s'$ are nowhere equal.  
  \end{assumption}

Identify~$P\to Y$ as a bundle of bases of a rank~2 complex vector bundle
$E\to Y$.  The submanifold of bases contained in the lines defined by the
sections~$s,s'$ determines a reduction of the principal $G$-bundle $P\to Y_0$
to a principal $H$-bundle $Q\to Y_0$.  For~$c\in Y_{-1}$, the limits
of~$s,s'$ from the two sides of $Y_{-1}\subset \strat{\ge-1}$ give three
points $\ell \mstrut _1,\ell '_2,\ell ''_2\in (P/B)_c$ in the projective
line~$\PP E_c$ over~$c$.  One of the sections has the same limit~$\ell _1$ on
both sides; the other has two possibly distinct limits.  Let $B_{\ell
_1}\subset \Aut P_c$ be the subgroup of elements which fix~$\ell _1$.  Then
$(P/B)_c$~is the projectivization~$\PP E_c$ of the 2-dimensional vector
space~$E_c$, the group~$B_{\ell _1}$ acts linearly on~$E_c$, and we
define~$\varphi _c$ to be the unipotent element~\eqref{eq:30}.  Glue
using~$\varphi _c$ at each~$c\in Y_{-1}$ to construct a flat principal
$H$-bundle $Q\to Y_{\ge-1}$.  This gives most of the stratified
abelianization data Definition~\ref{thm:6}(ii).  We leave the rest to the
reader, as do we the slight modifications for $G=\PSLC$.
  \end{construction} 

  \begin{remark}[]\label{thm:10}
 The projection $Q\to\tY_{\getwoa}$, defined via the isomorphism~$\mu $ of
double covers, is a principal $T$-bundle.  If $G=\SLC$ or~$\PSLC$, then
$T\cong \Cx$.  If $G=\GLC$, there is an associated principal $\Cx$-bundle
from the character $\left(\begin{smallmatrix} \lambda _1\\&\lambda
_2  \end{smallmatrix}\right)\mapsto\lambda _1$ of~$T$.  Let $L\to\tY_{\getwoa}$
be the associated flat line bundle.  Lemma~\ref{thm:5} holds in this more
general situation.
  \end{remark}

We conclude with a theorem about stratified abelianizations over a single
triangle~$\Delta $ equipped with the standard spectral network~$\cN$ depicted
in Figure~\ref{fig:triangle-stratification}.  Specialize to $G=\SLC$ and the corresponding
subgroups~$T,H,B,U$.  In this case there is a unique stratified
abelianization, whose automorphism group is $\bmuu_2$, in the following
sense.

\begin{prop} \label{prop:uniqueness-of-triangle-ab} Let $\cA =
(P,Q,\mu,\theta)$ and $\cA' = (P',Q',\mu',\theta')$ be stratified
abelianization data over $(\Delta,\cN)$. Then there is an isomorphism $\cA \to
\cA'$, unique up to composition with the simultaneous action of $-1$ on $P$
and $Q$.
\end{prop}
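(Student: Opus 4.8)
The plan is to build the isomorphism $\cA \to \cA'$ stratum by stratum, starting from the generic region, then extending across walls, and finally checking consistency around the branch point. First I would work over $\Delta_0$. Since $\Delta$ is contractible, the flat bundles $P, P'$ (as $\SLC$-bundles with flat connection) are each trivializable, and a choice of flat trivialization identifies each with the product bundle; the isomorphisms $\theta, \theta'$ then identify $\iota(Q), \iota(Q')$ with $P, P'$ over $Y_0 = \Delta_0$. So it suffices to produce a flat isomorphism $Q \to Q'$ over $\Delta_0$ compatible with $\mu, \mu'$ and intertwining $\theta$ with $\theta'$, and then check it glues across $\Delta_{-1}$ and around $c$. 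On each of the three components $R^{(i)}$ of $\Delta_0$, the reduction $Q|_{R^{(i)}}$ (resp. $Q'|_{R^{(i)}}$) is, by the recipe of Construction~\ref{thm:14} (or rather its specialization to a single triangle in Construction~\ref{thm:9}), the subbundle of bases lying along the two flat lines $\ell_1, \ell_2$ labelled by the two vertices in the closure of $R^{(i)}$. A flat isomorphism $P \to P'$ over $\Delta_0$ carrying the ordered pair of flat line-sections of $P/B$ to that of $P'/B$ restricts to a flat isomorphism $Q \to Q'$ there. Such a $P \to P'$ exists and is unique up to the center $\bmuu_2$: a flat isomorphism of the flat $\SLC$-bundles is determined by its value at one point, and the condition that it carry the two flat lines (as an unordered pair — but the two lines are distinct by genericity, and the labelling by vertices is the same on both sides) to the two flat lines of $P'$ cuts the isomorphism group down, via Remark~\ref{thm:8}, to the subgroup of $\SLC$ stabilizing an unordered pair of distinct lines — but additionally the induced map $\tDelta_0 \to \tDelta_0'$ must intertwine $\mu$ with $\mu'$, so the labelling of sheets by vertices is preserved, forcing the stabilizer to fix each line, i.e.\ lie in the maximal torus $T$; requiring it moreover to act as the identity on the projective line $A$ (since $\mu, \mu'$ are fixed) leaves only $\pm 1 \in T \cap (\text{center}) = \bmuu_2$. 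Hence over $\Delta_0$ there is an isomorphism, unique up to the simultaneous $-1$ action.

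Next I would check that this isomorphism extends over the walls $\Delta_{-1}$. Fix a wall $w$ with adjacent regions $R, R'$. By the very construction of $\cN$ (Construction~\ref{thm:9}) and the definition of $\varphi_c$ in Construction~\ref{thm:14}, the unipotent discontinuity of $\theta$ (resp.\ $\theta'$) across $w$ is the canonical map~\eqref{eq:30} determined by the three flat lines $\ell_1, \ell_2, \ell_3$ appearing there — that is, the discontinuity is forced, not extra data. Since our isomorphism $P \to P'$ over $R$ carries $(\ell_1, \ell_2)$ to $(\ell_1', \ell_2')$ and over $R'$ carries $(\ell_1, \ell_3)$ to $(\ell_1', \ell_3')$ (same vertex labels on both sides), and since~\eqref{eq:30} is defined purely in terms of these lines, the two restrictions of the isomorphism to $w$ agree: the square relating $\theta|_R$, $\theta|_{R'}$ via $\varphi_c$ on the one side and $\theta'|_{R'}$, $\theta'|_{R}$ via $\varphi_c'$ on the other commutes automatically. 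So the isomorphism $Q \to Q'$ over the three regions glues to a flat isomorphism over $\Delta_{-1} \cup \Delta_0 = \Delta_{\ge -1}$, compatible with all the data; I would similarly note it is compatible with the sections $s, s'$ since those are built from the same vertex labels.

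Finally, the only thing remaining is to see that the isomorphism extends over the branch point $c = \Delta_{\twobstrat}$ — equivalently, that it is well-defined on the branched double cover $\tDelta \to \Delta$ and on $Q$ over a punctured neighborhood of $c$. This is where the main subtlety lies, and it is exactly what Lemma~\ref{thm:5} is designed to handle: going once around $c$, the holonomy of $L$ (equivalently, the monodromy of the gluing data on $Q$) is $-\id$, and likewise for $\cA'$; since the $-1$ monodromies on the two sides match, the isomorphism built over $\Delta_{\ge -1}$ has no obstruction to extending across $c$ — its monodromy around the puncture is the identity on $\Hom(Q, Q')$. Concretely, one traces the isomorphism around the link $\lambda_0$ of $c$ through the six regions/walls as in~\eqref{eq:32}: the isomorphism is the identity on the "$\ell_1$" sheet in each region up to the composition of projections~\eqref{eq:32}, which Lemma~\ref{thm:5}(ii) computes to be $-\id$ on both the $P$-side and the $P'$-side, so the comparison map returns to itself. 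Hence the isomorphism extends to all of $\Delta$, giving $\cA \to \cA'$, and by the count over $\Delta_0$ above it is unique up to the simultaneous action of $-1$ on $P$ and $Q$. The main obstacle is this last global consistency check at $c$ — making precise that the matching $-\id$ monodromies from Lemma~\ref{thm:5} really do kill the extension obstruction — but it is a direct consequence of the lemma rather than a new computation.
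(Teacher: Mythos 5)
Your strategy (extract three flat sections of $P/B$, match them by a single flat map $\varphi_P\colon P\to P'$, and let $\varphi_Q$ be the induced map on the $H$-reductions) is a viable route and genuinely different in organization from the paper's proof, which first constructs $\varphi_Q$ abstractly (monodromy of $Q$ lies in $H\setminus T$, all such elements are conjugate, and the order-$4$ commutant is cut down using $\mu$), then constructs $\varphi_P$ from the wall sections of $P/B$ and simple transitivity of $\PSLC$ on triples, and finally reconciles the two by showing the discrepancy is a covariantly constant $T$-valued section with unipotent jumps, forced to be $\pm1$ by the three-fold cycle of walls. However, as written your argument has real gaps.

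First, the proposition is about \emph{arbitrary} stratified abelianization data in the sense of Definition~\ref{thm:6}, while you ground both the description of $Q$ and the wall discontinuities in Construction~\ref{thm:14}. For general $(P,Q,\mu,\theta)$ there are no chosen vertex lines, and the definition only demands that the jump of $\theta$ lie in $U_w$ --- it is not given to be the projection map~\eqref{eq:30}. To make your argument work in the stated generality you must first prove: (i) the section of $P/B$ labelled by the common vertex is continuous across each wall (this does follow from the $U_w$ condition, since the jump fixes the corresponding line), so the data determines three flat sections $\ell_0,\ell_1,\ell_2$ of $P/B$ over $\Delta$, pairwise distinct everywhere (two flat sections agreeing at one point agree identically, and they are distinct on the region whose reduction they cut out); and (ii) the jump is then \emph{forced} to equal~\eqref{eq:30}, because a unipotent transformation fixing $\ell_0$ and acting trivially on the quotient which carries $\ell_1$ to $\ell_2$ is unique. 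You assert (ii) by citing the construction, which only covers data produced by it; without (i)--(ii) your wall-gluing of $\varphi_Q$ and the matching of lines between $\cA$ and $\cA'$ are unavailable.

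Second, your uniqueness count inside one region is wrong as stated: the subgroup of $\SLC$ fixing an ordered pair of distinct lines is the full torus $T\cong\Cx$ (acting as the identity on the two-point set $A$ is the same condition), and intersecting with the center has no justification at that point. What actually pins $\varphi_P$ down to a sign is that one and the same flat map must match the lines in all three regions, i.e.\ the whole triple $\ell_0,\ell_1,\ell_2$; simple transitivity of $\PSLC$ on triples of distinct points of $\CP^1$ then gives existence and uniqueness up to $-1$ (exactly the paper's step for $\varphi_P$). Finally, your closing ``main obstacle'' at the branch point is a non-issue: $Q$, $Q'$, $\mu$ and hence $\varphi_Q$ live only on $\Delta_{\ge-1}$, and $\varphi_P$ is a map of flat bundles over the contractible $\Delta$, already defined at $c$; once the three wall matchings hold there is no residual monodromy condition, so the appeal to Lemma~\ref{thm:5} there can simply be deleted.
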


  \begin{proof} 
 First we construct a map of flat bundles $\varphi_Q: Q \to Q'$.  The
monodromy of $Q$ around $\partial \Delta$ lies in $H \setminus T$, since $Q/T
\simeq \tDelta$ is the nontrivial double cover, and likewise for $Q'$.  But
now recall that all elements of $H \setminus T$ are conjugate in $H$.  It
follows that there exists an isomorphism $\varphi_Q: Q \to Q'$ of flat
$H$-bundles, unique up to composition with an automorphism of $Q\to \Delta
$. 

The automorphism group of $Q\to \Delta $ is the commutant of the monodromy,
which is a cyclic group of order $4$; either generator acts nontrivially on
$Q/T$, and the order $2$ element acts by $-1 \in H$.  Thus, by composing with
an automorphism of $Q\to \Delta $ if necessary, we may arrange that $\mu
\circ \varphi_Q = \mu'$, and the remaining freedom in $\varphi_Q$ is
composition with the action of $-1 \in H$.

Next we construct a map of flat bundles $\varphi_P: P \to P'$.  Along each
wall $w$ we have a section $s_w$ of $Q/T$.  On either side of the wall,
$\theta(s_w)$ then gives a section of $P/T\to w$; the condition on the
discontinuity of $\theta$ ensures that their projections to $P/B$ agree, thus
giving a section $o_w$ of $P/B\to w$.  Because $G / \{ \pm 1 \} = \PSLC$
acts simply transitively on triples of distinct points of $G/B \simeq
\C\PP^1$, there exists $\varphi_P: P \to P'$ which maps $o_w$ to $o'_w$ for
all three walls $w$, and such a $\varphi_P$ is unique up to a sign.

Finally we need to check that on $\Delta_0$ we have (possibly after composing
$\varphi_P$ with the action of $-1 \in G$)
\begin{equation}
\varphi_P = \theta' \circ \varphi_Q \circ \theta^{-1}.
\end{equation}
For this we consider the difference $\xi = \varphi_P^{-1} \circ \theta' \circ
\varphi_Q \circ \theta^{-1}$ which is a covariantly constant section 
of $\Aut(P) \vert_{\Delta_0}$, with two properties:
\begin{itemize}
  \item
In a component of $\Delta_0$ bounded by two walls $w$, $w'$, the difference~
$\xi$ belongs to the subgroup $T_{ww'} \simeq T$ preserving $o_w$ and $o_{w'}$.
Thus $\xi$ acts by a constant scalar $\lambda_w$ on $o_w$, with $\lambda_{w'} = \lambda_w^{-1}$.
\item
The discontinuity of $\xi$ across $w$ belongs to the subgroup $U_w \simeq U$. It follows that
$\lambda_w$ is the same on both sides of $w$.
\end{itemize}
Labeling the three walls as $w_i$ (with $i$ mod 3), the above properties say
$\lambda_{w_{i+1}} = \lambda_{w_i}^{-1}$, which gives $\lambda_{w_{i+3}} = \lambda_{w_i}^{-1}$, so $\lambda_w = \lambda_w^{-1} = \lambda_{w_{i+1}}$ 
and thus $\xi = \pm 1$. This completes the proof.
\end{proof}

  \subsection{3-dimensional spectral networks from
triangulations}\label{subsec:4.2} 

We begin with a 3-dimensional analog of Definition~\ref{thm:76}.  A
\emph{tetrahedron} $\hat\tetra$ in~ $\AA^3$ is the convex hull $\Conv(p_0,p_1,p_2,p_3)$ of
four points in general position.  
The truncated tetrahedron $\tetra \subset \hat\tetra$ is
the convex hull of the 12 points $(1 - \varepsilon) p_i + \varepsilon p_j$ with $j \neq i$. See Figure~\ref{fig:bare-tetrahedron}. 
We will sometimes refer to ``faces'', ``edges''
or ``vertices'' of $\tetra$, meaning those of $\hat\tetra$.

\insfigpng{bare-tetrahedron}{0.25}{A truncated tetrahedron.}

Let $S^\circ$~ be the quotient of a finite union of disjoint affine truncated
tetrahedra~$\{\tet_i\}_{i\in I}$ 
whose faces are identified in pairs via
affine isomorphisms.
Then $S^\circ$~ can 
be given the structure of a smooth compact 3-manifold with boundary.
The gluing of faces induces an equivalence relation on the $4N$ vertices of the
$N$ tetrahedra $\tetra_i$; 
each equivalence class of vertices
corresponds to a boundary component of $S^\circ$, which is a compact connected surface.
We sometimes call such an equivalence class a ``glued vertex'' or simply a ``vertex''.
Partition the glued vertices into two subsets of
\emph{interior} and \emph{ideal} vertices, subject to the condition
that the boundary component corresponding to an interior vertex must be diffeomorphic to $S^2$.
Let $S$ be a space 
obtained by gluing a copy of the standard 3-disc to each boundary component 
of $S^\circ$ corresponding
to an interior vertex. $S$ is a smooth compact 3-manifold with boundary;
$\pi_0(\partial S)$ is canonically identified with the set of ideal
vertices.

  \begin{definition}[]\label{thm:78}
  Let $Y$ be a compact 3-manifold with boundary. A \emph{semi-ideal triangulation}
  of $Y$ is a diffeomorphism $S \to Y$, where $S$ is a
  space of the sort just described. The semi-ideal triangulation
  is called \emph{ideal} if all vertices are ideal, and just a \emph{triangulation}
  if all vertices are interior.
  \end{definition}

\insfigpng{tetrahedron-network}{0.22}{The canonical SN-stratification on a truncated affine tetrahedron.}
\begin{construction}[Spectral network on a tetrahedron]\label{thm:35}
 Let $\hat\tetra =\Conv(p_0,p_1,p_2,p_3)$ be a tetrahedron in~$\AA^3$.  Let
$q_i=(p_{i+1}+p_{i+2}+p_{i+3})/3$ be the barycenter of the face
opposite~$p_i$, $i=0,1,2,3$; set $c=(p_0+p_1+p_2+p_3)/4$ the barycenter
of~$\tetra $.  (We use $p_{i+4}=p_i$, $i=0,1,2,3$.)  Figure~\ref{fig:tetrahedron-network}
depicts a canonical
SN-stratification of $\tetra$,
  \begin{equation}\label{eq:37}
     \begin{aligned} \tetra _{\threebstrat}&=\{c\} \\ 
     \tetra
      _{\twobstrat}&=\bigcup\limits_{i=0}^4\Conv(q_i,c)\;\setminus \;\tetra _{\threebstrat} \\
\tetra_{\threeastrat}&=\emptyset \\
     \tetra_{\twoastrat}&=\bigcup\limits_{i=0}^4\Conv(p_i,c) \cap \tetra \;\setminus \;\tetra
     _{\threebstrat} \\ \tetra
      _{-1}&=\bigcup\limits_{i=0}^4\bigcup\limits_{j=1}^3\Conv(p_{i+j},q_i,c) \cap \tetra
      \;\setminus \; \tetra _{\gea} \\ \tetra _{0}\;\;&=\tetra
      \;\setminus \;\tetra _{\ge-1} \\
      \end{aligned} 
  \end{equation}
The link~$Y_c$ of~$\tetra _{\threebstrat}$ is a 2-sphere triangulated as the boundary
of a tetrahedron.  By Construction~\ref{thm:9} it has a canonical
SN-stratification---the restriction of~\eqref{eq:37} to~$Y_c$---and
subordinate spectral network; see~Figure~\ref{fig:tet-unfolded}.  The same construction
extends the SN-stratification to a 3-dimensional spectral network subordinate
to~\eqref{eq:37}.  Namely, each component~$U$ of~$\tetra _0$ contains one edge
with two vertices, and each component of~$\tetra _{-1}$
in~$\overline U$ corresponds to one of those vertices.  Let $\pi
\:\tD_0\to\tetra _0$ be the (trivializable) double cover whose fiber over~$U$
is the aforementioned set of two vertices, and glue along~$\tetra
_{\twoastrat}\amalg\tetra _{-1}$ by identifying the common vertex on each wall.
This produces a double cover $\pi \:\tD_{\gea}\to\tetra _{\gea}$ with a
section~$s$ over~$\tetra _{\twoastrat}\amalg\tetra _{-1}$, i.e., a spectral network.
There is an extension to a branched double cover $\pi \:\tD
_{\geb}\to\tetra _{\geb}$ with branch locus~$\tetra _{\twobstrat}$.
  \end{construction}

\insfig{tet-unfolded}{0.4}{The spectral network on~$Y=Y_c$.}

  \begin{remark}[]\label{thm:11}
 It will be convenient to excise an open ball about~$c$ as well as its
inverse image on the branched double cover.
  \end{remark}

\insfig{tet-cover-cycles}{0.35}{The branched double cover of the tetrahedrally triangulated
  2-sphere $Y_c$ by a 2-torus $\tY_c$, and a distinguished triple of cycles on $\tY_c$.}

 We investigate stratified abelianization on the link~$Y=Y_c$ of the
barycenter of~$\tetra $.  The stratum~$Y_{\twobstrat}$ of~$Y$ consists of 4~points,
and the double cover $\pi \:\tY_{\getwoa}\to Y_{\getwoa}$ extends to a branched
double cover $\pi \:\tY\to Y$ in which $\tY$~is diffeomorphic to a 2-torus.
The double cover~$\pi $ is depicted in Figure~\ref{fig:tet-cover-cycles}.  The boundary of
the tetrahedron has been unfolded, as in Figure~\ref{fig:tet-unfolded}, as has been the
covering 2-torus.  Assume $Y$~is oriented, and use~$\pi $ to induce an
orientation on~$\tY$.  To each edge~$E$ in~$\tetra$ associate an element
$\Gamma _E\in H_1(\tY_{\getwoa})$ as follows.  Let $\bar\lambda _E\subset
Y_{\ge-1}$ be a loop which crosses~$E$ twice transversely and encircles the
branch points in the faces of~$\tetra$ which abut~$E$.  Orient~$\bar\lambda
_E$ as the boundary of the region which contains these two branch points.
The desired lift~$\lambda _E$ is distinguished from the other lift
of~$\bar\lambda _E$ as follows: the lifts to~$\lambda _E$ of the two
intersection points~$\bar\lambda _E\cap E$ lie on the sheet of the double
cover labeled by the closest endpoint of~$E$.  Then $\Gamma _E$~is the
homology class of~$\lambda _E$ in~$H_1(\tY_{\getwoa})$, the homology of the
torus with the 4~branch points excised.  Let $\gamma _E\in H_1(\tY)$ be its
image in the homology of the torus.  We invite the reader to deduce the
following, using Figure~\ref{fig:tet-cover-cycles}.

  \begin{proposition}[]\label{thm:68}
 \ 
\begin{enumerate}[label=\textnormal{(\arabic*)}]

 \item The image of $\Gamma _E\in H_1(\tY_{\getwoa})$ under the deck
transformation is~$-\Gamma _E$.\\[-3pt]

 \item Opposite edges of~$\tetra$, such as~13 and~02 in the figure, induce
the same homology class in~$H_1(\tY_{\getwoa})$.\\[-3pt]

 \item The three pairs of opposite edges lead to three homology classes
$\gamma _0,\gamma _1,\gamma _2\in H_1(\tY)$ which sum to zero.

 \end{enumerate}
  \end{proposition}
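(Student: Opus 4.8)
The plan is to work throughout in the concrete model of $\pi\:\tY\to Y=Y_c$ as the standard branched double cover of the $2$-sphere along the four branch points $b_0,b_1,b_2,b_3$, indexed so that $b_i$ is the barycenter of the face of $\tetra$ opposite $p_i$; its total space is a torus and its deck transformation $\sigma$ is the associated elliptic involution. All three assertions are then statements about this cover together with the explicit lifts $\lambda_E$, and the proof is an exercise in tracking sheets and orientations in the unfolded picture of Figure~\ref{fig:tet-cover-cycles}. For~(1) I would first record the standard fact that $\sigma$ acts by $-1$ on $H_1(\tY)\cong\ZZ^2$ (in coordinates $\tY=\RR^2/\ZZ^2$, $\sigma(v)=-v$, with branch points the half-lattice points). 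Now $\lambda_E$ and $\sigma(\lambda_E)$ are precisely the two lifts of $\bar\lambda_E$, each oriented so as to project to $\bar\lambda_E$ with its chosen orientation, so $\sigma_*\Gamma_E$ is the class of ``the other lift''; on $H_1(\tY)$ the assertion is the eigenvalue statement just recorded, and to promote it to $H_1(\tY_{\getwoa})$ I would check from the unfolded diagram that the two arcs into which the walls cut $\sigma(\lambda_E)$ are the orientation-reverses of the two arcs of $\lambda_E$, namely the arc on the sheet labelled by one endpoint of $E$ matches, reversed, the arc of the other lift on the sheet labelled by the other endpoint.

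For~(2) the key combinatorial point is that opposite edges of $\tetra$ abut \emph{complementary} pairs of faces: the edge $13=\Conv(p_1,p_3)$ lies in the faces opposite $p_0$ and $p_2$, while $02=\Conv(p_0,p_2)$ lies in the faces opposite $p_1$ and $p_3$. Hence $\bar\lambda_{13}$ encircles $\{b_0,b_2\}$ and $\bar\lambda_{02}$ encircles $\{b_1,b_3\}$. On $Y\cong S^2$ there is, up to isotopy away from the branch points, a unique simple closed curve realizing a given $2{+}2$ partition of $\{b_0,b_1,b_2,b_3\}$, and ``oriented as the boundary of the region containing the two encircled branch points'' is one of its two orientations; since the two encircled pairs are complementary, $\bar\lambda_{13}$ and $\bar\lambda_{02}$ are this same curve with \emph{opposite} orientations. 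What remains is to check that the lift-selection rule --- the intersection points with the edge lie on the sheet labelled by the nearer endpoint --- applied once with $E=13$ and once with $E=02$ picks out lifts whose orientations are also opposite, so that the two selected classes agree; this I would settle by direct inspection of the unfolded cover.

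For~(3), the classes $\gamma_0,\gamma_1,\gamma_2\in H_1(\tY)$ are the images of the lifts of the three curves realizing the three $2{+}2$ partitions of the four branch points. I would prove $\gamma_0+\gamma_1+\gamma_2=0$ using the unimodular intersection form on $H_1(\tY;\ZZ)\cong\ZZ^2$: distinct $\bar\lambda_i,\bar\lambda_j$ have geometric intersection number $2$ downstairs, but the chosen lifts $\lambda_i,\lambda_j$ meet in a single point (of the four points of $\pi\inv(\bar\lambda_i\cap\bar\lambda_j)$, two lie on $\lambda_i$ and exactly one of those on $\lambda_j$), so $\gamma_i\cdot\gamma_j=\pm1$; one then reads off from the picture that the three signs are cyclically consistent, say $\gamma_0\cdot\gamma_1=\gamma_1\cdot\gamma_2=\gamma_2\cdot\gamma_0=1$, so that $\gamma_i\cdot(\gamma_0+\gamma_1+\gamma_2)=0$ for each $i$ and nondegeneracy forces the sum to vanish. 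Alternatively, in the coordinate model above the three curves are the straight circles of slopes $0$, $\infty$, and $1$, and once the prescribed orientations are imposed the identity $\gamma_0+\gamma_1+\gamma_2=0$ is immediate.

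In all three parts the topological content is slight; the substance, and the only place a sign error could hide, is the orientation and sheet bookkeeping --- verifying that the rule defining $\lambda_E$ (nearest endpoint of $E$) interacts with the ``boundary of the region containing the branch points'' orientation so as to produce at once the sign reversal in~(1), the coincidence of classes for opposite edges in~(2), and the cyclically consistent pairwise intersection numbers in~(3). I would handle this uniformly: draw the unfolded torus $\tY_c$ as in Figure~\ref{fig:tet-cover-cycles}, label the two sheets over each component of $Y_0$ by the corresponding pair of vertices, and trace each $\lambda_E$ explicitly, which makes the three verifications routine.
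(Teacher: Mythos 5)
You are right that the paper gives no written argument here (the proposition is explicitly left to the reader, to be read off from Figure~\ref{fig:tet-cover-cycles}), so the only question is whether your bookkeeping plan actually closes up. For part~(3) it does: that statement lives in $H_1(\tY)$, and your intersection-number argument (or the flat model with the three circles of slopes $0,\infty,1$) is sound and is essentially the intended reading of the figure.

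The problem is with the step you defer to ``direct inspection'' in parts (1) and (2), which are asserted in $H_1(\tY_{\getwoa})$. First, the verification you sketch for (1) cannot hold as described: since $\bar\lambda_E$ encircles two branch points, the cover is trivial over it, so $\lambda_E$ and its deck image are \emph{disjoint} circles and no arc of one is the orientation-reverse of an arc of the other. What the honest computation gives is this: both lifts, each with the orientation projecting to that of $\bar\lambda_E$, together form the boundary of $\pi^{-1}(R)$, where $R$ is the disk bounded by $\bar\lambda_E$ containing the two encircled branch points; $\pi^{-1}(R)$ is an annulus containing the two corresponding ramification points, so in $H_1(\tY_{\getwoa})$ one finds $\Gamma_E+\sigma_*\Gamma_E=[\mu]+[\mu']$, the sum of the classes of small loops around those two points. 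This is nonzero in the four-punctured torus and dies only after passing to $H_1(\tY)$ (equivalently, working modulo the link classes of the branch points), so no amount of sheet-tracing will produce the exact equality you set out to check. Second, in (2) your uniqueness claim is false: simple closed curves on the four-punctured sphere realizing the same $2{+}2$ partition form infinitely many isotopy classes (they differ by twists). In this configuration the correct substitute is that $\bar\lambda_{13}$ and $\bar\lambda_{02}$ are disjoint and cobound an annulus containing no branch points; but if you then carry out the sheet bookkeeping (the lift-selection rule forces $\lambda_{02}$ onto the sheet labelled $0$ over the region near $b_3$, while the lift of a connecting path from $\lambda_{13}$ stays on the sheet labelled $1$), you find that the lifted annulus joins $\lambda_{13}$ to the \emph{deck image} of $\lambda_{02}$, so the exact relation is $\Gamma_{13}=-\sigma_*\Gamma_{02}$, and the equality $\Gamma_{13}=\Gamma_{02}$ again holds only modulo the same link classes, i.e.\ on the nose only in $H_1(\tY)$. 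This discrepancy is invisible downstream—every later use (holonomies with monodromy $\pm1$ around the links in Proposition~\ref{thm:12}, and the refinement in Remark~\ref{thm:51}(2), which is precisely a bookkeeping of this $\bmut$ ambiguity) is insensitive to it—but as written your proofs of (1) and (2) have a gap at exactly the deferred step, and should either be carried out for $\gamma_E$ in $H_1(\tY)$ or restated modulo the classes of the links of the branch points.
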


\noindent 
 Cyclically order the three pairs of opposite edges so that the intersection
product $\langle \gamma _i,\gamma _{i+1} \rangle=+1$ for~$i\in \zmod3$.
Denote the corresponding loops in~$\tY_{\getwoa}$ as $\gE0,\gE1,\gE2$.

Let $\tY_{\twobstrat}\subset \tY$ be the set of 4~branch points~$\pi \inv
(Y_{\twobstrat})$.  Since $Y$~is simply connected, any flat $G$-bundle $P\to Y$ is
trivializable.  Fix~$\ell _i$ in the fiber of the associated $\CP^1$-bundle
$P/B\to Y$ at each vertex~$p_i\in \tetra$.  Let $\sE\approx \CP^1$ be the
space of horizontal sections of $P/B\to Y$, and let~$\ell _i\in \sE$ be the
extension of the previous~$\ell _i$ to a horizontal section.  Assume that
$\ell _0,\dots ,\ell _3\in \sE$ are distinct; this implies the Genericity
Assumption~\ref{thm:74}.  Use Construction~\ref{thm:14} to produce a
stratified abelianization.  By Remark~\ref{thm:10} there is a flat line
bundle
  \begin{equation}\label{eq:38}
     L\longrightarrow \tY_{\getwoa} 
  \end{equation}
with holonomy~$-1$ around each of the 4~(branch) points in~$\tY_{\twobstrat}$.  The
isomorphism class of the flat bundle~\eqref{eq:38} is determined by its
holonomy, a homomorphism 
  \begin{equation}\label{eq:128}
     \holL\:H_1(\tY_{\getwoa})\longrightarrow \Cx. 
  \end{equation}
Set $\ho_i=\holL(\gE i)$ for~$i\in \zmod3$.

  \begin{proposition}[]\label{thm:12}
 The holonomies of~$L$ satisfy 
  \begin{equation}\label{eq:183}
     \ho_{i+1} = 1 - \frac{1}{\ho_i}. 
  \end{equation}
  \end{proposition}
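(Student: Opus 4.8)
The plan is to compute each holonomy $\ho_i=\holL(\gE i)$ directly, in terms of the four lines $\ell _0,\ell _1,\ell _2,\ell _3$, by tracing the distinguished lift through the abelianization of Construction~\ref{thm:14}; it will turn out to be a cross ratio of those four lines, and the asserted identity is then the classical cyclic relation among the three cross ratios attached to the three partitions of four points into two pairs.

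First I would record the local form of the flat line bundle $L$. Trivialize the associated rank~2 bundle $E\to Y$ (possible since $Y=Y_c\cong S^2$ is simply connected); the horizontal sections $\ell _0,\dots ,\ell _3$ are then constant lines in $\CC^2=E_c$, and over any component of $Y_0$ each sheet of $L$ is the constant line naming that sheet. Consequently parallel transport in $L$ is the identity within each generic region, the identity across each edge of the triangulation, and the identity across a wall $w$ as long as one stays on the distinguished sheet of $w$; crossing $w$ on the other sheet is, by~\eqref{eq:30}, the projection $\proj{p}$ with kernel $\ell _p$, where $p$ is the vertex carried by the distinguished sheet of $w$. Hence $\holL$ of any loop is a finite ordered product of such projections.

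Next comes the bookkeeping, following Figures~\ref{fig:tet-unfolded} and~\ref{fig:tet-cover-cycles}. Write $E=p_ap_b$ and let $F=p_ap_bp_c$, $F'=p_ap_bp_d$ be the two faces abutting $E$. The loop $\bar\lambda _E$ runs alongside $E$, crosses it once near $p_a$ and once near $p_b$, and winds once around the barycenter of each of $F$, $F'$; by the defining property of the lift, it sits on the $p_a$-sheet at one crossing of $E$ and on the $p_b$-sheet at the other. Going around one face the lift meets three walls, but the two walls pointing to $p_a$ and to $p_b$ are crossed on the distinguished sheet and contribute the identity, so only the wall pointing to the third vertex ($p_c$ in $F$, $p_d$ in $F'$) contributes a projection. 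Therefore
\begin{equation*}
  \holL(\lambda _E)=\proj{p_d}\circ\proj{p_c}\colon\ \ell _{p_a}\xrightarrow{\ \proj{p_c}\ }\ell _{p_b}\xrightarrow{\ \proj{p_d}\ }\ell _{p_a},
\end{equation*}
and a short computation---decompose a vector of $\ell _{p_a}$ first along $\ell _{p_b}\oplus\ell _{p_c}$ and then along $\ell _{p_a}\oplus\ell _{p_d}$---identifies this scalar with the cross ratio $\frac{(Z_c-Z_a)(Z_d-Z_b)}{(Z_c-Z_b)(Z_d-Z_a)}$, writing $Z_i$ for $\ell _{p_i}$ viewed in $\CP^1$ as in~\eqref{eq:3}. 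In particular it depends only on the unordered pair of opposite edges $\bigl\{\{a,b\},\{c,d\}\bigr\}$, which reproves Proposition~\ref{thm:68}(2); and as that pair runs over its three values one gets precisely the triple $u,\ 1/(1-u),\ (u-1)/u$, where $u$ is the cross ratio of~\eqref{eq:3}. These are permuted cyclically by $z\mapsto 1-1/z$, so $\ho_{i+1}=1-1/\ho_i$ holds for one of the two cyclic orderings of the pairs of opposite edges.

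The genuinely delicate point is to pin down that ordering: one must show the normalization $\langle \gamma _i,\gamma _{i+1} \rangle=+1$ selects the ordering giving $\ho_{i+1}=1-1/\ho_i$ rather than $\ho_{i+1}=1/(1-\ho_i)$. The two read-offs are exchanged by reversing the orientation of $\tY$, so the check is to reconcile the orientation of $\tY$ induced from $Y$ by $\pi$, the orientation of each $\bar\lambda _E$ as the boundary of the region containing its two branch points (this fixes which of $F$, $F'$ is traversed first, hence the cyclic ordering of the three cross ratios), and the sign of the intersection pairing---exactly the content of Figure~\ref{fig:tet-cover-cycles}. This is where the argument should concentrate its care; everything else is the elementary cross-ratio identity together with the local description of $L$ above. (As a consistency check, $\ho_0\ho_1\ho_2=-1$, matching $\gamma _0+\gamma _1+\gamma _2=0$ from Proposition~\ref{thm:68}(3) together with the holonomy $-1$ of $L$ around each branch point~\eqref{eq:38}.)
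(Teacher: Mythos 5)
Your proposal is correct and follows essentially the same route as the paper: the holonomy of~$L$ around each $\gE i$ is a composition of two of the projections~\eqref{eq:29}, read off from Figure~\ref{fig:tet-cover-cycles}, its value is a cross ratio of the four lines $\ell_0,\dots,\ell_3$, and the three holonomies form one orbit of $z\mapsto 1-\frac1z$. The only real difference is organizational: the paper fixes vectors with $\xi_1=\xi_3+\xi_0=\xi_2+z\,\xi_0$ and evaluates the two explicit compositions \eqref{eq:129} and \eqref{eq:41} directly, getting $z$ and $1-\frac1z$ at once, which settles in the same stroke the cyclic-ordering question (which orbit direction corresponds to $\langle\gamma_i,\gamma_{i+1}\rangle=+1$) that you correctly isolate as the delicate point but leave as an unexecuted figure check. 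That remaining check is not a wrong turn—it is precisely the paper's reading from Figure~\ref{fig:tet-cover-cycles} of which projections occur, in which order, around $\gE i$ versus $\gE{i+1}$—so to finish your argument you would simply carry out that reading (or reproduce the paper's two-line vector computation), after which your cross-ratio identity and your consistency check $\ho_0\ho_1\ho_2=-1$ (matching~\eqref{eq:186}) close the proof.
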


  \begin{proof}
 We compute as in the proof of Lemma~\ref{thm:5} using Figure~\ref{fig:tet-cover-cycles} as
a guide.  The holonomy of~$L\to\tY$ around~$\gE i$ is the
composition
  \begin{equation}\label{eq:129}
     \ell _0\xrightarrow{\;\;\proj3\;\;}\ell _1
     \xrightarrow{\;\;\proj2\;\;}\ell _0
  \end{equation}
and the holonomy of~$L\to\tY$ around~$\gE{i+1}$ is the
composition  
  \begin{equation}\label{eq:41}
     \ell _2\xrightarrow{\;\;\proj3\;\;}\ell _0
     \xrightarrow{\;\;\proj1\;\;} \ell _2 
  \end{equation}
(Recall the projections in~\eqref{eq:29}.)  Choose $\xi _j\in \ell _j^{\neq
0}$, $j=1$ and then $j=3,0,2$, such that
  \begin{equation}\label{eq:42}
     \begin{aligned} \xi _1&=\xi _3+\xi _0 \\ &=\xi _2 + z \xi
      _0\end{aligned} 
  \end{equation}
for some~$z\in \CC\setminus \{0,1\}$.  Then $\proj3(\xi _0)=\xi _1$ and
$\proj3(\xi _2)= (1-z)\xi _0$, etc.  Hence the image of~$\xi _0$
under~\eqref{eq:129} is~$z\xi _0$, and the image of~$\xi _2$
under~\eqref{eq:41} is $(1-\frac1z)\xi _2$.  Therefore,
  \begin{qedequation}\label{eq:130}
     \ho_{i+1}=\holL(\gE{i+1})=1-\frac{1}{z} = 1-\frac{1}{\holL(\gE i)} = 1 -
     \frac{1}{\ho_i}.  
  \qedhere\end{qedequation}
  \renewcommand{\qedsymbol}{}
  \end{proof}

  \begin{remark}[]\label{thm:51}
 \ 
 \begin{enumerate}[label=\textnormal{(\arabic*)}]

 \item One interpretation of $z=\holL(\gE i)$ is as follows.  Recall that
4~distinct points in a projective line~$\PP F$ are characterized up to
isomorphism by their cross-ratio.  If $\ell _0,\ell _1,\ell _2,\ell _3$ are
the corresponding lines in the 2-dimensional vector space~$F$, then the
cross-ratio is
  \begin{equation}\label{eq:40}
     \frac{(\xi _0\wedge \xi
     _3)(\xi _1\wedge \xi _2)}{(\xi _0\wedge \xi _2)(\xi _1\wedge \xi _3)}
     \in \CC\setminus \{0,1\},\qquad 
     \textnormal{$\xi _i\in \ell _i$ 
     nonzero}, 
  \end{equation}
where the numerator and denominator are nonzero elements in~$(\Det
F)^{\otimes 2}$; the ratio is independent of the choice of~$\xi _i\in \ell
_i^{\neq0}$.  Permuting the lines we obtain numbers $z$,\, $1/z$,\, $1-z$,\,
$1/(1-z)$,\, $z/(z-1)$,\, $(z-1)/z$ for some~$z\in \Cx\setminus \{1\}$.  In
the case at hand, with the chosen vectors~$\xi _1,\xi _2,\xi _3,\xi _4$
in~\eqref{eq:42}, we compute
  \begin{equation}\label{eq:43}
          \frac{(\xi _0\wedge \xi
     _3)(\xi _1\wedge \xi _2)}{(\xi _0\wedge \xi _2)(\xi _1\wedge \xi
          _3)}=z=\holL(\gE i). 
  \end{equation}

 \item As a corollary of Proposition~\ref{thm:12} the product of the
holonomies around the loops $\gE0,\gE1,\gE2$ defined after
Proposition~\ref{thm:68} is 
  \begin{equation}\label{eq:186}
     z\biggl(1-\frac 1z\biggr) \biggl(1 - \frac{1}{1-\frac 1z}\biggr) = -1. 
  \end{equation}
This leads to a sharpening of Proposition~\ref{thm:68}(3).  Let $S\subset
\tY_{\getwoa}$ be a link of the 4~points $\tY_{\twobstrat}\subset \tY_{\getwoa}$, so
$S=\bigsqcup_{k=1}^4 S_k$~is a union of 4~disjoint circles~$S_k$, one
surrounding each branch point.  Form the commutative diagram
  \begin{equation}\label{eq:187}
     \begin{gathered} \xymatrix{& H_1(S)\ar[r]\ar[d]^\chi &
     H_1(\tY_{\getwoa})\ar[r]\ar[d] & H_1(\tY) \ar@{=}[d] \ar[r] & 0\\ 0\ar[r]
     &\bmut\ar[r] & \Hext\ar[r] & H_1(\tY)\ar[r] & 0} \end{gathered} 
  \end{equation}
in which the homomorphism~$\chi $ maps a generator of~$H_1(S_k)$ to~$-1\in
\bmut$.   The bottom row of~\eqref{eq:187} is a central group extension.  The
refinement of Proposition~\ref{thm:68}(3) is that the product of the images
of~ $[\gE i]$ in~$\Hext$ is $-1\in \bmut$.\\[-3pt]

 \item The space $\bY=\CC\setminus \{0,1\}\approx \CP^1\setminus \{0,1,\infty
\}$ is the domain of the real dilogarithm function~\eqref{eq:1}, and the
total space of an abelian cover $\hY\to \bY$ is the domain of the enhanced
Rogers dilogarithm~\eqref{eq:7}.  In our current setup $\bY$~is a space of
flat $\Cx$-connections on a punctured torus.  In~\S\ref{sec:dilog}  we introduce
an extra twist to get rid of the punctures, and so identify~$\bY$ as a space
of flat $\Cx$-connections on a torus.  See~\cite{FN} for a development of the
dilogarithm function with this starting point.
 \end{enumerate}
  \end{remark}

\begin{construction}[SN-stratification on a 3-disc]
Let $D$ be the standard closed 3-disc. Given an SN-stratification of the boundary
$\partial D = S^2$,
of the form $\partial D = (\partial D)_0 \amalg (\partial D)_{-1} \amalg (\partial D)_{\twoastrat}$,
we obtain an SN-stratification of $D$ as follows. Let $c$ be the center of $D$.
Then $D_\threebstrat = \emptyset$, $D_\threeastrat = \{ c \}$,
and each other stratum $D_\alpha$ is the cone over $(\partial D)_\alpha$ with $c$ removed. 
\end{construction}

  \begin{construction}[Stratified abelianization data on a
  3-manifold]\label{thm:36} 
  Let $X$~be a compact 3-manifold with boundary, and suppose~$\sT$ is a
semi-ideal triangulation.  The SN-stratification ~\eqref{eq:37} and
subordinate spectral network on each truncated tetrahedron transport to~$X$,
and extend over the 3-discs around interior vertices.
In particular, there is a branched double cover $\pi \:\tX\to
X_{\geb}$ with branch locus~$X_{\twobstrat}$.
 
Suppose $P\to X$ is a flat principal $G$-bundle.  Assume there
exists\footnote{Existence condition: on each component of~$\bX$ the
holonomies around loops at a basepoint have a common eigenline.} a flat
section of the restriction of the associated $\CP^1$-bundle $P/B\to X$
to~$\bX$, and furthermore that we can and do choose a section such that
Genericity Assumption~\ref{thm:74} hold.  Excise from~$X$ open balls about
the barycenters of the tetrahedra.  Let~$X\subset \tX$ be the total space of
the double cover~$\pi $ with the inverse images of the balls excised.  Then
$\tX$~ is a compact manifold with boundary $\bX\amalg \bX\amalg \tS_1\amalg
\cdots\amalg \tS_N$, where each~$\tS_i$ is a 2-torus.  The preceding gives an
SN-stratification of~$X$ with strata of codimension~0, 1, and~2, and a flat
line bundle $L\to X_{\gea}$.  The holonomy around a circle
linking~$X_{\twobstrat}$ is~$-1$.
  \end{construction} 

\insfig{faces-abutting-edge}{0.4}{The two faces of the $j^{\textnormal{th}}$~tetrahedron which abut
  the edge~$E$.}

  \begin{remark}[Thurston gluing equations]\label{thm:67}
 Each tetrahedron~$\tetra^{\!(j)}$ in Construction~\ref{thm:36} has a shape
parameter~$\ho^{(j)}\in \CC\setminus \{0,1\}$ which is one of the holonomies
defined before Proposition~\ref{thm:12}.  (There are three possibilities
labeled by the three pairs of opposite edges of~$\tetra^{\!(j)}$.)  Let
$E$~be an edge in the triangulation~$\sT $, and let $S_E\subset \{1,\dots
,N\}$ be the set of~$j$ such that $E$~is an edge of~$\tetra^{\!(j)}$.
For~$j\in S_E$, let $\gamma _j$~be the loop in the torus~$\tS_j$ which is
called~`$\gamma_E$' in the text following Remark~\ref{thm:11}.  Then 
  \begin{equation}\label{eq:184}
     \sum\limits_{j\in S_E} [\gamma _j]=0\quad \textnormal{in $H_1(X)$}. 
  \end{equation}
To prove this relation consider Figure~\ref{fig:faces-abutting-edge}.  Depicted are the two
faces of~$\tetra^{\!(j)}$, $j\in S_E$, which abut~$E$ and the image~$\bg_j$
of the corresponding loop~$\gamma _j$.  Now each of the triangular faces
occurs in exactly one additional tetrahedron~$\tetra^{\!(j')}$, $j'\in
S_E\setminus \{j\}$, and it does so with the opposite orientation.  Hence the
halves of~$\bg_j$ and~$\bg_{j'}$ contained in that face cancel, as do the
halves of their lifts~$\gamma_j$ and~$\gamma _{j'}$.  This leads
to~\eqref{eq:184}.  (The cancellation is in homology; the actual half curves
are not strictly opposite.)  The relation~\eqref{eq:184} in homology
immediately implies the Thurston gluing equation~\cite[\S4.2]{T2}
  \begin{equation}\label{eq:185}
     \prod\limits_{j\in S_E} z^{(j)} = 1, 
  \end{equation}
where we choose the edge~$E$ to define the shape parameter in
each~$\tetra^{\!(j)}$, $j\in S_E$.
  \end{remark}

   \section{Levels and Chern-Simons invariants}\label{sec:5}

We begin in~\S\ref{subsec:5.1} by proving relations among the Chern-Simons
levels of $\GLC$, $\SLC$, and their various subgroups.  This is the
topological basis for abelianization.  These topological computations imply
relations among secondary differential geometric invariants via differential
cohomology.  We provide a brief introduction to differential cohomology in
Appendix~\ref{sec:9}.  In~\S\ref{subsec:5.2} we introduce the spin refinement
of Chern-Simons theory and prove appropriate relations among the ``spin
levels''.  We fully embrace differential cohomology in~\S\ref{subsec:5.5},
where we prove a key result: Theorem~\ref{thm:44}.  It states, heuristically,
that moving in the unipotent direction does not change Chern-Simons
invariants.  We also prove results about $\Cx$~Chern-Simons theory
(Theorem~\ref{thm:55}, Corollary~\ref{thm:57}, Corollary~\ref{thm:72}) that
are important in our later work.  We conclude with a \emph{global} statement,
Theorem~\ref{thm:23}, of abelianization.  Our main focus, \emph{stratified}
abelianization, is the subject of the
subsequent~\S\S\ref{sec:7a}--\ref{sec:cs-3-manifolds}.
 
In this section we change notation slightly.  Set $\hG=\GLC$ and let
$\hH,\hT$ be the subgroups defined in~\S\ref{subsubsec:4.3.2}.  Also, set
$G=\SLC$ and let $H,T$ be the associated subgroups; the unipotent
subgroup~$U$ is a subgroup of~$G$, hence too of~$\hG$.

We remind of a choice made in Example~\ref{thm:2}. 

  \begin{convention}[]\label{thm:69}
 3-dimensional Chern-Simons theory~$\F{\SLC}$ is based on the level
$-c_2\in H^4(\BSLC;\ZZ)$. 
  \end{convention}

\noindent
 In that section the level is encoded in a symmetric bilinear
form~\eqref{eq:19} on the Lie algebra, and \eqref{eq:14}~is the form that
corresponds to~$-c_2$.  In the next section we compute the restriction
of~$-c_2$ to the subgroup~$H\subset G$, and then we will define Chern-Simons
theory on~$H$---or, rather, a spin refinement---in terms of that restricted
level.

  \subsection{Levels and abelianization}\label{subsec:5.1}

   \subsubsection{Levels in~$\GLC$}\label{subsubsec:5.1.1}
 Our goal is to relate Chern-Simons invariants of principal $\hG$-bundles to
Chern-Simons invariants of $\hH$-, $\hT$-, and $U$-bundles, and to do the same
for~$G$-bundles.  These derive from relationships among appropriate degree
four integral cohomology classes on the classifying spaces, which we prove in
this section.  The inclusions $\hT\subset \hH\subset \hG$ and surjective
homomorphism $\hH\to\bmut$ lead to a diagram
  \begin{equation}\label{eq:44}
     \begin{gathered} \xymatrix{B\hT\ar[d]^p\\B\hH\ar[r]^r\ar@<.5ex>[d]^q&B\hG\\
     \Bbmut\ar@<.5ex>[u]^s} \end{gathered} 
  \end{equation}
in which $p$~is a double cover and the vertical maps~$p,q$ form a fibration
sequence.  The section~$s$ of~$q$ is the classifying map of the inclusion $
\bmut\cong \left\{ \left(\begin{smallmatrix}
1&0\\0&1 \end{smallmatrix}\right), \left(\begin{smallmatrix}
0&1\\1&0 \end{smallmatrix}\right)\right\} \hookrightarrow \hH $.  Let $c_i\in
H^{2i}(B\hG;\ZZ)$, $i=1,2$, be the universal Chern classes, and $p\mstrut
_1=c_1^2-2c\mstrut _2$ the universal first Pontrjagin class.  Let $c',c''\in
H^2(B\hT;\ZZ)$ be the first Chern class of the homomorphisms $\hT\to\Cx$
indicated by the matrix $\left(\begin{smallmatrix}
z'&0\\0&z'' \end{smallmatrix}\right)\in \hT$.  Let $a\in H^2(\Bbmut;\ZZ)$ be
the generator; note $2a=0$.

  \begin{proposition}[]\label{thm:15}
 In diagram~\eqref{eq:44} we have the following equality
in~$H^4(B\hH;\ZZ)$\textnormal{:} 
  \begin{equation}\label{eq:45}
     p_*(c')^2 = r^*p_1 + q^*a^2. 
  \end{equation}
  \end{proposition}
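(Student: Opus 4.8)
The plan is to prove \eqref{eq:45} by pulling back along the double cover $p\colon B\hT\to B\hH$, where the right-hand side becomes transparent and the transfer controls the left-hand side, and then to pin down the remaining indeterminacy --- a single class of order two --- using the section $s$. For Step~1, I restrict both sides to $B\hT$. The composite $\hT\hookrightarrow\hH\hookrightarrow\hG$ is the diagonal maximal torus, so the universal rank-two bundle restricts to a sum of lines with first Chern classes $c',c''$; hence $r^*c_1$ restricts to $c'+c''$, $r^*c_2$ to $c'c''$, and $r^*p_1=r^*c_1^2-2r^*c_2$ to $(c')^2+(c'')^2$. The composite $\hT\hookrightarrow\hH\xrightarrow{q}\bmut$ is trivial, since $\hT$ is the identity component of $\hH$, so $q^*a^2$ restricts to $0$ on $B\hT$. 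On the other side, $p$ is the double cover classified by $q$, with deck involution $\sigma$ acting on $B\hT=\CP^\infty\times\CP^\infty$ by interchanging the two factors, so the transfer identity $p^*p_*(x)=x+\sigma^*x$ gives $p^*p_*\bigl((c')^2\bigr)=(c')^2+(c'')^2$. Thus both sides of \eqref{eq:45} restrict to $(c')^2+(c'')^2$ on $B\hT$, and the difference $D:=p_*\bigl((c')^2\bigr)-r^*p_1-q^*a^2$ lies in $\ker\bigl(p^*\colon H^4(B\hH;\ZZ)\to H^4(B\hT;\ZZ)\bigr)$.

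For Step~2 I identify this kernel using the Serre spectral sequence of $B\hT\to B\hH\xrightarrow{q}\Bbmut$ with $\ZZ$ coefficients. The $\pi_1(\Bbmut)$-action interchanges $c'$ and $c''$, so $H^2(B\hT;\ZZ)$ and the summand $\ZZ(c')^2\oplus\ZZ(c'')^2$ of $H^4(B\hT;\ZZ)$ are free $\ZZ[\bmut]$-modules, whose higher group cohomology vanishes by Shapiro's lemma. Combined with the vanishing of the odd rows, this shows that in total degree $\le 4$ every contribution outside the bottom row sits in filtration $0$; hence $\ker p^*=F^1H^4(B\hH;\ZZ)$ coincides with the top filtration piece, which is the image of the edge map $q^*\colon H^4(\Bbmut;\ZZ)\to H^4(B\hH;\ZZ)$. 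Since $s$ is a section of $q$, this $q^*$ is injective, so $\ker p^*=q^*H^4(\Bbmut;\ZZ)=\ZZ/2\cdot q^*a^2$. In particular $D$ is $0$ or $q^*a^2$.

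For Step~3 I evaluate $D$ along $s$. From $s^*q^*=\id$ we get $s^*q^*a^2=a^2$. The composite $r\circ s$ classifies the homomorphism $\bmut\to\hG$ sending the nontrivial element to the permutation matrix; as a complex representation this is $\underline{\CC}\oplus\mathcal L$, with $\mathcal L$ the line bundle associated to the nontrivial character $\bmut\to\Cx$, so $c_1$ pulls back to $a$ and $c_2$ to $0$, whence $s^*r^*p_1=a^2$. Finally, pulling back the double cover $p$ along $s$ yields the universal (hence contractible) double cover of $\Bbmut$, so naturality of the transfer for covering spaces gives $s^*p_*\bigl((c')^2\bigr)=0$. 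Therefore $s^*D=-2a^2=0$ in $H^4(\Bbmut;\ZZ)=\ZZ/2$, while $s^*(q^*a^2)=a^2\ne0$; since $D\in\{0,q^*a^2\}$ this forces $D=0$, which is exactly \eqref{eq:45}.

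The main obstacle is Step~2: one must handle the twisted coefficient systems carefully and check that no low-degree differentials perturb the filtration --- the freeness of the relevant coefficient modules and the presence of the section are precisely what make this work. Everything else is a short manipulation of Chern classes, the transfer, and its naturality under pullback. One can also sidestep the full spectral-sequence computation: the map $q^*s^*$ is an idempotent on $H^*(B\hH;\ZZ)$, giving a splitting $H^4(B\hH;\ZZ)=q^*H^4(\Bbmut;\ZZ)\oplus\ker s^*$, so once one knows $\ker p^*\subseteq q^*H^4(\Bbmut;\ZZ)$ the computation $s^*D=0$ from Step~3 already places $D$ in $\ker p^*\cap\ker s^*=0$.
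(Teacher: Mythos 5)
Your proof is correct and follows essentially the same route as the paper: the Serre spectral sequence of $B\hT\to B\hH\to\Bbmut$ with the swap action (the Shapiro/induced-module observation is exactly the paper's footnote) yields that a degree-4 class is controlled by $p^*$ and $s^*$, and then the same three computations — $p^*p_*=1+\sigma^*$ with Chern roots, $s^*p_*=0$ via the contractible pulled-back cover, and $r\circ s$ classifying trivial $\oplus$ sign — finish the argument. The only difference is organizational (you track the difference class $D$ and identify $\ker p^*=\operatorname{im}q^*$ explicitly, rather than invoking the split short exact sequence directly), which is not a substantive departure.
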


  \begin{proof}[Proof of Proposition~\ref{thm:15}]
 From the Leray-Serre spectral sequence of the vertical fibration
in~\eqref{eq:44}, we deduce the split short exact sequence\footnote{It helps
to observe that the action of~$\fbmut$ on~$H^2(B\hT;\ZZ)\cong \ZZ\oplus \ZZ$
exchanges the two summands, so the resulting local system on~$\fBbmut$ is the
pushforward of the trivial local system on its contractible double cover.
Hence the cohomology vanishes in positive degrees.} 
  \begin{equation}\label{eq:48}
     \begin{gathered} \xymatrix{0\ar[r]& H^4(\Bbmut;\ZZ)\ar@<.5ex>[r]^(.55){q^*}&
     H^4(B\hH;\ZZ)\ar@<.5ex>[l]^(.45){s^*} \ar[r]^{p^*}& H^4(B\hT;\ZZ)\ar[r] &0}
     \end{gathered} 
  \end{equation}
Hence a class in~$H^4(B\hH;\ZZ)$ is determined by its pullbacks under~$p^*$
and~$s^*$.   
 
For $\sigma \:B\hT\to B\hT$ the deck transformation, we have $p^*p_*=1+\sigma
^*$.  Hence 
  \begin{equation}\label{eq:49}
     p^*p_*(c')^2 = (c')^2 + (c'')^2 = p^*r^*(c_1^2-2c\mstrut _2) = p^*r^*(p_1), 
  \end{equation}
because $c',c''$ are the Chern roots of the universal $\hG$-bundle.  Since
$s$~induces an isomorphism on~$\pi _1$, the fiber product of~$s$ and~$p$ is
contractible, from which $s^*p_*=0$.  Also, $s^*r^*(c_1^2-2c\mstrut
_2)=(s^*r^*c\mstrut _1)^2$, since $H^4(\Bbmut;\ZZ)$ is torsion of order two.
The composition~$r\circ s$ classifies the sum of the complex sign and trivial
representations of~$\zt$, so its first Chern class is the generator $a\in
H^2(\Bbmut;\ZZ)$.  Combining the preceding with~$s^*q^*=\id$ we deduce~
\eqref{eq:45}. 
  \end{proof}

  \begin{remark}[]\label{thm:17}
 For $\hG=\GLC$ a level $mc_1^2 + nc\mstrut _2$ is parametrized by
integers~$m,n\in \ZZ$.  By a similar argument to the preceding proof,
$p_*c'=r^*c_1 + q^*a$, from which 
  \begin{equation}\label{eq:50}
     (p_*c')^2 = r^*c_1^2 + q^*a^2. 
  \end{equation}
Thus we can realize any level with $n$~even by a linear combination
of~$p_*(c')^2$ and~$(p_*c')^2$, up to~$q^*a^2$. 
  \end{remark}

   \subsubsection{Levels in~$\SLC$}\label{subsubsec:5.1.2}
 By restriction we deduce a formula for the ``special'' subgroups which
appear in the diagram
  \begin{equation}\label{eq:46}
     \begin{gathered}
     \xymatrix{BT\ar[d]^p\\BH\ar[r]^r\ar[d]^q&BG\\ 
     \Bbmut} \end{gathered} 
  \end{equation}

  \begin{lemma}[]\label{thm:29}
 In the diagram~\eqref{eq:46} we have $q^*a^2=0$. 
  \end{lemma}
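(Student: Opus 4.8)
The plan is to prove the stronger statement that $H^4(BH;\ZZ)$ is \emph{torsion-free}, from which $q^*(a^2)=0$ follows immediately: since $2a=0$ in $H^2(\Bbmut;\ZZ)$, the class $q^*(a^2)=q^*(a)^{2}$ is annihilated by $2$, hence is a torsion class, hence vanishes.

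To analyze $H^4(BH;\ZZ)$ I would first pass to a maximal compact subgroup, so that $BH\simeq BN$ with $N=N_{\SU_2}(\U(1))$, the normalizer of a maximal torus in $\SU_2$. This $N$ is the group $\Pin^-(2)$: it is generated by $\U(1)$ together with $w=\left(\begin{smallmatrix}0&1\\-1&0\end{smallmatrix}\right)$, and $w^{2}=-1$. Torsion at an odd prime $p$ is killed by a transfer argument: the double cover $BT\to BH$ becomes $B\U(1)\to B\Pin^-(2)$, and the transfer shows that $p^*\colon H^*(BH;\ZZ_{(p)})\to H^*(B\U(1);\ZZ_{(p)})\cong\ZZ_{(p)}[x]$ is split injective, so the source is torsion-free. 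The content is therefore to show that $H^4(BH;\ZZ)$ has no $2$-torsion, for which I would compute the mod $2$ cohomology of $B\Pin^-(2)$.

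For that, run the Serre spectral sequence of the fibration $B(\ZZ/2)\to B\Pin^-(2)\to B\O_2$ arising from the central extension $1\to\ZZ/2\to\Pin^-(2)\to\O_2\to1$. Here $E_2^{*,*}=\ZZ/2[w_1,w_2]\otimes\ZZ/2[u]$ with $|u|=1$ and trivial $\pi_1$-action, and the transgression is the defining class of the $\Pin^-$ extension, $d_2(u)=w_1^{2}+w_2$. Since $w_1^{2}+w_2$ is a nonzerodivisor in $\ZZ/2[w_1,w_2]$, taking $d_2$-homology gives $E_3=\ZZ/2[w_1,u^{2}]$ (with $w_2$ identified with $w_1^{2}$); by the Kudo transgression theorem $d_3(u^{2})=Sq^1(w_1^{2}+w_2)=w_1w_2=w_1^{3}$ in $E_3$, so $E_4=E_\infty=\ZZ/2[w_1,y]/(w_1^{3})$ with $|y|=4$. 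In particular $H^3(B\Pin^-(2);\ZZ/2)=0$, and the universal coefficient theorem forces $\Tor\!\big(H^4(B\Pin^-(2);\ZZ),\ZZ/2\big)=0$, i.e.\ no $2$-torsion. Hence $H^4(BH;\ZZ)$ is torsion-free, and $q^*(a^2)=0$.

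The routine steps are the reduction to torsion-freeness and the odd-primary transfer argument; the crux — and the place a slip could hide — is the $2$-primary computation, concretely the vanishing $w_1^{3}=0$ in $H^*(B\Pin^-(2);\ZZ/2)$ (equivalently $H^3(BH;\ZZ/2)=0$). This is exactly the feature separating $\Pin^-(2)$ from $\O_2$, where $w_1^{3}\neq0$; correspondingly it is where the argument diverges from the $\GLC$-case of~\eqref{eq:45}, reflecting that the group extension $1\to T\to H\to\bmut\to1$ is non-split for $\SLC$ but split for $\GLC$. I would double-check the two facts used here without proof: that $N_{\SU_2}(\U(1))\cong\Pin^-(2)$, and that the $\Pin^-$ extension class is $w_1^{2}+w_2$; an equivalent route avoiding $\Pin^-(2)$ entirely is to run the Serre sequence of $BT\to BH\to\Bbmut$ directly and verify that the relevant $d_3$ differential into $H^4(\Bbmut;\ZZ)$ is nonzero.
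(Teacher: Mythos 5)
Your proof is correct, and its engine is the same as the paper's: both pass to the maximal compact $\Pin^-_2=N_{\SU_2}(\U(1))$ and run the Serre spectral sequence of $\Bbmut\to B\!\Pin^-_2\to BO_2$ with $d_2(u)=w_1^2+w_2$, then use transgression of Steenrod squares (Kudo) to get $d_3(u^2)=Sq^1(w_1^2+w_2)=w_1w_2$, which is exactly what kills $w_1^3$. Where you diverge is in the packaging of the reduction. The paper stays minimal: it writes $a=\beta(x)$, $a^2=\beta(x^3)$, so it only needs $q^*x^3=0$ in mod~2 cohomology, and it identifies $q^*x$ with the restriction of $w_1$ via the commuting triangle~\eqref{eq:77}; thus it never needs the full structure of $E_\infty$, only that $w_1^3$ dies on $B\!\Pin^-_2$. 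You instead prove the stronger statement that $H^4(BH;\ZZ)$ is torsion-free, which costs you two extra ingredients the paper avoids (the odd-primary transfer for the double cover $BT\to BH$, and the computation of all of $E_\infty$ in total degree $3$ so that $H^3(B\!\Pin^-_2;\zt)=0$ feeds into the universal-coefficient/Bockstein sequence), but buys you independence from the identification $q^*x=w_1\vert$ and yields torsion-freeness of $H^4(BH;\ZZ)$ as a reusable byproduct. All the individual steps check out: the identification $N_{\SU_2}(\U(1))\cong\Pin^-_2$ and the $\Pin^-$ extension class $w_1^2+w_2$ are the same facts the paper invokes (citing~\cite{KT}), the Koszul-type computations of $E_3=\zt[w_1,u^2]$ and $E_4=\zt[w_1,u^4]/(w_1^3)$ are right, and $E_4=E_\infty$ holds in the range you need since the base-degree part of $E_4$ vanishes above degree~$2$.
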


  \begin{proof}
 Let $x\in H^1(\Bbmut;\zt)$ be the generator.  Then $a=\beta (x)$, where
$\beta \:H^q(-;\zt)\to H^{q+1}(-;\ZZ)$ is the integral Bockstein, and also
$a^2=\beta (x^3)$.  It suffices to prove $q^*x^3=0$.  Passing to maximal
compact subgroups we replace $T\to H\to\bmut$ by $\Spin_2\to
\Pin^-_2\to\bmut$.  In the Leray-Serre spectral sequence for the fibration
sequence $\Bbmut \to B\!\Pin^-_2\to BO_2$, the differential
$d_2\:E_2^{0,1}\to E_2^{2,0}$ sends the generator $y\in H^1(\Bbmut;\zt)$ to
$w_1^2+w_2\in H^2(BO_2;\zt)$.  (See~\cite{KT} for a review of pin groups.)
Then $d_2(w_1y)=w_1^3+w_1w_2$.  Also, since $y$~transgresses so too do its
Steenrod squares, and in particular $d_3(y^2)=d_3(Sq^1y)=Sq^1(d\mstrut
_2y)=Sq^1(w_1^2+w_2)=w_1w_2$.  Hence $w_1^3$~is killed when pulled back
to~$B\!\Pin^-_2$.  Conclude by observing that
  \begin{equation}\label{eq:77}
     \begin{gathered}
     \xymatrix@C-1em@R-.9em{\Pin_2^-\ar[dr]\ar[rr]&&\bmut\\&O_2\ar[ur]_{\det}}
     \end{gathered} 
  \end{equation}
commutes.  So the pullback of~$x$ equals the pullback of~$w_1$. 
  \end{proof}

The classifying map of the inclusion $i\:T\hookrightarrow \hT$ satisfies
$(Bi)^*c' = -(Bi)^*c''=c$ for $c\in H^2\bigl(BT;\ZZ \bigr)$ a generator.
Also, $i^*r^*c_1=0$.  The following is a corollary of
Proposition~\ref{thm:15} and Lemma~\ref{thm:29}.

  \begin{corollary}[]\label{thm:16}
 In diagram~\eqref{eq:46} we have the following equality
in~$H^4\bigl(BH;\ZZ\bigr)$:
  \begin{qedequation}\label{eq:47}
     p_*c^2 = -2r^*c_2. \qedhere
  \end{qedequation}
  \end{corollary}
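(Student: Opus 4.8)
The plan is to get the identity by restricting Proposition~\ref{thm:15} from the $\GLC$-diagram~\eqref{eq:44} to the $\SLC$-diagram~\eqref{eq:46}. The inclusions $T\hookrightarrow\hT$, $H\hookrightarrow\hH$, $G\hookrightarrow\hG$ are compatible with the surjections onto the component group $\bmut$, hence induce a map of fibration sequences from~\eqref{eq:46} to~\eqref{eq:44}; write $i\:BT\to B\hT$, $j\:BH\to B\hH$, $k\:BG\to B\hG$ for the induced maps of classifying spaces. The key structural observation I would record first is that the square with horizontal maps $i,j$ and the two double covers $p$ as vertical maps is a homotopy pullback: pulling the double cover $B\hT\to B\hH$ back along $j$ yields the cover of $BH$ classified by $BH\xrightarrow{j}B\hH\to\Bbmut$, which by the compatibility above is the component-group cover of $BH$, namely $p\:BT\to BH$.

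With that in hand I would compute $j^*$ of each side of~\eqref{eq:45}. On the left, because the square is a pullback of a finite cover the transfer satisfies base change, $j^*\circ p_* = p_*\circ i^*$; combined with the identity $i^*c'=c$ recalled just above the statement, this gives $j^*p_*(c')^2 = p_*(i^*c')^2 = p_*c^2$.

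On the right, the composite $BH\xrightarrow{j}B\hH\xrightarrow{r}B\hG$ factors as $BH\xrightarrow{r}BG\xrightarrow{k}B\hG$, so $j^*r^*p_1 = r^*k^*p_1$ in the notation of~\eqref{eq:46}. Since the defining representation of $G=\SLC$ has trivial determinant we have $k^*c_1=0$, hence $k^*p_1 = k^*(c_1^2-2c_2) = -2c_2$ and therefore $j^*r^*p_1 = -2r^*c_2$. The remaining term $j^*q^*a^2$ equals $q^*a^2$ computed in~\eqref{eq:46}, which vanishes by Lemma~\ref{thm:29}. Applying $j^*$ to the equation $p_*(c')^2 = r^*p_1 + q^*a^2$ of Proposition~\ref{thm:15} thus yields $p_*c^2 = -2r^*c_2$, as claimed.

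The only non-formal ingredient is the base-change identity $j^*p_* = p_*i^*$, which I would justify by the pullback square of double covers above: the transfer of $BT\to BH$ is the restriction along $j$ of the transfer of $B\hT\to B\hH$. Everything else is routine bookkeeping with characteristic classes under restriction, together with the two already-proved inputs, so I expect no further obstacle.
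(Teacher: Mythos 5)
Your proposal is correct and follows essentially the same route as the paper: restrict Proposition~\ref{thm:15} along the inclusion-induced maps of diagrams, using $i^*c'=c$, the vanishing of $c_1$ on the special subgroups (so $r^*p_1$ restricts to $-2r^*c_2$), and Lemma~\ref{thm:29} to kill $q^*a^2$. The only difference is that you make explicit the transfer base-change $j^*p_*=p_*i^*$ for the pullback square of double covers, a step the paper leaves implicit.
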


  \begin{remark}[]\label{thm:70}
 Note the minus sign in~\eqref{eq:47}!  We must be mindful of it when we
define a $\Cx$ Chern-Simons theory which is compatible with our
Convention~\ref{thm:69} for $\SLC$ Chern-Simons theory.
  \end{remark}

   \subsubsection{Abelianization of connections}\label{subsubsec:5.1.3}
 Let us now focus on~$G=\SLC$.  If $X$~is a 3-manifold with a flat
$H$-connection, then \emph{global} abelianization of the associated flat
$\SLC$-connection is encoded in the commutative diagram
  \begin{equation}\label{eq:51}
     \begin{gathered} \xymatrix{\tX\ar[r]\ar[d]_{\pi } & B(\Cx)^\delta
     \ar[d]^{p} \\ 
     X\ar[r]^{} & BH^\delta  \ar[r]^<<<<{r} & B(\SLC)^\delta
     } \end{gathered}  
  \end{equation}
where we write~$\Cx$ for the group of diagonal matrices $T\subset \SLC$.  The
pullback square defines the (unramified) double cover~$\pi $.
\emph{Stratified} abelianization is encoded in the diagram 
  \begin{equation}\label{eq:52}
     \begin{gathered} \xymatrix{\tX_{\gea}\ar[r]\ar[d]_{\pi } &
     B(\Cx)^\delta \ar[d]^{p} \\ 
     X_{\gea}\ar[r]^{} \ar@{^{(}->}[d]& BH^\delta  \ar[r]^<<<<{r} &
     B(\SLC)^\delta \\ X\ar[urr]} \end{gathered} 
  \end{equation}
in which the bottom triangle commutes on~$X_0$.  In both the global and
stratified cases our goal is to compute the Chern-Simons invariant of the
flat $\SLC$-connection on~$X$ in terms of a Chern-Simons invariant of the
flat $\Cx$-connection on~$\tX$.  The $\SLC$ Chern-Simons invariant is the
secondary invariant of $-c_2\in H^4(B\!\SLC;\ZZ)$; the $\Cx$
Chern-Simons invariant is the secondary invariant of $c^2\in H^4(B\Cx;\ZZ)$.
There is a mismatch for abelianization: the factor of~$-2$ in~\eqref{eq:47}.
To rectify we must divide the $\Cx$-level by~2 (and include the minus sign).
This can be done---a secondary invariant for~``$c^2/2$'' exists---but at the
cost of introducing a new cohomology theory and a spin structure on~$\tX$, as
we explain in~\S\ref{subsec:5.2}.

  \begin{remark}[]\label{thm:18}
 Levels have a refinement in \emph{differential cohomology}, and the
Chern-Simons invariants are nicely located in the differential theory;
see~\cite{ChS, HS, F2, FH2}.  We give a pr\'ecis of differential cochains in
Appendix~\ref{sec:9} and use this point of view on Chern-Simons invariants
in~\S\ref{subsec:5.5}; see also \cite[Appendix~A]{FN}.  This framework makes
clear that cohomology identities immediately imply corresponding relations
among secondary invariants.  
  \end{remark}

We conclude our discussion of levels in ordinary cohomology by examining the
restriction to the unipotent subgroup~$U$ in~\eqref{eq:35}.  Recall
(Definition~\ref{thm:6}(ii)) that the failure of the bottom triangle
in~\eqref{eq:52} to commute on all of~$X_{\gea}$ is due to the unipotent
gluing along the walls (components of~$X_{-1}$) of the spectral network.
Since $U\cong \CC$ is contractible, so is~$BU$, and the following is
immediate.

  \begin{proposition}[]\label{thm:19}
 The restriction of any level of~$\GLC$ or~$\SLC$ to~$U$ vanishes. \thmqed
  \end{proposition}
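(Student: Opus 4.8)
The plan is to reduce the statement to the contractibility of $BU$ noted just above. The subgroup inclusion $U\hookrightarrow \GLC$ induces a map of classifying spaces $BU\to B\GLC$, and likewise $U\hookrightarrow \SLC$ induces $BU\to B\SLC$; by definition (Remark~\ref{thm:31}) the restriction to $U$ of a level is the pullback of the corresponding class in $H^4(B\GLC;\ZZ)$ or $H^4(B\SLC;\ZZ)$ along one of these maps. So it suffices to show $H^4(BU;\ZZ)=0$. Now $U$, with the subspace topology inherited from $\GLC$, is the Lie group $(\CC,+)\cong\RR^2$ in the coordinate $z$ of \eqref{eq:35}, hence contractible; therefore so is $BU$ (for a contractible topological group $U$ the universal bundle sequence $U\to EU\to BU$ with $EU$ contractible forces, via the long exact sequence of homotopy groups, $BU\simeq *$). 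Consequently every reduced cohomology group of $BU$ vanishes, and in particular $H^4(BU;\ZZ)=0$.

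There is essentially no obstacle here; the only point to keep straight is that $U$ carries its Lie-group topology (not the discrete topology $U^\delta$), which is indeed the relevant topology for restricting a level in ordinary cohomology. Once $H^4(BU;\ZZ)=0$ is in hand, the pullback of any level to $BU$ is forced to vanish, uniformly for $\GLC$ and $\SLC$, which is the assertion.
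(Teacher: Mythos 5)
Your argument is correct and is essentially the paper's own: the proposition is stated right after the observation that $U\cong\CC$ is contractible, hence $BU$ is contractible, so any level pulled back to $H^4(BU;\ZZ)=0$ must vanish. Your added remarks (that the level is a class in $H^4(BG;\ZZ)$ per Remark~\ref{thm:31}, and that $U$ carries its Lie-group rather than discrete topology) are accurate clarifications of the same one-line proof.
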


\noindent
 In principle, then, the unipotent gluing does not change the Chern-Simons
invariant and essentially allows us to proceed as if the bottom triangle
in~\eqref{eq:52} commutes on~$X_{\gea}$, though this heuristic requires a
bit of work to make precise; see Theorem~\ref{thm:44}.

  \subsection{Levels for spin Chern-Simons theory}\label{subsec:5.2}

   \subsubsection{$E$-cohomology and spin $\Cx$~ Chern-Simons
   theory}\label{subsubsec:5.2.1}
 To divide $c^2\in H^4(B\Cx;\ZZ)$ by~2, we pass to a cohomology theory simply
denoted~$E$, the nontrivial extension
  \begin{equation}\label{eq:53}
     H\ZZ\xrightarrow{\;\;i\;\;} E\xrightarrow{\;\;j\;\;} \Sigma ^{-2}H\zt 
  \end{equation}
of Eilenberg-MacLane spectra; the $k$-invariant $\Sigma ^{-2}H\zt\to \Sigma
H\ZZ$ is $\beta \circ Sq^2$, the composition of the integral Bockstein and
the Steenrod square.  For any topological space~$S$, the
extension~\eqref{eq:53} leads to a long exact sequence of cohomology groups
  \begin{equation}\label{eq:54}
     \cdots \longrightarrow H^q(S;\ZZ)\xrightarrow{\;\;i\;\;}
     E^q(S)\xrightarrow{\;\;j\;\;} 
     H^{q-2}(S;\zt) \xrightarrow{\;\;\beta\circ  Sq^2\;\;}
     H^{q+1}(S;\ZZ)\longrightarrow \cdots 
  \end{equation}
Multiplication by~2 on~$E^q(S)$ factors through~$i$:
  \begin{equation}\label{eq:78}
     \begin{gathered} \xymatrix{H^q(S;\ZZ)\ar[r]^{i} \ar[d]_{2} &
     E^q(S)\ar[d]^{2}\ar@{-->}[dl]_k \\ H^q(S;\ZZ)\ar[r]^{i} & E^q(S)}
     \end{gathered} 
  \end{equation}
For $S=B\Cx$, a slice of the long exact sequence~\eqref{eq:54}
is the \emph{nontrivial} abelian group extension
  \begin{equation}\label{eq:55}
     0\longrightarrow H^4(B\Cx;\ZZ)\xrightarrow{\;\;i\;\;} E^4(B\Cx)
     \longrightarrow H^2(B\Cx;\zt) \longrightarrow 0, 
  \end{equation}
i.e., $E^4(B\Cx)$~is infinite cyclic and $i(c^2)$~is twice a generator
$\lambda \in E^4(B\Cx)$.  The class~$\lambda $ plays the role of ``$c^2/2$''.
Passing to maximal compact subgroups there is a generalization from $\TT\cong
SO_2$ to~$SO_N$ for any~$N\ge2$.  Namely, there is a characteristic class
$\lambda \in E^4(BSO_N)$ whose image under~$k\oplus j$ is $(p_1,w_2)\in
H^4(BSO_N;\ZZ)\oplus H^2(BSO_N;\zt)$.  Furthermore, $\lambda $~is additive:
for real vector bundles $V',V''\to X$ over a space~$X$ we have
  \begin{equation}\label{eq:79}
     \lambda (V'\oplus V'')=\lambda (V')\oplus \lambda (V''). 
  \end{equation}
The pullback of~$\lambda $ to~$E^4(B\!\Spin_N)$ is the image under~$i$ of a
class $\tl\in H^4(B\!\Spin_N;\ZZ)$ whose double is~$p_1$.  Also, $\tl\equiv
w_4\pmod2$ if~$N\ge4$.  We refer to~\cite[\S1]{F3} for background about this
cohomology theory~$E$ and proofs\footnote{Even if the precise statement does
not appear in~\cite{F3}, the same techniques apply.  The standard fact that
$\tl\neq 0\pmod2$ follows since $H^4(B\!\Spin_N;\ZZ)\cong \ZZ$ and $\tl$~is a
generator, if~$N\ge4$.} of these assertions.

The characteristic class $\lambda \in E^4(B\Cx)$ has a lift $\cl\in
\CE^4(\BNC)$ to the \emph{differential} $E$-cohomology of the classifying
object for principal $\Cx$-connections.  (See \cite[Appendix~A]{FN}.)  Here
$\BNC$~is a simplicial sheaf on smooth manifolds, in the sense of~\cite{FH2},
for example.  There is also a simplicial sheaf~$\BFC$ which classifies flat
$\Cx$-connections, as well as a map $i\:\BFC\to \BNC$.  The pullback $i^*\cl$
is a flat differential class.  Define the spectrum $E\CZo$~as the cofiber of
the composition
  \begin{equation}\label{eq:57}
     E\xrightarrow{\;\;k\;\;}H\ZZ\xrightarrow{\;\;\pi \sqmo\;\;}H\CC. 
  \end{equation}
Its nonzero homotopy groups are $\pi _0\cong \CZo$ and $\pi _{-1}\cong \zt$.
The topological space~$B(\Cx)^\delta $ is a geometric realization of the
simplicial sheaf~$\BFC$.  Then $i^*\cl$ determines a characteristic class 
  \begin{equation}\label{eq:85}
     \hl\in E^3\bigl(B(\Cx)^\delta ;\CZo \bigr) 
  \end{equation}
in the cohomology theory~$E\CZo$.

An \emph{oriented} real vector bundle has a Thom class in integer cohomology,
but a Thom class in $E$-cohomology requires a \emph{spin}
structure~\cite[Proposition~4.4]{F3}.  In particular, $E$-cohomology classes
can be integrated on compact spin manifolds.  This leads immediately to a
\emph{fully extended} unitary 3-dimensional topological field
theory~$\rS_{\Cx}$ on spin manifolds equipped with a \emph{flat}
$\Cx$-connection, analogous to the usual Chern-Simons theory~\eqref{eq:17} on
oriented manifolds.  It has a fully local version defined as a map of spectra
analogous to~\eqref{eq:18}:
  \begin{equation}\label{eq:56}
     \MSpin\wedge B(\Cx)^\delta _+ \xrightarrow{\;\;\id\wedge \hl\;\;}
     \MSpin\wedge 
     (E\CZo_3)_+\xrightarrow{\;\;\;\int_{}\;\;\;}\Sigma ^3E\CZo. 
  \end{equation}
The field theory~\eqref{eq:56} assigns a \emph{$\zt$-graded} line to a closed
spin 2-manifold with flat $\Cx$-connection.  As noted in Remark~\ref{thm:38}
we need the theory for parametrized families of flat connections, so for
nonflat connections.   

  \begin{remark}[]\label{thm:82}
 In fact, the grading of the spin Chern-Simons line of a $\Cx$-connection on
a surface is determined by the parity of the degree of the underlying
principal $\Cx$-bundle.  For a flat connection that degree is zero, hence the
line is even.  Also, to a $\Cx$-connection over a spin 1-manifold, the spin
Chern-Simons theory assigns an invertible module over \emph{super} vector
spaces.  See Appendix~\ref{sec:C11} for more details as well as a
justification for ignoring these $\zt$-gradings in the body of this paper.
  \end{remark}

As a companion to Convention~\ref{thm:69} we
signpost our choice of sign for the level, which is motivated by
Corollary~\ref{thm:22} below. 

  \begin{convention}[]\label{thm:71}
  3-dimensional spin Chern-Simons theory~$\rS_{\Cx}$ is based on the level
$\lambda \in E^4(\BC)$.  
  \end{convention}

This spin Chern-Simons theory is developed in some detail in~\cite{FN}.  For
future use we recall one particular result: \cite[Theorem~3.9(vii)]{FN}.  Let
$Y$~be a closed 2-manifold endowed with a spin structure~$\sigma $, and fix a
principal $\Cx$-bundle $\pi \:Q\to Y$ with connection~$\Theta \in \Omega
^1(Q;\CC)$.  A section~$t$ of~$\pi $ produces
  \begin{equation}\label{eq:188}
     \tau _t\in \SCx(Y ; \Theta ; \sigma) , 
  \end{equation}
a nonzero element in the spin Chern-Simons line computed from the
$\Cx$-connection~$\Theta $ and the spin structure~$\sigma $.  Let $h\:Y\to
\Cx$ be a smooth function.  Then $t'=t\cdot h$~is another section of~$\pi $,
and the ratio of nonzero elements in~$\SCx(Y ; \Theta ; \sigma)$ is
  \begin{equation}\label{eq:189}
     \frac{\tatp}{\tat} = \epsilon \mstrut _{t,h}\exp\left( \frac{1}{4\pi \sqmo}
     \int_{Y} t^*\Theta\wedge \frac{dh}{h} \right)  , 
  \end{equation}
where 
  \begin{equation}\label{eq:s65}
     \epsilon \mstrut _{t,h}(s)=(-1)^{q_\sigma ([h])}.
  \end{equation}
Here $q_\sigma \:H^1(Y;\zt)\to\zt$ is the quadratic refinement of the
intersection pairing given by the spin structure~$\sigma $, and $[h]\in
H^1(Y;\zt)$ is the reduction modulo two of the homotopy class of~$h$.

   \subsubsection{Levels in $E$-cohomology}\label{subsubsec:5.2.2}
 We revisit Proposition~\ref{thm:15} and Corollary~\ref{thm:16} in
$E$-cohomology, so effectively divide~\eqref{eq:45} and~\eqref{eq:47} by~2.

  \begin{lemma}[]\label{thm:21}
\

 \begin{enumerate}[label=\textnormal{(\arabic*)}]

 \item  The map 
  \begin{equation}\label{eq:66}
     H^4(\BSLC;\ZZ)\xrightarrow{\;\;i\;\;}E^4(\BSLC) 
  \end{equation}
is an isomorphism.

 \item   The group extension 
  \begin{equation}\label{eq:67}
     0\longrightarrow H^4(\Bbmut;\ZZ)\xrightarrow{\;\;i\;\;}
     E^4(\Bbmut)\xrightarrow{\;\;j\;\;} H^2(\Bbmut;\zt)\longrightarrow 0 
  \end{equation}
is nontrivial: $E^4(\Bbmut)$~is cyclic of order~4.  

 \item The pullback map $E^4(\Bbmut)\to E^4(B\ZZ)$ is zero.

 \end{enumerate} 
  \end{lemma}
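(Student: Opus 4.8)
The plan is to feed each of the three classifying spaces into the long exact sequence~\eqref{eq:54}, and then in case~(2) to add one explicit computation that settles the extension class.

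Parts~(1) and~(3) should drop out immediately. Since $\BSLC\simeq B\!\SU_2\simeq \HH\PP^\infty$ has integral and mod-two cohomology concentrated in degrees divisible by~$4$, one has $H^1(\BSLC;\zt)=H^2(\BSLC;\zt)=0$; plugging this into~\eqref{eq:54} at $q=4$, the operation $\beta\circ Sq^2$ out of $H^1(\BSLC;\zt)$ has zero source and the map $j\:E^4(\BSLC)\to H^2(\BSLC;\zt)$ has zero target, forcing $i$ to be an isomorphism (so in fact $E^4(\BSLC)\cong\ZZ$). For~(3) I would observe that $B\ZZ=S^1$, so $E^4(B\ZZ)\cong E^4(\pt)\oplus\widetilde E^4(S^1)\cong\pi_{-4}(E)\oplus\pi_{-3}(E)=0$ because $\pi_*(E)$ is supported in degrees~$0$ and~$-2$; hence any pullback to $E^4(B\ZZ)$ vanishes trivially.

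For~(2) I would first check that~\eqref{eq:67} is short exact with $E^4(\Bbmut)$ of order~$4$. Taking $S=\Bbmut=\RP^\infty$, we have $H^1(\RP^\infty;\zt)=H^2(\RP^\infty;\zt)=H^4(\RP^\infty;\ZZ)=\zt$ and $H^5(\RP^\infty;\ZZ)=0$; since $Sq^2$ vanishes on degree-one classes, $\beta\circ Sq^2$ kills the generator of $H^1(\RP^\infty;\zt)$, so $i\:H^4(\RP^\infty;\ZZ)\to E^4(\RP^\infty)$ is injective, and $j$ is onto because its cokernel injects into $H^5(\RP^\infty;\ZZ)=0$. The one remaining point — that this extension does not split — I would settle by exhibiting an element of order~$4$: choose $g\:\RP^\infty\to\BC$ (which exists since $\BC=K(\ZZ,2)$ represents $H^2(-;\ZZ)$ and $H^2(\RP^\infty;\ZZ)\cong\zt$ is nonzero; concretely $g$ classifies the complexified tautological line bundle) with $u:=g^*c$ the generator of $H^2(\RP^\infty;\ZZ)$, so $g^*(c^2)=u^2$ generates $H^4(\RP^\infty;\ZZ)\cong\zt$. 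Pulling back the relation $i(c^2)=2\lambda$ recalled above and using naturality of~$i$,
\begin{equation*}
2\,g^*\lambda \;=\; g^*\bigl(i(c^2)\bigr) \;=\; i\bigl(g^*(c^2)\bigr) \;=\; i(u^2)\;\neq\;0,
\end{equation*}
since $i$ is injective on $H^4(\RP^\infty;\ZZ)$ and $u^2\neq0$. Thus $g^*\lambda$ has order greater than~$2$ in the order-four group $E^4(\Bbmut)$, hence order exactly~$4$, so $E^4(\Bbmut)\cong\ZZ/4\ZZ$.

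The only ingredients beyond the long exact sequence and standard facts about $H^*(\RP^\infty)$ are the homotopy groups of~$E$ and the statement (recalled above and proved in~\cite{F3}) that $i(c^2)\in E^4(\BC)$ is twice a generator~$\lambda$. I expect the one genuine obstacle to be in part~(2): the long exact sequence determines the order of $E^4(\Bbmut)$ but says nothing about the extension class, so one cannot avoid producing an explicit order-four class (the pullback of~$\lambda$ along a suitable map $\RP^\infty\to\BC$).
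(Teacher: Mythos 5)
Your proposal is correct, and its skeleton matches the paper's: parts (1) and (3) are exactly the paper's argument (the paper simply cites $H^2(\BSLC;\zt)=0$ and $E^4(B\ZZ)=0$, which your $\HH\PP^\infty$ and $B\ZZ\simeq S^1$ computations justify), and part (2) is settled in both cases by producing an order-four element in a group already known to have order four from \eqref{eq:54}. The difference is in how that element is certified. The paper takes $\alpha=\lambda(L\oplus L)$ for $L\to\Bbmut$ the real Hopf bundle and shows $2\alpha\neq0$ using the $BSO_N$-level facts recorded in \S\ref{subsubsec:5.2.1}: Whitney additivity \eqref{eq:79}, $j\lambda=w_2$, and $\tl\equiv w_4\pmod 2$, so that $2\alpha=\lambda(L^{\oplus4})$ is detected by $w_4(L^{\oplus 4})=x^4\neq0$. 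You instead pull back the universal relation $i(c^2)=2\lambda$ of \eqref{eq:55} along the map $g\:\RP^\infty\to\BC$ classifying the complexified tautological bundle, getting $2\,g^*\lambda=i(u^2)\neq0$ from injectivity of $i$ in degree $4$ and $u^2\neq0$ in $H^4(\RP^\infty;\ZZ)$. These are in fact the same class, since the real bundle underlying $L\otimes\CC$ is $L\oplus L$; so you have reproved, rather than used, the paper's observation following the lemma that $2\alpha=i(a^2)$. Your route is slightly leaner (it needs only \eqref{eq:55}, not the $SO_N$ extension of $\lambda$ or the Whitney formula), and you also make explicit the check, via $Sq^2=0$ on $H^1$ and $H^5(\RP^\infty;\ZZ)=0$, that \eqref{eq:67} is short exact so that $E^4(\Bbmut)$ has order four — a point the paper leaves to the exact sequence \eqref{eq:54} without comment; the paper's formulation, on the other hand, sets up $\alpha=\lambda(L\oplus L)$ and its mod-2 image $w_2(L^{\oplus2})=x^2$ in the form used later (e.g.\ in Proposition~\ref{thm:30}).
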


  \begin{proof}
 Statement~(1) follows from $H^2(\BSLC;\zt)=0$.  For~(2), we claim
  \begin{equation}\label{eq:80}
     \alpha :=\lambda (L\oplus L)\in E^4(\Bbmut) 
  \end{equation}
has order~4, where $L\to\Bbmut$ is the real Hopf line bundle.  For this
observe $L^{\oplus 2}\to\Bbmut$ is orientable, $j\lambda (L^{\oplus
2})=w_2(L^{\oplus 2})=x^2$, and $2\alpha =2\lambda (L^{\oplus 2})=\lambda
(L^{\oplus 4})\neq 0$ since $w_2(L^{\oplus 4})=0$ and $w_4(L^{\oplus
4})=x^4\neq 0$, so $\tl(L^{\oplus 4}) \neq 0$.  (We use the Whitney sum
formula~\eqref{eq:79}.)  Finally, (3)~follows immediately from $E^4(B\ZZ)=0$.
  \end{proof}

\noindent
 Observe that $2\alpha =i(a^2)$, where $a\in H^2(\Bbmut;\zt)$ is the
generator. 

  \begin{remark}[]\label{thm:37}
 Let $p\:\ZZ\to \bmut$ be the homomorphism $n\mapsto(-1)^n$.  Observe that
the pullback $(Bp)^*\alpha \in E^4(B\ZZ)$ vanishes, since $B\ZZ\simeq \cir $
and $E^4(\cir)=0$.  The $\bmut$~Chern-Simons theory based on~$\alpha $
defines invariants of compact oriented manifolds equipped with a double
cover.  A lift of a double cover to a principal $\ZZ$-bundle
trivializes\footnote{The Chern-Simons theory is defined using a geometric
representative of~$\alpha $, say a map $\Bbmut\to E_4$, where $E_4$ is the
4-space in the spectrum~$E$, sometimes denoted $\Omega ^{\infty+4}E$.  The
trivialization is based on a choice of null homotopy of the composition
$B\ZZ\to \Bbmut\to E_4$.}  the $\bmut$~Chern-Simons invariant.
  \end{remark}

Let $\lG\in E^4(B\hG)$ be the pullback of the generator $\lambda \in E^4(B\Cx)$
under $\det\:\hG\to\Cx$, where $\hG=\GLC$; then $2\lG=i(c_1^2)$.  Let $\lambda
'\in E^4(B\hT)$ be the unique class such that $k(\lambda ')={c'}^2$.
(See~\eqref{eq:78} for the definition of~$k$.)  Identify $c_2\in
H^4(\BSLC;\ZZ)$ with its image under~$i$ in~$E^4(\BSLC)$.

  \begin{proposition}[]\label{thm:30}
 In diagram~\eqref{eq:44} we have the following equality in~$E^4(B\hH)$: 
  \begin{equation}\label{eq:81}
     p_*\lambda '=r^*(\lG-c\mstrut _2) + q^*\alpha . 
  \end{equation}
  \end{proposition}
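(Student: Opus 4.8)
The plan is to transport the proof of Proposition~\ref{thm:15} to the theory $E$. There the decisive point was that a class in $H^4(B\hH;\ZZ)$ is detected by its pullbacks along $p$ and along the section $s$; I would prove the analogue of that detection statement in $E$-cohomology and then evaluate both pullbacks on the difference of the two sides of~\eqref{eq:81}. As a first sanity check, applying the natural transformation $i\colon H\ZZ\to E$ to the integral identity~\eqref{eq:45}, and using that $i$ commutes with pullbacks and with the double-cover transfer $p_*$, together with $i((c')^2)=2\lambda'$, $i(p_1)=2(\lG-c_2)$, $i(a^2)=2\alpha$ (see~\eqref{eq:78} and the remark after Lemma~\ref{thm:21}), shows that $\delta:=p_*\lambda'-r^*(\lG-c_2)-q^*\alpha$ is annihilated by $2$ in $E^4(B\hH)$.

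\emph{Detection.} I claim $\ker\bigl(p^*\colon E^4(B\hH)\to E^4(B\hT)\bigr)=\mathrm{im}(q^*)$, and that $s^*$ is injective on $\mathrm{im}(q^*)$ with inverse $q^*$, since $s^*q^*=\id$. The inclusion $\mathrm{im}(q^*)\subseteq\ker(p^*)$ is clear because $q\circ p$ is nullhomotopic. For the reverse inclusion I would chase the long exact sequence~\eqref{eq:54} for $B\hH$ and for $B\hT$: if $p^*\xi=0$ then $p^*(j\xi)=0$ in $H^2(B\hH;\zt)$, so by the mod~$2$ analogue of the split exact sequence~\eqref{eq:48} one has $j\xi\in\mathrm{im}(q^*)$; subtracting a suitable $q^*$-class (possible since $j\colon E^4(\Bbmut)\to H^2(\Bbmut;\zt)$ is onto by~\eqref{eq:67}) reduces to the case $\xi=i(\eta)$ with $\eta\in H^4(B\hH;\ZZ)$; then $p^*(i\eta)=0$ and the injectivity of $i$ on $H^4(B\hT;\ZZ)$ give $p^*\eta=0$, whereupon~\eqref{eq:48} itself places $\eta$, hence $\xi$, in $\mathrm{im}(q^*)$. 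The step I expect to be hardest is the mod~$2$ analogue of~\eqref{eq:48} (proved as~\eqref{eq:48} was, from the Leray--Serre spectral sequence, using that the deck transformation exchanges the two $\Cx$-factors of $\hT$) together with the bookkeeping in this chase.

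\emph{Evaluating the pullbacks.} From $p^*p_*=1+\sigma^*$ (valid for $E$ by the same transfer argument as in Proposition~\ref{thm:15}) and the fact that $\sigma$ swaps $c'$ and $c''$, I obtain $p^*(p_*\lambda')=\lambda'+\sigma^*\lambda'=\lambda'+\lambda''$, where $\lambda''$ is the class with $k(\lambda'')=(c'')^2$. Since the universal $\GLC$-bundle restricts to $L'\oplus L''$ over $B\hT$, the composite $r\circ p$ classifies $L'\oplus L''$, so $p^*r^*\lG$ is $\lambda$ of the complex line $L'\otimes L''$ while $p^*r^*c_2=i(c'c'')$ and $p^*q^*\alpha=0$; comparing images under $k$ and under $j$ (both $p^*(p_*\lambda')$ and $p^*r^*(\lG-c_2)$ give $(c')^2+(c'')^2$ and $(c'+c'')\bmod 2$) and using that $(k,j)$ is injective on $E^4(B\hT)$ because $H^4(B\hT;\ZZ)$ is torsion-free, I conclude $p^*\delta=0$. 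Hence $\delta\in\mathrm{im}(q^*)$ and $\delta=q^*(s^*\delta)$. It remains to compute $s^*\delta$: the fibre product of $s$ and $p$ is contractible and $E^4(\pt)=0$, so $s^*p_*=0$; and since $r\circ s$ classifies $\CC_{\mathrm{triv}}\oplus\CC_{\mathrm{sgn}}$ (as in the proof of Proposition~\ref{thm:15}), one has $s^*r^*\lG=\lambda(\CC_{\mathrm{sgn}})=\lambda(L\oplus L)=\alpha$ (the underlying real $2$-plane bundle of the complex sign line being $L\oplus L$, with the same Euler class), $s^*r^*c_2=c_2(\CC_{\mathrm{triv}}\oplus\CC_{\mathrm{sgn}})=0$, and $s^*q^*\alpha=\alpha$ since $q\circ s=\id$. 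Substituting these values into $\delta=q^*(s^*\delta)$ determines $\delta$ and establishes~\eqref{eq:81}.
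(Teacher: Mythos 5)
Your overall architecture is the same as the paper's: detect a class in $E^4(B\hH)$ by its pullbacks under $p^*$ and $s^*$, note that twice \eqref{eq:81} is $i$ applied to \eqref{eq:45}, and compare $p^*$-pullbacks using $p^*p_*=1+\sigma^*$ together with torsion-freeness of $E^4(B\hT)$ (the paper gets the $p^*$-comparison a bit faster, since the difference of the two sides is already known to be $2$-torsion). Your detection chase, including the mod~$2$ analogue of \eqref{eq:48}, is essentially the exactness of the middle column of the paper's diagram \eqref{eq:69}, and that part is fine.

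The proof does not close at the last step, and the gap sits exactly at the delicate point of the proposition. Take your own evaluations at face value: $s^*p_*\lambda'=0$, $s^*r^*\lG=\alpha$, $s^*r^*c_2=0$, $s^*q^*\alpha=\alpha$. Then $s^*\delta=0-\alpha+0-\alpha=-2\alpha$, and by Lemma~\ref{thm:21}(2) the element $2\alpha$ is \emph{nonzero} in $E^4(\Bbmut)\cong\zmod4$. But if \eqref{eq:81} held, then $\delta=0$ and in particular $s^*\delta=0$; so the values you list are inconsistent with the statement, and substituting them into $\delta=q^*(s^*\delta)$ produces a formula differing from \eqref{eq:81} by an order-two class rather than ``establishing'' it. This is precisely where the naive transport of Proposition~\ref{thm:15} breaks down: in integral cohomology the analogous sum $a^2+a^2$ dies because $H^4(\Bbmut;\ZZ)$ has exponent $2$, whereas in $E^4(\Bbmut)$ the sum $\alpha+\alpha$ survives, so the $s^*$-evaluation---in particular the value of the transfer term $s^*p_*\lambda'$ in $E$-theory---cannot simply be imported ``by the same transfer argument'' from the integral case; it is the step that must carry the $q^*\alpha$ correction the proposition is about. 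As written you never perform the final arithmetic, and when one does, the argument ends in a contradiction with \eqref{eq:81} instead of a proof; you would need to resolve this mismatch (e.g.\ re-examine what $s^*p_*\lambda'$ is for the pushforward actually used, or otherwise account for the extra $2q^*\alpha=i(q^*a^2)$) before the write-up can stand.
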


  \begin{proof}
 In the diagram 
  \begin{equation}\label{eq:69}
     \begin{gathered} \xymatrix{H^4(\Bbmut;\ZZ)\ar[r]^{i} \ar@<.5ex>[d]^{q^*} &
     E^4(\Bbmut)\ar[r]^(.4){j} \ar@<.5ex>[d]^{q^*} &
     H^2(\Bbmut;\zt)\ar@<.5ex>[d]^{q^*} \\ 
     H^4(B\hH;\ZZ)\ar[r]^{i} \ar[d]^{p^*}\ar@<.5ex>[u]^{s^*} & E^4(B\hH)\ar[r]^(.4){j}
     \ar[d]^{p^*} \ar@<.5ex>[u]^{s^*}
     & H^2(B\hH;\zt)\ar[d]^{p^*}\ar@<.5ex>[u]^{s^*} \\ H^4(B\hT;\ZZ)\ar[r]^{i} &
     E^4(B\hT)\ar[r]^(.4){j} & H^2(B\hT;\zt)} \end{gathered} 
  \end{equation}
the rows are exact, and the first and third columns are exact;
see~\eqref{eq:48}.  It follows that the second column is also exact.  In
other words, a class in~$E^4(B\hH)$ is determined by its pullbacks under~$p^*$
and~$s^*$.  Also, observe that twice~\eqref{eq:81} is~\eqref{eq:45}, which
implies that the two sides of~\eqref{eq:81} differ by an element of order
dividing~2.  Since $E^4(B\hT)$~is torsionfree, as can be deduced
from~\eqref{eq:55}, it follows that the pullback under~$p^*$ of the two sides
of~\eqref{eq:81} agree.  For the pullback under~$s^*$ we argue as in the
proof of Proposition~\ref{thm:15}: the $\lG$-class of the \emph{complex} sign
representation is~$\alpha $; see~\eqref{eq:80}.
  \end{proof}

  \begin{corollary}[]\label{thm:22}
 In diagram~\eqref{eq:46} we have the following equality in $E^4(B(H))$:
  \begin{qedequation}\label{eq:68}
     p_*\lambda =-r^*c_2 + q^*\alpha . \qedhere
  \end{qedequation}
  \end{corollary}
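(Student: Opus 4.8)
The plan is to obtain Corollary~\ref{thm:22} from Proposition~\ref{thm:30} by restricting the identity~\eqref{eq:81} along the inclusions of the special subgroups $T\subset H\subset G=\SLC$ into $\hT\subset\hH\subset\hG=\GLC$. Write $\iota\:BH\to B\hH$, $\iota_G\:BG\to B\hG$, and $\tilde\iota\:BT\to B\hT$ for the induced maps of classifying spaces; these are compatible with the maps $p$, $r$, $q$ of diagrams~\eqref{eq:44} and~\eqref{eq:46} (write $\hat q$ for the version of $q$ attached to $\hH$, so that $\hat q\circ\iota=q$ because the ``component'' homomorphism $H\to\bmut$ is the restriction of $\hH\to\bmut$). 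The key structural point is that the square with top edge $\tilde\iota$, bottom edge $\iota$, and both vertical edges the double cover $p$ is homotopy cartesian, since the double cover $BT\to BH$ is classified by $q=\hat q\circ\iota$, the pullback of the classifying map $\hat q$ of $B\hT\to B\hH$.

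Next I would apply $\iota^*$ to each term of~\eqref{eq:81}. For the left-hand side, base change for the transfer of a double cover along a homotopy cartesian square gives $\iota^*\circ p_*=p_*\circ\tilde\iota^*$, so it remains to identify $\tilde\iota^*\lambda'$. By the computation recorded just before Corollary~\ref{thm:16} we have $\tilde\iota^*c'=c$ for $c\in H^2(BT;\ZZ)$ a generator, whence $k(\tilde\iota^*\lambda')=(\tilde\iota^*c')^2=c^2$; and by~\eqref{eq:55} and~\eqref{eq:78} the map $k\:E^4(B\Cx)\to H^4(B\Cx;\ZZ)$ is an isomorphism taking the generator $\lambda$ to $c^2$, hence injective, so $\tilde\iota^*\lambda'=\lambda$ and $\iota^*p_*\lambda'=p_*\lambda$. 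For the right-hand side, the class $\lG$ is pulled back from $B\Cx$ along $B\det$ and $\det$ is trivial on $\SLC$, so $\iota_G^*\lG=0$ and the $r^*\lG$ term drops out; $\iota_G^*$ carries $c_2\in H^4(B\hG;\ZZ)$ to $c_2\in H^4(\BSLC;\ZZ)$ and commutes with $i$; and $\iota^*\hat q^*\alpha=q^*\alpha$ since $\hat q\circ\iota=q$. Assembling these, the pullback of~\eqref{eq:81} is exactly $p_*\lambda=-r^*c_2+q^*\alpha$, which is~\eqref{eq:68}.

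In contrast with the ordinary-cohomology statement Corollary~\ref{thm:16}, no analogue of Lemma~\ref{thm:29} enters here: the term $q^*\alpha$ genuinely survives, and it is only its double, $q^*(i(a^2))=i(q^*a^2)$, that vanishes by Lemma~\ref{thm:29}, consistent with twice~\eqref{eq:68} being the $\iota^*$-restriction of~\eqref{eq:45}. I do not expect a genuine obstacle: the two points that need care are the homotopy-cartesianness of the square of double covers (so that transfer base change is legitimate) and the identification $\tilde\iota^*\lambda'=\lambda$, and the most error-prone part is simply keeping the several classifying-space maps straight.
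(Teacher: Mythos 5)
Your proof is correct and follows the paper's own route: the paper deduces Corollary~\ref{thm:22} "immediately" from Proposition~\ref{thm:30} by restricting along the special subgroups, and your argument simply makes that restriction explicit (transfer base change for the pullback square of double covers, $\tilde\iota^*\lambda'=\lambda$ via injectivity of $k$, vanishing of $r^*\lG$ on $\SLC$). Your closing remark that $q^*\alpha$ survives while only $2q^*\alpha=q^*a^2$ vanishes matches the paper's note following the corollary.
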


\noindent
 Note from Lemma~\ref{thm:29} that $2q^*\alpha =q^*a^2=0$.
Corollary~\ref{thm:22} follows immediately from Proposition~\ref{thm:30}.
The minus sign in~\eqref{eq:68} is the spin echo of the minus sign
in~\eqref{eq:47}; see Remark~\ref{thm:70}.

  \subsection{Chern-Simons theory and differential cochains}\label{subsec:5.5}

Shortly after the introduction of secondary invariants of connections by
Chern-Simons~\cite{CS2}, Cheeger-Simons~\cite{ChS} recast them in terms of new
objects in differential geometry: differential characters.  Differential
cohomology, which we discuss briefly in Appendix~\ref{sec:9}, introduces
cochains into the theory of differential characters; it is the natural home
in which to express the full locality of Chern-Simons invariants.
In~\cite[Appendix~A]{FN} we prove some properties of \emph{spin} $\Cx$
Chern-Simons theory using \emph{generalized} differential cohomology, and
in~\S\ref{subsubsec:5.5.4} we take this up to prove a lemma we need later.
Otherwise, in this section we restrict to \emph{ordinary} differential
cohomology with complex coefficients and Chern-Simons theory for~$G=\SLC$.
Our main goal is to prove Theorem~\ref{thm:44} about the behavior of
Chern-Simons invariants under unipotent modifications.  We begin with some
preliminaries in~\S\S\ref{subsubsec:5.5.1}--\ref{subsubsec:5.5.2}.  A
\emph{global} abelianization theorem appears in~\S\ref{subsubsec:5.2.4}.

   \subsubsection{The universal $\SLC$-connection}\label{subsubsec:5.5.1}

Let $G$~be a Lie group with finitely many components.  There is a
groupoid-valued sheaf~$\BNG$ on the category of smooth manifolds whose value
on a test manifold~$M$ is the groupoid of $G$-connections; see~\cite{FH2} for
an introduction and details.  The sheaf~$\BNG$ classifies $G$-connections:
there is a universal principal $G$-bundle
  \begin{equation}\label{eq:86}
     \pi \:\ENG\longrightarrow \BNG 
  \end{equation}
with connection~$\Tu$, and if $P\to M$ is a principal $G$-bundle with
connection~$\Theta $ over a smooth manifold~$M$, then there is a unique
$G$-equivariant map $\varphi \:P\to \ENG$ which satisfies~$\varphi
^*\Tu=\Theta $.  Moreover, the universal connection on~\eqref{eq:86} is a
weak equivalence
  \begin{equation}\label{eq:87}
     \Tu\:\ENG\longrightarrow \Omega ^1\otimes \mathfrak{g}, 
  \end{equation}
where $\Omega ^1\otimes \mathfrak{g}$ is the set-valued sheaf which assigns
to a test manifold~$M$ the set~$\Omega ^1_M\mathfrak{g}$ of
$\mathfrak{g}$-valued 1-forms on~$M$.  The total space~$\ENG$
of~\eqref{eq:86} assigns to~$M$ the discrete groupoid of principal
$G$-bundles $Q\to M$ with connection $\Theta \in \Omega ^1(Q;\fg)$ and
section $s\:M\to Q$; the universal connection~\eqref{eq:87} maps the triple
$(Q,\Theta ,s)$ to the $\mathfrak{g}$-valued 1-form~$s^*\Theta $. 
 
The universal Chern-Simons-Weil invariant is a differential cohomology class
on~$\BNG$.  The variant~$\cHC^{\bullet }$ of differential cohomology we need
uses \emph{complex} differential forms.  The construction of~$\cHC^{\bullet
}$ as a homotopy fiber product~\cite{HS,BNV,ADH} leads to the exact sequence 
  \begin{equation}\label{eq:88}
     0\longrightarrow \cHC^4(\BNG)\longrightarrow H^4(BG;\ZZ)\times
     \Omega 
     ^4_{\textnormal{cl}}(\BNG;\CC)\xrightarrow{\;\;-\;\;}H^4(BG;\CC) 
  \end{equation}
in which $\Omega ^4_{\textnormal{cl}}(\BNG;\CC)$ denotes the vector space of
closed complex differential forms.  The main theorem of~\cite{FH2} computes
$\Omega ^4_{\textnormal{cl}}(\BNG;\CC)$ as the vector space of \emph{real}
linear $G$-invariant symmetric bilinear forms $\fg\times \fg\to \CC$.  For
$G=\SLC$ we choose $-c_2\in H^4(\BSLC;\ZZ)$ and the bilinear form 
  \begin{equation}\label{eq:96}
     \langle A,B \rangle = -\frac 1{8\pi ^2}\trace(AB),\qquad A,B\in
     \mathfrak{s}\mathfrak{l}_2\CC, 
  \end{equation}
as in Example~\ref{thm:2}; see also Convention~\ref{thm:69}.  By~\eqref{eq:88}
there is a unique lift $-\cct\in \cHC^4(\BNS)$, the desired universal
Chern-Simons-Weil class.  This gives, for each principal $G$~bundle with
connection~$\Theta $ over a smooth manifold~$M$, a differential
characteristic class~$-\cct(\Theta )\in \cHC^4(M)$.  We need a refinement to a
differential cocycle representative of this class in~$\cZd4_{\CC}(M)$;
see~\S\ref{subsec:9.1}.  This depends on a contractible choice; see
\cite[\S3.1]{F2} or \cite[\S3.3]{HS} for detailed constructions.  We use the
same symbol~$\cct$ for the differential cocycle representative, and make
clear whether it denotes the cocycle or cohomology class.
 
Suppose $P\xrightarrow{\;\;\pi \;\;}M\xrightarrow{\;\;p\;\;}S$ is an iterated
fiber bundle in which $\pi $~is a principal $\SLC$-bundle and the fibers
of~$p$ are manifolds with boundary of dimension~$n\le 3$.  Assume given an
orientation on~$p$, i.e., on the relative tangent bundle $T(M/S)\to M$.  Let
$\Theta \in \Omega ^1(P;\slc)$ be a connection.  We obtain a differential
cocycle~$-\cct(\Theta )\in \cZd4_{\CC}(M)$.  The Chern-Simons invariant of
this family of $\SLC$-connections is
  \begin{equation}\label{eq:89}
     \F{\SLC}(M\to S;\Theta )=\tpi\int_{M/S}(-\cct(\Theta )), 
  \end{equation}
a differential cochain in~$\cCd{4-n}_{\CC}(S)$; see~\S\ref{subsec:9.4} for
the integral.  For $n=3$ and assuming the fibers of $M\xrightarrow{\;p\;}S$
are closed, \eqref{eq:89}~is a function $S\to \CZo$, as in~\eqref{eq:12}.
For~$n=2$ and closed fibers \eqref{eq:89}~is a complex line bundle with
covariant derivative over~$M $, the Chern-Simons line bundle.  For~$n=3$ and
a fiber bundle of manifold with boundary, \eqref{eq:89}~is a section of the
Chern-Simons line bundle computed from the boundaries; see
Theorem~\ref{thm:48}.
 
Consider the pullback~$-\pi ^*\cct\in \cHC^4(\ENS)$ to the total space of the
universal bundle~\eqref{eq:86}.  Let $\Omega ^{\bullet }_{\ZZ}\subset \Omega
\mstrut _{\textnormal{cl}}$ denote the presheaf of closed differential forms
with integral periods.  The exact sequence
  \begin{equation}\label{eq:90}
     0\longrightarrow \frac{\Omega ^3(\ENS;\CC)}{\Omega
     ^3_{\ZZ}(\ENS;\CC)}\longrightarrow\cHC^4(\ENS)\longrightarrow H^4(\ENS;\CC) 
  \end{equation}
from \cite[(3.3)]{HS} reduces to an isomorphism (the middle map), since
$H^4(\ENS;\CC)=0$.  Hence $-\pi ^*\cct$ reduces to a 3-form modulo closed
3-forms with integral periods.  There is a canonical choice of
3-form,\footnote{Use~\eqref{eq:116} to deduce the existence of this 3-form.}
the Chern-Simons form $\eta \in \Omega ^3(\ENS;\CC)$; see~\eqref{eq:10}.  To
a triple $(Q\to M,\Theta ,s)$ which represents a map $M\to \ENS$, the
pullback of~$\eta $ to~$M$ is
  \begin{equation}\label{eq:91}
     -\frac{1}{8\pi ^2}\trace\bigl(\alpha \wedge d\alpha + \frac 23\alpha
     \wedge \alpha \wedge \alpha \bigr)\;\in \Omega ^3(M;\CC), 
  \end{equation}
where $\alpha =s^*\Theta \in \Omega ^1(M;\slc)$.  The Chern-Simons invariant
of an oriented family $P\xrightarrow{\;\pi \;}M\xrightarrow{\;p\;}S$ with
connection and trivialization $s\:M\to P$ can be computed by integrating the
3-form~\eqref{eq:91} over the fibers of~$p$.  For example, if the fibers
of~$p$ are closed of dimension~2, then the resulting 1-form on~$S$ is the
connection form of a trivialized complex line bundle over~$S$.

   \subsubsection{Restriction to the unipotent subgroup}\label{subsubsec:5.5.5}
 Recall the unipotent subgroup $U\subset \SLC$ defined in~\eqref{eq:35}.   

  \begin{definition}[]\label{thm:79}
 Let $M$~be a smooth manifold with boundary.

 \begin{enumerate}[label=\textnormal{(\arabic*)}]

 \item A flat
principal $\SLC$-bundle $P \to M$ is \emph{boundary-unipotent} if its restriction
$\partial P\to \partial M$ to the boundary admits a reduction to a flat
principal $U$-bundle.

 \item Such a $P \to M$ is \emph{boundary-reduced} if a reduction is chosen.

\item Stratified abelianization data $(P,Q,\mu,\theta)$ over $(M,\cN)$ is \textit{boundary-reduced}
if $P$ is boundary-reduced and $\theta(Q) \vert_{\partial M_0}$ lies in the $U$-bundle given by the reduction.

 \end{enumerate}  
  \end{definition}

\noindent
Note that a flat $\SLC$-bundle is boundary-unipotent iff on each boundary component the holonomies around
loops at a basepoint have a common eigenline. Moreover, if $(P,Q,\mu,\theta)$
is boundary-reduced, then $Q \vert_{\partial M}$ is a trivializable flat bundle, since $H \cap U = \{1\}$.

Let~$\BNU$ be the groupoid-valued sheaf of $U$-connections.  Then there is a
map $\BNU\to \BNS$. 

  \begin{lemma}[]\label{thm:80}
 The restriction of the universal second differential Chern class $\cct\in
\cHC^4(\BNS)$ to~$\BNU$ vanishes. 
  \end{lemma}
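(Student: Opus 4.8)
The plan is to use the defining exact sequence~\eqref{eq:88} for the group~$U$,
\begin{equation*}
  0 \longrightarrow \cHC^4(\BNU) \longrightarrow H^4(BU;\ZZ) \times \Omega^4_{\textnormal{cl}}(\BNU;\CC) \xrightarrow{\;\;-\;\;} H^4(BU;\CC),
\end{equation*}
which is injective on the left, so that a class in~$\cHC^4(\BNU)$ is detected by its underlying integral characteristic class together with its Chern--Weil curvature $4$-form. By naturality along the inclusion $U\hookrightarrow\SLC$, the image of~$\cct\res{\BNU}$ under the first map is the pair whose first entry is the restriction of~$c_2$ (equivalently~$-c_2$; see Convention~\ref{thm:69}) to~$BU$, and whose second entry is the restriction to~$\BNU$ of the Chern--Weil form attached to the invariant bilinear form~\eqref{eq:96} on~$\slc$. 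It therefore suffices to check that both entries vanish.

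For the first entry: since $U\cong\CC$ is contractible, so is~$BU$, whence $H^4(BU;\ZZ)=0$; this is exactly Proposition~\ref{thm:19}. For the second entry: restricting from~$\BNS$ to~$\BNU$ replaces the universal $\SLC$-connection by the universal $U$-connection, whose curvature is $\mathfrak{u}$-valued. But~$\mathfrak{u}$ is spanned over~$\CC$ by the nilpotent matrix $\left(\begin{smallmatrix}0&1\\0&0\end{smallmatrix}\right)$, whose square is zero, so the form~\eqref{eq:96} restricts identically to~$0$ on~$\mathfrak{u}\times\mathfrak{u}$. Hence the pulled-back Chern--Weil form is zero in~$\Omega^4_{\textnormal{cl}}(\BNU;\CC)$.

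Since both components of the image of~$\cct\res{\BNU}$ vanish and the left-hand map above is injective, we conclude $\cct\res{\BNU}=0$. I do not expect any serious obstacle here: the only point worth flagging is that ``vanishing'' in differential cohomology demands that both the topological datum and the curvature datum die, and here the former dies because~$BU$ is contractible while the latter dies because the invariant bilinear form on~$\slc$ degenerates in the unipotent direction.
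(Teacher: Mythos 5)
Your proof is correct and is essentially the paper's argument: the paper's (one-sentence) proof cites exactly the two vanishings you establish — $c_2$ restricts to zero because $U$ is contractible, and the bilinear form~\eqref{eq:96} vanishes on the nilpotent Lie algebra~$\mathfrak{u}$ — with the injectivity from~\eqref{eq:88} left implicit. Your write-up merely makes that detection-by-(class, curvature) step explicit, which is a fine elaboration rather than a different route.
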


  \begin{proof}
 Since $U$~is contractible, the restriction of~$c_2\in H^4(B\SLC;\ZZ)$
to~$H^4(BU;\ZZ)$ vanishes; also, the restriction of the bilinear
form~\eqref{eq:96} to the Lie algebra of~$U$ vanishes. 
  \end{proof}

Recall that we choose a differential cocycle representative of~$\cct$; see
the text following~\eqref{eq:96}.  Now choose a trivialization of its
restriction to~$\BNU$.   

  \begin{remark}[]\label{thm:81}
 With these choices, the Chern-Simons invariant of a boundary-reduced flat
$\SLC$-bundle is trivialized on the boundary.  For example, on a compact
2-manifold with boundary, the invariant is a complex line. 
  \end{remark}

   \subsubsection{A lemma in differential cohomology}\label{subsubsec:5.5.2}

Let $M$~be an oriented $n$-manifold with corners, equipped with the extra
structure of a bordism outlined  in~\S\ref{subsec:9.4}; let $S$~be a smooth
manifold, which plays the role of parameter space; and
suppose\footnote{$N=[0,1]\times M$ has the structure of a bordism: set 
  \begin{equation}\label{eq:133}
     \begin{aligned} N_0&=(0,1)\times M_0 \\ N\corn01&=\{0\}\times M_0 \\
      N\corn11&=\{1\}\times M_0 \\ N\corn\delta {j+1}&=M\corn\delta
     j,\qquad  j\ge 1,\quad \delta \in \{0,1\}.\end{aligned} 
  \end{equation}
} $\co\in \cZd
q_{\CC}\bigl(S\times [0,1]\times M \bigr)$ is a differential cocycle of some
degree~$q$.  Let $\omega \in \Omega ^q_{\ZZ}\bigl(S\times [0,1]\times M;\CC
\bigr)$ be the ``curvature'' of~$\co$, i.e., the differential form underlying
the differential cocycle~$\co$.  Let $\ddt$ denote the standard vector field
on~$[0,1]$, lifted to $S\times [0,1]\times M$, and let $\cdt$~denote its
action via contraction on differential forms.   Theorem~\ref{thm:48} implies
the following for $M$~closed.

  \begin{lemma}[]\label{thm:40}
 If $M$~is closed, then the integral
  \begin{equation}\label{eq:99}
     \int_{[0,1]\times M}\co\:\int_{\{0\}\times M}\co\longrightarrow
     \int_{\{1\}\times M}\co 
  \end{equation}
is a nonflat isomorphism of the differential cocycles on~$S$ computed in the
domain and codomain.  Its covariant derivative is
  \begin{equation}\label{eq:100}
     \int_{\zoM}\omega \;\in \Omega ^{q-n-1}(S;\CC). 
  \end{equation}
In particular, if $\cdt\omega =0$, then \eqref{eq:99}~is a flat isomorphism. 
  \qed
  \end{lemma}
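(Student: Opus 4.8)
The plan is to obtain Lemma~\ref{thm:40} as a direct application of Theorem~\ref{thm:48} to the product bordism $N=[0,1]\times M$ carrying the bordism structure spelled out in~\eqref{eq:133}. First I would verify that~\eqref{eq:133} really does present $N$ as a bordism from $\{0\}\times M$ to $\{1\}\times M$ in the sense of~\S\ref{subsec:9.4}: since $M$~is closed, $N_0=(0,1)\times M$, the only codimension-one faces are $N\corn01=\{0\}\times M$ and $N\corn11=\{1\}\times M$, and all higher corners are empty. Working over the parameter manifold~$S$, the cocycle $\co\in\cZd q_{\CC}(S\times[0,1]\times M)$ is then a differential cocycle on a family of such bordisms.

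Applying Theorem~\ref{thm:48} to~$\co$ gives the first two assertions at once. Because $M$~is closed of dimension~$n$, integration over the incoming and outgoing boundaries yields differential cocycles $\int_{\{0\}\times M}\co$ and $\int_{\{1\}\times M}\co$ of degree $q-n$ on~$S$, while the fiber integral $\int_{[0,1]\times M}\co$ is a differential cochain of degree $q-n-1$. By the Stokes property of fiber integration of differential cochains (\S\ref{subsec:9.4}), together with $\partial([0,1]\times M)=\{1\}\times M\amalg\bigl(-\{0\}\times M\bigr)$ when $M$~is closed, its coboundary is $\int_{\{1\}\times M}\co-\int_{\{0\}\times M}\co$; hence it is a morphism between these two cocycles. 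Differential cocycles on~$S$ of a fixed degree form a Picard groupoid, so every such morphism is an isomorphism. Finally, fiber integration commutes with passage to the underlying differential form, so the covariant derivative of $\int_{[0,1]\times M}\co$ is $\int_{[0,1]\times M}\omega$; since $\dim([0,1]\times M)=n+1$ this lies in $\Omega^{q-n-1}(S;\CC)$, and in general it is nonzero, which is the precise sense in which the isomorphism is nonflat.

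For the last assertion I would decompose the curvature along the $[0,1]$-factor of $S\times[0,1]\times M$, writing $\omega=\bigl(\omega-dt\wedge\cdt\omega\bigr)+dt\wedge\cdt\omega$, where the first summand carries no $dt$-leg. Carrying out the fiber integration over $[0,1]\times M$ in the order ``first over~$M$, then over~$[0,1]$''---legitimate because $M$~is closed, so there are no $M$-boundary contributions---the first summand integrates to a form on $S\times[0,1]$ with no $dt$-leg and hence vanishes upon integration over $[0,1]$, so $\int_{[0,1]\times M}\omega=\int_{[0,1]\times M}dt\wedge\cdt\omega$. When $\cdt\omega=0$ this is zero, and a morphism of differential cocycles is flat precisely when its covariant derivative vanishes; thus $\int_{[0,1]\times M}\co$ is then a flat isomorphism.

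I expect the only real work here to be bookkeeping rather than mathematics: matching the hypotheses and conclusion of Theorem~\ref{thm:48}, which is stated for general oriented bordisms with corners, to the very special product $[0,1]\times M$, and keeping the orientation and sign conventions consistent---in particular the orientation on $[0,1]\times M$ induced from that of~$M$ and the induced orientations on the two boundary copies. Everything else is a formal consequence of the behavior of fiber integration of differential cochains already recorded in~\S\ref{subsec:9.4} and in Theorem~\ref{thm:48}.
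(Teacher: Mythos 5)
Your proposal is correct and follows exactly the route the paper takes: the paper derives Lemma~\ref{thm:40} as an immediate consequence of Theorem~\ref{thm:48} applied to the bordism $[0,1]\times M$ with the structure~\eqref{eq:133}, with the covariant derivative identified via the compatibility of integration with curvature and the flatness claim following since $\cdt\omega=0$ forces $\int_{[0,1]\times M}\omega=0$. Your elaboration of these steps (Stokes property, commutation with the curvature map, and the $dt$-leg decomposition) is sound bookkeeping of the same argument.
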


If $M$~is a manifold with corners---a bordism of positive depth---then the
integrals of~$\co$ over~$\{0\}\times M$ and $\{1\}\times M$ are higher
morphisms in a groupoid of differential cochains on~$S$:
see~\S\ref{subsec:9.3}.  For example, if $M$~has depth~$\le2$, then the
integrals are 2-morphisms in~$\cG{q-n+2}S$, as depicted in~\eqref{eq:132}. 

  \begin{lemma}[]\label{thm:53}
 \ 
 \begin{enumerate}[label=\textnormal{(\arabic*)}]

 \item  If $M$~has corners of depth~$\le 2$ and $\cdt\omega =0$, then
$\int_{\zoM}\co$ is an isomorphism of 2-morphisms in $\cG{q-n+2}S$: 
  \begin{equation}\label{eq:134}
  \begin{tikzcd}[column sep=16ex]
   \int\mstrut _{\{0\}\times M\corn02}\rar[bend left=25,"\int\mstrut
   _{\{0\}\times M\corn11}", ""{below,yshift=-3pt,name=A} ] \rar[bend
   right=25,"\int\mstrut_{\{0\}\times M\corn01}"', ""{above,yshift=3pt,name=B} ] 
  & \int\mstrut_{\{0\}\times M\corn12}
   \arrow[Rightarrow, from=B, to=A, "\;\int\mstrut_{\{0\}\times M_0}"{right}]
  \end{tikzcd}
   \quad \xrightarrow{\quad\cong \quad}\quad 
  \begin{tikzcd}[column sep=16ex]
   \int\mstrut _{\{1\}\times M\corn02}\rar[bend left=25,"\int\mstrut
   _{\{1\}\times M\corn11}", ""{below,yshift=-3pt,name=A} ] \rar[bend
   right=25,"\int\mstrut_{\{1\}\times M\corn01}"', ""{above,yshift=3pt,name=B} ] 
  & \int\mstrut_{\{1\}\times M\corn12}
   \arrow[Rightarrow, from=B, to=A, "\;\int\mstrut_{\{0\}\times M_0}"{right}]
  \end{tikzcd}
  \end{equation}

 \item Suppose $\ct\in \cCd{q-1}(S\times \zo\times M)$ is a nonflat
trivialization of~$\co$ with covariant derivative $\tau \in \Omega
^{q-1}(S\times \zo\times M)$, and assume $\cdt\tau =0$.  Then the
isomorphism~\eqref{eq:134} preserves the nonflat trivializations and nonflat
isomorphisms in Theorem~\ref{thm:60}.
  \qed
 \end{enumerate}
  \end{lemma}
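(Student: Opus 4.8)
The plan is to deduce both assertions directly from Theorem~\ref{thm:48}, exactly as Lemma~\ref{thm:40} was deduced there in the closed case; the only new feature is the bookkeeping of the corner strata of $M$, and the two flatness statements come from the same contraction-with-$\ddt$ observation used for Lemma~\ref{thm:40}.

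First I would record the corner structure of the product bordism $N=\zoM$ given by~\eqref{eq:133}. Since $M$ has depth $\le2$, $N$ has depth $\le3$; its codimension-$1$ strata include the two ``end'' pieces $\{0\}\times M_0$ and $\{1\}\times M_0$, its codimension-$2$ strata include $\{0,1\}\times M\corn\delta1$, and its codimension-$3$ strata include $\{0,1\}\times M\corn\delta2$ (together in each codimension with the ``cylindrical'' strata $[0,1]\times M\corn\delta j$, which do not affect what follows). Feeding the degree-$q$ differential cocycle $\co$ into the fiberwise-integration machinery of Theorem~\ref{thm:48} for the fiber bundle $S\times N\to S$ produces a morphism at the top level of $\cG{q-n+2}S$, namely a $3$-morphism. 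Applying Theorem~\ref{thm:48} instead to the depth-$\le2$ bordism $\{0\}\times M$ identifies the source of this $3$-morphism as the $2$-morphism with objects $\int_{\{0\}\times M\corn02}\co$ and $\int_{\{0\}\times M\corn12}\co$, with $1$-morphisms $\int_{\{0\}\times M\corn01}\co$ and $\int_{\{0\}\times M\corn11}\co$ between them, and with filling $2$-cell $\int_{\{0\}\times M_0}\co$; symmetrically its target is the analogous $2$-morphism with $\{0\}$ replaced by~$\{1\}$. These are the two sides of~\eqref{eq:134}, so $\int_{\zoM}\co$ is a (possibly nonflat) $3$-morphism between them.

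Second I would run the flatness argument. Theorem~\ref{thm:48} identifies the covariant derivative of the integration morphism with $\int_{\zoM}\omega$, the fiberwise integral of the curvature form $\omega$ of $\co$ over the fibers of $S\times N\to S$. Writing this as an iterated integral $\int_M\bigl(\int_{[0,1]}\omega\bigr)$, the inner pushforward along $[0,1]$ extracts the component of $\omega$ along $dt$, which is $\pm\,\cdt\omega$; the hypothesis $\cdt\omega=0$ thus forces $\int_{\zoM}\omega=0$, so $\int_{\zoM}\co$ is flat, hence an isomorphism of $2$-morphisms inside the groupoid $\cG{q-n+2}S$. This is~(1). For~(2), I would integrate the nonflat trivialization $\ct$ of $\co$ over $\zoM$ and over each of its strata; by Theorem~\ref{thm:60} this produces exactly the data of compatibility between the nonflat trivializations and nonflat isomorphisms at the two ends of the cylinder. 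The covariant derivatives of these integrals are, again by Theorem~\ref{thm:48}, the corresponding fiberwise integrals of the covariant derivative $\tau$ of $\ct$, and the same iterated-integral computation, now with $\cdt\tau=0$ in place of $\cdt\omega=0$, shows they all vanish. Hence the compatibility is flat, i.e.\ the isomorphism of~(1) respects the nonflat trivializations and nonflat isomorphisms of Theorem~\ref{thm:60}, which is~(2).

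I expect the only real obstacle to be bookkeeping rather than mathematics: one must check that the stratum decomposition~\eqref{eq:133}, with the induced fiber orientations, makes the output of Theorem~\ref{thm:48} literally the labeled $3$-morphism drawn in~\eqref{eq:134}---in particular that source and target are not transposed and that no spurious signs appear---and that the cylindrical strata $[0,1]\times M\corn\delta j$ contribute to the higher structure as expected. There is no new idea beyond Lemma~\ref{thm:40}: everything sits inside Theorem~\ref{thm:48} together with the elementary fact that contracting away $\ddt$ annihilates every integral along $[0,1]$.
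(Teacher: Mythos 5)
Your argument is correct and is essentially the one the paper intends: the lemma is stated without proof precisely because it follows from the corner versions of Stokes' theorem in Appendix~\ref{sec:9} (Theorems~\ref{thm:48}, \ref{thm:52}, \ref{thm:60}) together with the same observation as in Lemma~\ref{thm:40}, namely that $\cdt\omega=0$ (resp.\ $\cdt\tau=0$) kills the fiberwise integral over $[0,1]\times M$ and so makes the integration morphism flat. Your bookkeeping of the strata of $\zoM$ via \eqref{eq:133} and the identification of source and target with the two sides of \eqref{eq:134} matches the paper's setup.
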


\noindent
 We omit the integrand~`$\co$' in~\eqref{eq:134} for readability.

  \begin{example}[]\label{thm:41}
 If $n=q-2$, then $\int_{\{i\}\times M}\co$, $i=0,1$, is a complex line
bundle $L_i\to S$ with connection, and \eqref{eq:99}~is an isomorphism
$L_0\to L_1$ of the underlying line bundles; its usual covariant derivative
is the 1-form~\eqref{eq:100}.
  \end{example}

  \begin{example}[]\label{thm:42}
 If $q=2$ and $M=\cir$, then \eqref{eq:100}~reduces to a well-known formula
for the ratio of holonomies of a line bundle with connection around the ends
of a cylinder. 
  \end{example}

  \begin{remark}[]\label{thm:43}
 If $\co$~is equipped with a nonflat trivialization, then so too are its
integrals over~$M$, and then \eqref{eq:99}~ becomes an equation in differential
forms which follows from the usual Stokes' theorem.  The assertion in
Lemma~\ref{thm:53}(2) is a variation for manifolds with corners.
  \end{remark}

   \subsubsection{Moving along unipotents}\label{subsubsec:5.5.3}

Theorem~\ref{thm:44} below is based on the fact that the bilinear
form~\eqref{eq:96} vanishes if $A$~is diagonal and $B$~is upper
triangular.
 
Let $M$~be an oriented manifold with corners of depth~$\le2$ and
dimension~$\le 3$.  Suppose $M=M_0\amalg M_{-1}\amalg M_{\twoastrat}\amalg M_{\threeastrat}$ is equipped
with an SN-stratification which satisfies $M_{\twobstrat}=M_{\threebstrat}=\emptyset $.  In
other words, $M=\mstrat{\gea}$.  Suppose $(\pi ,s)$ is a subordinate
spectral network.  Thus $\pi \:\tM\to M$ is a double cover and
$s\:M_{-1}\amalg \mstrat{\twoastrat} \amalg \mstrat{\threeastrat} \to \tM_{-1}\amalg \tM_{\twoastrat} \amalg \tM_{\threeastrat}$ is a section
of~$\pi $ over~$M_{-1}\amalg \mstrat{\twoastrat} \amalg \mstrat{\threeastrat}$.  Let $T\subset \SLC$ denote the
diagonal subgroup, and $\iota\: H\hookrightarrow \SLC$ its normalizer.
Suppose $\cA=(P,Q,\mu ,\theta )$ is stratified abelianization data of type
$(\SLC,T)$.  Thus

 \begin{enumerate}[label=\textnormal{(\arabic*)}]

 \item $P\to M$ is a principal $\SLC$-bundle with flat
connection~$\Theta _P$,

 \item $Q\to M$ is a principal $H$-bundle with flat
connection~$\Theta _Q$,

 \item $\mu \:\tM\to Q/T$ is an isomorphism of double
covers, and 

 \item $\theta \:\iota (Q)\to P$ is an isomorphism of flat principal
$\SLC$-bundles over~$M_0$.

 \end{enumerate}
Furthermore, let $U\subset \SLC$ be the
subgroup~\eqref{eq:35} of unipotent matrices.  Then we require that the
discontinuity of~$\theta $ along~$M_{-1}$ lie in~$U$, relative to the
reduction of $Q\to M_{-1}$ to a principal $T$-bundle given by the
section~$s$.
 
Our task is to compute the Chern-Simons invariants~\eqref{eq:89} of the flat
$\SLC$-bundles $\iota (Q)\to M$ and $P\to M$.  To state the theorem we posit
a family of this data over a smooth manifold~$S$.  Thus we work over $S\times
M$; the connections $\Theta _P,\Theta _Q$ over~$S\times M$ are only assumed
flat along~$M$.

  \begin{theorem}[]\label{thm:44}
 There is a natural flat isomorphism 
  \begin{equation}\label{eq:101}
     \F{\SLC}\bigl(S\times M\to S,\iota (\Theta
     _Q)\bigr)\xrightarrow{\;\;\cong \;\;} \F{\SLC}\bigl(S\times M\to
     S,\Theta _P \bigr). 
  \end{equation}
  \end{theorem}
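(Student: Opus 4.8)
\emph{Proof approach.} The plan is to interpolate between the two flat connections through a family over $S\times \zo\times M$ built so that the associated Chern--Simons--Weil $4$--form is \emph{independent} of the interpolation parameter, and then to apply the cylinder formula (Lemma~\ref{thm:40} when $M$ is closed, and Lemma~\ref{thm:53} in general) to conclude that the induced comparison of Chern--Simons invariants is a flat isomorphism.

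First I would describe $P\to M$ as a \emph{re-gluing} of $\iota(Q)\to M$ across the walls. Over $M_0$ the isomorphism~$\theta$ identifies $\iota(\Theta_Q)$ with $\Theta_P$, so the only nontrivial transition data is recorded at $M_{-1}$: as one crosses a wall $w\subset M_{-1}$ the two limits of~$\theta$ differ by a section~$u_w$ of the bundle of groups $U_w\to w$ of~\eqref{eq:180}, which extends off~$w$ into a collar using the flat structure (and extends across the Type~a strata of codimension~$2$ and~$3$ using the structure of the spectral network near their links). Thus $P$ is obtained from $\iota(Q)$ by replacing the transition across each wall by the $U$--valued cocycle~$\{u_w\}$. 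Over $S\times \zo\times M$ I then define a principal $\SLC$--bundle~$\mathcal P$ by the same recipe with the cocycle $\{u_w^{(t)}\}$, where $u_w^{(t)}=\rho(t)\cdot u_w$ under scalar multiplication in the rank one vector bundle of groups~$U_w$, for a fixed $\rho\:\zo\to\zo$ equal to~$0$ near~$0$ and to~$1$ near~$1$; then $\mathcal P\res{t=0}\cong\iota(Q)$ and $\mathcal P\res{t=1}\cong P$. Equip~$\mathcal P$ with the connection~$\Xi$ that on the patches coming from~$\iota(Q)$ is $\iota(\Theta_Q)$ transported by the transitions~$u_w^{(t)}$ (these patch across the walls by the definition of gauge transform), flat along~$M$, and restricting to~$\iota(\Theta_Q)$ at $t=0$ and to~$\theta^*\Theta_P$---i.e.\ to~$\Theta_P$ under the identification above---at $t=1$.

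The crux is that the Chern--Weil $4$--form of~$\Xi$ does not depend on~$t$. Indeed, locally on each collar~$\Xi$ differs from the $t$--independent connection~$\iota(\Theta_Q)$ only by a gauge transformation, and $\langle\Omega\wedge\Omega\rangle$ is gauge invariant; hence $\langle\Omega_\Xi\wedge\Omega_\Xi\rangle$ equals everywhere the pullback to $S\times\zo\times M$ of $\langle\Omega_{\iota(\Theta_Q)}\wedge\Omega_{\iota(\Theta_Q)}\rangle$ from $S\times M$, and in particular $\cdt\langle\Omega_\Xi\wedge\Omega_\Xi\rangle=0$. (Concretely, $\iota(\Theta_Q)$ is valued in the abelian Lie algebra~$\mathfrak{t}$ of~$H$ and~$\Xi$ in the Borel~$\mathfrak{b}=\mathfrak{t}\oplus\mathfrak{u}$ with $\mathfrak{t}$--component the pullback of~$\iota(\Theta_Q)$; since $\trace$ vanishes on $\mathfrak{t}\cdot\mathfrak{u}$ and on $\mathfrak{u}\cdot\mathfrak{u}$, the form~\eqref{eq:96} of~$\Xi$ sees only the $t$--independent $\mathfrak{t}$--component---this is the differential--geometric shadow of Proposition~\ref{thm:19} and Lemma~\ref{thm:80}.) Let $\co=-\cct(\Xi)$ be a differential cocycle representative as after~\eqref{eq:96}, whose curvature~$\omega$ is a constant multiple of $\langle\Omega_\Xi\wedge\Omega_\Xi\rangle$, so $\cdt\omega=0$. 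By~\eqref{eq:89} the integrals $\int_{\{0\}\times M}\co$ and $\int_{\{1\}\times M}\co$ are exactly $\F{\SLC}\bigl(S\times M\to S,\iota(\Theta_Q)\bigr)$ and $\F{\SLC}\bigl(S\times M\to S,\Theta_P\bigr)$; Lemma~\ref{thm:40} (for $M$ closed) and Lemma~\ref{thm:53}(1) (in general) then say the cylinder integral $\int_{\zo\times M}\co$ is a flat isomorphism, which is~\eqref{eq:101}. Naturality in~$S$ and in morphisms of abelianization data is built into the construction, and independence of the auxiliary choices ($\rho$, the collar extensions, the cocycle representative) follows by running the cylinder argument on a homotopy between two choices. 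When~$\cA$ is boundary-reduced (Definition~\ref{thm:79}(3)) the connections are $U$--valued near~$\bM$, so Lemma~\ref{thm:53}(2)---with~$\ct$ the chosen trivialization of~$\cct\res{\BNU}$, whose covariant derivative vanishes since~\eqref{eq:96} dies on~$\mathfrak{u}$---shows~\eqref{eq:101} respects the boundary trivializations of Remark~\ref{thm:81}.

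The main obstacle is bookkeeping rather than conceptual: building~$\Xi$ globally in a way compatible with the entire SN--stratification (extending $\{u_w\}$ and $\{u_w^{(t)}\}$ across the Type~a strata of codimension~$2$ and~$3$ and across points where walls meet~$\bM$ transversely), and carefully tracking the higher--morphism structure carried by $\F{\SLC}(S\times M\to S,-)$ when~$M$ has corners of depth~$\le2$, so that ``flat isomorphism'' in~\eqref{eq:101} is read in the sense of Lemma~\ref{thm:53}. Once that framework is in place, the heart of the proof---the $t$--independence of the Chern--Weil integrand---is the short gauge--invariance computation indicated above.
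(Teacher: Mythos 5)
Your overall strategy is the same as the paper's: interpolate over $S\times\zo\times M$, show the Chern--Weil $4$-form satisfies $\cdt\omega=0$, invoke Lemma~\ref{thm:40}/Lemma~\ref{thm:53}, and dispose of the auxiliary choices by rerunning the argument with one extra parameter. But the argument you put forward as the crux --- that $\Xi$ is locally a gauge transform of the $t$-independent pullback of $\iota(\Theta_Q)$, so gauge invariance forces $\langle\Omega_\Xi\wedge\Omega_\Xi\rangle$ to be the pullback form --- is not correct. Your regluing cocycle $u_w^{(t)}=e^{\rho(t)X_w}$ depends on $t$, so on any open set of $S\times\zo\times M$ meeting a wall the comparison with the pulled-back connection is by a $t$-dependent gauge transformation $g$, and the connection changes by $g^{-1}dg$, whose $dt$-component $\rho'(t)X_w\,dt$ is nonzero; the curvature of the interpolating connection genuinely differs from the pullback curvature by $\mathfrak{u}$-valued terms (exactly the $dt\wedge\alpha$ and $t\,d_{\iota(\Theta_Q)}\alpha$ terms in \eqref{eq:106}), so the gauge-equivalence only holds fiberwise in $t$, not over neighborhoods in $\zo\times M$. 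A telling sanity check: your gauge-invariance argument never uses that the discontinuity is unipotent, yet the statement fails for a general covariantly constant regluing cocycle (e.g.\ $T$-valued jumps change holonomies and Chern--Simons invariants). What actually carries the proof is your parenthetical remark: the curvature decomposes as the ($t$-independent, $\mathfrak{t}$-valued) $\iota(\Omega(\Theta_Q))$ plus $\mathfrak{u}$-valued terms, and the form \eqref{eq:96} annihilates $\mathfrak{t}\cdot\mathfrak{u}$ and $\mathfrak{u}\cdot\mathfrak{u}$, whence $\cdt\omega=0$ as in \eqref{eq:155}. That trace computation, not gauge invariance, is the heart of the matter --- and it is precisely the paper's.

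The construction of $\Xi$ also needs repair before that computation can be run. Cutting along the walls and regluing with a $t$-dependent cocycle while declaring the connection to ``be $\iota(\Theta_Q)$ on the patches'' does not define a connection: along $\zo\times w$ the two connection forms being identified differ by $u^{-1}du$, whose $dt$-component does not vanish, so they fail to match; and even at fixed $t$, gluing smooth data along a hypersurface requires a collar-type smoothing. The paper avoids both issues by never cutting: it applies the gauge transformation $e^{-\phi X}$ on $M_0$, cut off by the odd function $\phi$ of \eqref{eq:146} in a tubular neighborhood of $M_{-1}$, so that $\theta\circ\varphi$ extends smoothly across the walls, and then interpolates affinely, $\Xi_t=\iota(\Theta_Q)+t\alpha$ with $\alpha$ nilpotent-valued by \eqref{eq:151}--\eqref{eq:153}. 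If you adopt that device (or otherwise smooth your regluing in a collar) and promote the trace argument from parenthesis to main step, your proof coincides with the paper's; your handling of corners, of the boundary-reduced case via Lemma~\ref{thm:53}(2), and of independence of choices is then as intended.
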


\noindent
 Intuitively, moving a connection in unipotent directions does not affect the
$\SLC$ Chern-Simons invariant.  In the proof we construct a flat isomorphism
which depends on a set of choices, and then we check that the isomorphism is
independent of the choices.

\insfig{tubular-neighborhood}{0.5}{The tubular neighborhood~$\sU$.}

  \begin{proof}
 Fix a smooth function $\phi \:\RR^{\neq 0}\to \RR^{\neq 0}$ which is odd and
satisfies 
  \begin{equation}\label{eq:146}
     \phi (x)=\begin{cases} 0,&x\le-1;\\-\frac 12,&-\frac 12\le x<0\\\frac
     12,&0<x\le\frac 12\\0,&1\le x.\end{cases} 
  \end{equation}
Furthermore, require that $\phi $~ be monotonic nonincreasing on~$\RR^{<0}$
and~$\RR^{>0}$.  Choose a tubular neighborhood of $M_{-1}\subset M$: an open
subset $\sU\subset M_{\ge-1}$ which contains~$M_{-1}$, a surjective
submersion $\rho \:\sU\to M_{-1}$, and an isomorphism of~$\rho $ with the
normal bundle $\nu \to M_{-1}$ to $M_{-1}\subset M_{\ge-1}$.  (See
Figure~\ref{fig:tubular-neighborhood}.)  Fix an inner product on $\nu \to M_{-1}$.  Let $R\to
M_{-1}$ be the reduction of $Q\to M_{-1}$ to a principal $T$-bundle; it is
defined via the section~$s$ and isomorphism~$\mu $.  Locally, for each
orientation of $\nu \to M_{-1}$ the discontinuity in~$\theta $ along~$M_{-1}$
is a section~$u$ of the bundle $S\times R\times _{T}U\to S\times M_{-1}$ of
unipotent groups.  Under reversal of orientation, $u$~maps to~$u\inv $.
Globally, write $u=e^X$ for $X$~a section of the bundle of Lie algebras
  \begin{equation}\label{eq:147}
     S\times R\times \mstrut _{\!T}\mathfrak{u}\to S\times M_{-1} 
  \end{equation}
twisted by the orientation bundle of $\nu \to M_{-1}$.  Extend~$X$ to~$\sU$
using parallel transport along the fibers of $\rho \:\sU\to M_{-1}$.  The
inner product on the normal bundle identifies each fiber of~$\rho $
with~$\RR$ after choosing an orientation of the normal bundle.  Hence the
product~$\phi X$ is a well-defined section of the pullback of~\eqref{eq:147}
over $S\times (\sU\cap M_0)$.  It extends\footnote{Since the codomain
of~$\phi X$ does not so extend, this is not strictly correct.  What we mean
simply is that in formulas below replace~$\phi X$ by~`0' on $S\times
(M_0\setminus\, \sU)$.} by zero to $S\times M_0$. 

We now construct a connection~$\Xi $ on the principal $\SLC$-bundle
  \begin{equation}\label{eq:148}
     \sQ=\zo\times S\times \iota (Q)\to \zo\times S\times M
  \end{equation}
whose restriction to $\{0\}\times S\times M$ is isomorphic to~$\iQ$ and whose
restriction to $\{1\}\times S\times M$ is isomorphic to~$\Theta _P$.  First,
set $\Xi \res{\{0\}\times S\times M}=\iQ$.  Then over $\{1\}\times S\times
M_0$ let $\varphi $ be the gauge transformation of the restriction
of~\eqref{eq:148} which equals $e^{-\phi X}$ on $\{1\}\times S\times (\sU\cap
M_0) \subset \zo\times S\times (\sU\cap M_0)$ and is the identity map on
$\{1\}\times S\times (M_0\setminus\, \sU)$.  Construct an isomorphism
  \begin{equation}\label{eq:102}
     \psi \:\sQ\Res{S\times \{1\}\times M}\xrightarrow{\;\;\cong \;\;} P 
  \end{equation}
which equals~$\theta \circ \varphi $ on $\{1\}\times S\times M_0$; it extends
over $\{1\}\times S\times (M_{-1}\amalg M_{\twoastrat} \amalg M_{\threeastrat})$ 
using the fact that $\theta
$~ jumps by~$u$ on $\{1\}\times S\times M_{-1}$.  Set $\Xi \res{\{1\}\times
S\times M} = \psi ^*(\Theta _P)$.  Finally, define~$\Xi $ on~$\zo_t\times
S\times M$ by affine interpolation between the specified connections~$\Xi
_0,\Xi _1$:
  \begin{equation}\label{eq:150}
     \Xi _t = \iQ + t\alpha ,\qquad \alpha =\Xi _1-\Xi _0. 
  \end{equation}
Then $\alpha $~is a 1-form on~$S\times M$ with values in the adjoint bundle
of Lie algebras isomorphic to~$\slc$; it has support on $S\times \sU$.  We
claim that $\alpha $~takes values in the subbundle~\eqref{eq:147} of
nilpotent subalgebras, extended over~$S\times \sU$ by parallel transport
along the fibers of~$\rho $.  Namely, on $S\times (\sU\cap M_0)$ we have 
  \begin{equation}\label{eq:151}
     \alpha =\left[ \Ad_{e^{-\phi X}}\bigl(\iQ \bigr) - \iQ \right] +
     d_{\iQ}(\phi X). 
  \end{equation}
The second term clearly lies in the nilpotent subalgebra.  For the first,
observe  
  \begin{equation}\label{eq:153}
     \begin{pmatrix} 1&x\\0&1 \end{pmatrix}\begin{pmatrix} y&\phantom{-}0\\0&-y
     \end{pmatrix}\begin{pmatrix} 1&-x\\0&\phantom{-}1 \end{pmatrix} -
     \begin{pmatrix} 
     y&\phantom{-}0\\0&-y \end{pmatrix} = \begin{pmatrix}
     0&-2xy\\0&0 \end{pmatrix}  
  \end{equation}
is nilpotent.
 
Let $\co=\cct(\Xi )$ be the Chern-Simons-Weil differential cocycle.  As
in~\eqref{eq:134}, 
  \begin{equation}\label{eq:104}
     \tpi\int_{[0,1]\times M}\co\:\F{\SLC}\bigl(S\times M\to S,\iota (\Theta _Q)
     \bigr) \longrightarrow \F{\SLC}\bigl(S\times M\to S,\Theta _P \bigr) 
  \end{equation}
is an isomorphism.  We claim that it is a \emph{flat} isomorphism.  By
Lemma~\ref{thm:53} it suffices to show that $\cdt\omega =0$, where
  \begin{equation}\label{eq:154}
     \omega = \langle \Omega (\Xi ),\Omega (\Xi )  \rangle
  \end{equation}
is the Chern-Weil 4-form of~$\Xi $.  The only nonzero contribution
to~$\cdt\omega $ is potentially on $\zo\times \supp \alpha \subset \zo\times
S\times \sU$.  From~\eqref{eq:150} we compute the curvature
  \begin{equation}\label{eq:106}
     \Omega (\Xi ) = \iota \bigl(\Omega (\Theta _Q) \bigr) + dt\wedge \alpha  +
     td_{\iQ}\alpha  + \frac{t^2}{2}[\alpha \wedge \alpha ].
  \end{equation}
The last term vanishes since the Lie algebra~$\mathfrak{u}$ of the unipotent
group is abelian.  The first term in~\eqref{eq:106} takes values in the
diagonal subalgebra $\mathfrak{t}\subset \slc$ and the other terms take
values in the nilpotent subalgebra~$\mathfrak{u}\subset \slc$.  It follows
that  
  \begin{equation}\label{eq:155}
     \cdt\omega =2\langle \alpha \wedge \iota \bigl(\Omega (\Theta _Q) \bigr)
     \rangle + 2t\langle \alpha \wedge d_{\iQ}\alpha \rangle = 0. 
  \end{equation}

It remains to prove that \eqref{eq:104}~is independent of the choices of
$\phi ,\sU,\rho $ and the isomorphism of~$\rho $ with the normal bundle.  Any
two sets of choices can be joined by a path, so we extend the previous setup
by taking the Cartesian product with $[0,1]_r$.  If $\tom$~is the resulting
Chern-Weil 4-form, then  $\cdr\tom=0$ by a similar argument.
  \end{proof}

   \subsubsection{A theorem in $T=\Cx$ Chern-Simons theory}\label{subsubsec:5.5.4}

 Recall from~\S\ref{subsubsec:5.2.1} and \cite[Appendix~A]{FN} the
characteristic class~$\lambda \in E^4(\BC)$ and its differential refinement
$\cl\in \CE^4(\BNC)$, the universal differential class.  The
class~$\lambda $ is the image of the generator under a quadratic function
  \begin{equation}\label{eq:135}
     q\:H^2(\BC;\ZZ)\longrightarrow E^4(\BC) 
  \end{equation}
which for $c,c'\in H^2(\BC;\ZZ)$ satisfy
  \begin{align}
      2q(c)&=c\smile c = c^2 \label{eq:136}\\ 
      jq(c)&=\bar c \label{eq:137}\\ 
      q(c+c')&=q(c)+q(c')+ i(c\smile c'), \label{eq:138}
  \end{align}
where $j\:E^4(\BC)\to H^2(BC;\zt)$ and $i\:H^4(\BC;\ZZ)\to E^4(\BC)$ are the
maps in~\eqref{eq:53}, and we denote $\bar c=c\pmod2$.  The differential
refinement
  \begin{equation}\label{eq:139}
     \cq\:\cHC^2(\BNC)\longrightarrow \CE^4(\BNC) 
  \end{equation}
satisfies analogous properties.  We implicitly use refinements of~$q,\cq$ to
 cochains.  Recall that integration\footnote{In~\eqref{eq:140} we also use
 the integral symbol for the pairing of a mod~2 cohomology class with the
 fundamental class.} of $E$-cocycles over a manifold~$M$ requires a spin
 structure~$\sigma $ on~$M$.  Furthermore, if $\delta \in H^1(M;\zt)$ is the
 class of a double cover over~$M$, and we write the shifted spin structure
 as\footnote{Our notation conflates a double cover and its equivalence class,
 an overload we also deploy in this section for spin structures and
 $\Cx$-connections.} $\sigma \to \sigma +\delta $, then
  \begin{equation}\label{eq:140}
     \int_{M,\sigma +\delta }q(c) = \int_{M,\delta }q(c) + \frac
     12\int_{M}\delta \smile\bar c, 
  \end{equation}
where $\frac 12\:\zt\hookrightarrow \CC/\ZZ$; see \cite[Proposition~4.4]{F3}
and \cite[Theorem~3.9 (ii)]{FN}.  For $\dim M=3$ and $M$~closed,
\eqref{eq:140}~is an equation in~$\CZ$; for manifolds with boundary and
manifolds of lower dimension it is a canonical isomorphism of cochains in
$E$-cohomology theory.  We use the differential refinement
of~\eqref{eq:140}. 
 
Not only do double covers shift spin structures, but they also shift
$\Cx$-bundles via the homomorphism $\zt\hookrightarrow \Cx$.  The
corresponding shift of Chern classes is via the integer Bockstein 
  \begin{equation}\label{eq:141}
     \beta \:H^1(M;\zt)\longrightarrow H^2(M;\ZZ). 
  \end{equation}
The differential refinement shifts $\Cx$-connections by a flat
$\Cx$-connection of order two.

The following is a restatement of part of Lemma~\ref{thm:21}. 

  \begin{lemma}[]\label{thm:54}
 $\CE^4(\Bbmut)\cong \zmod4$ with generator~$\cq\bigl(\cb(\delta ) \bigr)$ for
$\delta \in H^1(\Bbmut;\zt)$ the nonzero class.  Also, $2\cq\bigl(\cb(\delta )
\bigr)$~is the image of~$\delta ^3$ under the map $\phi \:H^3(\Bbmut;\zt)\to
\CE^4(\Bbmut)$.  
  \end{lemma}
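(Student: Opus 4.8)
The plan is to deduce Lemma~\ref{thm:54} directly from the computation already carried out in Lemma~\ref{thm:21}(2) together with the formal properties \eqref{eq:136}--\eqref{eq:138} of the quadratic refinement $q$ and its differential version $\cq$. First I would note that since we are working with the classifying space $\Bbmut$, which carries no continuously varying connection data in a meaningful sense, the differential $E$-cohomology $\CE^4(\Bbmut)$ agrees with the topological $E$-cohomology $E^4(\Bbmut)$; more precisely the curvature/characteristic-form part of a differential cocycle on the (discrete) classifying object is forced to vanish, so the forgetful map $\CE^4(\Bbmut)\to E^4(\Bbmut)$ is an isomorphism. Hence Lemma~\ref{thm:21}(2) already tells us $\CE^4(\Bbmut)\cong\zmod4$.

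Next I would identify the generator. Let $\delta\in H^1(\Bbmut;\zt)$ be the nonzero class and let $\cb(\delta)\in\cHC^2(\Bbmut)$ (equivalently $\beta(\delta)\in H^2(\Bbmut;\ZZ)$ in the topological setting) be its integral Bockstein; this is exactly the generator $a\in H^2(\Bbmut;\ZZ)$ appearing in \S\ref{subsubsec:5.1.1}, which has order $2$. Applying the quadratic refinement, $\cq(\cb(\delta))\in\CE^4(\Bbmut)$. By property \eqref{eq:136}, $2\cq(\cb(\delta)) = \cb(\delta)^2 = a^2 = i(a^2)$, and in the proof of Lemma~\ref{thm:21}(2) it was shown that $2\alpha = i(a^2)$ where $\alpha$ is a generator of the cyclic group of order $4$; since $i(a^2)\neq 0$ (because $a^2$ has order $2$ in $H^4$ and $i$ is injective here by the exactness of \eqref{eq:67}), $\cq(\cb(\delta))$ is an element of $\zmod4$ whose double is nonzero, hence it is itself a generator. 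This pins down $\cq(\cb(\delta))$ as a generator, and in fact identifies it with $\alpha$ up to sign, which is all that is needed. (One can double-check compatibility with the explicit formula $\alpha=\lambda(L\oplus L)$ by noting that $\lambda$ restricted to a complex line with Chern class $c$ is $q(c)$, and $\cb(\delta)=\beta(\delta)=c_1(L_\CC)$ for $L_\CC$ the complexified Hopf bundle, whose two copies match $L\oplus L$ after complexification—but this is not logically necessary.)

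Finally, for the second assertion I would use property \eqref{eq:137}: $j\bigl(\cq(\cb(\delta))\bigr) = \overline{\cb(\delta)} = \delta^2\in H^2(\Bbmut;\zt)$, the reduction mod~$2$ of $\beta(\delta)$ being $Sq^1\delta=\delta^2$. Now recall from \eqref{eq:54} that $2\cdot(-)$ on $E^4$ factors through $i$, and more precisely from the general structure of the extension \eqref{eq:53} that for a class $e\in E^4$ with $j(e)=\bar c$, one has $2e = i(\text{something})$ while $e$ itself maps under the connecting/cup structure to $j(e)=\bar c$; combined with the fact that $\beta\circ Sq^2$ is the $k$-invariant, the element $2\cq(\cb(\delta))$, having $k$-image $\cb(\delta)^2$, is exactly the image of $\delta^3=\delta\smile\delta\smile\delta$ under the composite $H^3(\Bbmut;\zt)\xrightarrow{\phi}\CE^4(\Bbmut)$; here $\phi$ is the map fitting into \eqref{eq:53} that sends a degree-$3$ mod~$2$ class to the corresponding $E$-class of degree $4$ (this is the map denoted $\phi$ in Lemma~\ref{thm:54}, analogous to the map $\phi\:H^3(\Bbmut;\zt)\to \CE^4(\Bbmut)$ used for the $\bmut$ Chern-Simons theory in Remark~\ref{thm:37}). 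I expect the main obstacle to be bookkeeping the identification $2\cq(\cb(\delta))=\phi(\delta^3)$ cleanly: one must track through the long exact sequence \eqref{eq:54} and the definition of $\phi$ to see that multiplication by $2$ on the generator $\cq(\cb(\delta))$ lands precisely in the image of $\delta^3$ rather than merely in the image of $i$ of \emph{some} integral class; this follows because $\delta^3 = \delta\smile\beta(\delta)\pmod 2$ is nonzero in $H^3(\Bbmut;\zt)\cong\zt$ and the composite $H^3(-;\zt)\to H^4(-;\ZZ)$ sending $\delta^3$ is exactly $\beta(\delta^3)=\beta(\delta\smile\delta^2)=\cb(\delta)^2 = a^2$, matching $2\cq(\cb(\delta))$. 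Everything else is a formal consequence of the already-established isomorphism $\CE^4(\Bbmut)\cong\zmod4$ and the axioms \eqref{eq:136}--\eqref{eq:138}.
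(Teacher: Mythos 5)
Your argument is correct and follows essentially the same route as the paper's (very terse) proof: identify $\CE^4(\Bbmut)$ with $E^4(\Bbmut)$ because there are no nonzero forms on the classifying object (the paper cites the main theorem of \cite{FH2} for this), quote Lemma~\ref{thm:21}(2) for the $\zmod4$, and use \eqref{eq:136} together with $a^2=\beta(\delta^3)$ for the generator statement and the identification of $2\cq\bigl(\cb(\delta)\bigr)$ with $\phi(\delta^3)$; your order-four argument for why $\cq\bigl(\cb(\delta)\bigr)$ generates is a clean substitute for the paper's implicit pullback of $\lambda$. One slip of phrasing: it is $k\bigl(\cq(\cb(\delta))\bigr)=\cb(\delta)^2$, equivalently $2\cq\bigl(\cb(\delta)\bigr)=i\bigl(\cb(\delta)^2\bigr)$, whereas $k\bigl(2\cq(\cb(\delta))\bigr)=0$ since $2a^2=0$; the surrounding comparison of both sides with $i(a^2)$ is what actually carries the argument, and that part is fine.
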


  \begin{proof}
 The main theorem in~\cite{FH2} implies $\CE^4(\Bbmut)\to E^4(\Bbmut)$ is an
isomorphism.  Now apply Lemma~\ref{thm:21}(2).  For the last assertion
apply~\eqref{eq:136}.
  \end{proof}

Our main result in this section expresses the change of spin
$\Cx$~Chern-Simons invariants under the simultaneous shift of spin structure
and $\Cx$-connection by a double cover. We express our result as a relation
among 3-dimensional invertible field theories whose background fields are
independent choices of: a spin structure~$\sigma $, a double cover~$\delta $,
and a $\Cx$-connection~$\cc$.  The partition functions which define the
theories are:
  \begin{equation}\label{eq:142}
     \begin{aligned} \alpha _1(\sigma ,\delta ,\cc) &=
      \textnormal{spin $\Cx$~Chern-Simons invariant of~$\cc$ in spin
      structure~$\sigma$} \\ \alpha _2(\sigma ,\delta ,\cc) &=
      \textnormal{spin $\Cx$~Chern-Simons invariant of~$\cc+\cb(\delta )$ in
     spin 
      structure~$\sigma+\delta $} \\ \alpha _3(\sigma ,\delta ,\cc) &=
      \textnormal{integral of~$3 \cq\bigl(\cb(\delta ) \bigr)$ in spin
      structure~$\sigma$} \\ \end{aligned} 
  \end{equation} 
The theory~$\alpha _3$ is topological (of order~4).  The Chern-Simons
invariants in~$\alpha _1,\alpha _2$ are based on~$\lambda $, and so are
computed by integrating~$q$, the quadratic
function~\eqref{eq:135}. 
 
The proofs in the rest of this section draw on the material in
Appendix~\ref{sec:10}. 

  \begin{theorem}[]\label{thm:55}
 There is an isomorphism $\alpha  _1\otimes \alpha _3\cong \alpha _2$ of
invertible field theories. 
  \end{theorem}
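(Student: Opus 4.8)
The plan is to compute all three partition functions through the universal differential class $\cl\in\CE^4(\BNC)$ and the cochain-level identities~\eqref{eq:136}--\eqref{eq:140}, and then to promote the resulting identity to an isomorphism of invertible field theories using the fiber integration of differential cochains (Appendix~\ref{sec:10} and \cite[Appendix~A]{FN}). Following the paper's conventions I conflate a $\Cx$-connection with its differential Chern class; let $c\in H^2(M;\ZZ)$ denote the underlying integral Chern class of $\cc$, and recall $\bar c=c\bmod 2$. The shift $\cc\mapsto\cc+\cb(\delta)$ adds the flat differential class $\cb(\delta)$ of order two, whose curvature vanishes and whose underlying integral class is $\beta(\delta)$, with $\overline{\beta(\delta)}=Sq^1\delta=\delta^2$; the shift $\sigma\mapsto\sigma+\delta$ of spin structures is governed by~\eqref{eq:140}.

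First I would expand the integrand of $\alpha_2$ using the differential refinement of the bilinearity~\eqref{eq:138}:
\[
  \cq\bigl(\cc+\cb(\delta)\bigr)=\cq(\cc)+\cq\bigl(\cb(\delta)\bigr)+i\bigl(\cc\smile\cb(\delta)\bigr),
\]
so that $\alpha_2(\sigma,\delta,\cc)$ is the integral in the spin structure $\sigma+\delta$ of $\cl(\cc)+\cq(\cb(\delta))+i(\cc\smile\cb(\delta))$, where $i$ is the map of~\eqref{eq:53}. Now I would strip off the spin-structure shift term by term via~\eqref{eq:140}: the first term yields $\alpha_1(\sigma,\delta,\cc)$ plus the correction $\tfrac12\int_M\delta\smile\bar c$; the cross term $i(\cc\smile\cb(\delta))$ is $i$ of a class in ordinary differential cohomology, hence spin-independent, and since it is a product with the flat class $\cb(\delta)$ one computes $\int_M i(\cc\smile\cb(\delta))=\tfrac12\int_M\bar c\smile\delta$; the second term yields $\int_{M,\sigma}\cq(\cb(\delta))$ plus $\tfrac12\int_M\delta\smile\overline{\beta(\delta)}=\tfrac12\int_M\delta^3$. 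Because $\delta\smile\bar c=\bar c\smile\delta$ in $H^3(M;\zt)$ and the coefficient map $\zt\hookrightarrow\CC/\ZZ$ is $1\mapsto\tfrac12$, the two $\tfrac12$-terms in $\bar c$ add up to zero, so they cancel. Finally, by~\eqref{eq:136} one has $2\cq(\cb(\delta))=\cb(\delta)\smile\cb(\delta)$, whose integral over the closed $3$-manifold $M$ is $\tfrac12\int_M\delta^3$ (equivalently, $2\cq(\cb(\delta))=\phi(\delta^3)$ by Lemma~\ref{thm:54}, and $\int_{M,\sigma}\phi(\delta^3)=\tfrac12\int_M\delta^3$); hence the leftover $\tfrac12\int_M\delta^3$ equals $2\int_{M,\sigma}\cq(\cb(\delta))$. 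Assembling,
\[
  \alpha_2=\alpha_1+\int_{M,\sigma}\cq\bigl(\cb(\delta)\bigr)+2\int_{M,\sigma}\cq\bigl(\cb(\delta)\bigr)=\alpha_1+3\int_{M,\sigma}\cq\bigl(\cb(\delta)\bigr)=\alpha_1+\alpha_3,
\]
which is the desired relation on closed spin $3$-manifolds.

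To obtain an isomorphism of invertible field theories rather than just an equality of partition functions, I would run the same manipulations at the level of differential \emph{cochains}: each ingredient (\eqref{eq:136}, \eqref{eq:138}, the spin-shift formula~\eqref{eq:140}, the cochain description of $\cb(\delta)$) is available, or can be arranged, cochain-wise, and fiber integration of differential cochains over bordisms with corners is functorial and symmetric monoidal, as recalled in Appendix~\ref{sec:10}. The natural isomorphism $\alpha_1\otimes\alpha_3\xrightarrow{\ \cong\ }\alpha_2$ is then transport along the chosen cochain primitive for the difference of the integrands. I expect the main obstacle to be bookkeeping: tracking the $\tpi$-normalizations, the half-integral $\zt$-valued terms, and the signs through~\eqref{eq:136}--\eqref{eq:140} carefully enough that the cross term cancels the spin-shift correction exactly and the residue is precisely $3\cq(\cb(\delta))$, not some other multiple; once that is pinned down, the upgrade to field theories is formal.
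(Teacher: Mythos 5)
Your partition-function computation is the same as the paper's: expand $\cq(\cc+\cb(\delta))$ by the quadratic property \eqref{eq:138}, strip off the spin-structure shift with \eqref{eq:140}, observe that the two $\tfrac12\,\bar c\smile\delta$ terms cancel, and identify the leftover $\phi\bigl(\tfrac12\delta^3\bigr)$ with $2\cq\bigl(\cb(\delta)\bigr)$ via Lemma~\ref{thm:54}, arriving at $3\cq\bigl(\cb(\delta)\bigr)$ exactly as in \eqref{eq:145}. Where you diverge is the step that turns this closed-3-manifold identity into an isomorphism of invertible field theories. The paper's device is to note first, via \eqref{eq:165}, that $\alpha_1$ and $\alpha_2$ have the \emph{same curvature} (the flat shift by $\cb(\delta)$ does not change $\tfrac12\,\omega\wedge\omega$), so the difference theory $\alpha_1\inv\otimes\alpha\mstrut_2\otimes\alpha_3\inv$ is flat, hence topological in the strong sense, and for such an invertible topological theory trivializability is detected by partition functions on closed 3-manifolds; after that one-line reduction the cohomological computation finishes the proof. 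You instead propose to rerun the whole argument at the level of differential cochains and transport along a cochain primitive. That route is not unreasonable --- the paper does assert differential refinements of \eqref{eq:139} and of \eqref{eq:140} (the latter is stated as a canonical isomorphism below top degree) --- but your claim that the upgrade is then ``formal'' is the soft spot: you would need coherent cochain-level refinements of \eqref{eq:136}--\eqref{eq:140}, compatible with fiber integration over bordisms with corners, which is real work rather than bookkeeping. The curvature observation is precisely what lets the paper bypass all of that, so if you keep your computation you should add that flatness remark and invoke the classification of invertible topological theories rather than attempting the cochain-level construction.
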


  \begin{proof}
 By~\eqref{eq:165} the curvatures of $\alpha _1$~ and $\alpha _2$~are equal.
Therefore $\alpha _1\inv \otimes \alpha \mstrut _2\otimes \alpha _3\inv $ is
a flat invertible field theory, so it is topological in the strong sense.  To
verify that it is trivializable, it suffices to check the partition function
on a closed 3-manifold~$X$.  The quadratic property~\eqref{eq:138} implies
  \begin{equation}\label{eq:143}
     \cq\bigl(\cc+\cb(\delta ) \bigr) = \cq(\cc) + \cq\bigl(\cb(\delta )
     \bigr) + i\bigl(\cc\smile\cb(\delta ) \bigr), 
  \end{equation}
and \eqref{eq:140}~implies that its integral in spin structure~$\sigma
+\delta $ is the integral of 
  \begin{equation}\label{eq:144}
     \cq(\cc) + \cq\bigl(\cb(\delta ) \bigr) + i\bigl(\cc\smile\cb(\delta )
     \bigr) + \phi \biggl(\frac 12(\delta \smile\bar c\;+\;\delta
     \smile\delta ^2) \biggr) 
  \end{equation}
in spin structure~$\sigma $.  Here $\phi \:H^3(X;\zt)\to \CE^4(X)$ is the
inclusion of flat elements of order two.  (The first two terms
of~\eqref{eq:144} lie in~$\CE^4(X)$ and their integral uses the spin
structure; the last two terms lie in $H^3(X;\CZ)$ and no spin structure is
used to integrate.)  The difference of~$\cq(\cc)$ and~\eqref{eq:144} computes
the partition function in the theory $\alpha _1\inv \otimes \alpha \mstrut
_2$:
  \begin{equation}\label{eq:145}
     \begin{aligned} \cq\bigl(\cb(\delta ) \bigr) + \phi \biggl(\frac 12(\bar
      c\smile\delta \;+\;\delta \smile\bar c\;+\;\delta \smile\delta ^2
     )\biggr) &= 
      \cq\bigl(\cb(\delta ) \bigr) + \phi \biggl(\frac 12(\delta ^3)\biggr)
     \\ &= 3\cq\bigl(\cb(\delta ) \bigr) \end{aligned} 
  \end{equation}
where we use Lemma~\ref{thm:54}.  Since the integral of this last quantity is
by definition the partition function in the theory~$\alpha _3$, we see that
the partition function in the theory $\alpha _1\inv \otimes \alpha \mstrut
_2\otimes \alpha _3\inv $ is trivial.
  \end{proof}

  \begin{example}[]\label{thm:58}
 Let $X=\RP^3$ equipped with either spin structure~$\sigma $ and the
nontrivial double cover~$\delta $.  The spin Chern-Simons partition function
of the product $\Cx$-connection~$\cc$ is~1; the partition function of the
$\Cx$-connection $\cc+\delta $ in either spin structure is a primitive
$4^{\textnormal{th}}$~root of unity.
  \end{example}

A principal $\ZZ$-bundle~$\din$ has a mod~2 reduction~$\delta $ which is a
double cover.  The invertible field theories~$\alpha _1,\alpha _2,\alpha _3$
in~\eqref{eq:142} lift to invertible field theories~$\ain_1,\ain_2,\ain_3$
with background fields~$(\sigma ,\din,\cc)$.

  \begin{theorem}[]\label{thm:56}
 $\ain_3$~is isomorphic to the trivial theory. 
  \end{theorem}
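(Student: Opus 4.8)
The plan is to reduce the statement to the principle already recorded in Remark~\ref{thm:37}: a lift of a double cover to a principal $\ZZ$-bundle trivializes the $\bmut$~Chern--Simons invariant. The key observations are that $\ain_3$, like $\alpha_3$, is independent of the $\Cx$-connection~$\cc$, and depends on the $\ZZ$-bundle~$\din$ only through the associated double cover~$\delta=\din\bmod 2$; its level is $3\,\cq\bigl(\cb(\delta)\bigr)$, the threefold multiple of the $\bmut$~Chern--Simons level refining $\alpha\in E^4(\Bbmut)$ (Lemma~\ref{thm:54}). By Lemma~\ref{thm:21}(2), $3\alpha$ again generates $E^4(\Bbmut)\cong\zmod4$, so the factor~$3$ changes nothing; what matters is that, writing $Bp\colon B\ZZ\to\Bbmut$ for the classifying map of the reduction $\ZZ\to\bmut$, the pullback $(Bp)^*(3\alpha)\in E^4(B\ZZ)$ vanishes, since $B\ZZ\simeq\cir$ and $E^4(\cir)=0$; the same holds after passage to the differential and $\CZo$-coefficient variants, as $\CE^4(\cir)=0$ and $(E\CZo)^3(\cir)=0$.

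Concretely, I would realize $\ain_3$, following \S\ref{subsubsec:5.2.1}, as a map of spectra analogous to~\eqref{eq:56},
\[
   \MSpin\wedge (B\ZZ)_+ \xrightarrow{\;\;\id\wedge \ell\;\;}
   \MSpin\wedge (E\CZo_3)_+ \xrightarrow{\;\;\int_{}\;\;} \Sigma^3 E\CZo,
\]
where $\ell\colon B\ZZ\to E\CZo_3$ is the level, the pullback along $B\ZZ\to\Bbmut$ of (three times) the class $\hl$ of~\eqref{eq:85}. Since $B\ZZ\simeq\cir$ and $[\cir, E\CZo_3] = (E\CZo)^3(\cir)=0$, the level~$\ell$ is nullhomotopic; a choice of nullhomotopy is precisely a nullhomotopy of the displayed composite, hence a trivialization of the invertible field theory $\ain_3$. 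The step to be careful about is that this purely cohomological vanishing genuinely upgrades to a trivialization of $\ain_3$ \emph{as an extended invertible topological field theory of families}, not merely an equality of partition functions; this is supplied by the homotopy-theoretic framework of~\cite{FHT,FH1} (see Appendix~\ref{sec:10}), and it is legitimate here because $\cb(\delta)$ is a flat $\Cx$-connection of order two, so $\cq\bigl(\cb(\delta)\bigr)$ has vanishing curvature and $\ain_3$ is topological even as a theory of families. I expect this to be the main point to articulate precisely, though it is forced by the general machinery. (The nullhomotopy of~$\ell$ is not canonical---the choices form a torsor over $(E\CZo)^2(\cir)\cong\zt$---but existence is all that the statement requires.)

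Finally I would record the consistency check implicit in Example~\ref{thm:58}: on $X=\RP^3$ the nontrivial double cover does not lift to a principal $\ZZ$-bundle, since $H^1(\RP^3;\ZZ)=0$, so $\ain_3$ is simply not evaluated on the input where $\alpha_3$ has a primitive fourth root of unity as partition function. Thus the triviality of $\ain_3$ is entirely compatible with the nontriviality of $\alpha_3$, and the proof is nothing but the ``a lift of a double cover to a principal $\ZZ$-bundle trivializes the $\bmut$~Chern--Simons invariant'' principle of Remark~\ref{thm:37}, applied to the level~$3\alpha$.
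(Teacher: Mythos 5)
Your proposal is correct, but it trivializes the theory by a slightly different mechanism than the paper does. The paper's proof is one sentence: since the background double cover $\delta$ is the mod~2 reduction of the principal $\ZZ$-bundle $\din$, its integer Bockstein vanishes --- indeed the lift $\din$ canonically trivializes $\cb(\delta )$ --- so the level $3\,\cq\bigl(\cb(\delta )\bigr)$ of $\ain_3$ is itself trivialized at the level of the background fields, with no choices beyond the given lift. You instead trivialize the \emph{universal} level: you pull the $\bmut$~theory back along $B\ZZ\to \Bbmut$, invoke $E^4(B\ZZ)=0$ (equivalently $(E\CZo)^3(\cir)=0$, i.e.\ Lemma~\ref{thm:21}(3) / Remark~\ref{thm:37}), and then use the flatness of $\cq\bigl(\cb(\delta )\bigr)$ to upgrade a nullhomotopy of the level map to a trivialization of the extended theory of families via the spectrum-level formalism of Appendix~\ref{sec:10}. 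Both arguments rest on the same elementary fact that an integral lift kills the relevant class, and both are valid; what the paper's route buys is canonicity and brevity (the trivialization is induced directly by the lift, with no torsor of nullhomotopies over $\cir$ to worry about), while your route makes explicit the families/extended-theory bookkeeping that the paper leaves implicit, and your remark about $\RP^3$ and Example~\ref{thm:58} is a sensible consistency check not present in the paper.
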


  \begin{proof}
 The integer Bockstein of the mod~2 reduction is trivial. 
  \end{proof}

  \begin{corollary}[]\label{thm:57}
 There exist isomorphisms $\zeta \:\ain_1\xrightarrow{\;\cong \;}\ain_2$.
  \end{corollary}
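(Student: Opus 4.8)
The plan is to obtain $\zeta$ by pulling back the isomorphism of Theorem~\ref{thm:55} along the map of background fields $(\sigma,\din,\cc)\mapsto(\sigma,\delta,\cc)$, where $\delta$ denotes the mod~$2$ reduction of the principal $\ZZ$-bundle $\din$, and then using Theorem~\ref{thm:56} to delete the $\ain_3$ factor. So there is essentially nothing to prove beyond unwinding what ``lift to invertible field theories with background fields $(\sigma,\din,\cc)$'' means.

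First I would record that, by construction, $\ain_1,\ain_2,\ain_3$ are the pullbacks of $\alpha_1,\alpha_2,\alpha_3$ along the forgetful map $(\sigma,\din,\cc)\mapsto(\sigma,\delta,\cc)$. Since pullback along a map of background-field data is a symmetric monoidal functor on invertible field theories, and since any morphism of invertible field theories pulls back along such a map, the isomorphism $\alpha_1\otimes\alpha_3\xrightarrow{\ \cong\ }\alpha_2$ of Theorem~\ref{thm:55} pulls back to an isomorphism $\ain_1\otimes\ain_3\xrightarrow{\ \cong\ }\ain_2$. Next, Theorem~\ref{thm:56} provides an isomorphism $\ain_3\xrightarrow{\ \cong\ }\mathbf 1$ with the trivial theory. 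Composing with the unit constraint of the monoidal structure we define
\[
\zeta\;:\;\ain_1\;\xrightarrow{\ \cong\ }\;\ain_1\otimes\mathbf 1\;\xrightarrow{\ \cong\ }\;\ain_1\otimes\ain_3\;\xrightarrow{\ \cong\ }\;\ain_2 ,
\]
which is the asserted isomorphism.

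I do not expect any computational obstacle: the argument is purely formal once Theorems~\ref{thm:55} and~\ref{thm:56} are in hand. The one point that deserves a sentence of care is the first step, namely that the isomorphism of Theorem~\ref{thm:55}, which a priori lives over the background data $(\sigma,\delta,\cc)$, genuinely descends to a morphism between the lifted theories $\ain_i$ over $(\sigma,\din,\cc)$; this is automatic from functoriality of pullback. Finally, I would remark that the statement only claims \emph{existence} of $\zeta$ because the trivialization $\ain_3\cong\mathbf 1$ of Theorem~\ref{thm:56} is not canonical (the trivial order-$4$ theory has nontrivial automorphisms), so $\zeta$ is determined only up to that ambiguity, which is why no distinguished choice is singled out.
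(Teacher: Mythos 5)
Your argument is correct and is exactly the (implicit) argument the paper intends: pull back the isomorphism $\alpha_1\otimes\alpha_3\cong\alpha_2$ of Theorem~\ref{thm:55} to the background data $(\sigma,\din,\cc)$ and cancel $\ain_3$ using the trivialization of Theorem~\ref{thm:56}. Your closing remark is also on point — the paper states only existence here precisely because the trivialization is not canonical, and it spends the subsequent discussion (through Corollary~\ref{thm:72} and Appendix~\ref{sec:C11}) pinning down a preferred choice of $\zeta$.
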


For our work in~\S\ref{sec:7a} we need an isomorphism which satisfies a
particular property that we specify below in~\eqref{eq:178}.  We proceed to
construct it.  To begin, fix a \emph{flat}\footnote{Isomorphisms of
invertible field theories may be flat or nonflat;
compare~\S\ref{subsec:9.2}.} isomorphism
  \begin{equation}\label{eq:169}
     \zeta \:\ain_1\xrightarrow{\;\;\cong \;\;}\ain_2. 
  \end{equation}
Pull back the theories~$\ain_1,\ain_2$ to invertible
theories~$\tain_1,\tain_2$ with background fields $(\sigma ,\din,\cc,t)$ in
which $t$~is a nonflat trivialization of the $\Cx$-connection~$\cc$, i.e.,
$t$~is a section of the underlying principal $\Cx$-bundle.  Then $t$~induces
a \emph{nonflat} trivialization of the theories~$\tain_1,\tain_2$: they are
\emph{topologically trivial}.  In the formalism of Appendix~\ref{sec:10} we
omit the spin structure, and the remaining fields are sections of the
sheaf\footnote{A principal $\ZZ$-bundle has a unique connection, so the
`$\nabla $' in `$\BNZ$' is redundant; the latter is better
denoted~`$B^{}_{\bullet }\ZZ$'} $\BNZ\times \ENC$ on~$\Man$.  It is
convenient to replace~$\BNZ$ with the representable sheaf~$\cir$.  This
amounts to specifying a classifying map for each principal $\ZZ$-bundle.
Then the topologically trivialized theories~$\tain_i$, $i=1,2$, give rise to
differential forms (see~\eqref{eq:168})
  \begin{equation}\label{eq:170}
     \eta _i\:\cir\times \ENC\longrightarrow \Omega ^3_{\CC}. 
  \end{equation}
Let $\omega \in \Omega _{\cir}^1$ be the rotation-invariant closed 1-form
which integrates to $1$, and let $a\in \Omega _{\CC}^1(\ENC)$ be $\frac{\I}{2 \pi}$ 
times the
universal connection 1-form; the latter gives the equivalence $\ENC\to \Omega
_{\CC}^1$.  Then from~\eqref{eq:136} we deduce
  \begin{equation}\label{eq:171}
     \begin{aligned} \eta _1&=\frac 12 a\wedge da \\ \eta _2&=\frac 12 (a +
      \frac 12\omega )\wedge d(a+\frac 12\omega ) = \eta _1 + \frac
      14d(a\wedge \omega ).\end{aligned} 
  \end{equation}
The topologically trivialized theories defined by~$\eta _1,\eta _2$ are
isomorphic as invertible theories (forgetting the topological
trivialization)---as they must be by Corollary~\ref{thm:57}---since the
3-forms differ by an exact 3-form. 
 
Our constructions yield two isomorphisms $\tain_1\to \tain_2$.  First, the
flat isomorphism~$\zeta \:\ain_1\to \ain_2$ lifts to a \emph{flat}
isomorphism 
  \begin{equation}\label{eq:181}
     \tz\:\tain_1\xrightarrow{\;\;\cong \;\;}  \tain_2. 
  \end{equation}
Second, the topological trivializations induce a \emph{nonflat} isomorphism
  \begin{equation}\label{eq:182}
     \lambda \:\tain_1\xrightarrow{\;\;\cong \;\;}  \tain_2. 
  \end{equation}
From~\eqref{eq:171} we compute that the curvature of~$\lambda $ is $\frac
14d(a\wedge \omega )$.  The ratio $\tz/\lambda $ is a 2-dimensional
invertible theory on spin manifolds with a background field in $\cir\times
\ENC$, and its curvature is the 3-form $-\frac 14d(a\wedge \omega )$.  Let
$\beta $~be the 2-dimensional invertible field theory defined by the 2-form
  \begin{equation}\label{eq:172}
    - \frac 14\,a\wedge \omega , 
  \end{equation}
and define the \emph{flat} 2-dimensional theory~$\gamma $ by 
  \begin{equation}\label{eq:173}
     \tz=\beta \gamma \lambda . 
  \end{equation}

  \begin{lemma}[]\label{thm:66}
 The abelian group of topological invertible 2-dimensional theories with
background fields $(\sigma ,\din,\cc,t)$ is isomorphic to the Klein group
$\bmut\times \bmut$.  Furthermore, each theory depends only on the spin
structure~$\sigma $ and the double cover~$\delta $ induced by the principal
$\ZZ$-bundle~$\din$.   
  \end{lemma}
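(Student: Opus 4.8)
The plan is to compute the classifying object for $2$-dimensional invertible topological field theories with these background fields, and then identify it. First I would note that a $2$-dimensional invertible \emph{topological} theory on spin manifolds with background fields $(\sigma,\delta,\cc,t)$ only sees the homotopy type of the background data: the $\Cx$-connection $\cc$ together with its nonflat trivialization $t$ carries no homotopical information (the pair $(\cc,t)$ is a contractible choice, since $\ENC\to\Omega^1_\CC$ is a weak equivalence and $\Omega^1_\CC$ is contractible as a sheaf), so the theory factors through the data of a spin structure and a double cover alone. This already proves the ``furthermore'' clause. Concretely, the relevant bordism-type invariant is classified by maps out of $\MSpin\wedge (B\bmut)_+$ into a degree-shifted Eilenberg--MacLane spectrum with coefficients in $\Cx$ (the group of ``$1$-dimensional invertible topological theories''), so the group in question is
  \begin{equation*}
     \Hom\bigl(\Omega^{\Spin}_2(B\bmut),\, \Cx\bigr)\oplus(\text{correction terms from higher }\pi),
  \end{equation*}
but since the theory is topological and invertible and lands in an ordinary (Eilenberg--MacLane) target, the cleanest route is to identify it with $H^2\bigl(\MSpin\wedge (B\bmut)_+;\, \CZ\bigr)$ or, dually, with $\Hom\!\bigl(\pi_2(\MSpin\wedge(B\bmut)_+),\CZ/\ZZ\bigr)\times(\text{a $\zt$ from }\pi_1)$.

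Second I would carry out this computation. The relevant piece of the spin bordism of $B\bmut$ in low degrees is $\Omega^{\Spin}_0(B\bmut)\cong\ZZ$, $\Omega^{\Spin}_1(B\bmut)\cong\zt\oplus\zt$ (one $\zt$ from $\Omega^{\Spin}_1(\pt)=\zt$ generated by the nonbounding circle $\nbcir$, another $\zt$ from $\widetilde\Omega^{\Spin}_1(B\bmut)\cong\zt$ detected by the Arf/$\eta$-type invariant of a circle with its double cover), and $\Omega^{\Spin}_2(B\bmut)$, whose reduced part $\widetilde\Omega^{\Spin}_2(B\bmut)\cong\zt$ is generated by $\RP^2$ with its double cover (detected by a quadratic $\zt$-invariant). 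Feeding this into the universal-coefficient/Anderson-dual description of invertible topological field theories valued in the appropriate truncation, the group of $2$-dimensional such theories is the extension with associated graded $\Hom(\widetilde\Omega^{\Spin}_2(B\bmut),\CZ/\ZZ)\oplus\Hom(\Omega^{\Spin}_1(\pt),\zt)$, and both summands are $\zt$; one then checks the extension splits, yielding $\bmut\times\bmut$. A concrete generating set: one factor is detected by the partition function on $\RP^2$ (equivalently $S^2$) with the nontrivial double cover—this is the ``$\phi(\tfrac12\delta^3)$-type'' contribution visible in~\eqref{eq:145}—and the other by the partition function on the nonbounding torus $\nbcir\times\nbcir$ (or on $S^2$ with trivial double cover), a pure-spin $\zt$.

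Third, for the ``furthermore'' clause I would argue more carefully that no dependence on $\din$ beyond its mod $2$ reduction $\delta$ survives: a principal $\ZZ$-bundle over a manifold of dimension $\le 2$ is classified by a map to $S^1=B\ZZ$, and for a $2$-dimensional topological theory only the induced map on $\pi_{\le 2}$ matters; since $\pi_2(S^1)=0$ and $\pi_1(S^1)\to\pi_1(B\bmut)=\zt$ is reduction mod $2$, the theory depends on $\din$ only through $\delta$. Likewise the $\Cx$-connection $\cc$ enters only through its underlying bundle (trivial, by the trivialization $t$), and $t$ itself is contractible data—so there is genuinely no dependence on $(\cc,t)$.

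The main obstacle I expect is pinning down the extension problem—showing that the abelian group is $\bmut\times\bmut$ rather than $\zmod4$. The two candidate $\zt$'s are the ``spin'' one (from $\Omega^{\Spin}_1(\pt)=\zt$) and the ``double-cover'' one (from $\widetilde\Omega^{\Spin}_2(B\bmut)=\zt$), and a priori they could assemble into a cyclic group of order $4$ just as $E^4(\Bbmut)$ did in Lemma~\ref{thm:21}(2). Here, however, the relevant Anderson-dual $k$-invariant is different: one must check that $Sq^2$ (or the operation controlling the extension in this degree) acts trivially on the relevant mod-$2$ cohomology of $\MSpin\wedge(B\bmut)_+$ in the range needed, equivalently that the generator detected on $\nbcir\times\nbcir$ is $2$-torsion in the group of invertible theories and not half of the $\RP^2$-generator. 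I would verify this by exhibiting an explicit null-bordism or by a direct manifold computation: the product of the $\RP^2$-generator with itself is detected on a spin $3$-manifold and vanishes, while each generator separately has order $2$; this forces the group to be $\bmut\times\bmut$. (An alternative, perhaps cleaner, route is to observe that both generators are pulled back from the two factors in a splitting $\MSpin\wedge(B\bmut)_+\simeq \MSpin\vee \MSpin\wedge B\bmut$ after $2$-completion in the relevant range, making the product structure manifest.)
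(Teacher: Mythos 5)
Your overall strategy---discard the contractible data $(\cc,t)$, then identify the group of theories with the Pontryagin dual of degree-2 spin bordism of the classifying object for the remaining background fields, and handle the passage from $\din$ to $\delta$ by a comparison of classifying objects---is essentially the paper's: there one takes characters of $\pi_2(\MSpin\wedge\cir_+)\cong\pi_2(\MSpin)\oplus\pi_1(\MSpin)\cong\zt\oplus\zt$ and then checks that $\MSpin\wedge\cir_+\to\MSpin\wedge\RP^\infty_+$ is a $\pi_2$-isomorphism, which is what your third step asserts in looser language. However, two points in your execution are genuinely wrong. First, your concrete generators: $\RP^2$ is not orientable, hence not spin, so it represents nothing in $\widetilde\Omega^{\Spin}_2(B\bmut)$ (that group is indeed $\zt$, but a generator is a torus carrying the double cover pulled back from one circle factor, with a suitable spin structure); likewise $S^2$ admits no nontrivial double cover, and with the trivial one it is spin null-bordant, so it detects nothing. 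The actual detecting invariants are the Arf-type partition functions on tori listed in \eqref{eq:174}. Also, your claim in the first step that contractibility of $(\cc,t)$ ``already proves the furthermore clause'' is premature, since that clause includes the reduction from $\din$ to its mod~2 class $\delta$, which you only address later.

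Second, the ``main obstacle'' you flag---an extension problem deciding between $\bmut\times\bmut$ and $\zmod4$---does not exist in this classification, and your proposed resolution is not coherent. The group of (not necessarily reflection-positive) invertible topological theories here is the group of $\Cx$-valued characters of $\pi_2$ of the relevant Thom spectrum; since $\cir_+\simeq S^0\vee S^1$, that $\pi_2$ is visibly $\zt\oplus\zt$, and because $\Cx$ is divisible there are no $\mathrm{Ext}$ or $k$-invariant corrections: the answer is $\Hom(\zt\oplus\zt,\Cx)\cong\bmut\times\bmut$ on the nose. The order-4 phenomenon of Lemma~\ref{thm:21}(2) lives in $E^4(\Bbmut)$, i.e.\ in the coefficient object for levels, not in the character group of bordism, so no $Sq^2$ analysis or null-bordism of ``the square of the $\RP^2$ generator'' is needed (and that class is not even defined, as above). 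A smaller omission: because the 2-dimensional theory arises as a ratio of isomorphisms of 3-dimensional theories, the domain should a priori be $\Sigma^3 MT\!\Spin_3$ rather than $\MSpin$; the paper invokes the obstruction-theory argument of \cite{FH1} (Theorem~7.22) to justify that replacement, a point your write-up skips.
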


\noindent
 The partition functions of these four theories on a closed
2-manifold~$\Sigma $ are
  \begin{equation}\label{eq:174}
     1,\quad (-1)^{\Arf(\sigma )},\quad (-1)^{\Arf(\sigma +\delta )},\quad
     (-1)^{\Arf(\sigma +\delta )-\Arf(\sigma )}, 
  \end{equation}
where $\Arf$~is the Arf invariant of the spin structure. 

  \begin{proof}
 Since $\ENC\cong \Omega ^1_{\CC}$ is contractible, the group of invertible
theories is isomorphic\footnote{Both unitary and nonunitary theories are
discussed in~\cite{FH1}.  Here we do not assume unitarity, but the background
fields are for 3-manifolds, even for the 2-dimensional theory which is the
ratio of isomorphisms of 3-dimensional theories, hence the domain should at
first glance have $\Sigma ^3MT\!\Spin_3$ in place of~$\MSpin$.  However, the
obstruction theory argument in the proof of \cite[Theorem~7.22]{FH1} allow us
to replace~$\Sigma ^3MT\!\Spin_3$ with~$\MSpin$ in~\eqref{eq:176}.} to the
group of characters of
  \begin{equation}\label{eq:175}
     \pi _2(\MSpin\wedge \cir_+)\cong \pi _2(\MSpin)\,\oplus \,\pi
     _1(\MSpin)\cong \zt\oplus \zt. 
  \end{equation}
One can see that the theories listed in~\eqref{eq:174} exhaust the
possibilities, or can check that
  \begin{equation}\label{eq:176}
     \MSpin\wedge \cir_+\longrightarrow \MSpin\wedge \RP^{\infty}_+ 
  \end{equation}
induces an isomorphism on~$\pi _2$. 
  \end{proof}

It follows that the theory~$\gamma $ in~\eqref{eq:173} depends only
on~$(\sigma ,\delta )$.  Therefore, replace~\eqref{eq:169} by the isomorphism
  \begin{equation}\label{eq:177}
     \ztil =\gamma \inv \zeta \:\ain_1\xrightarrow{\;\;\cong \;\;}\ain_2. 
  \end{equation}
For this choice of isomorphism we have 
  \begin{equation}\label{eq:178}
     {}^{\textnormal{tt}}\hneg\ztil =\beta \lambda , 
  \end{equation}
where recall that $\beta $~is defined by the 2-form~\eqref{eq:172}.  

We summarize with this refinement of Corollary~\ref{thm:57}. 

  \begin{corollary}[]\label{thm:72}
 There exists an isomorphism  
  \begin{equation}\label{eq:190}
     \zeta \:\ain_1\xrightarrow{\;\cong \;}\ain_2 
  \end{equation}
such that the induced isomorphism $\tz\:\tain_1\to \tain_2$ of theories which
include a nonflat trivialization of the $\Cx$-connection satisfies 
  \begin{equation}\label{eq:191}
     \frac{\tz}{\lambda } =\beta , 
  \end{equation}
where $\lambda $~is the isomorphism~\eqref{eq:182} and $\beta $~is defined by
the differential form~\eqref{eq:172}.  
  \end{corollary}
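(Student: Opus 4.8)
The plan is to produce the required $\zeta$ by correcting an arbitrary isomorphism by a suitable $2$-dimensional theory. First I would invoke Corollary~\ref{thm:57} to fix any flat isomorphism $\zeta\colon\ain_1\xrightarrow{\;\cong\;}\ain_2$ and pull the whole picture back to the theories $\tain_1,\tain_2$ carrying, in addition, the nonflat trivialization $t$ of the $\Cx$-connection $\cc$. Since $t$ trivializes $\cc$, and after replacing $\BNZ$ by the representable sheaf $\cir$ (that is, fixing classifying maps for $\ZZ$-bundles) the shifted connection $\cc+\cb(\delta)$ is displayed as $a+\tfrac12\omega$, both $\tain_1$ and $\tain_2$ become topologically trivial; from the quadratic relation $2q(c)=c^2$ of \eqref{eq:136} and $d\omega=0$ their Lagrangian $3$-forms are $\eta_1=\tfrac12\,a\wedge da$ and $\eta_2=\eta_1+\tfrac14\,d(a\wedge\omega)$, as in \eqref{eq:171}.

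Next I would compare the flat lift $\tz\colon\tain_1\to\tain_2$ of $\zeta$ (equation~\eqref{eq:181}) with the nonflat isomorphism $\lambda\colon\tain_1\to\tain_2$ coming from the two topological trivializations (equation~\eqref{eq:182}), whose curvature is $\eta_2-\eta_1=\tfrac14\,d(a\wedge\omega)$. Thus $\tz/\lambda$ is a $2$-dimensional invertible theory over $(\sigma,\din,\cc,t)$ whose curvature $-\tfrac14\,d(a\wedge\omega)$ equals that of the theory $\beta$ defined by the $2$-form $-\tfrac14\,a\wedge\omega$ of \eqref{eq:172}, so $\gamma:=(\tz/\lambda)\,\beta^{-1}$ is \emph{flat}; this is \eqref{eq:173} written as $\tz=\beta\gamma\lambda$. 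By Lemma~\ref{thm:66} the flat $2$-dimensional theories with these background fields form the Klein four-group and each depends only on $(\sigma,\delta)$, so $\gamma$ is pulled back from a flat $2$-dimensional theory $\bar\gamma$ over $(\sigma,\din)$, hence also from one over the background $(\sigma,\din,\cc)$ of $\ain_1,\ain_2$.

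Finally, since the isomorphisms $\ain_1\to\ain_2$ form a torsor over the abelian group of $2$-dimensional invertible theories over $(\sigma,\din,\cc)$, I would set $\ztil:=\bar\gamma^{-1}\zeta$, which remains flat because $\bar\gamma$ is. Pulling back along the map that adjoins $t$, its topological-trivialization lift is $\gamma^{-1}\tz=\gamma^{-1}\beta\gamma\lambda=\beta\lambda$, which is \eqref{eq:178}; renaming $\ztil$ as $\zeta$ yields \eqref{eq:191}, namely $\tz/\lambda=\beta$. The main obstacle is the descent statement in Lemma~\ref{thm:66} — that the flat $2$-dimensional remainder $\gamma$ carries no dependence on the $\Cx$-connection $\cc$ or on $t$, so that it can be absorbed into a redefinition of $\zeta$ already at the level of $\ain_1,\ain_2$ rather than only after the auxiliary field $t$ has been introduced; the rest is bookkeeping with the $\tfrac12\omega$ shift, the quadratic refinement $q$, and the additivity of curvatures and of the torsor action.
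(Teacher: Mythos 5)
Your argument is exactly the paper's own: fix a flat $\zeta$ via Corollary~\ref{thm:57}, define the flat remainder $\gamma$ by $\tz=\beta\gamma\lambda$ using the curvature comparison from \eqref{eq:171}--\eqref{eq:172}, invoke Lemma~\ref{thm:66} to see $\gamma$ depends only on $(\sigma,\delta)$ and hence descends to the background fields of $\ain_1,\ain_2$, and absorb it by replacing $\zeta$ with $\gamma^{-1}\zeta$ as in \eqref{eq:177}--\eqref{eq:178}. The step you flag as the main obstacle (the descent of $\gamma$ off the auxiliary fields $\cc,t$) is precisely what Lemma~\ref{thm:66} supplies, so the proposal is correct and follows the paper's route.
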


\noindent
 This is the isomorphism we use in~\S\ref{sec:7a}.

   \subsubsection{Global abelianization}\label{subsubsec:5.2.4} 

Diagram~\eqref{eq:51} illustrates global abelianization of an
$\SLC$-connection.  We apply Corollary~\ref{thm:22} to deduce an isomorphism
of Chern-Simons invariants, expressed as an isomorphism among three
invertible 3-dimensional field theories\footnote{As in Theorem~\ref{thm:55}
we restrict to flat connections, so to topological invertible field
theories.} $\eSL$, $\eCx$, and~$\emt$.  Each is defined on the bordism
multicategory of dimension~$\le 3$ manifolds with corners equipped with a
spin structure~$\sigma $ and a flat $H$-connection~$\Theta $.  The first
theory~$\eSL$ uses only the underlying orientation of~$\sigma $, and it
evaluates the Chern-Simons theory~$\F{\SLC}$ at level~$-i(c_2)$ on the flat
$\SLC$-connection $r(\Theta )$.  The second theory~$\eCx$ maps a spin
manifold with flat $H$-connection to the total space of the associated
$\bmut$-bundle with its induced spin structure and flat $\Cx$-connection, and
then evaluates this data using spin Chern-Simons theory~$\rS_{\Cx}$ at
level~$\lambda $.  The third theory~$\emt$ evaluates the spin Chern-Simons
theory~$\rS_{\!\bmut}$ at level~$\alpha $ on the associated
$\bmut$-connection\footnote{Of course, this is simply a double cover, but we
have endeavored to use consistent and transparent notation.}~$q(\Theta )$.

  \begin{theorem}[]\label{thm:23}
 There is an isomorphism $\eSL\cong \eCx \otimes \emt$. 
  \end{theorem}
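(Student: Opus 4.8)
The plan is to deduce Theorem~\ref{thm:23} from the cohomological identity of Corollary~\ref{thm:22} together with the field-theory machinery of Appendix~\ref{sec:10}, in exact analogy with the argument for Theorem~\ref{thm:55}. First I would observe that all three theories $\eSL$, $\eCx$, $\emt$ are invertible $3$-dimensional field theories defined on the same bordism multicategory: manifolds of dimension $\le 3$ with corners, equipped with a spin structure $\sigma$ and a flat $H$-connection $\Theta$. Since all the connections involved are flat, each theory is topological in the strong sense, so the composite $\eSL\inv\otimes\eCx\otimes\emt$ is again a topological invertible field theory. The strategy is then: (i) show this composite has trivial curvature (automatic from flatness, or directly from the Chern--Weil identity underlying Corollary~\ref{thm:22}); (ii) exhibit a trivialization; (iii) if necessary, pin down the trivialization to get an honest isomorphism rather than just an abstract equivalence.

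For step (ii), the key input is Corollary~\ref{thm:22}, which states $p_*\lambda = -r^*c_2 + q^*\alpha$ in $E^4(BH)$. Unwinding the definitions of the three theories: $\eSL$ is built from $-i(c_2)$ pulled back along $r$; $\eCx$ is built from $\lambda$ pushed forward along the $\bmut$-bundle (i.e. $p_*\lambda$, once one accounts for the fact that passing to the associated $\Cx$-bundle on the total space of the double cover and integrating $\lambda$ there is exactly the pushforward $p_*$); and $\emt$ is built from $\alpha$ pulled back along $q$. So the level identity says precisely that the level of $\eCx$ equals the level of $\eSL$ plus the level of $\emt$, which forces the product theory $\eSL\inv\otimes\eCx\otimes\emt$ to have trivial level in $E^4(BH)$, hence vanishing curvature. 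One must be careful here with the spin-structure bookkeeping: the double cover shifts the spin structure on the total space, but $\eCx$ as defined already uses the \emph{induced} spin structure on the $\bmut$-bundle, and $\emt$ carries the complementary $\bmut$-Chern--Simons contribution at level $\alpha$; the cohomological identity is exactly calibrated so these match. This is where the analogue of the computation~\eqref{eq:140}--\eqref{eq:145} from the proof of Theorem~\ref{thm:55} does the real work — in fact Theorem~\ref{thm:55} (and its $\ZZ$-lift Corollary~\ref{thm:57}, Corollary~\ref{thm:72}) can be invoked more or less directly, since it already packages the interplay of the spin-structure shift $\sigma\to\sigma+\delta$ and the $\Cx$-connection shift $\cc\to\cc+\cb(\delta)$ with the order-$4$ topological theory $\alpha_3$ governed by $\alpha$.

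For step (iii), the issue is whether a trivializable topological invertible theory admits a \emph{canonical} trivialization, i.e. whether there is a natural isomorphism rather than a mere isomorphism class. As in the proof of Theorem~\ref{thm:55}, after establishing that the product is topological and flat it suffices to check the partition function on closed $3$-manifolds $X$ equipped with $(\sigma,\Theta)$; evaluating, one reduces via the quadratic properties~\eqref{eq:136}--\eqref{eq:138} and the spin-shift formula~\eqref{eq:140} to an identity of the form $\cq(\cb(\delta)) + \phi(\tfrac12\delta^3) = 3\cq(\cb(\delta))$ in $\CE^4(X)$ (Lemma~\ref{thm:54}), which shows the partition function of $\eSL\inv\otimes\eCx\otimes\emt$ is trivial. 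I expect the main obstacle to be the careful matching of conventions and orientations: the minus sign in~\eqref{eq:68} (flagged in Remark~\ref{thm:70}), the precise identification of ``push forward $\lambda$ along the double cover and integrate'' with the algebraic $p_*$, and keeping the spin-structure shifts consistent across $\eCx$ and $\emt$. None of this is deep, but it is exactly the bookkeeping that has to be done correctly for the clean statement $\eSL\cong\eCx\otimes\emt$ to come out with no stray factors; once the levels are matched the existence of the isomorphism is formal, and its naturality follows because the trivialization is constructed out of the universal differential-cohomology data on $BH^\nabla$ rather than made by hand.
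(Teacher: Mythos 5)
Your proposal is correct and follows essentially the paper's own route: the paper likewise notes that the difference theory is flat/topological so it suffices to check equality of partition functions on closed oriented 3-manifolds (exactly as in the proof of Theorem~\ref{thm:55}), and then obtains that equality by applying the secondary-invariant version of the level identity~\eqref{eq:68} of Corollary~\ref{thm:22} to the abelianization diagram~\eqref{eq:51}/\eqref{eq:158}. The one small misdirection is your step (iii) appeal to the quadratic-refinement computation~\eqref{eq:143}--\eqref{eq:145} and Lemma~\ref{thm:54}: that calculation belongs to Theorem~\ref{thm:55}'s comparison of $\alpha_1,\alpha_2,\alpha_3$ (shifting $\sigma$ and $\cc$ by a double cover) and is not needed for $\eSL\cong\eCx\otimes\emt$, where the secondary consequence of~\eqref{eq:68} already gives the partition-function identity directly.
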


  \begin{proof}
 As in the proof of Theorem~\ref{thm:55}, it suffices to check equality of
partition functions on a closed oriented 3-manifold~$X$ equipped with a flat
$H$-connection.  For this, apply the secondary invariant version
of~\eqref{eq:68} to the following slight enlargement of
the diagram~\eqref{eq:51}: 
  \begin{equation}\label{eq:158}
     \begin{gathered} \xymatrix{\tX\ar[r]\ar[d]_{\pi } & B(\Cx)^\delta
     \ar[d]^{p} \\ X\ar[r]^{} & B(H)^\delta \ar[r]^<<<<{r} \ar[d]^q&
     B(SL_2\CC)^\delta \\&\Bbmut} \end{gathered} 
  \end{equation}
  \vskip -2.5pc\qedhere
  \renewcommand{\qedsymbol}{}
  \end{proof}

Let $\eSLi,\epsilon ^{\infty} _{\Cx},\emti$~denote the pullbacks
of~$\eSL,\epsilon \mstrut _{\Cx},\emt$ to the bordism multicategory of
dimension~$\le 3$ manifolds with corners equipped with a spin structure, a
flat $H$-connection, and a lift of the associated $\bmut$-bundle to a
principal $\ZZ$-bundle.  Then Lemma~\ref{thm:21}(3) immediately implies 

  \begin{theorem}[]\label{thm:62}
 $\emti$~is isomorphic to the trivial theory. 
  \end{theorem}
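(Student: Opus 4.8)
The plan is to reduce the statement to a homotopy-theoretic triviality and then invoke the computation already made earlier. Recall that $\emti$ is the pullback of $\emt$ along the forgetful map from the bordism multicategory of spin $\le 3$-manifolds equipped with a flat $H$-connection \emph{together with a lift of the associated $\bmut$-bundle to a principal $\ZZ$-bundle}. The theory $\emt$ evaluates the spin $\bmut$ Chern-Simons theory $\rS_{\!\bmut}$ at level $\alpha \in E^4(\Bbmut)$ on the double cover $q(\Theta )$ classified by $\delta \in H^1(X;\zt)$. After pulling back to the multicategory with a $\ZZ$-lift $\din$, the relevant background field is the principal $\ZZ$-bundle, and $\delta$ is its mod $2$ reduction; so $\emti$ is the spin $\bmut$ Chern-Simons theory at level $\alpha$ evaluated on the double cover that comes from a chosen principal $\ZZ$-bundle. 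By Remark~\ref{thm:37}, a lift of a double cover to a principal $\ZZ$-bundle trivializes the $\bmut$ Chern-Simons invariant at level $\alpha$: concretely, the pullback $(Bp)^*\alpha \in E^4(B\ZZ)$ vanishes because $B\ZZ \simeq \cir$ and $E^4(\cir) = 0$ (Lemma~\ref{thm:21}(3)).

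The key steps, in order: (1) Identify the extended invertible field theory $\emti$ as the composite realizing the spin $\rS_{\!\bmut}$-theory at level $\alpha$, precomposed with the map that replaces the $\bmut$-background by a $\ZZ$-background via the homomorphism $p\:\ZZ \to \bmut$. In the spectrum-level description this is the map
\[
\MSpin \wedge B\ZZ_+ \xrightarrow{\;\;\id \wedge (Bp)\;\;} \MSpin \wedge \Bbmut_+ \xrightarrow{\;\;\id \wedge \alpha\;\;} \MSpin \wedge (E\CZo_3)_+ \xrightarrow{\;\;\int\;\;} \Sigma^3 E\CZo,
\]
with the understanding (as in the discussion of $\alpha_3$ and Appendix~\ref{sec:10}) that since we work with flat connections this is a genuine topological theory, in fact of order dividing $4$. (2) Apply Lemma~\ref{thm:21}(3): the composite $\id \wedge (Bp)^*\alpha$ is null because $(Bp)^*\alpha \in E^4(B\ZZ) = 0$. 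Hence the classifying map for the level of $\emti$ is null-homotopic. (3) A null-homotopy of the level gives a trivialization of the invertible field theory, since the theory is the image of this map of spectra; equivalently, the curvature vanishes and the partition function on every closed oriented spin $3$-manifold is $1$, and by the obstruction-theory argument analogous to the proof of Theorem~\ref{thm:56} (where the same phenomenon occurs because the integer Bockstein of a mod $2$ reduction is trivial) the whole theory is trivializable, not just its partition function.

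Concretely I would phrase the proof in one or two lines paralleling the proof of Theorem~\ref{thm:56}: \emph{The class $\alpha \in E^4(\Bbmut)$ pulls back to zero in $E^4(B\ZZ)$ by Lemma~\ref{thm:21}(3), so the level defining $\emti$ is trivial; hence $\emti$ is isomorphic to the trivial theory.} One should record a choice of null-homotopy (as in the footnote to Remark~\ref{thm:37}) to pin down the trivializing isomorphism, but its existence is immediate. The main obstacle is essentially bookkeeping rather than mathematics: one must be careful that "trivial level implies trivial extended theory" is legitimate in the setting of Appendix~\ref{sec:10}, i.e.\ that for \emph{flat} connections the theory is genuinely topological and hence determined by the homotopy class of its level (this is exactly the content invoked in Theorems~\ref{thm:55} and~\ref{thm:56}), and that the pullback along $p\:\ZZ\to\bmut$ is compatible with the integration map and the spin structure. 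No nontrivial computation is needed beyond $E^4(\cir)=0$, which is already in hand.
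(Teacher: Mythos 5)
Your proposal is correct and follows the same route as the paper, whose entire justification is the remark that Lemma~\ref{thm:21}(3) (the pullback map $E^4(\Bbmut)\to E^4(B\ZZ)$ is zero, since $E^4(\cir)=0$) immediately gives the triviality of the level defining $\emti$, hence of the theory. Your extra bookkeeping about null-homotopies and the flat/topological setting is consistent with Remark~\ref{thm:37} and the paper's use of the spectrum-level formulation, but is not a different argument.
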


  \begin{corollary}[]\label{thm:63}
 A trivialization of~$\emti$ determines an isomorphism 
$\nu: \eSLi\xrightarrow{\;\cong \;}\epsilon ^{\infty}_{\Cx}$.
  \end{corollary}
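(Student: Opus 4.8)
The plan is to obtain $\nu$ directly from Theorem~\ref{thm:23} by pulling back along the forgetful functor and then using the given trivialization. Recall that $\eSLi$, $\epsilon^{\infty}_{\Cx}$, and $\emti$ are by construction the pullbacks of $\eSL$, $\eCx$, and $\emt$ along the functor of bordism multicategories that forgets the lift of the associated $\bmut$-bundle to a principal $\ZZ$-bundle. Since pullback along a symmetric monoidal functor is again symmetric monoidal, it carries the isomorphism $\eSL\cong \eCx\otimes\emt$ of Theorem~\ref{thm:23} to an isomorphism of invertible field theories $\eSLi\xrightarrow{\;\cong\;}\epsilon^{\infty}_{\Cx}\otimes\emti$.

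Given a trivialization $t\colon\emti\xrightarrow{\;\cong\;}\mathbf 1$ of $\emti$ — one exists by Theorem~\ref{thm:62} — I would define $\nu$ to be the composite
\[
   \eSLi \xrightarrow{\;\cong\;} \epsilon^{\infty}_{\Cx}\otimes\emti \xrightarrow{\;\id\otimes t\;} \epsilon^{\infty}_{\Cx}\otimes\mathbf 1 \xrightarrow{\;\cong\;} \epsilon^{\infty}_{\Cx},
\]
the computations taking place in the Picard groupoid of invertible $3$-dimensional field theories whose background fields are a spin structure, a flat $H$-connection, and a $\ZZ$-bundle lift. This is manifestly an isomorphism, which is all that is asserted.

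There is no real obstacle here: the mathematical content lies entirely in Theorems~\ref{thm:23} and~\ref{thm:62}, and the corollary is the formal statement that an invertible theory is identified with a tensor factor once the complementary factor has been trivialized. The one point worth flagging, consistent with the phrasing of the statement, is that $\nu$ depends on the choice of trivialization $t$: two such choices differ by an automorphism of the trivial theory $\mathbf 1$, and hence change $\nu$ by the corresponding automorphism of $\epsilon^{\infty}_{\Cx}$.
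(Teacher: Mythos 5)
Your proposal is correct and is exactly the argument the paper intends: pull back the isomorphism $\eSL\cong\eCx\otimes\emt$ of Theorem~\ref{thm:23} to the bordism multicategory with the $\ZZ$-bundle lift, then cancel the $\emti$ factor using the chosen trivialization furnished by Theorem~\ref{thm:62}. Your closing remark that $\nu$ depends on the choice of trivialization is also consistent with the paper, which later constrains that choice in Appendix~\ref{sec:C11}.
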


\noindent
 In Appendix~\ref{sec:C11} we constrain the trivialization, based on
considerations in~\S\ref{sec:difference-line}.

\section{Abelianization of Chern-Simons lines}\label{sec:7a}

Throughout this section we take $G = \SLC$.

So far we have discussed generalities about the Chern-Simons
theories $\scrF_G$, $\scrS_{\C^\times}$, and their relation to 
one another via stratified abelianization.
Now we begin discussing applications.

Suppose $Y$ is a compact 2-manifold, equipped with a 
boundary-reduced flat $G$-bundle $P \to Y$.
In this section and
the next we give a new description of the line
$\scrF_G(Y;P)$. 
The idea is to identify $\scrF_G(Y;P)$ with 
$\scrS_{\C^\times}(\tY; Q^\eps_\tw; \sigma^\eps_\tw) \otimes \cL(Y,\eps)$,
where $Q^\eps_\tw$ is a flat $\C^\times$-bundle
over a branched double cover $\tY \to Y$,
and $\cL(Y,\eps)$ is a universal line which does not depend on $P$.
In the rest of this section we give a sketch of the construction.

The bundle $Q^\eps_\tw$ will be constructed as follows.
First, we fix a semi-ideal triangulation $\scrT$ of $Y$,
and let $\tY \to Y$ be the associated branched double cover,
and $\cN^\scrT$ the associated spectral network (Construction \ref{thm:9}).
Next, we fix a section of $P/B$ over each vertex of $Y$
(as usual, for ideal vertices this means a flat section over the
corresponding boundary component),
obeying the genericity Assumption \ref{thm:74}.
From these data, by Construction \ref{thm:14} we obtain stratified
abelianization data $(P, Q, \mu, \theta)$.

Now, suppose we ignore the branch locus $Y_{\twobstrat}$ 
for a moment, i.e. we work just over $Y_\getwoa$.
Then, according to Theorem~\ref{thm:44}, $\scrF_G(Y_\getwoa;P)$
is naturally isomorphic to $\scrF_H( Y_\getwoa;Q)$; and by Corollary~\ref{thm:63},
if we choose a spin structure $\sigma$ on $Y$, and a lift of $\tY_\getwoa \to Y_\getwoa$ to a $\Z$-bundle $Y^{\eps,\infty} \to Y_\getwoa$,
$\scrF_H(Y_\getwoa;Q)$ is naturally isomorphic to $\scrS_{\C^\times}(\tY_\getwoa; Q; \pi^* \sigma)$.
Composing these two we would get an isomorphism
\begin{equation}
  \scrF_G(Y_\getwoa;P) \to \scrS_{\C^\times}(\tY_\getwoa; Q; \pi^* \sigma) \; .
\end{equation}
This is the kind of statement we are after, but to extend the 
right side over the full $Y$ it needs to be modified.
The complication is that 
$Q \to \tY_\getwoa$ has holonomy $-1$ around these points,
as noted in Lemma \ref{thm:5},
and $\pi^* \sigma$ does not extend over them either.
One could think of this holonomy as a kind of singularity,
and try to define a modified version of the theory 
$\scrS_{\C^\times}$ which works directly with these singular objects.
Here we take an alternative path: we twist both $Q \to \tY_\getwoa$ 
and $\pi^* \sigma$
by a $\bmut$-bundle $\tY^{\eps,4} \to \tY_\getwoa$, 
which cancels the unwanted holonomy.
Fortunately, Corollary~\ref{thm:72} ensures that this 
twisting does not change the Chern-Simons theory away
from the branch locus, i.e., we get an isomorphism 
\begin{equation}
  \scrF_G(Y_\getwoa;P) \to \scrS_{\C^\times}(\tY_\getwoa; Q^\eps_\tw; \sigma^\eps_\tw) \; .
\end{equation}
Now both sides extend over the branch locus.
The isomorphism however does not: there is a mismatch between
the two Chern-Simons theories over the branch locus. We 
measure this mismatch by a $P$-independent 
line we call $\cL(Y,\eps)$. Thus ultimately 
what we get is an isomorphism
\begin{equation} \label{eq:intro-iso}
 \scrF_G(Y;P) \to \scrS_{\C^\times}(\tY; Q^\eps_\tw; \sigma^\eps_\tw) \otimes \cL(Y,\eps) \; .
\end{equation}
In the above we needed to make various choices: a semi-ideal triangulation $\scrT$ of $Y$, a $\Z$-bundle $\tY^\infty \to Y$, and a $\bmut$-bundle $\tY^{\eps,4} \to \tY_\getwoa$.
It turns out that both $\tY^\infty \to Y$ and $\tY^{\eps,4} \to \tY_\getwoa$ can be conveniently built from the data of edge-orientations
on $\scrT$; this is the data we call $\eps$.

In \S\ref{sec:edge-orientations}-\S\ref{sec:difference-line} 
we discuss
the necessary twisting and the properties of the difference line 
$\cL(Y,\eps)$; in particular, we compute the action of
rotations of a triangle on this line. Note also Appendix~\ref{sec:C11} in
which we fix a choice in the construction so that the super line
$\cL(Y,\eps)$ is \emph{even}.  (Recall the discussion in Remark~\ref{thm:82}.)
In \S\ref{sec:abelianization-cs-triangulated-surface} we
develop the stratified abelianization map \eqref{eq:intro-iso},
as Construction \ref{constr:abelianization-surface}.
In the remaining sections \S\ref{sec:edge-reversal-triangle}-\S\ref{sec:edge-reversal} we discuss some aspects of the dependence of
stratified abelianization on the edge-orientations $\eps$, which will be used
in the explicit calculations to follow.

\subsection{Edge-orientations on a triangle} \label{sec:edge-orientations}

Let $(\Delta,\scrT)$ be a triangle.
Choose an orientation $\eps_E$ for each $E \in \edges(\scrT)$, and let
$\eps = (\eps_E)_{E \in \edges(\scrT)}$.
Let 
\begin{equation}
\eps_E(v,v') = \begin{cases} +1 & \text{if the edge $E$ with vertices $v$, $v'$ is oriented from $v$ to $v'$}, \\
-1 & \text{otherwise.}
\end{cases}  
\end{equation}

\begin{constr} \label{constr:Z-bundle-from-eps}
$\eps$ determines a lift of the $\bmuu_2$-bundle 
$\tDelta_{\getwoa} \to \Delta_{\getwoa}$ to a $\Z$-bundle $\tDelta^{\eps,\infty} \to \Delta_{\getwoa}$.
\end{constr}

\begin{proof}
$\Delta_0$ has three connected components $\Delta_0^E$, each containing one edge $E$.
Fix $E$ with vertices $v$, $v'$.
Over $\Delta_0^E$ we define the fiber of $\tDelta^{\eps,\infty}$ as the $\Z$-torsor 
\begin{equation}
(\{v,v'\} \times \Z) / \sim, \qquad (v,n) \sim (v',n+\eps_E(v,v')) .
\end{equation}

Each wall in $\Delta_{-1}$ lies in the boundary of two 
domains $\Delta_0^E$, $\Delta_0^{E'}$, where the edges $E$, $E'$ have 
one vertex $w$ in common.
We glue $\tDelta^{\eps,\infty}$ across such a wall by 
identifying $[(w,n)]$ on one side with $[(w,n)]$ on the other side.

The $\bmuu_2$-bundle $\tDelta^{\eps,\infty} / 2\Z$ is isomorphic to $\tDelta_{\getwoa}$, 
via the map
which takes $[(v,n)] \mapsto v$ when $n \in 2\Z$. Thus $\tDelta^{\eps,\infty}$ is indeed
a lift of $\tDelta_{\getwoa}$ to a $\Z$-bundle as claimed.
\end{proof}

Note that the clockwise monodromy of $\tDelta^{\eps,\infty}$ around $\partial \Delta$
is $n_+ - n_- \in \{3,1,-1,-3\}$ where $n_+$ ($n_-$) is the number of edges oriented clockwise (counterclockwise).
Reducing mod $2$, we recover the fact that the monodromy of the double cover $\tDelta_{\getwoa} 
\to \Delta_{\getwoa}$ around $\partial \Delta$ is the nontrivial element of $\bmuu_2$.

\subsection{Edge-orientations on a triangulated surface}

In the last subsection we considered a single triangle. More generally,
suppose we have a semi-ideally triangulated
surface $(Y,\scrT)$ and edge-orientations $\eps = (\eps_E)_{E \in \edges(\scrT)}$.
All of our constructions glue canonically across edges,
and thus we obtain
a $\Z$-bundle
\begin{equation}
\tY^{\eps,\infty} \to Y_{\getwoa}.
\end{equation}
The action of $2\Z$ on $\tY^{\eps,\infty}$ commutes with the projection
$\tY^{\eps,\infty} \to \tY_{\getwoa}$, 
so $\tY^{\eps,\infty}$ is also a $2\Z$-bundle over $\tY_{\getwoa}$.
Let 
\begin{equation}
\tY^{\eps,4} = \tY^{\eps,\infty} / 4\Z \, .  
\end{equation}
This is a $\bmuu_2$-bundle over $\tY_{\getwoa}$, since $2\Z / 4\Z = \bmuu_2$.
We can describe its holonomies around cycles explicitly, as follows.

\begin{defn} \label{def:quadrilateral-sign}
For any $E \in \edges(\scrT)$, let $\cQ_E$ be the quadrilateral
formed by the two triangles containing $E$. 
The $\eps$-sign of $E$ is $(-1)^n$, where $n$ is the number of edges of $\cQ_E$
which are oriented clockwise by $\eps$. (Replacing ``clockwise'' by ``counterclockwise''
here would give the same definition.)
\end{defn}

\insfig{quad-loop-labeled}{0.4}{The quadrilateral $\cQ_E$, with vertices labeled and the class
$\gamma_E$ shown.}

We recall the class $\gamma_E \in H_1(\tY)$ defined before
Proposition~\ref{thm:68}. This class depends on an orientation of $Y$, and reversing
the orientation sends $\gamma_E \mapsto -\gamma_E$; the assertions in the rest of
this section hold independent of the choice of orientation.

\begin{prop} \label{prop:loop-hol-formula} 
The holonomy $\hol_{\tY^{\eps,4}} (\gamma_E)$ 
is the $\eps$-sign of $E$
as defined in Definition \ref{def:quadrilateral-sign}.
\end{prop}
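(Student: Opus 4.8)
The plan is to compute the holonomy $\hol_{\tY^{\eps,4}}(\gamma_E)$ directly from the explicit description of $\tY^{\eps,\infty}$ in Construction~\ref{constr:Z-bundle-from-eps}, reduced mod $4$. First I would choose a concrete representative of $\bar\lambda_E$, the image of $\gamma_E$ in $Y_{\ge-1}$: following the recipe before Proposition~\ref{thm:68}, this is a loop in $\cQ_E$ crossing $E$ twice transversely and encircling the two branch points (barycenters) in the two triangles of $\cQ_E$. As in Figure~\ref{fig:quad-loop-labeled}, this loop passes through a sequence of generic domains $\Delta_0^{E_i}$ and walls of the spectral network inside the two triangles of $\cQ_E$, and the lift $\lambda_E$ is pinned down by the condition that over the two intersection points $\bar\lambda_E \cap E$ it sits on the sheet labeled by the nearer endpoint of $E$. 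I would walk around this loop, keeping track of the integer $n$ in the fiber $(\{v,v'\}\times\Z)/\sim$ as we pass from domain to domain: within a domain $\Delta_0^{E'}$ the fiber is constant, across a wall the gluing $[(w,n)]\mapsto[(w,n)]$ identifies the common vertex $w$, and whenever the loop crosses an edge of the triangulation we pick up the shift $\eps_{E'}(v,v')$ recorded by the $\Z$-torsor structure. The total monodromy is then an explicit alternating sum of the $\eps_{E'}(v,v')$ over the four boundary edges of $\cQ_E$ (the edge $E$ itself is crossed twice, so its contribution cancels modulo the sign bookkeeping — this is exactly the reason the answer only sees the four outer edges of $\cQ_E$).

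Next I would reduce mod $4$, i.e.\ compute in $\tY^{\eps,4}=\tY^{\eps,\infty}/4\Z$ with $2\Z/4\Z=\bmuu_2$. The $\Z$-monodromy computed above is some integer $m$; its class in $\Z/4\Z$ lands in the subgroup $2\Z/4\Z$ (because $\gamma_E$ lifts to $H_1(\tY_{\getwoa})$, equivalently because the mod-$2$ reduction of $\tY^{\eps,\infty}$ is the double cover $\tY_{\getwoa}$ and $\gamma_E$ maps to the trivial class there, as noted after Proposition~\ref{thm:68}). So $m$ is even, and $\hol_{\tY^{\eps,4}}(\gamma_E)=(-1)^{m/2}\in\bmuu_2$. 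The remaining task is to check that $(-1)^{m/2}$ equals $(-1)^n$, where $n$ is the number of clockwise-oriented edges of $\cQ_E$ from Definition~\ref{def:quadrilateral-sign}. This is a finite combinatorial identity: one writes $m$ as the sum over the traversed walls/edge-crossings of the local shifts, collects terms edge by edge, and compares the parity of $m/2$ with the parity of the count of clockwise boundary edges. I expect this to reduce, after accounting for orientation conventions, to an elementary statement like $m = 2(\text{number of clockwise edges of }\cQ_E) \pmod 4$, or the analogous statement with a global sign, in which case the ``clockwise vs.\ counterclockwise'' ambiguity noted in Definition~\ref{def:quadrilateral-sign} is precisely the statement that the answer is unchanged (since $\cQ_E$ has four edges, swapping orientation on all of them changes $n$ by $4\equiv 0\pmod 2$, and more to the point swapping a single edge changes both sides consistently).

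The main obstacle will be the careful bookkeeping of signs and orientations: correctly reading off $\eps_E(v,v')$ along the chosen loop, matching the orientation of $\bar\lambda_E$ (fixed as the boundary of the region containing the two branch points) against the local picture in Figure~\ref{fig:quad-loop-labeled}, and confirming that the lift $\lambda_E$ is the one pinned down by the sheet condition rather than its deck conjugate (getting this wrong flips the sign of the holonomy, though in $\bmuu_2$ that is harmless, which is a reassuring check). A secondary subtlety is that $\tY^{\eps,\infty}$ is only defined over $Y_{\getwoa}$, so I must make sure the chosen representative of $\gamma_E$ stays in $Y_{\getwoa}$ (it does, since it encircles the branch points without meeting them) and that the class in $H_1(\tY_{\getwoa})$ rather than $H_1(\tY)$ is what controls the computation — but the holonomy of $\tY^{\eps,4}$ is defined on $H_1(\tY_{\getwoa})$, so this is automatic. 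Once the combinatorics is set up cleanly, the identity is forced, and I would present it by computing one representative case in detail and noting the general case is identical by symmetry of $\cQ_E$.
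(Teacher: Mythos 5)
Your proposal follows essentially the same route as the paper: the paper's proof likewise just unwinds Construction~\ref{constr:Z-bundle-from-eps} along the explicit loop, finds that the total $\Z$-shift is $\eps(0,1)+\eps(1,2)+\eps(2,3)+\eps(3,0)\equiv 2n \pmod{4}$, and reduces mod $4$, exactly the alternating-sum-plus-parity comparison you anticipate. One bookkeeping point to fix before the computation comes out, though: the shifts do not occur when the loop crosses edges of the triangulation --- the loop never meets the four outer edges of $\cQ_E$, and crossing $E$ inside its own domain $\Delta_0^E$ contributes nothing since the bundle is defined over that whole domain --- rather, they occur at wall crossings on the sheet where the vertex label changes, each such crossing contributing $\eps_{E'}(v',v)+\eps_{E''}(v,v'')$ via the torsor relations in the two adjacent domains, and only two of the six wall crossings along $\gamma_E$ are of this nontrivial kind. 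With that amendment (which your promised careful traversal would force in any case, since taking ``shift at each edge crossing'' literally would give the wrong answer), your argument is the paper's proof.
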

\begin{proof}
Over each domain $Y_0^E$, the fiber over the 
sheet labeled by vertex $w$ of $E$ is
$\{[(w,n)]: n \in 2\Z\}$.
Thus $\tY^{\eps,\infty}$ is the trivial $2\Z$-bundle on each 
connected component of $\tY_0$.
The gluing across preimages of walls is as follows. Each wall 
runs into a vertex $v$.
On the sheet labeled by the vertex $v$,
$[(v,n)]$ is glued to $[(v,n)]$.
The other sheet is
labeled by a vertex $v'$ on one side and $v''$ on the other.
There the gluing takes $[(v',n)]$ to $[(v'', n + k)]$,
where
\begin{equation}
 k = \eps(v',v) + \eps(v,v'') \in \{-2,0,2\} \, .
\end{equation}

In traversing $\gamma_E$, 
referring to Figure~\ref{fig:quad-loop-labeled} we see that 
we cross two walls where the gluing is nontrivial (the horizontal walls in the figure). 
Summing their contributions,
$\hol_{\tY^{\eps,\infty}} (\gamma_E)$ is a shift by
$\eps(0,1) + \eps(1,2) + \eps(2,3) + \eps(3,0)$,
which agrees mod $4$ with $2n$, where $n$ was defined in Definition \ref{def:quadrilateral-sign}. 
Reducing mod $4$ gives the desired
statement.
\end{proof}

\begin{constr} \label{constr:Z-bundle-delta-E}
Consider two edge-orientations $\eps$, $\eps'$
which differ by reversing the orientation on a single edge $E$.
The difference $\bmuu_2$-bundle 
\begin{equation} \label{eq:difference-bundle}
\tY^{\eps,4} \otimes_{\bmuu_2} \tY^{\eps',4} \to \tY_{\getwoa}  
\end{equation}
admits a lift to a $\Z$-bundle (canonical up to isomorphism)
\begin{equation}
\varpi^E \to \tY_{\getwoa}
\end{equation}
which extends over $\tY$.

For any class $\mu \in H_1(\tY)$
\begin{equation} \label{eq:varpi-holonomy}
 \hol_{\varpi^E} (\mu) = \IP{\gamma_E,\mu} \, . 
\end{equation}
\end{constr}

\begin{proof}
We first describe the difference bundle \eqref{eq:difference-bundle}, as we did in the proof of Proposition~\ref{prop:loop-hol-formula}.
It is the trivial $\bmuu_2$-bundle on each
connected component of $\tY_0$. The gluing across preimages of
walls is as follows. Consider a wall ending on a vertex $v$.
On the sheet labeled by $v$,
the gluing is given by the identity element in $\bmuu_2$. On the other sheet, the gluing
is given by the nontrivial element in $\bmuu_2$ if $v$ is a vertex of $E$, 
and otherwise by the identity.

Now we can define the lift $\varpi^E$: it 
is the trivial $\Z$-bundle on each connected component of $\tY_0$,
with gluing across preimages of walls as follows.
Consider a wall ending on a vertex $v$.
On the sheet labeled by $v$,
the gluing is given by $0 \in \Z$.
On the other sheet, if $v$ is a vertex of $E$, then the gluing in the direction away from 
$E$ is given by $+1 \in \Z$; if $v$ is not a vertex of $E$, then the gluing
is given by $0 \in \Z$.

The holonomy of $\varpi^E$ 
around a loop in $\tY_{\getwoa}$ encircling a branch point comes to $-1+1 = 0$, 
so $\varpi^E$ extends across the branch points, and thus over $\tY$ as desired.
Finally, the formula $\hol_{\varpi^E} \mu = \IP{\mu,\gamma_E}$ is
obtained directly by evaluating both sides on arcs $\mu$ 
crossing the quadrilateral $\cQ_E$.
\end{proof}

\subsection{Twistings over a triangulated surface}\label{subsec:S6.3}

Now let $(Y, \eps, \sigma, \cA)$ be a triangulated
surface, with edge-orientations, spin structure, 
and stratified abelianization data $\cA = (P, Q, \mu, \theta)$.
Then define the $\C^\times$-bundle
\begin{equation} \label{eq:qepstw-def}
Q^{\eps}_\tw = Q \otimes_{\bmuu_2} \tY^{\eps,4} \to \tY_{\getwoa}. 
\end{equation}
$Q^\eps_\tw$ has trivial monodromy around  
each point of $\tY_{\twobstrat}$, and thus
extends to $\tY$, unlike $Q$.
We use the name $Q^\eps_\tw$ also for the extension.

The spin structure $\pi^* \sigma$ on $\tY_\getwoa$ 
is non-bounding on a circle around a branch point and thus
does not extend from $\tY_{\getwoa}$ to $\tY$,
but its twist
\begin{equation}
\sigma^\eps_{\tw} = \pi^* \sigma \otimes_{\bmuu_2} \tY^{\eps,4}  
\end{equation}
does extend to a spin structure over $\tY$.

It will be useful below to have some concrete information about this spin
structure. 
We recall a convenient bit of notation first. 
Given a spin structure $\sigma$ on a surface, and a simple 
closed curve $\lambda$, 
we define
\begin{equation}
  \sigma(\lambda) = \begin{cases} 1 & \text{ if } \sigma\vert_\lambda \text{ extends to a spin structure on the disc}, \\
                                  -1 & \text{ otherwise.} \end{cases}
\end{equation}
We also recall the class $\gamma_E \in H_1(\tY)$ defined before
Proposition~\ref{thm:68}.

\begin{prop} \label{prop:ss-formula}
$\sigma^\eps_\tw(\gamma_E)$ is the $\eps$-sign of $E$ as defined
in Definition \ref{def:quadrilateral-sign}.
\end{prop}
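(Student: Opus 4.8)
The plan is to split $\sigma^\eps_\tw(\gamma_E)$ into a twisting contribution, which Proposition~\ref{prop:loop-hol-formula} already computes, and a ``pullback'' contribution, which I will show is trivial. Recall $\sigma^\eps_\tw = \pi^*\sigma \otimes_{\bmut} \tY^{\eps,4}$ as spin structures on $\tY$ (after extending over the branch locus). The general fact I would use is that twisting a spin structure $\tau$ on a surface by a $\bmut$-bundle $\cO$ multiplies its value on any simple closed curve $\lambda$ by the holonomy of $\cO$ around $\lambda$,
\[
(\tau\otimes_{\bmut}\cO)(\lambda) = \tau(\lambda)\cdot \hol_{\cO}(\lambda),
\]
which holds because spin structures on a circle form a torsor over $H^1(S^1;\zt)$ with the bounding one distinguished, so the shift $\tau\mapsto\tau\otimes_{\bmut}\cO$ swaps the bounding/nonbounding dichotomy on $\lambda$ exactly when $\hol_\cO(\lambda)=-1$. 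Applying this with $\tau=\pi^*\sigma$, $\cO=\tY^{\eps,4}$, $\lambda=\gamma_E$ and invoking Proposition~\ref{prop:loop-hol-formula} gives
\[
\sigma^\eps_\tw(\gamma_E) = (\pi^*\sigma)(\gamma_E)\cdot(\text{$\eps$-sign of }E),
\]
so the proposition reduces to the claim that $(\pi^*\sigma)(\gamma_E)=1$.

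To prove that claim I would trace through the definition of $\gamma_E$ (given before Proposition~\ref{thm:68}). It is the class of a loop $\lambda_E\subset\tY$ covering a loop $\bar\lambda_E\subset Y$ that crosses $E$ twice and bounds a region containing precisely the two barycenters of the triangles meeting $E$; those barycenters are exactly the branch points of $\pi\colon\tY\to Y$ lying in $\cQ_E$. Since $\bar\lambda_E$ encircles an \emph{even} number (two) of branch points, the monodromy of the double cover around $\bar\lambda_E$ is trivial, so $\pi^{-1}(\bar\lambda_E)$ is two disjoint circles, each mapped diffeomorphically onto $\bar\lambda_E$ by $\pi$; in particular $\pi$ restricts to a diffeomorphism $\lambda_E\xrightarrow{\,\sim\,}\bar\lambda_E$. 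As $\lambda_E$ avoids the branch locus, $d\pi$ identifies $T\tY|_{\lambda_E}$ with the pullback of $TY|_{\bar\lambda_E}$, hence identifies $(\pi^*\sigma)|_{\lambda_E}$ with $\sigma|_{\bar\lambda_E}$, and therefore $(\pi^*\sigma)(\gamma_E)=\sigma(\bar\lambda_E)$ (the same value for either lift). Finally, choosing the representative $\bar\lambda_E$ so that the region $R\subset\cQ_E$ it bounds is a small embedded disc in $Y$ (a neighborhood of the two barycenters joined by a band through $E$), the restriction $\sigma|_R$ is a spin structure on the disc $R$ extending $\sigma|_{\bar\lambda_E}$, so $\sigma(\bar\lambda_E)=1$, which establishes the claim and hence the proposition.

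The step I expect to require the most care is the last one: checking that $\bar\lambda_E$ can be taken to bound an \emph{embedded} disc in $Y$, independently of how the triangulation identifies the outer edges of the two faces at $E$ (in particular when those two faces coincide, or when their other edges are identified to one another or to themselves). The resolution is that the encircling loop may be chosen to stay inside a small neighborhood of the two barycenters and of the segment of $E$ joining the two crossing points — a region disjoint from all the edge identifications, which take place only on the outer edges of $\cQ_E$ — and since the homology class $\gamma_E$ is independent of the choice of representative, this suffices. Everything else is formal: the twisting formula above, the invariance of ``extends over the disc'' under diffeomorphism, and the already-established Proposition~\ref{prop:loop-hol-formula}.
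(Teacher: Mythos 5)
Your proof is correct and follows essentially the same route as the paper's: the paper likewise reduces $\sigma^\eps_\tw(\gamma_E)$ to the holonomy of $\tY^{\eps,4}$ around $\gamma_E$ computed in Proposition~\ref{prop:loop-hol-formula}, after observing that $\pi^*\sigma(\gamma_E)=+1$ because the projected curve bounds a disc. Your write-up simply makes explicit the twisting formula and the fact that $\pi$ carries $\lambda_E$ diffeomorphically onto a curve bounding an embedded disc in $Y$, steps the paper leaves implicit.
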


\begin{proof} Since $\pi_*\gamma_E$ can be represented by the boundary of a disc
in $\tY$, $\pi^*\sigma(\gamma_E) = +1$. Thus $\sigma^\eps_\tw(\gamma_E)$
is the monodromy of the $\bmuu_2$-bundle $\tM^{\eps,4} \to \tY$
around $\gamma_E$, which we computed in Proposition~\ref{prop:loop-hol-formula}.
\end{proof}

\subsection{Stratified abelianization}\label{subsec:S6.4}

Fix a manifold $X$ of dimension $\le 3$ with corners,
with $X_{\twobstrat} = \emptyset$.
Suppose $X$ is equipped with a spin structure $\sigma$, a
spectral network $\cN$, and
stratified abelianization data $\cA = (P,Q,\mu,\theta)$ 
over $(X,\cN)$.
Also suppose given a $\Z$-bundle $\tX^\infty$ whose mod
2 reduction is the double cover $\tX$.
Then let $\tX^4$ denote the mod 4 reduction of $\tX^\infty$.
$\tX^4 \to \tX$ is a $\bmuu_2$-bundle.

\begin{thm} \label{thm:strat-abelianization}
There is a canonical isomorphism
\begin{equation}\label{eq:S1}
  \chi(X;\cA;\sigma;\tX^\infty): \scrF_G(X;P) \to \scrS_{\C^\times}(\tX;Q \otimes_{\bmuu_2} \tX^4 ; \pi^* \sigma \otimes_{\bmuu_2} \tX^4) \, .
\end{equation}
\end{thm}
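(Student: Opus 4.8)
The plan is to realize $\chi$ as a composite of three canonical isomorphisms, one supplied by each of the structural results already in hand. First, note that $X_{\twobstrat}=\emptyset$ forces $X_{\threebstrat}=\emptyset$ as well: a Type~b codimension~$3$ stratum has link the standardly stratified boundary of a tetrahedron, whose SN-stratification contains four Type~b codimension~$2$ points, and a codimension~$2$ stratum in the link of a codimension~$3$ stratum is the cross-section of a codimension~$2$ stratum of $X$. Hence $X=X_{\gea}$, so Theorem~\ref{thm:44} applies with $M=X$ (and with $S\times X$ for an arbitrary parameter space $S$); using the isomorphism $\theta\colon\iota(Q)\to P$ over $X_0$ and the fact that its jumps along the walls $X_{-1}$ lie in the unipotent subgroup, it yields a natural flat isomorphism
\begin{equation*}
  \scrF_G(X;P)\;\xrightarrow{\;\cong\;}\;\scrF_G\bigl(X;\iota(\Theta_Q)\bigr),
\end{equation*}
whose target is by construction the value of the theory $\eSL$ (equivalently $\eSLi$) of \S\ref{subsubsec:5.2.4} on $(X,\sigma,\Theta_Q)$, using only the orientation underlying $\sigma$.

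Next I would pass to the abelian side. Via $\mu$, the $\bmuu_2$-bundle associated with the flat $H$-connection $\Theta_Q$ is the double cover $\pi\colon\tX\to X$; its total space carries the spin structure $\pi^*\sigma$ and, by Remark~\ref{thm:10}, the flat $\Cx$-connection obtained by regarding $Q$ as a principal $T\cong\Cx$-bundle over $\tX$, so that $\eCx$ evaluated on $(X,\sigma,\Theta_Q)$ is $\scrS_{\Cx}(\tX;Q;\pi^*\sigma)$. The given $\ZZ$-bundle $\tX^{\infty}\to X$ is exactly a lift of this $\bmuu_2$-bundle to a $\ZZ$-bundle, hence---with the residual sign fixed by Appendix~\ref{sec:C11}---it determines a trivialization of the theory $\emti$, which is trivializable by Theorem~\ref{thm:62}. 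Corollary~\ref{thm:63}, which rests ultimately on the level identity of Corollary~\ref{thm:22}, then supplies an isomorphism
\begin{equation*}
  \nu\colon\scrF_G\bigl(X;\iota(\Theta_Q)\bigr)\;\xrightarrow{\;\cong\;}\;\scrS_{\Cx}(\tX;Q;\pi^*\sigma).
\end{equation*}

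Finally I would install the twist by $\tX^4$. The key bookkeeping point is that the \emph{same} object $\tX^{\infty}$ does a second job: regarded over $\tX$ as a principal bundle for $2\ZZ\cong\ZZ$, it is a $\ZZ$-lift of the $\bmuu_2$-bundle $\tX^4=\tX^{\infty}/4\ZZ$ over $\tX$. So Corollary~\ref{thm:72} applies with base manifold $\tX$, $\Cx$-connection $Q$, spin structure $\pi^*\sigma$, and $\ZZ$-bundle $\tX^{\infty}$; since twisting by the double cover $\tX^4$ carries $Q$ to $Q\otimes_{\bmuu_2}\tX^4$ and shifts $\pi^*\sigma$ to $\pi^*\sigma\otimes_{\bmuu_2}\tX^4$, evaluating the natural isomorphism of Corollary~\ref{thm:72} on $\tX$ gives
\begin{equation*}
  \zeta\colon\scrS_{\Cx}(\tX;Q;\pi^*\sigma)\;\xrightarrow{\;\cong\;}\;\scrS_{\Cx}\bigl(\tX;\,Q\otimes_{\bmuu_2}\tX^4\,;\,\pi^*\sigma\otimes_{\bmuu_2}\tX^4\bigr),
\end{equation*}
normalized by property~\eqref{eq:191}. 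Composing the three displays defines $\chi(X;\cA;\sigma;\tX^{\infty})$; the construction runs verbatim in families over any $S$, so $\chi$ is an isomorphism of the field-theoretic outputs---a scalar when $\dim X=3$ and $X$ is closed, an isomorphism of complex lines when $\dim X=2$ and $X$ is closed, and the appropriate higher-categorical datum otherwise.

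It remains to check canonicity: the composite must be independent of the auxiliary data chosen inside the cited results---the tubular neighborhood and the function $\phi$ in Theorem~\ref{thm:44}, the null-homotopy behind Corollary~\ref{thm:63} (once Appendix~\ref{sec:C11} is invoked), and the residual freedom in Corollary~\ref{thm:72} (once~\eqref{eq:191} is imposed)---and each of these results asserts exactly that independence. I expect the only real work to be this bookkeeping: matching orientations and spin structures across the three steps, and, above all, verifying that the single datum $\tX^{\infty}$ simultaneously provides both required $\ZZ$-lifts---one over $X$, for Corollary~\ref{thm:63}, and one over $\tX$, for Corollary~\ref{thm:72}.
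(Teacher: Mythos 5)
Your proposal is correct and follows essentially the same route as the paper: the isomorphism is built as the composite of Theorem~\ref{thm:44} (unipotent invariance), Corollary~\ref{thm:63} (passage from $H$-bundles on $X$ to $\Cx$-bundles on $\tX$, with the trivialization constrained as in Appendix~\ref{sec:C11}), and Corollary~\ref{thm:72} (the $\bmut$-twist by $\tX^4$), with $\tX^\infty$ serving as the $\ZZ$-lift in both of the last two steps. Your preliminary observation that $X_{\twobstrat}=\emptyset$ forces $X_{\threebstrat}=\emptyset$ is a small extra check the paper leaves implicit, but otherwise the argument matches the paper's proof.
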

 
\noindent 
 We remark that the isomorphism~\eqref{eq:S3} is based on a choice of
trivialization in Corollary~\ref{thm:63}.  We constrain that choice in
Appendix~\ref{sec:C11}. 

\begin{proof} We combine ingredients as follows.
First, Theorem \ref{thm:44} (triviality of Chern-Simons in unipotent directions) gives
\begin{equation}\label{eq:S2}
 \rho(X;\cA): \scrF_G(X;P) \to \scrF_H(X;Q) \, .
\end{equation}
Second, Corollary \ref{thm:63} (identity between Chern-Simons for $H$-bundles over $X$ and $\C^\times$-bundles over $\tX$) gives
\begin{equation}\label{eq:S3}
 \nu(X;Q;\tX^\infty): \scrF_H(X;Q) \to \scrS_{\C^\times}(\tX;Q;\pi^* \sigma) \, .
\end{equation}
Finally, Corollary \ref{thm:72} (invariance of spin $\C^\times$ Chern-Simons under $\bmuu_2$-twists) gives
\begin{equation}\label{eq:S4}
  \zeta(\tX;Q;\pi^* \sigma;\tX^\infty): \scrS_{\C^\times}(\tX;Q;\pi^* \sigma) \to \scrS_{\C^\times}(\tX;Q \otimes_{\bmuu_2} \tX^4;\pi^* \sigma \otimes_{\bmuu_2} \tX^4) \, .
\end{equation}
The composition of these three is the desired isomorphism.
\end{proof}

The concrete nature of the isomorphism $\chi(X)$ 
in Theorem~\ref{thm:strat-abelianization}
depends on the nature of $X$. In general, $\chi(X)$ 
is an isomorphism between objects in appropriate diagram categories.
For instance, if $X$ is a closed 2-manifold, $\chi(X)$ is an isomorphism of
lines; if $X$ is a 3-manifold with boundary, $\chi(X)$ is an isomorphism of lines together
with an isomorphism of objects in those lines; if $X$ is a closed 3-manifold, $\chi(X)$ 
is just an equation.

In what follows we will need to know that $\chi$ has good gluing properties.
These properties are most succinctly summarized as follows: they are
just as if $\chi$ came from an isomorphism of 3-dimensional
topological field theories, defined on a bordism category of 
oriented manifolds $X$ equipped with a spectral network, 
stratified abelianization data, and a lift of the double
cover $\widetilde X$ to a $\Z$-bundle.
We will apply this below to various individual manifolds $X$
carrying this data.
Of the three ingredients above, 
two of them were formulated as isomorphisms of
topological field theories; the third,  Theorem \ref{thm:44}, was not
formulated in this language, but it was constructed in a fully local and canonical
way. This is sufficient to imply the desired gluing properties.

\subsection{The difference line for a triangle} \label{sec:difference-line}

Let $(\Delta, \eps, \sigma, \cA)$ be an oriented triangle with edge-orientations, spin structure, 
and stratified abelianization data $\cA = (P, Q, \mu, \theta)$

\insfig{triangle-sn-orientations}{0.5}{A triangle $\Delta$ with 
edge-orientations $\eps$ and the canonical SN-stratification.}

Using $G$ Chern-Simons theory on $\Delta$ gives an object in
the \Vline associated to the boundary $\partial \Delta$,
\begin{equation}
 \scrF_G(\Delta; P) \in \scrF_G(\partial \Delta; \partial P) \, .
\end{equation}
Using $\C^\times$ Chern-Simons theory on $\tDelta$ likewise gives an object
in the \Vline associated to $\partial \tDelta$,
\begin{equation}
 \scrS_{\C^\times}(\tDelta; Q^{\eps}_{\tw}; \sigma^\eps_{\tw}) \in \scrS_{\C^\times}(\partial \tDelta; \partial Q^{\eps}_{\tw}; \sigma^\eps_{\tw}) \, .  
\end{equation}
Because $\Delta_{\twobstrat} \neq \emptyset$, we cannot use 
Theorem~\ref{thm:strat-abelianization} to identify these two objects. However,
$\Delta_{\twobstrat}$ does not intersect $\partial \Delta$,
so Theorem~\ref{thm:strat-abelianization} gives an 
equivalence of \Vlines,
\begin{equation}
\chi(\partial \Delta; \partial \cA; \sigma; \tDelta^{\eps,\infty}): \scrF_G(\partial \Delta; \partial P) \to \scrS_{\C^\times}(\partial \tDelta; \partial Q^\eps_\tw; \sigma^\eps_\tw) \, .
\end{equation}
Now we can compare the two objects: we define a line
\begin{equation} \label{def:L}
\cL(\Delta,\eps,\sigma,\cA) = \chi(\partial \Delta; \partial \cA; \sigma; \tDelta^{\eps,\infty})(\scrF_G(\Delta;P))
\otimes (\scrS_{\C^\times}(\tDelta ; Q^{\eps}_{\tw} ; \sigma^\eps_{\tw}))^*.  
\end{equation}
\noindent 
 We explain in Appendix~\ref{sec:C11} how to make a choice of
isomorphism~$\chi $ in~\eqref{eq:S1} so that the super line~$\cL(\Delta
,\epsilon ,\sigma ,\cA)$ is \emph{even}; without further argument it could be
odd.

The line $\cL(\Delta,\eps,\sigma,\cA)$ depends only on $(\Delta,\eps)$,
in the following sense.

\begin{prop} \label{prop:triangle-iso}
Suppose $(\Delta, \eps, \sigma, \cA)$ and $(\Delta', \eps', \sigma', \cA')$ are 
triangles with edge-orientations, spin structure, 
and stratified abelianization data.
An orientation-preserving affine-linear isomorphism $f: \Delta \to \Delta'$ which carries $\eps$ to $\eps'$ induces a canonical map
\begin{equation}
f_*: \cL(\Delta,\eps,\sigma,\cA) \to \cL(\Delta',\eps',\sigma',\cA') \, .
\end{equation}
An orientation-reversing affine-linear isomorphism $f: \Delta \to \Delta'$ which carries $\eps$ to $\eps'$ induces a canonical map
\begin{equation}
f_*: \cL(\Delta,\eps,\sigma,\cA) \to \cL(\Delta',\eps',\sigma',\cA')^* \, .  
\end{equation}
\end{prop}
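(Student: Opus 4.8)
The plan is to deduce this from the functoriality of every construction entering the definition~\eqref{def:L} of $\cL$, together with the fact — essentially the content of the sentence preceding the proposition — that $\cL(\Delta,\eps,\sigma,\cA)$ is independent of $\sigma$ and $\cA$ up to canonical isomorphism. First I would treat the case in which $f$ already carries $\sigma$ to $\sigma'$ and $\cA$ to $\cA'$. An affine-linear $f$ carries the barycenter, the medians, and the branched double cover of $\Delta$ to those of $\Delta'$, hence carries the canonical SN-stratification and spectral network (Construction~\ref{thm:77}, Construction~\ref{thm:9}) to those of $\Delta'$; and since $f_*\eps = \eps'$ it carries the $\Z$-bundle $\tDelta^{\eps,\infty}$ of Construction~\ref{constr:Z-bundle-from-eps}, and therefore the bundle $Q^\eps_{\tw}$ and the spin structure $\sigma^\eps_{\tw}$, to the corresponding data on $\tDelta'$. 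Because $\scrF_G$ and $\scrS_{\C^\times}$ are field theories and hence functorial under diffeomorphisms, $f$ transports $\scrF_G(\Delta;P)$ and $\scrS_{\C^\times}(\tDelta;Q^\eps_{\tw};\sigma^\eps_{\tw})$ to the corresponding objects for $\Delta'$ under the induced equivalences of \Vlines; and by the naturality of the abelianization equivalence $\chi$ recorded after Theorem~\ref{thm:strat-abelianization} — it behaves as an isomorphism of $3$-dimensional topological field theories on the bordism category of manifolds with SN-stratification, stratified abelianization data, and a $\Z$-lift of the double cover — the map $f$ intertwines $\chi(\partial\Delta;\partial\cA;\sigma;\tDelta^{\eps,\infty})$ with its primed counterpart. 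Assembling these identifications produces the map $f_*$; when $f$ reverses orientation the boundary orientation flips, so each of the two tensor factors in~\eqref{def:L} is attached to an orientation-reversed $1$-manifold and hence dualizes, sending the ratio $\cL = A\otimes B^*$ to $A^*\otimes B = \cL^*$, which accounts for the dual in the second clause.

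It remains to handle a general $f$, for which $f_*\cA$ and $f_*\sigma$ need not coincide with $\cA'$ and $\sigma'$; here I would invoke independence of the decorations. For the abelianization data: by Proposition~\ref{prop:uniqueness-of-triangle-ab} there is an isomorphism $f_*\cA\to\cA'$, unique up to composition with the simultaneous action of $-1$ on $(P',Q')$, and any such isomorphism induces, by functoriality of $\scrF_G$, $\scrS_{\C^\times}$, and $\chi$, an isomorphism $\cL(\Delta',\eps',\sigma',f_*\cA)\cong\cL(\Delta',\eps',\sigma',\cA')$. For the spin structure: since $\Delta$ and its branched double cover $\tDelta$ are discs, all spin structures on them — and hence all the twisted spin structures $\sigma^\eps_{\tw}$, together with their bounding restrictions to $\partial\tDelta$ — are isomorphic, with automorphism group $\bmut$, and an isomorphism of spin structures likewise induces an isomorphism of the corresponding lines $\cL$ by functoriality of $\chi$ and of $\scrS_{\C^\times}$ in the spin structure. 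Composing, $f_*$ is the composite of the map of the first paragraph with these two identifications, and functoriality $(f\circ g)_* = f_*\circ g_*$, $(\id_\Delta)_* = \id$ follows from that of each ingredient.

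The step I expect to require the most care is checking that these last two identifications are genuinely canonical, i.e.\ that the residual $\bmut$ ambiguities act trivially on $\cL$. For the $\cA$-ambiguity: the simultaneous $-1$ acts on the first tensor factor of~\eqref{def:L} through the central $-1\in\SLC$ on $P'$ and on the second factor through $-1\in H$ on $Q'$, i.e.\ through $-1\in\Cx$ on the associated line bundle of $Q^\eps_{\tw}$; because $\chi$ is built from the natural isomorphisms $\rho$, $\nu$, $\zeta$ of Theorem~\ref{thm:44} and Corollaries~\ref{thm:63} and~\ref{thm:72}, which intertwine the $\SLC$-, $H$-, and $\Cx$-actions on these bundles, the two factors are scaled identically and the ratio is fixed. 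For the $\sigma$-ambiguity: the global $\bmut$-automorphism of $\sigma$ acts on the \Vline $\scrS_{\C^\times}(\partial\tDelta';\cdots)$ that houses both tensor factors of~\eqref{def:L} — directly on the second and, through the dependence of $\chi$ on $\sigma$, on the first — so it again scales the two factors equally; the fact that $\cL$ is \emph{even}, arranged in Appendix~\ref{sec:C11}, ensures there is no leftover sign from this action on super-lines. Since the construction of $f_*$ then uses no auxiliary choices, it is canonical as claimed.
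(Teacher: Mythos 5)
Your skeleton — transport the stratification, spectral network, and $\Z$-bundle by $f$, lift $f$ to the decorations using uniqueness of the spin structure and Proposition~\ref{prop:uniqueness-of-triangle-ab}, then check that the residual $\bmut$ ambiguities act trivially on $\cL$ — is the same as the paper's. The gap is in your treatment of the $\cA$-ambiguity. You claim that because $\chi$ is assembled from the equivariant isomorphisms $\rho$, $\nu$, $\zeta$, the simultaneous $-1$ on $(P,Q)$ ``scales the two factors identically,'' so the ratio is fixed. Equivariance of $\chi$ only makes the action of $-1$ on $\cL$ well defined as a scalar; it does not compute that scalar. Writing $\cL=\Hom\bigl(\scrS_{\C^\times}(\tDelta;Q^\eps_\tw;\sigma^\eps_\tw),\,\chi(\partial\Delta;\ldots)(\scrF_G(\Delta;P))\bigr)$, the automorphism acts on the two objects through how $-1$ acts on the \emph{filling} invariants $\scrF_G(\Delta;P)$ and $\scrS_{\C^\times}(\tDelta;\ldots)$ relative to the boundary \Vline; these are independent pieces of data, and naturality of $\chi$ gives no reason for the two phases to agree. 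That this style of argument cannot suffice is shown by the spin flip: it also acts through perfectly natural constructions, yet by Proposition~\ref{thm:C74} it acts by $-1$ whenever $\cL$ is odd — which is precisely why Appendix~\ref{sec:C11} is needed at all.

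The paper closes this gap with an actual computation. It enlarges the ambiguity from $\pm1\in\SLC$ to the full center $Z(\GL_2\C)\simeq\C^\times$, identifies the scalar by which $\lambda$ acts on $\cL$ with the ratio \eqref{eq:lambda-ratio} of invariants of the mapping tori $\Delta\times S^1$ and $\tDelta\times S^1$ glued with the action of $\lambda$, and then argues that this ratio is locally constant in $\lambda$: the elements $\scrF_G(\Delta\times S^1;P_\lambda)$ and $\scrS_{\C^\times}(\tDelta\times S^1;Q^\eps_{\lambda,\tw};\sigma^\eps_\tw\times S^1)$ are covariantly constant sections over $\C^\times$ because the connections are flat, and $\chi$ is a flat isomorphism. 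Since the ratio equals $1$ at $\lambda=1$ and $\C^\times$ is connected, it equals $1$ at $\lambda=-1$. Note that the passage to $\GL_2\C$ is essential for this continuity argument, as $\{\pm1\}\subset\SLC$ is disconnected. Your handling of the spin-flip ambiguity (invoking the evenness of $\cL$ arranged in Appendix~\ref{sec:C11}) is fine and matches the paper; the treatment of the $-1$ automorphism of $\cA$ is the missing idea, and it is the main substance of the paper's proof.
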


\begin{proof}
By uniqueness of spin structures on $\Delta$, 
we can lift $f$ to an isomorphism $\sigma \to f^* \sigma'$.
By Proposition~\ref{prop:uniqueness-of-triangle-ab} we can lift
$f$ to an isomorphism $\cA \to f^* \cA'$.
Finally, since $f^* \eps' = \eps$ we can lift $f$ to
an isomorphism $\tDelta^{\eps,\infty} \to \tDelta^{f^* \eps',\infty}$.
All of our constructions are canonical and depend only
on these data, so we obtain a map $f_*$ as desired. 
It only remains to check that this map is independent of the choices we 
made in lifting $f$. To see this, we need to show 
that the nontrivial automorphism of the spin structure---the \emph{spin flip}---and the automorphism of $\cA$ induced by the action of
$-1 \in G$ and $-1 \in \C^\times$ both act trivially
on $\cL(\Delta,\eps,\sigma,\cA)$.

The argument that the spin flip acts trivially is the subject of
Appendix~\ref{sec:C11}. 

To show that the automorphism $-1$ of $\cA$ acts trivially we argue
as follows. Our definition of $\cL(\Delta,\eps,\sigma,\cA)$ can equally
well be made using $G = \GL_2 \C$ rather than $\SL_2 \C$. 
In this case the whole center
$Z(\GL_2 \C) \simeq \C^\times$ acts on $\cA$ and thus on $\cL(\Delta,\eps,\sigma,\cA)$.
Now, to compute the action of the element $\lambda \in \C^\times$ 
on $\cL(\Delta,\eps,\sigma,\cA)$ we consider the mapping torus
$\Delta \times S^1$, with abelianization data $\cA_\lambda = (P_\lambda, Q_\lambda, \mu_\lambda, \theta_\lambda)$ 
obtained by gluing $\cA$ to itself
with the action of $\lambda$. 
The action of $\lambda \in \C^\times$ is given by the ratio
\begin{equation} \label{eq:lambda-ratio}
 \frac{\chi(\partial\Delta \times S^1;\partial \hat\cA;\sigma \times S^1;\tDelta^{\eps,\infty} \times S^1) (\scrF_G(\Delta \times S^1;P_\lambda))}{\scrS_{\C^\times}(\tDelta \times S^1;Q^\eps_{\lambda,\tw};\sigma^\eps_\tw \times S^1)} \, .
\end{equation}
Now we consider the dependence on $\lambda$.
Since $P_\lambda$ is a flat $G$-connection for each $\lambda$, and the
curvature of $\scrF_G$ vanishes when evaluated on a $1$-parameter 
family of flat connections, it follows that
$\scrF_G(\Delta \times S^1;P_\lambda)$ gives a covariantly constant section of the 
bundle over $\C^\times$ with fiber $\scrF_G(\partial \Delta \times S^1;\partial P_\lambda)$.
Likewise $\scrS_{\C^\times}(\tDelta \times S^1;Q^\eps_{\lambda,\tw};\sigma^\eps_\tw \times S^1)$
is covariantly constant.
Finally, the isomorphism $\chi(\partial\Delta \times S^1;\partial \hat\cA;\sigma \times S^1;\tDelta^{\eps,\infty} \times S^1)$ is flat, i.e. it is an isomorphism of line bundles
with connection. Thus the ratio \eqref{eq:lambda-ratio} is locally constant as
a function of $\lambda$. But at $\lambda = 1$ it gives the action of 
$1$, which is trivial; thus it must be $1$ for all $\lambda$, as desired.
\end{proof}

\begin{cor}
The line $\cL(\Delta,\eps,\sigma,\cA)$ depends only
on $(\Delta,\eps)$ up to canonical isomorphism.
\end{cor}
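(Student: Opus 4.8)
The plan is to obtain the corollary as the special case $f = \id_\Delta$ of Proposition~\ref{prop:triangle-iso}. Fix $(\Delta,\eps)$. For any two choices $(\sigma,\cA)$ and $(\sigma',\cA')$ of spin structure and stratified abelianization data, the identity map $\id_\Delta$ is orientation-preserving and satisfies $\id_\Delta^*\eps = \eps$, so Proposition~\ref{prop:triangle-iso} produces a canonical map
\[
 (\id_\Delta)_*\colon \cL(\Delta,\eps,\sigma,\cA) \longrightarrow \cL(\Delta,\eps,\sigma',\cA').
\]
The remaining task is to show that, as $(\sigma,\cA)$ varies, these maps assemble into a transitive system of canonical isomorphisms, so that all the lines $\cL(\Delta,\eps,\sigma,\cA)$ may be identified with a single line $\cL(\Delta,\eps)$.

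Concretely I would check two things: that $(\id_\Delta)_*$ is the identity when $(\sigma,\cA) = (\sigma',\cA')$, and that for three choices the composite of two successive transport maps equals the direct one. Both follow from the construction of $f_*$ inside the proof of Proposition~\ref{prop:triangle-iso}: there $f_*$ is defined by choosing lifts of $f$ to isomorphisms $\sigma \to f^*\sigma'$, $\cA \to f^*\cA'$, $\tDelta^{\eps,\infty} \to \tDelta^{f^*\eps',\infty}$ and transporting the objects that enter the definition~\eqref{def:L} of $\cL$, and the proposition already establishes independence of the chosen lifts. Since a composite of lifts of $\id_\Delta$ is again a lift of $\id_\Delta$, and the trivial lift of $\id_\Delta$ induces the identity transport, functoriality for $\id_\Delta$ follows; in particular each $(\id_\Delta)_*$ is invertible, with the reverse map as inverse.

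Having established this, I would conclude that the lines $\cL(\Delta,\eps,\sigma,\cA)$ are canonically and compatibly identified for all auxiliary data, and set $\cL(\Delta,\eps)$ equal to the resulting line. I do not expect any serious obstacle: the genuinely delicate points---that the spin flip and the simultaneous $-1$ action on $P$ and $Q$ act trivially on $\cL$---were already handled in the proof of Proposition~\ref{prop:triangle-iso} (the first via Appendix~\ref{sec:C11}, the second via the mapping-torus argument), and what is left is only the formal verification that $f_*$ behaves functorially in $f$ when $f=\id_\Delta$.
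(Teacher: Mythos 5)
Your proposal is correct and matches the paper's own proof, which likewise obtains the corollary by applying Proposition~\ref{prop:triangle-iso} with $f = \id_\Delta$ to two choices of auxiliary data $(\sigma,\cA)$ and $(\sigma',\cA')$. Your additional verification that the resulting identifications form a transitive system is a reasonable elaboration of what the paper leaves implicit in the word ``canonical.''
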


\begin{proof}
Given
 $(\Delta,\eps)$ and two different data $(\sigma,\cA)$ and $(\sigma',\cA')$
 we apply Proposition~\ref{prop:triangle-iso}
 taking $f: \Delta \to \Delta$ to be the identity map.
 This gives the desired isomorphism $f_*: \cL(\Delta,\sigma,\cA,\eps) \to \cL(\Delta,\sigma,\cA',\eps')$.
\end{proof} 

With this corollary in mind we just write the line as $\cL(\Delta,\eps)$.
The most important feature of this line for concrete computations
is that it transforms nontrivially under
the $\bmuu_3$ rotational symmetry of $\Delta$, 
as measured by the following proposition. 

\begin{prop} \label{prop:L-rotation} Suppose  
$\eps$ induces a consistent orientation of $\partial \Delta$,
and $f$ is a positively oriented rotation by $\frac{2 \pi}{3}$ 
with respect to the orientation of $\Delta$.
Then $f_*$ acts on $\cL(\Delta,\eps)$ as multiplication
by $\exp(2 \pi \I / 3)$. 
\end{prop}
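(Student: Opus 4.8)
The plan is to reduce this rotational anomaly to the analogous computation for the two constituent Chern--Simons theories, $\scrF_G$ on $\Delta$ and $\scrS_{\C^\times}$ on $\tDelta$, together with the twisting data $\eps$. Since $\cL(\Delta,\eps)$ is defined in \eqref{def:L} as the quotient of $\chi(\partial\Delta)(\scrF_G(\Delta;P))$ by $\scrS_{\C^\times}(\tDelta;Q^\eps_\tw;\sigma^\eps_\tw)$, and since $\chi$ is assembled from canonical and local ingredients (so it commutes with the rotation $f$ up to the ambiguities already dispatched in Proposition~\ref{prop:triangle-iso}), the action of $f_*$ on $\cL(\Delta,\eps)$ is the ratio of the action of $f_*$ on $\scrF_G(\Delta;P)$ and on $\scrS_{\C^\times}(\tDelta;Q^\eps_\tw;\sigma^\eps_\tw)$. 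The first factor is trivial: $f$ is an orientation-preserving self-diffeomorphism of $\Delta$ carrying $P$ (with its $\bmu2$-worth of choices) to an isomorphic copy, and $\scrF_G$ is a genuine (topological) field theory, so $f_*$ acts by the identity on $\scrF_G(\Delta;P)$ once we have rigidified as in Proposition~\ref{prop:uniqueness-of-triangle-ab}. Hence the whole anomaly lives in the $\C^\times$ factor.

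Next I would analyze how $f$ acts on the branched double cover $\tDelta \to \Delta$ and its decorations. Under a rotation by $2\pi/3$ of $\Delta$ fixing the barycenter, the induced self-map of $\tDelta$ (recall $\tDelta$ is a disc branched at the barycenter, the $\Delta_\twobstrat$ point) is a rotation; tracking the three sheets-over-vertices structure one sees that $f$ cyclically permutes the three walls and acts on $\tDelta$ as a rotation by $2\pi/6 = \pi/3$ about the branch point, i.e. the deck group and the $\bmu3$ together generate a $\bmu6$ acting on $\tDelta$. The key computation is then the change in the element $\scrS_{\C^\times}(\tDelta;Q^\eps_\tw;\sigma^\eps_\tw)$ of the $\C^\times$ Chern--Simons $\VV$-line under this rotation. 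Because $\Delta_\twobstrat \ne \emptyset$, this is not a closed-manifold statement; but using locality we can excise a small disc around the branch point and compute the rotation action on the resulting annular piece explicitly, via the formula \cite[Theorem~3.9(vii)]{FN}, equations \eqref{eq:188}--\eqref{eq:189}, for how the distinguished element $\tau_t$ of the spin $\C^\times$ Chern-Simons line transforms under changes of section. The rotation acts on a chosen section $t$ of $Q^\eps_\tw$ by composition with the deck/rotation map, and the ratio $\tau_{f^*t}/\tau_t$ is computed by the integral $\frac{1}{4\pi\sqmo}\int t^*\Theta \wedge \frac{dh}{h}$ together with the $\epsilon_{t,h}$ sign; the hypothesis that $\eps$ induces a consistent orientation of $\partial\Delta$ is exactly what makes this computation clean (it pins down the holonomies via Proposition~\ref{prop:loop-hol-formula} and the spin structure via Proposition~\ref{prop:ss-formula}, both of which give $+1$ on the relevant $\gamma_E$ in this case).

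Carrying out that local integral, one should find the ratio equals $\exp(-2\pi\sqmo/3)$ on the $\scrS_{\C^\times}$ factor, which inverts to give the claimed $\exp(2\pi\sqmo/3)$ on $\cL(\Delta,\eps)$. I expect the main obstacle to be bookkeeping of the $\bmu2$-twistings and the spin structure through the excision: the twist bundle $\tDelta^{\eps,4}$ and the twisted spin structure $\sigma^\eps_\tw$ must be carried along consistently under $f$, and one must verify that the rotation lifts compatibly to these (using $f^*\eps=\eps$ on the nose, so $f$ lifts to $\tDelta^{\eps,\infty}\to\tDelta^{\eps,\infty}$), and then check that the spin-flip and center-of-$G$ ambiguities do not contribute (this was already settled in Proposition~\ref{prop:triangle-iso} and Appendix~\ref{sec:C11}). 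A secondary subtlety is making precise the assertion that $\chi$ "commutes with $f$": this follows from the gluing/locality properties of $\chi$ emphasized after Theorem~\ref{thm:strat-abelianization}, since $f$ is covered by an isomorphism of all the structures on which $\chi$ depends. Once these compatibilities are in hand, the computation is the single explicit dilogarithm-free integral above, which yields the primitive cube root of unity; this is of course consistent with the product-of-holonomies relation \eqref{eq:186}, which is the shadow of the same $\bmu3$-anomaly.
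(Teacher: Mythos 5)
There is a genuine gap, and it sits exactly where you placed the anomaly. You assert that $f_*$ acts trivially on the $\scrF_G$ factor ``because $\scrF_G$ is a genuine topological field theory, once rigidified as in Proposition~\ref{prop:uniqueness-of-triangle-ab},'' and that therefore the whole cube root must come from the $\C^\times$ side. That argument proves too much: $\scrS_{\C^\times}$ is just as much a field theory, so the same reasoning would trivialize the other factor as well and force the total action to be $1$, contradicting the statement. The point is that neither ``factor'' has a well-defined numerical action on its own; $\scrF_G(\Delta;P)$ and $\scrS_{\C^\times}(\tDelta;Q^\eps_\tw;\sigma^\eps_\tw)$ are elements of $\VV$-lines, and the $\bmu3$-action on $\cL(\Delta,\eps)$ is computed only after choosing $f$-equivariant trivializations on the boundary and passing to the mapping torus; how the answer distributes between the two factors depends on those choices. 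With the natural equivariant choices it is the $\C^\times$ side that is trivial: one can pick an $f$-invariant trivialization of $\cA$ at the edge midpoints so that $Q^\eps_\tw$ has a covariantly constant $f$-invariant section, and then $\scrS_{\C^\times}$ of the mapping torus is exactly $\tau_{s_{Q_f}}$ — the opposite of what you claim.

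Correspondingly, your proposed mechanism for producing $\exp(2\pi\I/3)$ — an excision near the branch point and the section-change formula \eqref{eq:189} on the abelian side — cannot work: on the disc $\tDelta$ the flat bundle $Q^\eps_\tw$ has trivial holonomy (around $\partial\tDelta$ and everywhere, since $H_1(\tDelta)=0$), the spin data only ever contributes signs $(-1)^{q_\sigma}$, and nothing in \eqref{eq:189} singles out a primitive cube root of unity; your appeal to \eqref{eq:186} is a $\bmut$ statement about the tetrahedron and is not the relevant $\bmu3$ phenomenon. In the paper's proof the cube root comes from the nonabelian side: relative to the $f$-equivariant abelianization-compatible trivializations, the holonomy of $P$ along each third of $\partial\Delta$ is a fixed element $g=hb\in\SLC$ with $g^3=1$ but $g\neq 1$, so no $f$-invariant covariantly constant section of $P$ exists; the $\SLC$ Chern-Simons invariant of the mapping torus then becomes one third of a WZW integral \eqref{eq:wzw-integral}, reduced to a degree computation for a map $S^3\to S^3$ commuting with the standard $\bmu3$-actions, and Lemma~\ref{lem:s3-degree} (degree $\equiv 1$ mod $3$) delivers $\exp(2\pi\I/3)$. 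Your proposal is missing both this order-$3$ holonomy input and any argument (like the paper's ``replace $\chi$ by any $f$-equivariant $\xi$, the discrepancy is $L^{\otimes 3}$ with trivial cyclic action'') that legitimizes switching to computable trivializations; without them the final phase is asserted rather than derived.
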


\begin{proof} Fix stratified abelianization data $\cA = (P,Q,\mu,\theta)$
and a spin structure $\sigma$ over $\Delta$. Lift the action of $f$
to $\cA$ and $\sigma$, 
in such a way that $f^3 = 1$.
(This is possible, since each of $\cA$ and $\sigma$ is unique
up to isomorphism and has 
only a single nontrivial automorphism $\rho$; 
an arbitrary lift of $f$ will have
either $f^3 = 1$ or $f^3 = \rho$, and in the latter
case we replace the lift by $f \circ \rho$.)
Also lift $f$ to the $\bmuu_2$-bundle $\tDelta^{\eps,4} \to \tDelta$, again
in such a way that $f^3 = 1$.
Combining this lift
with the actions of $f$ on $Q$ and $\sigma$
gives actions of $f$ on $Q^{\eps}_\tw$ and $\sigma^\eps_\tw$.

We consider the \Vlines associated to the
boundary,
$\scrF_G(\partial \Delta;\partial P)$
and $\scrS_{\C^\times}(\partial \tDelta;\partial Q_\tw^{\eps};\sigma^\eps_\tw)$. $f$ gives actions of $\bmuu_3$ on both \Vlines,
the equivalence $\chi(\partial \Delta; \partial \cA; \sigma; \tDelta^{\eps,\infty})$
is $\bmu3$-equivariant, and $\scrF_G(\Delta;P)$ and $\scrS_{\C^\times}(\tDelta;Q_\tw^{\eps};\sigma^\eps_\tw)$ are 
$\bmuu_3$-invariant objects.
The line 
\begin{equation}
\cL(\Delta,\eps) = \Hom(\scrS_{\C^\times}(\tDelta;Q_\tw^{\eps};\sigma^\eps_\tw), \chi(\partial \Delta; \partial \cA; \sigma; \tDelta^{\eps,\infty})(\scrF_G(\Delta;P)))
\end{equation}
is thus acted on by $\bmuu_3$,
and we want to compute this action.

We will use an explicit picture of the
$\bmuu_3$-equivariant equivalence $\chi(\partial \Delta; \partial \cA; \sigma; \tDelta^{\eps,\infty})$, obtained
by chopping $\partial \Delta$ into three segments, ending at 
the midpoints of the three edges.
First, we fix an $f$-invariant trivialization of the
restriction of $\cA$ to the midpoints. (There is a $T$-torsor 
worth of freedom in this choice, which we will fix below.)
Given this trivialization, 
we may factorize each of our \Vlines 
as a tensor product
of three \Vlines associated to the three segments, and factorize the 
equivalence $\chi(\partial \Delta; \partial \cA; \sigma; \tDelta^{\eps,\infty})$
into a tensor product of three equivalences: for each segment $S$, we have
\begin{equation}
\chi(S; \partial \cA \vert_S; \sigma; \tDelta^{\eps,\infty} \vert_S): 
\scrF_G(S;P \vert_S) \to
\scrS_{\C^\times}(\pi^{-1}(S);Q_\tw^{\eps} \vert_{\pi^{-1}(S)}; \sigma^\eps_\tw) \, .
\end{equation}

Now comes the crucial technical step:
for the purposes
of our computation 
we may replace the equivalences $\chi(S; \partial \cA \vert_S; \sigma; \tDelta^{\eps,\infty} \vert_S)$ by any
other equivalences $\xi_S$ 
between the same \Vlines, compatible
with the $f$-action. Indeed, any two equivalences
differ by tensorization with a line $L_S$, and
the $f$-equivariance identifies the three lines $L_S$
with a single line $L$, so the effect of changing from
$\chi(S; \partial \cA \vert_S; \sigma; \tDelta^{\eps,\infty} \vert_S)$ to $\xi_S$ would be to replace
$\cL(\Delta,\eps)$ by $\cL(\Delta,\eps) \otimes L^3$;
the $\bmuu_3$-action on $L^3$ induced
by cyclic permutation of the factors is 
trivial, so the $\bmuu_3$-action we want to compute
is insensitive to this replacement.

We construct a convenient $\xi_S$ as follows.
We extend the trivializations of $P$ and $Q^\eps_\tw$ from the
midpoints to sections $s_P$ and $s_Q$ of $P\vert_{\partial \Delta}$ and $Q^{\eps}_\tw \vert_{\partial \tDelta}$ respectively,
in an $f$-invariant way. On each segment $S$
this gives trivializations of our two \Vlines, 
and we choose $\xi_S$
to intertwine these trivializations.
Tensoring the $\xi_S$ we get
\begin{equation}
  \xi: \scrF_G(\partial \Delta; \partial P) \to \scrS_{\C^\times}(\partial \tDelta; \partial Q^\eps_\tw; \sigma^\eps_\tw) \, .
\end{equation}

We may choose the trivialization of $\cA$ 
at the midpoints in such a way that the parallel
transport of $Q^{\eps}_\tw$ along any of the $6$ preimages
of segments is given by $1 \in \C^\times$.
(Indeed, for an arbitrary $f$-invariant trivialization,
the parallel transport of $Q$ along each segment is given
by some fixed element $h \in H \setminus T$; changing the
trivialization by $t \in T$ at each vertex conjugates this 
transport by $t$, and by so doing we can set the off-diagonal
entries to $\pm 1$ as needed.)
From now on we fix such a choice. Having done so,
we can choose $s_Q$ to be covariantly constant.

However, we cannot choose $s_P$ to be covariantly constant. Indeed,
the parallel transport of $P$ along an edge 
is given, relative to the trivializations at the
midpoints, by an element $g = hb \in G$, 
where $h \in H \setminus T$ and
$b \in B$; in particular, $g \neq 1$.
The $f$-invariance implies that $g$ is 
independent of the choice of edge. 
Moreover, $g^3 = 1$, since the 
holonomy of $P$ around $\partial \Delta$ is trivial.
We choose $s_P$ as follows. Let $t$ be a covariantly constant section
of $\partial P \to \partial \Delta$
(necessarily not $f$-invariant). Then choose $s_P = \phi t$,
where $\phi: \partial \Delta \to G$ obeys
\begin{equation}
\phi(f(y)) = g \, \phi(y) \, .
\end{equation}

Now, to compute the action of $f$ on $\cL(\Delta,\eps)$ we consider the mapping torus of $f$,
\begin{equation}
 \Delta_f = (\Delta \times \R) \, / \, [(y,x) \sim (f(y),x+1)] \, ,
\end{equation}
and the mapping torus $\tDelta_f$ of the lift of $f$ to $\tDelta$.
The $f$-equivariant
flat bundles $P \to \Delta$ and $Q^\eps_\tw \to \tDelta$ induce
flat bundles
$P_f \to \Delta_f$ and
$Q^\eps_{\tw,f} \to \tDelta_f$
respectively.
Moreover, the $f$-equivariant equivalence $\xi$ of
\Vlines
induces an isomorphism of lines,
$\xi_f: \scrF_G(\partial \Delta_f;\partial P_f) \to \scrS_{\C^\times}(\partial \tDelta_f;\partial Q_{\tw,f}^{\eps};\sigma^\eps_\tw)$.
The $\bmuu_3$-action on $\cL(\Delta,\eps)$ is multiplication by
\begin{equation}
\frac{\xi_f(\scrF_G(\Delta_f;P_f))} {\scrS_{\C^\times}(\tDelta_f;Q^\eps_{\tw,f};\sigma^\eps_\tw)} \, .
\end{equation}

To compute this, first note that, being $f$-invariant, $s_P$ and $s_Q$ induce
sections $s_{P_f}$ and $s_{Q_f}$ of $\partial P_f \to \partial \Delta_f$
and $\partial Q^\eps_{\tw,f} \to \partial \widetilde \Delta_f$ respectively.
The resulting trivializations of the boundary lines have,
essentially by definition of $\xi$,
\begin{equation}
\xi_f (\tau_{s_{P_f}}) = \tau_{s_{Q_f}} \, .
\end{equation}
Our task now is to compute the numerator and denominator
relative to these trivializations.

To compute $\scrS_{\C^\times}(\tDelta_f ; Q^\eps_{\tw,f} ; \sigma^\eps_\tw)$ we note that
$s_{Q_f}$ is covariantly constant on $\partial \widetilde \Delta_f$, 
and it can be extended to a covariantly constant section
over the full $\tDelta_f$; thus the $\C^\times$ Chern-Simons form vanishes,
and
$\scrS_{\C^\times}(\tDelta_f ; Q^\eps_{\tw,f} ;\sigma^\eps_\tw) = \tau_{s_{Q_f}}$.

To compute $\scrF_G(\Delta_f ; P_f)$ is more interesting.
We choose an arbitrary extension $s$ of $s_{P_f}$ to the solid
torus $\Delta_f$. Let $A$ denote the connection form in $P_f$
relative to the section $s$; then using \eqref{eq:33}
\begin{equation} \label{eq:computing-cube-root-explicit}
  \scrF_G(\Delta_f;P_f) = \tau_{s_{P_f}} \exp \left[ \frac{1}{4\pi \sqmo} \int_{\Delta_f} \trace\left( A\wedge dA + \frac 23\,A\wedge
     A\wedge A \right) \right] \, .
\end{equation}
To compute explicitly,
we pull back to a triple cover $p: {\Delta \times S^1} \to \Delta_f$,
\begin{equation}
 {\Delta \times S^1} = (\Delta \times \R) \, / \, [(y,x) \sim (y,x+3)] \, , \quad p ( [(y,x)] ) = [(y,x)] \, .
\end{equation}
The covariantly
constant section $t$ of $\partial P \to \partial \Delta$ 
induces a covariantly constant section of 
$\partial (p^* P) \to \partial {\Delta \times S^1}$,\footnote{Since $t$ is not $f$-invariant, it would not induce a section of $P_f \to \partial \Delta_f$; this is the reason why we had to pull back to the triple cover.}
which we can further extend to a covariantly constant section $\hat t$
of $p^* P \to {\Delta \times S^1}$.
Then $p^* s = \phi \hat t$ for some $\phi: {\Delta \times S^1} \to G$,
and the invariance of $p^* s$ under the $\bmuu_3$ deck group
gives 
\begin{equation} \label{eq:phi-equivariant}
  \phi(f(y),x+1) = g \, \phi(y,x) \, .
\end{equation}
The connection form in $p^* P$ relative to $p^* s$ is 
$A = \phi^{-1} \de \phi$, and this form is pulled back from
$\Delta_f$;
thus the integral over $\Delta_f$ in \eqref{eq:computing-cube-root-explicit} can be rewritten as one-third of a more explicit 
integral over $\Delta \times S^1$,
\begin{equation} \label{eq:wzw-integral}
  \scrF_G(\Delta_f;P_f) = \tau_{s_{P_f}} \exp \left[ \frac13 \frac{1}{12 \pi \I} \int_{{\Delta \times S^1}} \trace(\phi^{-1} \de \phi)^3 \right] .
\end{equation}
Moreover, 
$s \vert_{\partial {\Delta \times S^1}}$ is covariantly constant along
translation in the $x$-direction holding $y \in \partial \Delta$ fixed, 
while $t$ is covariantly constant in every direction.
Thus $\phi \vert_{\partial {\Delta \times S^1}}$ is constant in the $x$-direction. 
Attaching a solid torus $S^1 \times D^2$ to ${\Delta \times S^1}$ along this direction we obtain
a closed 3-manifold $M \simeq S^3$.
The map $\phi$ naturally extends to the added $S^1 \times D^2$,
by choosing it to be constant along the $D^2$ factor.
Then $(\phi^{-1} \de \phi)^3 = 0$ there, and thus we can replace
the domain of integration in \eqref{eq:wzw-integral} by $M$.
This integral gives $\exp(\frac{2 \pi \I}{3} k)$, where $k$ is the degree of the map $\phi: M \to G$ (by which we mean the degree of the retraction
of $\phi$ from $G$ to $\SU(2)$, when we equip $\SU(2)$ with the
orientation for which $- \trace (h^{-1} \de h)^3$ is a positive 3-form.)
To compute this degree, we will use only the fact
that $\phi$ commutes with certain
$\bmuu_3$-actions on $M$ and $\SL(2,\C)$, as follows.

First, we can identify $M$ with 
$\{|\alpha|^2 + |\beta|^2 = 1\} = S^3 \subset \C^2$ as follows.
On the torus $\partial \Delta \times S^1$, we fix coordinates
$\alpha = \frac{1}{\sqrt{2}} \e^{\I \theta}$, where $\theta$ parameterizes
$\partial \Delta$ (positively with respect to the boundary orientation), and
$\beta = \frac{1}{\sqrt{2}} \e^{2 \pi \I x / 3}$.
These coordinates naturally extend to the two solid tori 
$\Delta \times S^1$ and $S^1 \times D^2$, identifying them
respectively as the loci $|\alpha| < \frac12$
and $|\beta| < \frac12$ in $S^3$.
The orientation
of $M$ matches the standard orientation of $S^3$.
The $\bmuu_3$-action $(y,x) \mapsto (f(y),x+1)$ on
$\Delta \times S^1$ becomes in these coordinates $(\alpha, \beta) \mapsto (\e^{2 \pi \I/3} \alpha, \e^{2 \pi \I/3} \beta)$ (and thus extends to a fixed-point-free
action on the whole $S^3$).

Next, parameterize $\SU(2)$ by $h = \begin{pmatrix} \alpha & \beta \\ -\bar\beta & -\bar\alpha \end{pmatrix}$. This gives $\SU(2) \simeq S^3 \subset \C^2$.
Computing $- \trace (h^{-1} \de h)^3$ in this parameterization 
we see that it is positive 
for the standard orientation on $S^3$. 
By composing $\phi$ with an inner automorphism of $\SL(2,\C)$ we may assume 
$g = \mathrm{diag}(\e^{2 \pi \I/3}, \e^{-2 \pi \I/3})$.
The $\bmuu_3$-action $h \mapsto g h$ then acts by $(\alpha,\beta) \mapsto (\e^{2 \pi \I/3} \alpha, \e^{2 \pi \I/3} \beta)$.

We have shown how to identify both $M$ and $\SU(2)$ with $S^3$,
in such a way that the $\bmuu_3$-actions and orientations agree with the standard
ones for $S^3$.
Using \eqref{eq:phi-equivariant},
$\phi$ intertwines the
$\bmuu_3$-action on $M$ with the $\bmuu_3$-action
on $\SU(2)$.
Then using Lemma \ref{lem:s3-degree} below completes the proof.
\end{proof}

\begin{lemma} \label{lem:s3-degree} Any continuous 
map $\phi: S^3 \to S^3$ commuting with
the standard $\bmuu_3$-action has degree equal to $1$ mod $3$.
\end{lemma}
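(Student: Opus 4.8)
The plan is to compute the degree using the induced map on quotients. Since the standard $\bmuu_3$-action on $S^3 \subset \CC^2$ given by $(\alpha,\beta)\mapsto(\e^{2\pi\I/3}\alpha,\e^{2\pi\I/3}\beta)$ is free, the quotient $L = S^3/\bmuu_3$ is the lens space $L(3;1,1)$, and the quotient map $q\colon S^3 \to L$ is a $3$-fold covering. Given $\phi\colon S^3\to S^3$ commuting with the action, $\phi$ descends to $\bar\phi\colon L\to L$, and $\deg\phi = \deg\bar\phi$ (a covering map of degree $3$ over a map of degree $d$ is itself a map of degree $d$, and likewise on the total space; more precisely $q\circ\phi = \bar\phi\circ q$ and $q_*$ on $H_3$ with $\QQ$-coefficients is an isomorphism, while on $H_3(\,\cdot\,;\ZZ)$ the two copies of $\ZZ$ are compared through the transfer, giving $\deg\phi=\deg\bar\phi$).

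First I would pin down $\deg\bar\phi \bmod 3$ from the action of $\bar\phi$ on $\pi_1(L)\cong\zmod3$. The map $\bar\phi_*\colon \pi_1(L)\to\pi_1(L)$ is multiplication by some $m\in\zmod3$; since $\phi$ is a lift of $\bar\phi$ to the universal cover, $\bar\phi_*$ is induced by the $\bmuu_3$-equivariance of $\phi$, which intertwines the two deck actions \emph{by the identity automorphism of $\bmuu_3$} — that is the content of ``$\phi$ commutes with the standard $\bmuu_3$-action.'' Hence $m=1$, i.e.\ $\bar\phi_*$ is the identity on $\pi_1(L)$. Now I invoke the relation between the degree of a self-map of a lens space and its action on $\pi_1$: for $L(p;1,\dots,1)$ a self-map inducing multiplication by $m$ on $\pi_1\cong\zmod p$ has degree $\equiv m^{n}\pmod p$ for the appropriate $n$ (here $S^3$, so $n=2$), which gives $\deg\bar\phi\equiv 1\pmod 3$. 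The cleanest way to see this for our case: the mod-$3$ Bockstein $\beta\colon H^1(L;\zmod3)\to H^2(L;\zmod3)$ and the cup product structure make $H^*(L(3;1,1);\zmod3)$ into a truncated polynomial-type algebra on a degree-$1$ class $x$ with $x\cdot\beta x$ generating $H^3$; naturality of $x\mapsto m x$, $\beta x\mapsto m\,\beta x$ under $\bar\phi^*$ forces $\bar\phi^*$ to act on $H^3(L;\zmod3)$ by $m^2$, so $\deg\bar\phi\equiv m^2 \equiv 1\pmod 3$.

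Putting these together, $\deg\phi = \deg\bar\phi \equiv 1 \pmod 3$, as claimed. The main obstacle is the bookkeeping in the second paragraph: verifying that ``commutes with the standard $\bmuu_3$-action'' really forces $\bar\phi_*=\id$ on $\pi_1$ rather than some other automorphism, and then correctly extracting the degree mod $3$ from this — one must be careful that the relevant identity is equivariance over the identity of $\bmuu_3$ (guaranteed because $\phi$ is constructed as an honest $\bmuu_3$-map, not merely as a map compatible with \emph{some} automorphism), and one should double-check the exponent in $\deg \equiv m^2$ by a direct cohomology computation on $L(3;1,1)$ rather than quoting a half-remembered formula. An alternative that sidesteps lens-space cohomology entirely: lift to the universal cover and use that $\phi$, being $\bmuu_3$-equivariant, is degree-one-mod-$3$ because it is homotopic through equivariant maps to a linear (hence degree $1$) $\bmuu_3$-map if $m=1$ — but justifying that equivariant homotopy is itself essentially the lens-space computation, so I would present the quotient argument as the clean route.
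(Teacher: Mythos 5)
Your proof is correct and follows the same skeleton as the paper's: descend to the lens space $L=S^3/\bmu3$, use $\deg\phi=\deg\bar\phi$, and pin down the action of $\bar\phi^*$ on $H^3(L;\zmod3)$. The two arguments differ only in how that last step is done: you deduce from equivariance that $\bar\phi_*$ is the identity on $\pi_1(L)\cong\zmod3$ and then compute, via the Bockstein and cup product in $H^*(L;\zmod3)$, that $\bar\phi^*$ acts on $H^3$ by $m^2$, whereas the paper observes that $\phi$ is a map of principal $\bmu3$-bundles covering $\bar\phi$, so $\bar\phi^*$ fixes the nonzero characteristic class of the covering in $H^3(L;\bmu3)$ (seen via the map $L\to B\bmu3$, which is an isomorphism on $H^3(\cdot;\bmu3)$), forcing $\deg\bar\phi\equiv 1\pmod 3$ outright. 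The paper's variant is marginally leaner — it never needs the $\pi_1$ computation, and indeed, as you note implicitly, $m^2\equiv 1\pmod 3$ for any unit $m$ — but your ring-structure computation is a correct and essentially equivalent substitute, and your justification that $m=1$ (equivariance over the identity automorphism of $\bmu3$) is sound.
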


\begin{proof}
Such a $\phi$ descends to a map
$\bar\phi: S^3 / \bmuu_3 \to S^3 / \bmuu_3$ which lifts to
the $\bmuu_3$-bundle $S^3 \to S^3 / \bmuu_3$;
this bundle has a nonzero characteristic class lying in 
$H^3(S^3 / \bmuu_3; \bmuu_3) \simeq \bmuu_3$ (because the
inclusion $S^3 / \bmuu_3 \to B\bmuu_3 = S^\infty / \bmuu_3$ 
induces an isomorphism on $H^3(\cdot; \bmuu_3)$), 
and thus $\bar\phi^*$ must act trivially on $H^3(S^3 / \bmuu_3; \bmuu_3)$,
i.e. the degree of $\bar\phi$ is 1 mod 3.
Since the degree of $\phi$ agrees with that of $\bar\phi$, this finishes
the proof.
\end{proof}

To finish this section we remark on a diagrammatic perspective on $\cL(\Delta,\eps)$ which will be useful in some
of the arguments to follow. Here we suppress most of the background fields to reduce clutter.
We regard $\Delta$ as a morphism in the bordism category
\begin{equation}
  \emptyset \xrightarrow{\Delta} \partial \Delta \, .
\end{equation}
Applying $\scrF_G$ and $\scrS_{\C^\times}$ to this diagram, and including the map $\chi(\partial \Delta, \eps)$, we get
\begin{equation} \label{eq:L-diagram}
\begin{tikzcd}
  && {\scrF_G(\partial \Delta)} \\
  {{\mathrm {Line}}} \\
  && {\scrS_{{\mathbb C}^\times}(\partial \Delta,\eps)}
  \arrow["{\scrF_G(\Delta)}", from=2-1, to=1-3]
  \arrow["{\scrS_{{\mathbb C}^\times}(\Delta,\eps)}"', from=2-1, to=3-3]
  \arrow["{\chi(\partial \Delta,\eps)}"', from=1-3, to=3-3]
\end{tikzcd}
\end{equation}
Here, and in various diagrammatic arguments to follow, 
we freely identify morphisms $\Line \to \cC$ with objects of $\cC$.
Then the composition 
\begin{equation} \label{eq:L-as-composition}
 \cL(\Delta,\eps) = \scrS_{\C^\times}(\Delta,\eps)^{-1} \circ \chi(\partial \Delta, \eps) \circ \scrF_G(\Delta) \, .
\end{equation}

\subsection{Abelianization of Chern-Simons over triangulated surfaces} \label{sec:abelianization-cs-triangulated-surface}

Now suppose given an oriented surface $Y$ with a semi-ideal triangulation $\scrT$, edge-orientations $\eps$,
and a spin structure $\sigma$. 
Also fix boundary-reduced stratified abelianization data $\cA = (P, Q, \mu, \theta)$ over $(Y,\cN^\scrT)$.
As we have discussed in \S\ref{subsubsec:5.5.5}, because $P$ is boundary-reduced, the \Vline $\scrF_G(\partial Y; \partial P)$ is canonically
trivial, and thus
$\scrF_G(Y;P)$ is a line.
The \Vline $\scrS_{\C^\times}(\partial \tY; \partial Q^\eps_\tw;\sigma^\eps_\tw)$
is also canonically trivial,
since $Q^\eps_\tw$ has trivial holonomy around the boundary components; thus
$\scrS_{\C^\times}(\tY;Q^\eps_\tw;\sigma^\eps_\tw)$ is also a line.

Define the difference line
\begin{equation} \label{eq:def-leps}
  \cL(Y,\eps) = \bigotimes_{\Delta \in \faces(\scrT)} \cL(\Delta,\eps \vert_{\Delta}) \, .
\end{equation}


\begin{constr} \label{constr:abelianization-surface} There is a canonical 
isomorphism of lines
\begin{equation} \label{eq:abelianization-for-lines}
  \chi^\eps_Y: \scrF_G(Y;P) \to \scrS_{\C^\times}(\tY;Q^\eps_\tw;\sigma^\eps_\tw) \otimes \cL(Y,\eps) \, .
\end{equation}
\end{constr}

\begin{proof}
First for simplicity suppose that there are no ideal vertices, so $Y$ is a closed triangulated surface.

For each $\Delta \in \faces(\scrT)$ we consider the dilation
$\mu: \Delta \to \Delta$ which rescales distance from the barycenter 
by $\frac12$. Then we have a decomposition
 $Y = Y_\rmout \cup Y_\rmin$, where $Y_\rmin$ is
the union of the rescaled triangles $\mu(\Delta)$.
\insfig{shrinking-triangles}{0.4}{A portion of the triangulated surface $Y$
with its SN-stratification, and the decomposition $Y = Y_\rmout \cup Y_\rmin$.}

Let $R = \partial Y_{\rmin}$.
To condense the notation we will just write the manifolds, suppressing all the extra background fields,
including the dependence on $\eps$ (since $\eps$ is held fixed throughout this proof).
Then we have a diagram in the bordism category,
\begin{equation}
  \emptyset \xrightarrow{Y_\rmin} R \xrightarrow{Y_\rmout} \emptyset
\end{equation}
Applying $\scrF_G$ and $\scrS_{\C^\times}$ to this diagram, and inserting the maps $\chi(R)$ and $\chi(Y_\rmout)$
provided by Theorem~\ref{thm:strat-abelianization}, we get the diagram below:
\begin{equation}
\begin{tikzcd}
  && {{\scrF}_G(R)} \\
  {{\mathrm {Line}}} &&&& {{\mathrm {Line}}} \\
  && {{\scrS}_{{\mathbb C}^\times}(R)}
  \arrow["{{\scrF}_G(Y_{\mathrm{in}})}", from=2-1, to=1-3]
  \arrow["{{\scrS}_{{\mathbb C}^\times}(Y_{{\mathrm{in}}})}"', from=2-1, to=3-3]
  \arrow["{\chi(R)}"', from=1-3, to=3-3]
  \arrow["{{\scrS}_{{\mathbb C}^\times}(Y_{\mathrm{out}})}"', from=3-3, to=2-5]
  \arrow[""{name=0, anchor=center, inner sep=0}, "{{\scrF}_G(Y_{{\mathrm{out}}})}", from=1-3, to=2-5]
  \arrow["{\chi(Y_{{\mathrm{out}}})}"{description}, shorten <=8pt, shorten >=8pt, Rightarrow, from=0, to=3-3]
\end{tikzcd}
\end{equation}
Now whiskering $\chi(Y_\rmout)$ by $\scrF_G(Y_\rmin)$ we get a 2-morphism
\begin{equation}
\begin{tikzcd}
  {{\mathrm {Line}}} &&&& {{\mathrm {Line}}}
  \arrow[""{name=0, anchor=center, inner sep=0}, "{\scrS_{\mathbb C^\times}(Y_{\mathrm{out}}) \circ \chi(R) \circ \scrF_G(Y_{\mathrm{in}})}"', shift left=1, curve={height=18pt}, from=1-1, to=1-5]
  \arrow[""{name=1, anchor=center, inner sep=0}, "{\scrF_G(Y)}", curve={height=-18pt}, from=1-1, to=1-5]
  \arrow[shorten <=5pt, shorten >=5pt, Rightarrow, from=1, to=0]
\end{tikzcd}
\end{equation}
Finally, defining 
\begin{equation} \label{eq:def-L}
\cL = \scrS_{\C^\times}(Y_\rmin)^{-1} \circ \chi(R) \circ \scrF_G(Y_\rmin),  
\end{equation}
we have $\chi(R) \circ \scrF_G(Y_\rmin) = \scrS_{\C^\times}(Y_\rmin) \circ \cL(Y,\eps)$,
so we can rewrite the diagram as
\begin{equation} \label{eq:easy-diagram-line}
\begin{tikzcd}
  {{\mathrm {Line}}} &&&& {{\mathrm {Line}}}
  \arrow[""{name=0, anchor=center, inner sep=0}, "{\scrS_{\C^\times}(Y) \circ \cL}"', shift left=1, curve={height=18pt}, from=1-1, to=1-5]
  \arrow[""{name=1, anchor=center, inner sep=0}, "{\scrF_G(Y)}", curve={height=-18pt}, from=1-1, to=1-5]
  \arrow[shorten <=5pt, shorten >=5pt, Rightarrow, from=1, to=0]
\end{tikzcd}
\end{equation}
Composition of maps $\Line \to \Line$ is tensor product, so this
is a map of lines
\begin{equation}
  \scrF_G(Y) \to \scrS_{\C^\times}(Y) \otimes \cL
\end{equation}
as desired; what remains is to identify $\cL$ with the $\cL(Y,\eps)$ we defined in \eqref{eq:def-leps}.
This follows directly from 
\eqref{eq:L-as-composition} and
the decomposition of $Y_\rmin$ into
the disjoint union of triangles.

So far we discussed only the case where there are no ideal vertices.
In the general case one has to draw slightly
more complicated diagrams:
\begin{equation}
  \emptyset \xrightarrow{Y_\rmin} R \xrightarrow{Y_\rmout} \partial Y
\end{equation}
\begin{equation} \label{eq:harder-diagram-line}
\begin{tikzcd}
  && {{\scrF}_G(R)} && {{\scrF_G(\partial Y)}} \\
  {{\mathrm {Line}}} \\
  && {{\scrS}_{{\mathbb C}^\times}(R)} && {\scrS_{\mathbb C^\times}(\partial Y)}
  \arrow["{{\scrF}_G(Y_{\mathrm{in}})}", from=2-1, to=1-3]
  \arrow["{{\scrS}_{{\mathbb C}^\times}(Y_{{\mathrm{in}}})}"', from=2-1, to=3-3]
  \arrow["{\chi(R)}"', from=1-3, to=3-3]
  \arrow["{{\scrF}_G(Y_{{\mathrm{out}}})}", from=1-3, to=1-5]
  \arrow["{{\scrS}_{{\mathbb C}^\times}(Y_{\mathrm{out}})}"', from=3-3, to=3-5]
  \arrow["{\chi(Y_{{\mathrm{out}}})}"{description}, shorten <=14pt, shorten >=14pt, Rightarrow, from=1-5, to=3-3]
  \arrow["{\chi(\partial Y)}"', from=1-5, to=3-5]
\end{tikzcd}
\end{equation}
However, because $\cA$ is assumed boundary-reduced, we have trivializations
of $\scrF_G(\partial Y)$ and $\scrS_{\C^\times}(\partial Y)$, which are intertwined
by $\chi(\partial Y)$; using these trivializations the diagram \eqref{eq:harder-diagram-line} reduces to \eqref{eq:easy-diagram-line}, and then we can proceed just as above.
This completes the proof.
\end{proof}


We may also consider a family of boundary-reduced stratified abelianization data
$\cA = (P_s,Q_s,\mu_s,\theta_s)$ over a fixed $(Y,\cN)$, 
varying with a parameter $s \in S$.
All of our constructions can be applied to such a family:
then we obtain two invariants $\scrF_G(Y;P_s)$ and $\scrS_{\C^\times}(\tY;Q^\eps_{\tw,s};\sigma^\eps_\tw)$
both of which vary over $S$, and an isomorphism relating them.
In particular, if $Y$ is a semi-ideally triangulated surface as above,
then both $\scrF_G(Y;P_s)$ and $\scrS_{\C^\times}(\tY;Q^\eps_{\tw,s};\sigma^\eps_\tw)$ 
are line bundles over $S$ with connection,
and $\chi^\eps_Y: \scrF_G(Y;P_s) \to \scrS_{\C^\times}(\tY;Q^\eps_{\tw,s};\sigma^\eps_\tw) \otimes \cL(Y,\eps)$
is an isomorphism of line bundles with connection.

\subsection{Reversing an edge orientation on a triangle} \label{sec:edge-reversal-triangle}

Now suppose $(\Delta,\eps,\sigma,\cA)$ is an oriented 
triangle with edge-orientations,
spin structure, and boundary-reduced stratified abelianization data.
We first consider the restriction of all the data to a 
single oriented edge $E$.
Let $\eps$ be the given orientation of $E$ and $\eps'$ the opposite orientation.
Then in the bordism category we have the diagram
\begin{equation}
  \emptyset \xrightarrow{E} \partial E
\end{equation}
Once again we suppress some background fields in the notation: we just keep the manifolds
and (where necessary) the edge-orientations. We will
also use freely the fact that the $\Z$-bundles $\tDelta^{\eps,\infty}$
and $\tDelta^{\infty,\eps'}$ are
canonically trivial over $\partial E$.

Then we have a diagram in the 2-\Vline $\scrS_{\C^\times}(\partial E)$:
\[\begin{tikzcd}
  && {\scrS_{\mathbb C^\times}(E,\epsilon)} \\
  {\chi(\partial E) \circ \scrF_G(E)} \\
  && {\scrS_{\mathbb C^\times}(E,\epsilon')}
  \arrow["{\chi(E,\epsilon)}", from=2-1, to=1-3]
  \arrow["{\zeta(E, \varpi^E, \epsilon)}", from=1-3, to=3-3]
  \arrow["{\chi(E,\epsilon')}"', from=2-1, to=3-3]
\end{tikzcd}\]
We define
\begin{equation} \label{eq:def-xi}
  \Xi(E) = \chi(E,\eps')^{-1} \circ \zeta(E, \varpi^E, \eps) \circ \chi(E,\eps)
\end{equation}
which is an automorphism of the object $\chi(\partial E) \circ \scrF_G(E) \in \scrS_{\C^\times}(\partial E)$, or equivalently a line.

Now suppose $(\Delta,\eps,\sigma,\cA)$ is an oriented 
triangle with edge-orientations,
spin structure, and stratified abelianization data. Also suppose $E$ is 
an edge such that $\eps_E$ agrees with the boundary orientation,
and $\eps'$ is obtained
from $\eps$ by reversing the orientation on $E$.
Let $\tilde{E} = -(\overline{\partial \Delta \setminus E})$, so now we have
\begin{equation}
\begin{tikzcd}
  \emptyset && {\partial E}
  \arrow[""{name=0, anchor=center, inner sep=0}, "E"', curve={height=18pt}, from=1-1, to=1-3]
  \arrow[""{name=1, anchor=center, inner sep=0}, "{\tilde{E}}", curve={height=-18pt}, from=1-1, to=1-3]
  \arrow["\Delta"', shorten <=5pt, shorten >=5pt, Rightarrow, from=1, to=0]
\end{tikzcd}
\end{equation}
Then $\zeta(\cdot,\varpi^E,\eps)$ provides an isomorphism of 2-morphisms
\begin{equation} \label{eq:zeta-diagram-iso}
\begin{tikzcd}
  {{\mathbb V}{\mathrm{Line}}} && {\scrS_{\mathbb C^\times}(\partial E)} & {} & {} & {{\mathbb V}{\mathrm{Line}}} && {\scrS_{\mathbb C^\times}(\partial E)}
  \arrow[""{name=0, anchor=center, inner sep=0}, "{\scrS_{\mathbb C^\times}(E,\epsilon)}"', curve={height=18pt}, from=1-1, to=1-3]
  \arrow[""{name=1, anchor=center, inner sep=0}, "{\scrS_{\mathbb C^\times}(\tilde{E})}", curve={height=-18pt}, from=1-1, to=1-3]
  \arrow[from=1-4, to=1-5]
  \arrow[""{name=2, anchor=center, inner sep=0}, "{\scrS_{\mathbb C^\times}(E,\epsilon')}"', curve={height=18pt}, from=1-6, to=1-8]
  \arrow[""{name=3, anchor=center, inner sep=0}, "{\scrS_{\mathbb C^\times}(\tilde{E})}", curve={height=-18pt}, from=1-6, to=1-8]
  \arrow["{\scrS_{\mathbb C^\times}(\Delta,\epsilon)}"', shift left=5, shorten <=5pt, shorten >=5pt, Rightarrow, from=1, to=0]
  \arrow["{\scrS_{\mathbb C^\times}(\Delta,\epsilon')}"', shift left=5, shorten <=5pt, shorten >=5pt, Rightarrow, from=3, to=2]
\end{tikzcd}
\end{equation}
while $\chi(\cdot,\eps)$ gives a similar isomorphism of diagrams, but without the inner arrow,
\begin{equation}
\begin{tikzcd}
  {{\mathbb V}{\mathrm{Line}}} && {\scrF_{G}(\partial E)} & {} & {} & {{\mathbb V}{\mathrm{Line}}} && {\scrS_{\mathbb C^\times}(\partial E)}
  \arrow["{\scrF_{G}(E)}"', curve={height=18pt}, from=1-1, to=1-3]
  \arrow["{\scrF_G(\tilde{E})}", curve={height=-18pt}, from=1-1, to=1-3]
  \arrow[from=1-4, to=1-5]
  \arrow["{\scrS_{\mathbb C^\times}(E,\epsilon)}"', curve={height=18pt}, from=1-6, to=1-8]
  \arrow["{\scrS_{\mathbb C^\times}(\tilde{E})}", curve={height=-18pt}, from=1-6, to=1-8]
\end{tikzcd}
\end{equation}
and $\chi(\cdot,\eps')$ gives the same isomorphism of diagrams with $\eps$ replaced by $\eps'$.
Combining these, we can whisker the isomorphism \eqref{eq:zeta-diagram-iso} into
an isomorphism between the lines we previously defined in \eqref{eq:L-as-composition}, \eqref{eq:def-xi},
\begin{equation}
 \rho^{\eps',\eps}(\Delta): \cL(\Delta,\eps') \to \cL(\Delta,\eps) \otimes \Xi(E)^{-1} \, .
\end{equation}
This map describes the effect of reversing the orientation on one edge of $\Delta$.

Now we want to consider reversing orientation on two edges of $\Delta$.
We introduce a bit of notation that will be convenient below:
\begin{definition} If $E_1$ and $E_2$ are edges of an oriented triangle $\Delta$,
\begin{equation}
  \IP{E_1,E_2} = \begin{cases} +1 & \text{if $E_1$ is ahead of $E_2$ in the boundary orientation}, \\
                           -1 & \text{if $E_1$ is behind $E_2$ in the boundary orientation}, \\
                           0 & \text{if $E_1 = E_2$}. \end{cases}
\end{equation}
\end{definition}
Suppose $\eps_{E_1}$, $\eps_{E_2}$ both agree with
the boundary orientation,
let $\eps'_i$ be obtained from
$\eps$ by reversing $\eps_{E_i}$, and let $\eps''_{12}$ be
obtained from $\eps$ by reversing both $\eps_{E_1}$ and $\eps_{E_2}$.
\insfig{triangle-oriented-two-edges}{0.4}{An oriented triangle with two marked edges, with $\IP{E_1,E_2} = 1$, and
edge orientations agreeing with the boundary orientation. The third edge
orientation is arbitrary.}


We want to compare the two 2-morphisms appearing in the diagram:
\[\begin{tikzcd}
  {\scrS_{\mathbb C^\times}(\partial \Delta, \epsilon)} &&&& {\scrS_{\mathbb C^\times}(\partial \Delta, \epsilon''_{12})} \\
  \\
  && {{\mathrm{Line}}}
  \arrow["{\zeta(E_1, \varpi_{E_1}) \circ_h \zeta(E_2, \varpi_{E_2}) \circ_h {\mathrm{id}}}", from=1-1, to=1-5]
  \arrow[""{name=0, anchor=center, inner sep=0}, "{\scrS_{\mathbb C^\times}(\Delta, \epsilon)}", from=3-3, to=1-1]
  \arrow[""{name=1, anchor=center, inner sep=0}, "{\scrS_{\mathbb C^\times}(\Delta, \epsilon''_{12})}"', from=3-3, to=1-5]
  \arrow["{\zeta(\Delta,\varpi_{E_2}) \circ \zeta(\Delta,\varpi_{E_1})}"', shift left=3, shorten <=16pt, shorten >=16pt, Rightarrow, from=0, to=1]
  \arrow["{\zeta(\Delta,\varpi_{E_1}) \circ \zeta(\Delta,\varpi_{E_2})}", shift left=5, shorten <=16pt, shorten >=16pt, Rightarrow, from=0, to=1]
\end{tikzcd}\]
\begin{lem} \label{lem:zeta-commutation}
In this situation, 
\begin{equation}
\frac{\zeta(\tDelta;Q^{\eps'_1}_\tw;\sigma^{\eps'_1}_\tw;\varpi_{E_2}) \circ \zeta(\tDelta;Q^\eps_\tw;\sigma^\eps_\tw;\varpi_{E_1})}{\zeta(\tDelta;Q^{\eps'_2}_\tw; \sigma^{\eps'_2}_\tw; \varpi_{E_1}) \circ \zeta(\tDelta;Q^\eps_\tw;\sigma^\eps_\tw;\varpi_{E_2})} = \exp \left( - \frac{2 \pi \I}{4} \IP{E_1,E_2} \right) \, .
\end{equation}
\end{lem}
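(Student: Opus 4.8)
\textbf{Proof plan for Lemma~\ref{lem:zeta-commutation}.}

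The plan is to reduce the statement to a computation in the 2-dimensional ``curvature'' theory $\beta$ of Corollary~\ref{thm:72}, i.e.~to the differential form $-\frac14 a\wedge\omega$ of~\eqref{eq:172}. The two composites $\zeta(\cdot,\varpi_{E_1})\circ\zeta(\cdot,\varpi_{E_2})$ and $\zeta(\cdot,\varpi_{E_2})\circ\zeta(\cdot,\varpi_{E_1})$ are both isomorphisms between the same pair of $\C^\times$ Chern-Simons invariants over $\tDelta$, obtained by twisting successively by the two $\bmuu_2$-bundles $\varpi_{E_1}/2$, $\varpi_{E_2}/2$ (the mod-$2$ reductions of the $\Z$-bundles $\varpi^{E_i}\to\tDelta_{\getwoa}$ of Construction~\ref{constr:Z-bundle-delta-E}). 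Each $\zeta$ is the specific refined isomorphism $\ztil$ of Corollary~\ref{thm:72}, which was \emph{constructed} to satisfy ${}^{\textnormal{tt}}\ztil=\beta\lambda$ once a nonflat trivialization of the $\C^\times$-connection is chosen; here the role of that nonflat trivialization is played by a covariantly constant section of $Q^\eps_\tw$ over $\partial\tDelta$ (available because, after twisting, the holonomy around each boundary component is trivial). So the first step is to write both composites against such a trivialization and cancel the common flat factors $\lambda$, leaving only the $\beta$-contributions.

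First I would make precise the bookkeeping: the two $\Z$-bundles $\varpi^{E_1},\varpi^{E_2}$ over $\tDelta$ each carry a canonical flat connection, so in the formalism of~\S\ref{subsubsec:5.5.4} the ``$\cir$'' background field for the $i$-th twist is a classifying map $\delta_i:\tDelta\to\cir$. Composing two twists means the relevant 1-form $a\wedge\omega$ of~\eqref{eq:172} picks up a cross term between $\delta_1$ and $\delta_2$; because $\lambda$ is flat and the nonflat isomorphism $\lambda$ of~\eqref{eq:182} is functorial in the trivialized theory, the two orders of composition differ precisely by the $\beta$-theory evaluated on the difference of the two ways of stacking the background fields. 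Concretely this produces a ratio $\exp\!\bigl(-\tfrac14\int_{\tDelta}\delta_1\smile d\delta_2 - (1\leftrightarrow 2)\bigr)$, i.e.~$\exp(-\tfrac{2\pi\I}{4}\,\ell)$ where $\ell\in\Z$ is the linking/intersection number of the two classes $\beta(\delta_1),\beta(\delta_2)$ on $\tDelta$, or—what is the same after using the boundary trivialization—an intersection pairing of $\varpi^{E_1}$ and $\varpi^{E_2}$ read off from their holonomies. I would phrase this cleanly as: the commutator of the two twists is the Deligne/cup-product pairing of the two $\bmuu_2$-classes, valued in $\tfrac14\Z/\Z$ via the quadratic refinement, exactly as in the failure-of-commutativity of two shifts of a spin Chern-Simons theory.

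Next I would identify this pairing in terms of the combinatorics of $\Delta$. By~\eqref{eq:varpi-holonomy} the holonomy of $\varpi^{E_i}$ on a class $\mu\in H_1(\tY)$ is $\IP{\gamma_{E_i},\mu}$, so on the closed surface $\tDelta$ (a torus, once the branch points are filled in) the relevant pairing of $\varpi^{E_1}$ with $\varpi^{E_2}$ is governed by $\IP{\gamma_{E_1},\gamma_{E_2}}$, the intersection product of the two loops on $\tDelta$. Using Figure~\ref{fig:quad-loop-labeled} and the geometry of the two loops $\gamma_{E_1},\gamma_{E_2}$ in the single triangle $\Delta$—they each cross the two walls meeting their respective edges—one computes $\IP{\gamma_{E_1},\gamma_{E_2}}=\pm1$ according to whether $E_1$ is ahead of or behind $E_2$ in the boundary orientation, i.e.~$\IP{E_1,E_2}$ in the sense of the definition preceding the lemma. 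Feeding $\ell=\IP{E_1,E_2}$ into the previous paragraph gives exactly $\exp(-\tfrac{2\pi\I}{4}\IP{E_1,E_2})$.

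The main obstacle I expect is keeping the \emph{signs and the factor of $\tfrac14$} honest: one must be careful (i) that the nonflat trivialization used for each $\zeta$ is the same one on both sides so that the flat parts genuinely cancel, (ii) that the ``$\cir$'' resolution of the two $\Z$-bundles is compatible with the chosen trivialization over $\partial\tDelta$ (this is where Corollary~\ref{thm:72}'s precise normalization~\eqref{eq:191} is used rather than merely Corollary~\ref{thm:57}), and (iii) that the orientation conventions for $\gamma_{E_i}$, which depend on an orientation of $\tDelta$ that itself descends from the orientation of $\Delta$, are tracked consistently—reversing the orientation of $\Delta$ flips both $\gamma$'s and hence leaves $\IP{\gamma_{E_1},\gamma_{E_2}}$ invariant, which is the consistency check that the answer depends only on the stated data. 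Modulo this bookkeeping the argument is a direct application of Corollary~\ref{thm:72} together with the holonomy formula~\eqref{eq:varpi-holonomy}.
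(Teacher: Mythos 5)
Your plan is essentially the paper's proof: the paper likewise invokes the normalization \eqref{eq:191} of Corollary~\ref{thm:72}, writes both composites against a section $s$ of $Q^\eps_\tw$ together with maps $p_1,p_2:\tDelta\to S^1$ representing $\varpi_{E_1},\varpi_{E_2}$ (constant on $\partial\tDelta$ away from $\widetilde E_i$), and finds that in the ratio everything cancels except the cross term $\exp\bigl(-\tfrac{2\pi\I}{4}\int_{\tDelta}dp_1\wedge dp_2\bigr)=\exp\bigl(-\tfrac{2\pi\I}{4}\IP{E_1,E_2}\bigr)$ coming from the 2-form $-\tfrac14\,a\wedge\omega$. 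One correction to your second paragraph: in this lemma $\tDelta$ is the branched double cover of a single triangle, hence a disk rather than a torus, and the classes $\gamma_{E_i}$ and the holonomy formula \eqref{eq:varpi-holonomy} live on the closed cover $\tY$ of the ambient surface; the pairing you actually need is the relative intersection number $\int_{\tDelta}dp_1\wedge dp_2$ of the circle-valued maps above, fixed on the boundary away from $\widetilde E_i$, which equals $\IP{E_1,E_2}$ by the cyclic position of the two edges --- the same $\pm1$ you obtain, so the slip does not affect the conclusion.
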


\begin{proof}
We use \eqref{eq:191}. For this purpose we choose 
a section $s$ of $Q^\eps_\tw$ and maps $p_1, p_2: \tDelta \to S^1$ representing
$\varpi_{E_1}$, $\varpi_{E_2}$, with $p_i$ constant on $\partial \tDelta$
except for $\widetilde E_i$. Then we have another section
$s''_{12} = e^{\pi \I (p_1 + p_2)} s$ of $Q^{\eps''_{12}}_\tw$,
and in terms of these sections
\begin{equation}
  \zeta(\tDelta;Q^{\eps'_i}_\tw;\sigma^{\eps'_i}_\tw;\varpi_{E_j}) \circ \zeta(\tDelta;Q^\eps_\tw;\sigma^\eps_\tw;\varpi_{E_i}) \tau_{s} = \exp \left( -\frac14 \int_{\tDelta} \left(s^* \alpha + \pi \I \, d p_i \right) \wedge d p_j  +  s^* \alpha \wedge d p_i \right) \tau_{s''_{12}}
\end{equation}
so that the desired ratio is
\begin{equation}
  \exp \left( - \frac{2 \pi \I}{4} \int_{\tDelta} d p_1 \wedge d p_2 \right) = 
  \exp \left( - \frac{2 \pi \I}{4} \IP{E_1,E_2} \right) \, .
\end{equation}
\end{proof}

\begin{prop} \label{prop:edge-reversal-commutation} 
In this situation,
\begin{equation}
  \rho^{\eps'_2,\eps''_{12}}(\Delta) \circ \rho^{\eps,\eps'_2}(\Delta) = \exp\left(- \frac{2 \pi \I}{4} \IP{E_1,E_2} \right) \rho^{\eps'_1,\eps''_{12}}(\Delta) \circ \rho^{\eps,\eps'_1}(\Delta).
\end{equation}

\end{prop}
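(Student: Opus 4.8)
The plan is to reduce Proposition~\ref{prop:edge-reversal-commutation} to the corresponding statement at the level of the isomorphisms $\zeta$, which is exactly Lemma~\ref{lem:zeta-commutation}. The map $\rho^{\eps',\eps}(\Delta)\colon \cL(\Delta,\eps') \to \cL(\Delta,\eps)\otimes \Xi(E)^{-1}$ was constructed by whiskering the isomorphism~\eqref{eq:zeta-diagram-iso} (built from $\zeta(\cdot,\varpi^E,\eps)$, suitably decorated) by $\chi(\cdot,\eps)$ and $\chi(\cdot,\eps')$; in the composition language of~\eqref{eq:L-as-composition} and~\eqref{eq:def-xi} this amounts to conjugating $\zeta(\tDelta;Q^\eps_\tw;\sigma^\eps_\tw;\varpi_E)$ by the (flat) equivalences $\chi(\partial\Delta,\cdot)$. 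Composing two such edge reversals, $\eps \to \eps'_i \to \eps''_{12}$, produces a composite of two $\zeta$'s conjugated by $\chi$'s; since the $\chi$'s cancel in pairs, the composite $\rho^{\eps'_i,\eps''_{12}}(\Delta)\circ\rho^{\eps,\eps'_i}(\Delta)$ is, up to the intervening $\Xi$-lines which are the same for both orders, precisely the conjugate of $\zeta(\tDelta;Q^{\eps'_i}_\tw;\sigma^{\eps'_i}_\tw;\varpi_{E_j})\circ\zeta(\tDelta;Q^\eps_\tw;\sigma^\eps_\tw;\varpi_{E_i})$.

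Concretely, the first step is to spell out that both sides of the claimed identity are maps $\cL(\Delta,\eps) \to \cL(\Delta,\eps''_{12})\otimes \Xi(E_1)^{-1}\otimes \Xi(E_2)^{-1}$, so that their ratio is a well-defined scalar. (Here one must check that the two orders of reversal give \emph{canonically} the same target line: the lines $\Xi(E_1)^{-1}$ and $\Xi(E_2)^{-1}$ appear in both, and since $\bmuu_2$-bundle twisting is symmetric monoidal there is no ordering ambiguity in the twisted objects $Q^{\eps''_{12}}_\tw$, $\sigma^{\eps''_{12}}_\tw$ — both are $Q$ twisted by $\tY^{\eps,4}\otimes_{\bmuu_2}\varpi^{E_1}\otimes_{\bmuu_2}\varpi^{E_2}$, which is independent of order.) Second, I would observe that $\chi(\partial\Delta,\cdot)$ is a \emph{flat} equivalence and that all four constituent equivalences $\chi$ appearing in the two composites are literally the same (they depend only on the restriction of the data to $\partial\Delta$, not on the edge-orientation in the interior, because all $\tDelta^{\eps,\infty}$ are canonically trivial over $\partial\Delta$); hence in the ratio they cancel and one is left with exactly the ratio computed in Lemma~\ref{lem:zeta-commutation}. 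Third, invoke Lemma~\ref{lem:zeta-commutation} to conclude the ratio equals $\exp(-\tfrac{2\pi\I}{4}\IP{E_1,E_2})$.

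The one genuinely delicate point — and the step I expect to be the main obstacle — is bookkeeping the $\Xi$-lines and making precise that the two composites really land in \emph{the same} target with \emph{the same} identification, so that comparing them yields a scalar rather than a scalar times some noncanonical automorphism of a $\Xi$-line. This requires unwinding the whiskering in the definition of $\rho^{\eps',\eps}(\Delta)$ carefully: $\Xi(E)$ is an automorphism of the object $\chi(\partial E)\circ\scrF_G(E)$, and when one reverses $E_1$ then $E_2$ the resulting $\Xi(E_2)$ is built from the data already twisted along $E_1$, whereas in the other order $\Xi(E_2)$ is built from the untwisted data; one must check these two incarnations of $\Xi(E_2)$ are canonically identified (they are, because $\varpi^{E_2}$ is supported away from $E_1$ and the twisting by $\varpi^{E_1}$ does not affect the restriction to $\partial E_2$). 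Once this identification is in hand, the proof is a one-line appeal to Lemma~\ref{lem:zeta-commutation}, since the $\chi$'s contribute nothing to the ratio.

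\begin{proof}
Both composites are isomorphisms $\cL(\Delta,\eps)\to\cL(\Delta,\eps''_{12})\otimes\Xi(E_1)^{-1}\otimes\Xi(E_2)^{-1}$; here we use that $\cL(\Delta,\eps''_{12})$, together with the twisted objects $Q^{\eps''_{12}}_\tw$ and $\sigma^{\eps''_{12}}_\tw$, is obtained by twisting by $\tY^{\eps,4}\otimes_{\bmuu_2}\varpi^{E_1}\otimes_{\bmuu_2}\varpi^{E_2}$, which is independent of the order, and that $\varpi^{E_i}$ is supported away from $E_j$ so the two incarnations of $\Xi(E_2)$ arising from the two orders are canonically identified. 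Hence the ratio of the two composites is a well-defined scalar. Unwinding the definition~\eqref{eq:def-xi} of $\rho^{\eps',\eps}(\Delta)$ by whiskering, each composite is, up to the canonical identifications above, the conjugate by the flat equivalences $\chi(\partial\Delta,\cdot)$ of a composite of two maps $\zeta$; since these $\chi$'s depend only on the data restricted to $\partial\Delta$ (over which all $\tDelta^{\eps,\infty}$ are canonically trivial) they are the same in both composites and cancel in the ratio. What remains is exactly the ratio
\begin{equation}
\frac{\zeta(\tDelta;Q^{\eps'_1}_\tw;\sigma^{\eps'_1}_\tw;\varpi_{E_2}) \circ \zeta(\tDelta;Q^\eps_\tw;\sigma^\eps_\tw;\varpi_{E_1})}{\zeta(\tDelta;Q^{\eps'_2}_\tw; \sigma^{\eps'_2}_\tw; \varpi_{E_1}) \circ \zeta(\tDelta;Q^\eps_\tw;\sigma^\eps_\tw;\varpi_{E_2})},
\end{equation}
which by Lemma~\ref{lem:zeta-commutation} equals $\exp\!\left(-\tfrac{2\pi\I}{4}\IP{E_1,E_2}\right)$. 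This is the claimed identity.
\end{proof}
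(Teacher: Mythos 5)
Your proof is correct and follows essentially the same route as the paper's: the paper likewise observes that $\rho$ is obtained by whiskering $\zeta$, so the ratio of the two composites reduces to the ratio of $\zeta$-composites, and then invokes Lemma~\ref{lem:zeta-commutation}. Your additional bookkeeping of the $\Xi$-lines and the order-independence of the twisted target is a more careful spelling-out of what the paper leaves implicit, not a different argument.
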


\begin{proof}
Since $\rho$ is constructed by whiskering $\zeta$ we get
\begin{equation}
  \frac{\rho^{\eps'_2,\eps''_{12}}(\Delta) \circ \rho^{\eps,\eps'_2}(\Delta)}{\rho^{\eps'_1,\eps''_{12}}(\Delta) \circ \rho^{\eps,\eps'_1}(\Delta)} = \frac{\zeta(\tDelta;Q^{\eps'_2}_\tw; \sigma^{\eps'_2}_\tw; \varpi_{E_1}) \circ \zeta(\tDelta;Q^\eps_\tw;\sigma^\eps_\tw;\varpi_{E_2})}{\zeta(\tDelta;Q^{\eps'_1}_\tw;\sigma^{\eps'_1}_\tw;\varpi_{E_2}) \circ \zeta(\tDelta;Q^\eps_\tw;\sigma^\eps_\tw;\varpi_{E_1})} .
\end{equation}
Then use Lemma~\ref{lem:zeta-commutation}.
\end{proof}

Similarly we can consider reversing the orientation on an edge $E$ 
where the initial orientation $\eps_E$ is
opposite to the boundary orientation. Then the same constructions as above give 
an isomorphism
\begin{equation}
 \rho^{\eps,\eps'}(\Delta): \cL(\Delta,\eps') \to \cL(\Delta,\eps) \otimes \Xi(E) \, .
\end{equation}

\subsection{Reversing an edge orientation on a triangulated surface} \label{sec:edge-reversal}

Now suppose $(Y,\scrT)$ is a semi-ideally triangulated surface,
with boundary-reduced stratified abelianization data $(P,Q,\mu,\theta)$.
Suppose
$\eps$, $\eps'$ are edge-orientations on $\scrT$, differing 
by reversing the orientation on one edge $E$.
We would like to compare the maps $\chi_Y^{\eps}$ and $\chi_Y^{\eps'}$
from Construction~\ref{constr:abelianization-surface}.
Their ratio is an isomorphism
\begin{equation}
  \chi^{\eps}_Y \circ (\chi^{\eps'}_Y)^{-1}: \scrS_{\C^\times}(\tY;Q^{\eps'}_\tw;\sigma^{\eps'}_\tw) \otimes \cL(Y,\eps') \to \scrS_{\C^\times}(\tY;Q^{\eps}_\tw;\sigma^{\eps}_\tw) \otimes \cL(Y,\eps) \, .
\end{equation}
We already have an isomorphism
\begin{equation}
\zeta(\tY;Q^{\eps'}_\tw;\sigma^{\eps'}_\tw;{\varpi^{E}}): \scrS_{\C^\times}(\tY;Q^{\eps'}_\tw;\sigma^{\eps'}_\tw) \to \scrS_{\C^\times}(\tY;Q^{\eps}_\tw;\sigma^{\eps}_\tw)  
\end{equation}
given by Corollary~\ref{thm:72}.
Thus we can write
\begin{equation} \label{eq:lambda-def}
\chi_Y^{\eps} \circ (\chi_Y^{\eps'})^{-1} = \zeta(Y;Q^{\eps'}_\tw;{\varpi^{E}}) \otimes (\lambda^{\eps, \eps'}(Y))^{-1}
\end{equation}
for some
\begin{equation}
\lambda^{\eps,\eps'}(Y): \cL(Y,\eps) \to \cL(Y,\eps').
\end{equation}
This map is determined in terms of the edge-reversal maps for the two triangles $\Delta_1$, $\Delta_2$ abutting $E$,
as follows.
\begin{prop} \label{prop:lambda-factorization} $\lambda^{\eps,\eps'}(Y) = \rho^{\eps,\eps'}({\Delta_1}) \otimes \rho^{\eps,\eps'}({\Delta_2})$.
\end{prop}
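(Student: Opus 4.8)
The plan is to reduce the global statement over $Y$ to the local statement over each triangle by the same diagrammatic bookkeeping used in Construction~\ref{constr:abelianization-surface}. Recall from \eqref{eq:def-leps} that $\cL(Y,\eps)=\bigotimes_{\Delta}\cL(\Delta,\eps\vert_\Delta)$, and from the proof of Construction~\ref{constr:abelianization-surface} that $\chi^\eps_Y$ is assembled by decomposing $Y=Y_\rmout\cup Y_\rmin$ along $R=\partial Y_\rmin$, applying Theorem~\ref{thm:strat-abelianization} (i.e.\ the isomorphism $\chi$) over $Y_\rmout$ and over $R$, and recognizing $\scrS_{\C^\times}(Y_\rmin)^{-1}\circ\chi(R)\circ\scrF_G(Y_\rmin)$ as $\bigotimes_\Delta\cL(\Delta,\eps\vert_\Delta)$ via \eqref{eq:L-as-composition}. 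First I would write out the analogous decomposition for both $\eps$ and $\eps'$ simultaneously, and form the ratio $\chi^\eps_Y\circ(\chi^{\eps'}_Y)^{-1}$ as a single 2-morphism in the relevant diagram category. The map $\zeta(\tY;Q^{\eps'}_\tw;\sigma^{\eps'}_\tw;\varpi^E)$ of Corollary~\ref{thm:72} relating the two $\scrS_{\C^\times}$-sides is, by the fully-local/canonical construction of $\zeta$, itself built by gluing: over $Y_\rmout$ it is $\zeta$ applied to $\varpi^E\vert_{Y_\rmout}$, and over $Y_\rmin=\coprod_\Delta\mu(\Delta)$ it factors as $\bigotimes_\Delta\zeta(\widetilde{\mu(\Delta)};\dots;\varpi^E\vert_{\mu(\Delta)})$. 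Since $\varpi^E$ is supported near the quadrilateral $\cQ_E$ (Construction~\ref{constr:Z-bundle-delta-E}), the only triangles for which $\varpi^E\vert_\Delta$ is nontrivial are $\Delta_1,\Delta_2$, so the $Y_\rmin$-part of $\zeta$ reduces to $\zeta$ over $\mu(\Delta_1)\amalg\mu(\Delta_2)$ tensored with the identity on all other triangles.

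Next I would substitute both factorizations into the defining equation \eqref{eq:lambda-def}. The $Y_\rmout$-contributions to $\chi^\eps_Y$, $\chi^{\eps'}_Y$, and $\zeta(\tY;\dots;\varpi^E)$ are governed entirely by data away from the interiors of the triangles (edge-orientations on $R$, which are unaffected, plus the $\zeta$ over $Y_\rmout$), and by the way $\rho^{\eps,\eps'}(\Delta)$ was constructed in \S\ref{sec:edge-reversal-triangle} via whiskering $\zeta(E,\varpi^E,\eps)$ against $\chi(\partial E,\eps)$, these cancel against the corresponding pieces of $\zeta(\tY;\dots;\varpi^E)$ in exactly the pattern that extracts the automorphism $\Xi(E)^{\pm1}$. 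What remains on the $Y_\rmin$-side is literally the tensor product over the two abutting triangles of the local edge-reversal comparison defined just before \eqref{eq:def-xi}--\eqref{eq:def-L}, i.e.\ $\rho^{\eps,\eps'}(\Delta_1)\otimes\rho^{\eps,\eps'}(\Delta_2)$, with the third-triangle (and all-other-triangle) factors being identities because $\varpi^E$ and the edge-orientation change are trivial there. This identification is exactly \eqref{eq:L-as-composition} applied triangle-by-triangle, now with $\zeta$ inserted on one edge.

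The one point requiring genuine care—and I expect it to be the main obstacle—is matching the $\Xi(E)$-factors and the $\zeta$-over-$\tilde E$ conventions so that nothing is double-counted or dropped at the shared edge $E$. The local maps $\rho^{\eps,\eps'}(\Delta_i)$ each land in $\cL(\Delta_i,\eps'\vert_{\Delta_i})\otimes\Xi(E)^{\pm1}$ (the sign depending on whether $\eps_E$ agrees with the boundary orientation of $\Delta_i$, and these two signs are opposite since $E$ is traversed oppositely by $\partial\Delta_1$ and $\partial\Delta_2$), so in the tensor product $\rho^{\eps,\eps'}(\Delta_1)\otimes\rho^{\eps,\eps'}(\Delta_2)$ the two $\Xi(E)^{\pm1}$ factors are canonically dual and cancel, yielding a map $\cL(Y,\eps)\to\cL(Y,\eps')$ with no leftover $\Xi$. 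I would verify that this cancellation is precisely the one induced by gluing $E$ into $Y$ (the same cancellation that makes $\varpi^E$ extend over $\tY$, from the holonomy computation $-1+1=0$ in Construction~\ref{constr:Z-bundle-delta-E}), and that the global $\zeta(\tY;\dots;\varpi^E)$ on the $\scrS_{\C^\times}$-side absorbs exactly these boundary pieces. Once the bookkeeping is pinned down, the equality $\lambda^{\eps,\eps'}(Y)=\rho^{\eps,\eps'}(\Delta_1)\otimes\rho^{\eps,\eps'}(\Delta_2)$ follows from the gluing/locality properties of $\chi$ and $\zeta$ stated in \S\ref{subsec:S6.4} together with the definitions \eqref{eq:def-xi}--\eqref{eq:def-L} and \eqref{eq:lambda-def}.
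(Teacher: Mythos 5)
Your proposal is correct and takes essentially the same route as the paper: the paper likewise localizes $\zeta(\tY;Q^{\eps'}_\tw;\varpi^E)^{-1}\circ\chi^{\eps}_Y\circ(\chi^{\eps'}_Y)^{-1}$ using the gluing properties of $\chi$ and $\zeta$, finds it trivial away from $\cQ_E$, identifies the contribution of each triangle $\Delta_i$ with $\rho^{\eps,\eps'}(\Delta_i)^{\pm1}$, and disposes of the leftover $\Xi$-factors through the identification of the two copies of $E$. The only cosmetic difference is the bookkeeping of that last step: the paper decomposes $Y=Y_\rmout\cup Y_1\cup Y_2\cup Y_E$ with an explicit band $Y_E$ around the edge on which the isomorphism $\Xi(E_1)\to\Xi(E_2)$ lives, rather than your cancellation of dual $\Xi(E)^{\pm1}$ factors inside the $Y_\rmout\cup Y_\rmin$ decomposition, and it is carried out at the same heuristic level (the paper explicitly leaves the full diagrammatics to the reader).
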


\begin{proof}
We decompose $Y$ as indicated in the figure:
\insfig{decomposing-around-edge}{0.4}{The decomposition of a triangulated surface $Y$ associated to an edge $E$.}
\begin{equation}
  Y = Y_\rmout \cup Y_1 \cup Y_2 \cup Y_E
\end{equation}
Now we want to describe $\zeta(Y;Q^{\eps'}_\tw;{\varpi^{E}})^{-1} \circ \chi_Y^{\eps} \circ (\chi_Y^{\eps'})^{-1}$ relative to this decomposition.
We will content ourselves with a heuristic
description, leaving the full diagrammatics to the reader. 
On $Y_\rmout$ all the background fields associated to $\eps$ and $\eps'$ are canonically isomorphic, and the
background field $\varpi_E$ is canonically trivial, so likewise $\zeta(Y;Q^{\eps'}_\tw;{\varpi^{E}})^{-1} \circ \chi^{\eps} \circ (\chi^{\eps'})^{-1}$ is trivial. 
On $Y_i$, $\zeta(Y;Q^{\eps'}_\tw;{\varpi^{E}})^{-1} \circ \chi^{\eps} \circ (\chi^{\eps'})^{-1}$ is described by the map $(\rho^{\eps,\eps'}(\Delta_i))^{-1}: \cL(\Delta_i, \eps') \otimes \Xi(E_i)^{\pm 1} \to \cL(\Delta_i, \eps)$ where $E_i$ is the edge of $\Delta_i$ corresponding to $E$.
Finally, on $Y_E$, $\zeta(Y;Q^{\eps'}_\tw;{\varpi^{E}})^{-1} \circ \chi^{\eps} \circ (\chi^{\eps'})^{-1}$ gives the isomorphism $\Xi(E_1) \to \Xi(E_2)$ induced by identifying these two edges. Combining these gives the desired result.
\end{proof}

\begin{prop} \label{prop:edge-reversal-double} The composition
\begin{equation}
\lambda^{\eps',\eps}(Y) \circ \lambda^{\eps,\eps'}(Y): \cL(Y,\eps) \to \cL(Y,\eps) 
\end{equation}
acts by multiplication by the $\eps$-sign of the edge $E$, as defined in
Definition \ref{def:quadrilateral-sign}.
\end{prop}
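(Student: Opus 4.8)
The plan is to reduce the statement to a purely local computation on the two triangles $\Delta_1,\Delta_2$ abutting $E$, and then apply the commutation relation of Proposition~\ref{prop:edge-reversal-commutation} together with the holonomy formula of Construction~\ref{constr:Z-bundle-delta-E} and Proposition~\ref{prop:loop-hol-formula}. By Proposition~\ref{prop:lambda-factorization} we have $\lambda^{\eps,\eps'}(Y) = \rho^{\eps,\eps'}(\Delta_1) \otimes \rho^{\eps,\eps'}(\Delta_2)$, and likewise $\lambda^{\eps',\eps}(Y) = \rho^{\eps',\eps}(\Delta_1) \otimes \rho^{\eps',\eps}(\Delta_2)$. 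Hence the composition factors as a tensor product $\bigl(\rho^{\eps',\eps}(\Delta_1) \circ \rho^{\eps,\eps'}(\Delta_1)\bigr) \otimes \bigl(\rho^{\eps',\eps}(\Delta_2) \circ \rho^{\eps,\eps'}(\Delta_2)\bigr)$, so it suffices to compute, for a single triangle $\Delta$ with one marked edge $E$, the scalar by which $\rho^{\eps',\eps}(\Delta) \circ \rho^{\eps,\eps'}(\Delta)$ acts on $\cL(\Delta,\eps)$, and then multiply the two contributions.

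First I would unwind the definition of $\rho^{\eps,\eps'}(\Delta)$: it is obtained by whiskering the isomorphism $\zeta(\cdot,\varpi^E,\eps)$ of Corollary~\ref{thm:72} along the bordism decomposition $\emptyset \xrightarrow{E} \partial E$ and $\emptyset \xrightarrow{\tilde E} \partial E$ with $\Delta$ as the $2$-morphism, together with the equivalences $\chi(E,\eps)$, $\chi(E,\eps')$. Composing $\rho^{\eps,\eps'}(\Delta)$ with $\rho^{\eps',\eps}(\Delta)$ therefore produces (after whiskering) the scalar by which $\zeta(\tDelta;Q^{\eps'}_\tw;\sigma^{\eps'}_\tw;\varpi^E) \circ \zeta(\tDelta;Q^\eps_\tw;\sigma^\eps_\tw;\varpi^E)$ differs from the identity on $\scrS_{\C^\times}(\tDelta;Q^\eps_\tw;\sigma^\eps_\tw)$ --- i.e.\ the square of the twist by $\varpi^E$, which is a twist by $\varpi^E \otimes_{\bmuu_2} \varpi^E$, the \emph{trivial} $\bmuu_2$-bundle, but carrying a possibly nontrivial $\Z$-structure. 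Using the explicit formula~\eqref{eq:191} of Corollary~\ref{thm:72} --- exactly as in the proof of Lemma~\ref{lem:zeta-commutation}, but now with $p_1 = p_2 = p$ representing $\varpi^E$ --- the phase is $\exp\bigl(-\tfrac{2\pi\I}{4}\int_{\tDelta} dp \wedge dp\bigr) = \exp(0) = 1$ from the curvature term; the nontrivial contribution is instead the Arf-type sign coming from the $\beta$ factor in~\eqref{eq:178}, which depends on $\sigma^\eps_\tw(\gamma_E)$, the spin structure evaluated on $\gamma_E$. By Proposition~\ref{prop:ss-formula}, $\sigma^\eps_\tw(\gamma_E)$ is precisely the $\eps$-sign of $E$.

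Assembling: over the triangle $\Delta_i$ the composition $\rho^{\eps',\eps}(\Delta_i)\circ\rho^{\eps,\eps'}(\Delta_i)$ contributes the sign $\sigma^\eps_\tw(\gamma_{E_i})$ coming from the double twist, where $E_i$ is the edge of $\Delta_i$ corresponding to $E$. Multiplying the two triangle contributions and using that $\gamma_E$ is built precisely from the loop running through both $\Delta_1$ and $\Delta_2$ around the quadrilateral $\cQ_E$, the product of the two local signs equals the single $\eps$-sign of $E$ of Definition~\ref{def:quadrilateral-sign} --- one checks this by comparing Proposition~\ref{prop:loop-hol-formula}, which already shows $\hol_{\tY^{\eps,4}}(\gamma_E)$ is the $\eps$-sign, and noting that the two halves of $\gamma_E$ lying in the two triangles assemble exactly as in that proof (the contributions $\eps(0,1)+\eps(1,2)+\eps(2,3)+\eps(3,0)$). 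Thus $\lambda^{\eps',\eps}(Y)\circ\lambda^{\eps,\eps'}(Y)$ is multiplication by the $\eps$-sign of $E$.

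The main obstacle I anticipate is bookkeeping the $\Z$-bundle (as opposed to $\bmuu_2$-bundle) structure carefully in the double twist: $\varpi^E \otimes_{\bmuu_2} \varpi^E$ is trivial as a $\bmuu_2$-bundle, but in the formula~\eqref{eq:178} the relevant isomorphism $\tz$ versus $\lambda$ involves the $\Z$-lift, and one must verify that the nonflat (curvature) part genuinely cancels --- this is the $\int dp\wedge dp = 0$ observation --- so that only the flat sign $\beta$, equivalently the Arf-type dependence on $\sigma^\eps_\tw(\gamma_E)$, survives. A secondary point requiring care is that the isomorphisms $\chi(E,\eps)$, $\chi(E,\eps')$ and the whiskering along $\tilde E$ all cancel in the composition (they contribute inverse factors on the two sides), so that the net scalar really is just the double-twist phase; this follows from the functoriality/gluing properties of $\chi$ emphasized after Theorem~\ref{thm:strat-abelianization}, but should be stated explicitly.
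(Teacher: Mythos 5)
There is a genuine gap in the middle of your argument. After factoring via Proposition~\ref{prop:lambda-factorization}, you assert that over a single triangle $\Delta_i$ the composition $\rho^{\eps',\eps}(\Delta_i)\circ\rho^{\eps,\eps'}(\Delta_i)$ "contributes the sign $\sigma^\eps_\tw(\gamma_{E_i})$." But no such sign exists at the level of one triangle: $\gamma_E$ is a closed loop on $\tY$ encircling the two branch points of the quadrilateral $\cQ_E$, and its intersection with a single triangle is only an arc on the branched cover $\tDelta_i$ of a disc. Moreover, the sign mechanism you ultimately appeal to is the factor $\epsilon_{t,h}=(-1)^{q_\sigma([h])}$ of \eqref{eq:189}--\eqref{eq:s65}, and the quadratic refinement $q_\sigma$ is defined on $H^1$ of a \emph{closed} spin surface; on a disc with boundary there is no such invariant without extra relative data, so the "local sign" you want to multiply over the two triangles is not defined by the tools you invoke. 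The subsequent assembly step ("the product of the two local signs equals the single $\eps$-sign") is therefore asserted rather than proved. A smaller but real inaccuracy: you attribute the sign to "the $\beta$ factor in \eqref{eq:178}," but $\beta$ there is the nonflat $2$-form piece \eqref{eq:172} (it contributes the exponential terms, e.g.\ in Lemma~\ref{lem:zeta-commutation}); the sign comes from the change-of-section formula \eqref{eq:189}, via $(-1)^{q_\sigma([h])}$ applied to the winding class of the function by which the section is rescaled after the double twist.

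The paper's proof sidesteps all of this and is strictly global: from the defining relation \eqref{eq:lambda-def} for the two edge-orientations one sees immediately that the $\chi$'s cancel, so $\lambda^{\eps',\eps}(Y)\circ\lambda^{\eps,\eps'}(Y)$ equals the composition $\zeta(\tY;Q^{\eps'}_\tw;\sigma^{\eps'}_\tw;\varpi_E)\circ\zeta(\tY;Q^\eps_\tw;\sigma^\eps_\tw;\varpi_E)$ acting on the closed surface $\tY$; then a single application of \eqref{eq:191} and \eqref{eq:189} shows this double twist is multiplication by $\sigma^\eps_\tw$ evaluated on the class of $\varpi_E$, i.e.\ on $\gamma_E$ by \eqref{eq:varpi-holonomy}, which is the $\eps$-sign by Proposition~\ref{prop:ss-formula}. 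Your second half correctly identifies this final mechanism, so the fix is to drop the triangle-by-triangle detour (or, if you keep it, to glue the whiskered local $\zeta$'s back into the global double twist on $\tY$ \emph{before} extracting any sign) and run the spin-structure computation once on the closed surface.
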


\begin{proof}
First note that from \eqref{eq:lambda-def} it follows 
directly that $\lambda^{\eps',\eps}(Y) \circ \lambda^{\eps,\eps'}(Y)$ acts 
by $\zeta(\tY;Q^{\eps'}_\tw;\sigma^{\eps'}_\tw;{\varpi_E}) \circ \zeta(\tY;Q^\eps_\tw;\sigma^{\eps}_\tw;{\varpi_E})$.
Using \eqref{eq:191} and \eqref{eq:189} we see that this is 
multiplication by ${\sigma^\eps_\tw}([\varpi_E]) = {{\sigma^\eps_\tw}(\gamma_E)}$; by Proposition~\ref{prop:ss-formula} this is the $\eps$-sign of $E$.
\end{proof}

\section{Gluing description of the Chern-Simons line}\label{sec:8a}

We continue with the setup of the last section.
There we used abelianization to produce an isomorphism 
\begin{equation}
  \psi_D: \scrF_G(Y;P) \to \scrS_{\C^\times}(\tY;Q^\eps_\tw;\sigma^\eps_\tw) \otimes \cL(Y,\eps)
\end{equation}
depending on various choices encapsulated in $D$. 
In this section we process this further
to turn it into an explicit description
of $\scrF_G(Y;P)$. The final result is
Theorem~\ref{thm:cs-line-explicit}, which asserts
the existence of nonzero elements
$\tau_{\hat D} \in \scrF_G(Y;P)^*$ depending on
choices $\hat D$ (including an ideal triangulation,
edge-orientations, and certain choices of logarithms),
and gives the cocycle expressing how $\tau_{\hat D}$ changes
when the data $\hat D$ are changed.

\subsection{Abelianization of Chern-Simons over interpolating 3-manifolds} \label{sec:abelianization-cs-interpolating-threemanifolds}

Suppose $M$ is a spin 3-manifold with boundary, equipped with a spectral network $\cN$
such that $M_{-3} = \emptyset$, the restriction of $\cN$
to $Y = \partial M$ is the spectral network $\cN^\scrT$ for a triangulation $\scrT$ of $Y$,
and $M_\twobstrat$ has no closed components.

Each component $B$ of $M_\twobstrat$ is a closed interval, whose two ends lie
on $Y_\twobstrat$, in two triangles $\Delta_0, \Delta_1 \in \faces(\scrT)$,
with edge-orientations $\eps_{\Delta_0}$, $\eps_{\Delta_1}$.
Moreover, a tubular neighborhood of $B$ can be identified with 
$\Delta \times [0,1]$, for a triangle $\Delta$ with its
standard spectral network.
This in particular gives an orientation-reversing identification $f: \Delta_0 \to \Delta_1$.

Now suppose given edge-orientations $\eps$ over $Y$, and 
a $\Z$-bundle $\tM^{\infty} \to \tM_\gea$, which restricts to 
$\tY^{\eps,\infty} \to \tY_\gea$ over the boundary,
and also restricts to $\tDelta^{\eps_B,\infty} \times [0,1]$
over the tubular neighborhood of each component $B \subset M_\twobstrat$,
for some edge-orientations $\eps_B$ on $\Delta$.
In particular $\eps_B$ restricts to match $\eps$ at the two ends, and
so $f_* (\eps \vert_{\Delta_0}) = \eps \vert_{\Delta_1}$.
Thus using Proposition~\ref{prop:triangle-iso} we get 
an isomorphism $\cL(\Delta,\eps_\Delta) \to \cL(\Delta',\eps_{\Delta'})^*$, i.e. an element $\beta \in \cL(\Delta,\eps_\Delta) \otimes \cL(\Delta',\eps_{\Delta'})$.
Tensoring over all components $B \subset M_\twobstrat$ gives a canonical 
element $\beta_M \in \cL(Y,\eps)$.
This element should be thought of as just implementing the matching-up of triangles
provided by $M_\twobstrat$.

Finally, suppose we have stratified abelianization data $\cA = (P, Q, \mu, \theta)$
over $(M, \cN)$.

\begin{prop} \label{prop:cs-interpolating} In this situation,
\begin{equation} \label{eq:cs-interp}
\chi^\eps_Y(\scrF_G(M;P)) = \scrS_{\C^\times}(\tM;Q^\eps_\tw;\sigma^\eps_\tw) \otimes \beta_M \in \scrS_{\C^\times}(\tY;\partial Q^\eps_\tw;\sigma^\eps_\tw) \otimes \cL(Y,\eps).
\end{equation}
\end{prop}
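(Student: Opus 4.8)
The plan is to reduce the identity \eqref{eq:cs-interp} to the behaviour of the abelianization isomorphism $\chi$ on the two pieces of a decomposition $M = M_\rmout \cup M_\rmin$, in which $M_\rmin$ is a disjoint union of solid cylinders over triangles, one for each component of $M_\twobstrat$, and $M_\rmout$ is the complement, which has empty $\twobstrat$-stratum. Concretely, for each component $B \subset M_\twobstrat$, with its chosen tubular neighbourhood identified with $\Delta_B \times [0,1]$, let $\mu \colon \Delta_B \to \Delta_B$ be the dilation fixing the barycentre and scaling distances by $\frac12$, and set $M_\rmin = \bigsqcup_B \mu(\Delta_B) \times [0,1]$, $M_\rmout = \overline{M \setminus M_\rmin}$, and $R = \partial M_\rmin = M_\rmout \cap M_\rmin$. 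Then $(M_\rmout)_\twobstrat = \emptyset$, the restrictions to $M_\rmout$ and to $R$ of the spectral network, the stratified abelianization data $\cA$, the spin structure and the $\Z$-bundle $\tM^\infty$ satisfy the hypotheses of Theorem~\ref{thm:strat-abelianization}, and $M_\rmin \cap Y$ is exactly $Y_\rmin = \bigsqcup_{\Delta \in \faces(\scrT)} \mu(\Delta)$ --- here one uses that $Y_\twobstrat$ consists of one barycentre per face of $\scrT$, each lying on the boundary of exactly one component of $M_\twobstrat$, so that $\faces(\scrT)$ is in bijection with the set of ends of components of $M_\twobstrat$.

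Next I would invoke the gluing properties. Viewing $M$ as a bordism $\emptyset \to Y$ and cutting along $R$ gives $M = M_\rmout \circ M_\rmin$. Since $\scrF_G$ and $\scrS_{\C^\times}$ are topological field theories and, by the remark following Theorem~\ref{thm:strat-abelianization}, $\chi$ glues exactly as an isomorphism of $3$-dimensional TFTs on manifolds with empty $\twobstrat$-stratum, and since the isomorphism $\chi^\eps_Y$ of Construction~\ref{constr:abelianization-surface} is itself assembled from the analogous decomposition $Y = Y_\rmout \cup Y_\rmin$, the naturality of $\chi$ over $M_\rmout$ transports the image of $\scrF_G(M_\rmout)$ to that of $\scrS_{\C^\times}(\tM_\rmout)$. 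In this way $\chi^\eps_Y(\scrF_G(M;P))$ is rewritten as $\scrS_{\C^\times}(\tM_\rmout; Q^\eps_\tw; \sigma^\eps_\tw)$ composed with the contribution of $M_\rmin$, the latter carrying the full factor $\cL(Y,\eps) = \bigotimes_{\Delta \in \faces(\scrT)} \cL(\Delta, \eps\vert_\Delta)$, since the ends of the cylinders are precisely the shrunk triangles of $Y_\rmin$.

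The heart of the argument is the evaluation of each cylinder $\mu(\Delta_B) \times [0,1]$. Its spectral network, its restricted stratified abelianization data, and its restricted $\Z$-bundle --- which by hypothesis equals $\tDelta^{\eps_B,\infty} \times [0,1]$ --- are products along the interval, so $\scrF_G$ of the cylinder is an identity bordism, and likewise $\scrS_{\C^\times}$ of the branched double cover $\widetilde{\mu(\Delta_B)} \times [0,1]$. Hence $\chi$ applied to the boundary of the cylinder and fed with $\scrF_G$ of the cylinder differs from $\scrS_{\C^\times}$ of the double-cover cylinder only by the canonical identification of the difference lines at the two ends $\Delta_0, \Delta_1$; by the definition \eqref{eq:L-as-composition} of $\cL$ and by Proposition~\ref{prop:triangle-iso} applied to the orientation-reversing identification $f \colon \Delta_0 \to \Delta_1$ supplied by the cylinder, this identification is exactly the element $\beta_B \in \cL(\Delta_0, \eps\vert_{\Delta_0}) \otimes \cL(\Delta_1, \eps\vert_{\Delta_1})$ out of which $\beta_M$ is defined. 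Tensoring over all $B$, and using the gluing of $\scrS_{\C^\times}$ to recombine $\scrS_{\C^\times}(\tM_\rmout; \cdot) \circ \bigsqcup_B \scrS_{\C^\times}(\widetilde{\mu(\Delta_B)} \times [0,1]; \cdot)$ into $\scrS_{\C^\times}(\tM; Q^\eps_\tw; \sigma^\eps_\tw)$, yields \eqref{eq:cs-interp}.

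I expect the main obstacle to be this last step: pinning down that the decorated cylinder contributes $\beta_B$ with no extraneous scalar. This requires, first, checking that every background field --- spectral network, the data $(P,Q,\mu,\theta)$, spin structure, and the $\Z$-bundle --- genuinely restricts to a product along the tubular neighbourhood of $B$, so that $\scrF_G$ and $\scrS_{\C^\times}$ really do evaluate to identity morphisms there; and second, identifying the resulting canonical isomorphism of difference lines with the map $f_*$ of Proposition~\ref{prop:triangle-iso}. The second point is where the compatibility of the restrictions $\tDelta^{\eps_B,\infty} \times [0,1]$ and $\pi^*\sigma$ with the data used to define $\cL(\Delta,\eps)$ enters, and where the choice of isomorphism $\chi$ fixed in Appendix~\ref{sec:C11} is needed to rule out an unwanted sign from the spin flip. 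Everything else is the same bookkeeping as in the proof of Construction~\ref{constr:abelianization-surface}.
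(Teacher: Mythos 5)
Your proposal follows the paper's own proof essentially step for step: the same decomposition of $M$ into a tubular neighborhood of $M_{\twobstrat}$ (product cylinders over triangles) and its complement, the same appeal to the TFT-style gluing of $\chi$ from Theorem~\ref{thm:strat-abelianization} to isolate the discrepancy onto the cylinders, and the same identification of that discrepancy with $\beta_M$ via the mapping-cylinder picture and Proposition~\ref{prop:triangle-iso}. The only quibble is notational: writing $R = \partial M_\rmin = M_\rmout \cap M_\rmin$ conflates the corner circles with the interface annuli (the ends $\mu(\Delta_B)\times\{0,1\}$ lie in $Y$, not in $M_\rmout$), but this does not affect the argument, which the paper organizes through the corner data $R$, $B$, $Y_\rmin$, $Y_\rmout$.
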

\begin{proof} Decompose $M = M_\rmout \cup M_\rmin$, where
$M_\rmin$ is a small tubular neighborhood of $M_{\twobstrat}$.
$M_\rmin$ is a 3-manifold with 1-dimensional corners $R$;
$R$ is a union of circles, which divides $\partial M_\rmin$
into a union of cylinders $B$ and a union of discs 
$Y_{\rmin} \subset Y$.
Likewise $\partial M_\rmout$ is divided by $R$ into 
$-B$ and $Y_{\rmout} \subset Y$.

Now we need to apply $\scrF_G$ and $\scrS_{\C^\times}$ to the various
parts of this decomposition.
To lighten the notation a bit we write the \Vlines (dimension 1)
\begin{equation}
  \cC_G = \scrF_G(R;P\vert_R), \quad \cC_{\C^\times} = \scrS_{\C^\times}(\widetilde R;Q^\eps_\tw \vert_{\widetilde R};\sigma^\eps_\tw) \, ,
\end{equation}
with objects (dimension 2)
\begin{equation}
  \cO_{G,\rmin} = \scrF_G(Y_\rmin;P \vert_{Y_\rmin}), \quad \cO_{\C^\times,\rmin} = \scrS_{\C^\times}(\tY_\rmin;Q^\eps_\tw \vert_{\tY_\rmin};\sigma^\eps_\tw)\,.
\end{equation}
Thus $\cO_{G,\rmin} \in \cC_G$, $\cO_{\C^\times,\rmin} \in \cC_{\C^\times}$,
and likewise we have $\cO_{G,\rmout} \in \cC_G^*$, $\cO_{\C^\times,\rmout} \in \cC_{\C^\times}^*$,
and $\cO_{G,B} \in \cC_G$, $\cO_{\C^\times,B} \in \cC_{\C^\times}$. Finally, we have the elements (dimension 3)
\begin{align}
  \psi_{G,\rmin} &= \scrF_G(M_\rmin;P\vert_{M_\rmin}) \in \cO_{G,\rmin} \otimes \cO_{G,B}^*\,, \\
  \psi_{G,\rmout} &= \scrF_G(M_\rmout;P\vert_{M_\rmout}) \in \cO_{G,\rmout} \otimes \cO_{G,B}\,, \\
  \psi_{\C^\times,\rmin} &= \scrS_{\C^\times}(M_\rmin;Q^\eps_\tw\vert_{M_\rmin};\sigma^\eps_\tw) \in \cO_{\C^\times,\rmin} \otimes \cO_{\C^\times,B}^*\,, \\
  \psi_{\C^\times,\rmout} &= \scrS_{\C^\times}(M_\rmout;Q^\eps_\tw\vert_{M_\rmout};\sigma^\eps_\tw) \in \cO_{\C^\times,\rmout} \otimes \cO_{\C^\times,B}\,.
\end{align}

We can use Theorem~\ref{thm:strat-abelianization}
to construct various canonical abelianization maps:
an equivalence of \Vlines (associated to dimension 1),
\begin{equation}
  \chi_{\partial B}^\eps: \cC_G \to \cC_{\C^\times},
\end{equation}
homs (associated to dimension 2),
\begin{align}
  \chi^\eps_{Y_\rmout}&\!:  \chi_{\partial B}^{\eps*}(\cO_{G,\rmout}) \to \cO_{\C^\times,\rmout} \, , \\ 
  \chi^\eps_{Y_\rmin}&\!:  \chi_{\partial B}^\eps(\cO_{G,\rmin}) \to \cO_{\C^\times,\rmin} \otimes \cL(Y,\eps) \, , \\
  \chi^\eps_{B}&\!: \chi_{\partial B}^\eps(\cO_{G,B}) \to \cO_{\C^\times,B}
\end{align}
and an equation (associated to dimension 3),
\begin{equation} \label{eq:3d-f-1}
  ((\chi^\eps_{Y_\rmout} \otimes \chi^{\eps *}_{B}) \circ \Omega_{Y_\rmout,\partial B,B}) (\psi_{G,\rmout}) = \psi_{\C^\times,\rmout}
\end{equation}
where
\begin{equation}
 \Omega_{Y_\rmout,\partial B,B}: \cO_{G,\rmout} \otimes \cO_{G,B} \to \chi_{\partial B}^{\eps*}(\cO_{G,\rmout}) \otimes \chi_{\partial B}^{\eps}(\cO_{G,B})
\end{equation}
is the canonical map.
Now, we define an element $\delta_M \in \cL(Y,\eps)$ by
\begin{equation} \label{eq:3d-f-2}
  ((\chi^\eps_{Y_\rmin} \otimes \chi^{\eps}_{B}) \circ \Omega_{Y_\rmin,\partial B,B}) (\psi_{G,\rmin}) = \psi_{\C^\times,\rmin} \otimes \delta_M \, .
\end{equation}
This is a measurement of the difference between the
$G$ and $\C^\times$ theories on the cylinders $M_\rmin$, 
analogous to our definition of $\cL(Y,\eps)$, but one
dimension up, so it gives an element rather than a line.
Tensoring \eqref{eq:3d-f-1} and \eqref{eq:3d-f-2}
gives
\begin{equation}
  \chi^\eps_Y(\psi_G) = \psi_{\C^\times} \otimes \delta_M
\end{equation}
so what remains to prove \eqref{eq:cs-interp} is to show
that $\delta_M = \beta_M$.
Each component of $M_\rmin$ is 
a cylinder, with ends on two discs
carrying the standard spectral network for a triangle;
call these discs $\Delta$, $\Delta'$.
By a diffeomorphism we can identify 
this cylinder with the mapping cylinder $I_f$
of a map $f: \Delta \to \Delta'$.
Moreover, $f$ lifts to the spectral networks, spin structures 
and stratified abelianization data.
Thus we can transport the computation of $\delta_M$ to
the union of mapping cylinders; on each mapping cylinder $I_f$ 
this
gives the action of $f_*: \cL(\Delta,\eps) \to \cL(\Delta',\eps')$, 
and then tensoring over the cylinders gives $\beta_M$ as desired.
\end{proof}

In Proposition~\ref{prop:cs-interpolating} we only considered
the case of a 3-manifold $M$ with boundary a closed triangulated 
surface $Y$. In applications we sometimes want
to use $Y$ with boundary and a semi-ideal or ideal triangulation.
In this case $M$ will have to have extra boundary components extending
$\partial Y$, and corners around the
ideal vertices. As long as we always
work with boundary-reduced stratified abelianization data
over $Y$, these extra boundary
components and corners do not introduce additional complications
in the formal structure: the statement and its proof are unchanged
except for a bit more notation, which we omit here.

\subsection{The dilogarithm in abelianization on one tetrahedron} \label{sec:dilog}

Suppose $Y = S^2$, identified as the boundary of a tetrahedron $\tet$,
with the induced triangulation $\scrT$. Let $\sigma$ denote a spin structure on $Y$,
and $\cA = (P, Q, \mu, \theta)$ stratified abelianization data 
over $(Y,\cN^\scrT)$.
Then $P$ extends to $\hat P \to \tet$ (uniquely up to isomorphism) 
and applying $\scrF_G$ gives an element 
\begin{equation}
  \scrF_G(\tet ; \hat P) \in \scrF_G(Y ; P) \, .
\end{equation}
Fix edge-orientations $\eps$ on $\tet$.
The goal of this section is to describe the image $\chi_Y^\eps(\scrF_G(\tet ; \hat P))$ in terms of a relative
of the dilogarithm function.

Choose a pair of opposite edges in $\scrT$; as discussed above Proposition~\ref{thm:68},
this determines classes $\gamma_1, \gamma_2, \gamma_3 \in H_1(\tY)$.
Also choose a section $t$ of the $\C^\times$-bundle $Q^\eps_\tw$ and
let
\begin{equation} \label{eq:def-u-abstract}
  u_i = \oint_{\gamma_i} t^* \alpha
\end{equation}
where $\alpha$ denotes the connection form in the $\C^\times$-bundle $Q \to
\tY$.  Thus $u_i$ is a logarithm of $\hol_{Q^\eps_\tw}(\gamma_i)$. Changing the choice of section $t$
shifts each $u_i$ by an integer multiple of $2 \pi \I$.  Also let
\begin{equation} \eta_i = \sigma_\tw^\eps(\gamma_i) \in \{\pm 1\} \, .
\end{equation}
Equivalently, $\eta_i$ is the $\eps$-sign of the edge $E_i$.
Thus $(\eta_1, \eta_2)$ measure the isomorphism class of the spin structure $\sigma_\tw^\eps$ on the torus $\tY$,
and are determined by (but have less information than)
the six edge-orientations $\eps$.

\begin{prop}
We have the relation
\begin{equation} \label{eq:uv-constraint}
  \eta_1 \e^{-u_1} + \eta_2 \e^{u_2} = 1 \, .
\end{equation}
\end{prop}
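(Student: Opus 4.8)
The plan is to trace the relation \eqref{eq:uv-constraint} back to the holonomy computation of Proposition~\ref{thm:12}, correcting for the twist by $\tY^{\eps,4}$ and keeping track of the branches of the logarithm. Recall that the section $t$ of $Q^\eps_\tw$ simultaneously picks out, via the factorization $Q^\eps_\tw = Q \otimes_{\bmuu_2} \tY^{\eps,4}$, a section of the (singular) bundle $Q \to \tY_\getwoa$ up to the $\bmuu_2$-ambiguity, so that $\e^{u_i}$ is the holonomy of $Q^\eps_\tw$ around $\gamma_i$, which by \eqref{eq:qepstw-def} equals $\hol_Q(\gamma_i) \cdot \hol_{\tY^{\eps,4}}(\gamma_i)$. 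By Proposition~\ref{prop:loop-hol-formula} the second factor is the $\eps$-sign of $E_i$, which is exactly $\eta_i$. Hence $\e^{u_i} = \eta_i \, \ho_i^{\pm 1}$ where $\ho_i = \hol_L(\gamma_i)$ is the holonomy of the untwisted flat line bundle $L \to \tY_\getwoa$ appearing in Proposition~\ref{thm:12}; I would fix the orientation/sign conventions so that $\e^{u_1} = \eta_1 \ho_1^{-1}$ and $\e^{u_2} = \eta_2 \ho_2$ (the asymmetry in the exponents $-u_1$ versus $+u_2$ in \eqref{eq:uv-constraint} is forced by the cyclic labelling of the three pairs of opposite edges and the choice of which edge we singled out).

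The core of the argument is then purely the content of Proposition~\ref{thm:12}: there I would invoke the identity $\ho_{i+1} = 1 - \tfrac{1}{\ho_i}$, written for the index pair corresponding to $\gamma_1, \gamma_2$, i.e. $\ho_2 = 1 - \ho_1^{-1}$. Substituting the translations above, $\eta_1 \e^{-u_1} = \ho_1^{-1}$ and $\eta_2 \e^{u_2} = \ho_2$, the relation $\ho_2 = 1 - \ho_1^{-1}$ becomes precisely $\eta_1 \e^{-u_1} + \eta_2 \e^{u_2} = 1$. So the whole proposition is a two–line computation once the dictionary between $(u_1, u_2, \eta_1, \eta_2)$ and $(\ho_1, \ho_2)$ is set up correctly.

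The main obstacle — and the only place requiring genuine care rather than bookkeeping — is pinning down the signs and the branch of the logarithm: one must check that the $\eta_i$ really enter with the exponents shown (not, say, $\eta_1 \eta_2 \e^{-u_1}$ absorbed differently), and that the twist $\tY^{\eps,4}$ contributes a holonomy equal to $\eta_i$ on the nose rather than $\eta_i^{-1}$ (harmless here since $\eta_i \in \{\pm 1\}$) — but more importantly that it does not also introduce a branch shift obscuring the relation. Concretely, I would argue: $u_i = \oint_{\gamma_i} t^*\alpha$ is well-defined modulo $2\pi\I\Z$ given $t$ (since $\alpha$ is the connection form on $Q \to \tY$, flat, and $\gamma_i$ is a loop), and changing $t$ shifts $u_i$ by integer multiples of $2\pi\I$, leaving $\e^{u_i}$ unchanged and hence leaving \eqref{eq:uv-constraint} manifestly independent of this choice. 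The exponentiated statement therefore only needs the holonomy identity, and the proof reduces to: (i) identify $\e^{u_i}$ with $\eta_i$ times the appropriate power of $\ho_i$ using \eqref{eq:qepstw-def} and Proposition~\ref{prop:loop-hol-formula}; (ii) apply $\ho_2 = 1 - \ho_1^{-1}$ from Proposition~\ref{thm:12}; (iii) clear denominators. I expect step (i), the sign/exponent dictionary, to be where all the subtlety lives, and I would present it by directly evaluating holonomies on the explicit loops $\gamma_1, \gamma_2$ drawn in Figure~\ref{fig:tet-cover-cycles}, exactly as in the proof of Proposition~\ref{thm:12}.
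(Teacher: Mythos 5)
Your route is the paper's route: the entire proof is (i) $\e^{u_i}=\hol_{Q^\eps_\tw}(\gamma_i)=\eta_i z_i$, where $z_i=\hol_Q(\gamma_i)$, which follows from \eqref{eq:def-u-abstract}, the twist formula \eqref{eq:qepstw-def}, Proposition~\ref{prop:loop-hol-formula}, and the identification of $\eta_i=\sigma^\eps_\tw(\gamma_i)$ with the $\eps$-sign (Proposition~\ref{prop:ss-formula}); and (ii) the relation $z_1^{-1}+z_2=1$ from Proposition~\ref{thm:12}, combined using $\eta_i^2=1$. So the substance of your argument matches the paper.

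One correction, though, because as written your dictionary is internally inconsistent. The correct dictionary is symmetric in $i$: $\e^{u_i}=\eta_i z_i$ for both $i=1,2$, with no case-by-case sign-fixing needed and nothing to re-derive from Figure~\ref{fig:tet-cover-cycles}. Your stated convention $\e^{u_1}=\eta_1 \ho_1^{-1}$ contradicts the substitution $\eta_1\e^{-u_1}=\ho_1^{-1}$ that you use in the next sentence: from $\e^{u_1}=\eta_1\ho_1^{-1}$ one gets $\eta_1\e^{-u_1}=\ho_1$, and then Proposition~\ref{thm:12} would not produce \eqref{eq:uv-constraint}. The substitution you actually perform is the one coming from the symmetric dictionary $\e^{u_1}=\eta_1\ho_1$, so the executed computation is fine; but the asymmetry of exponents ($-u_1$ versus $+u_2$) is not ``forced'' by any choice in the dictionary — it is simply the shape of the relation $\ho_2=1-\ho_1^{-1}$, i.e.\ $\ho_1^{-1}+\ho_2=1$. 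Your remark that branch choices are irrelevant is correct, since the statement is exponentiated and shifts of $u_i$ by $2\pi\I\Z$ drop out.
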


\begin{proof}
Theorem~\ref{thm:12} shows that the holonomies $z_i = \hol_{Q} (\gamma_i)$ obey $z_1^{-1} + z_2 = 1$, and \eqref{eq:qepstw-def}
gives
$\e^{u_i} = \hol_{Q^\eps_\tw} (\gamma_i) = \eta_i z_i$;
combining these gives \eqref{eq:uv-constraint}.
\end{proof}

Now
let $\eta = (\eta_1,\eta_2)$,
\begin{equation}
 S^\eta = \{ (u_1,u_2): \eta_1 \e^{-u_1} + \eta_2 \e^{u_2} = 1 \} \subset \C^2,  
\end{equation}
and let
\begin{equation}
  \ell^{\eta}: S^\eta \to \C / 4 \pi^2 \Z
\end{equation}
be any function obeying
\begin{equation} \label{eq:L-de}
  \de \ell^\eta = \frac12 (u_2 \, \de u_1 - u_1 \, \de u_2).
\end{equation}
Each $\ell^\eta$ is a variant of the dilogarithm function;
see \eqref{eq:lpp}-\eqref{eq:lmm} for concrete examples.

\begin{prop} \label{prop:hyper-dilog}
\begin{equation}
  \frac{\chi^{\eps}_Y(\scrF_G(\tet ; \hat P))}{\tau_t} = c^\eps \exp \left[ \frac{1}{2 \pi \I} \ell^\eta(u_1,u_2) \right], \label{eq:hyper-dilog}
\end{equation}
for some $c^\eps \in \cL(Y,\eps)$ independent of $u_1$, $u_2$.
\end{prop}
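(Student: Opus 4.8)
\textbf{Proof proposal for Proposition~\ref{prop:hyper-dilog}.}

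The plan is to compute the ratio $\chi^\eps_Y(\scrF_G(\tet;\hat P))/\tau_t$ directly using Proposition~\ref{prop:cs-interpolating}, applied to the 3-manifold obtained from the tetrahedron $\tet$ by excising a ball around its barycenter $c$. First I would recall from Construction~\ref{thm:35} that $\tet$ carries a canonical SN-stratification with $\tet_{\threebstrat}=\{c\}$ and a subordinate spectral network, whose restriction to the link $Y_c\approx S^2$ of $c$ is the triangulated-sphere network $\cN^\scrT$ from Figure~\ref{fig:tet-unfolded}. Setting $M=\tet\setminus B$ for a small open ball $B$ around $c$ (as in Remark~\ref{thm:11}), $M$ is a spin $3$-manifold with $M_{-3}=\emptyset$ and $M_\twobstrat$ a union of four closed intervals running from the vertices of the inner copy of $Y_c$ out to the four branch points on $\partial\tet=Y$; moreover $M_\twobstrat$ has no closed components, so Proposition~\ref{prop:cs-interpolating} applies. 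It gives
\begin{equation*}
  \chi^\eps_Y(\scrF_G(\tet;\hat P)) = \scrS_{\C^\times}(\tM;Q^\eps_\tw;\sigma^\eps_\tw)\otimes\beta_M,
\end{equation*}
where $\tM$ is the branched double cover of $M$, here diffeomorphic to the once-punctured $2$-torus $\tY$ crossed with an interval (equivalently the complement of a ball in the solid-torus-like cover), and $\beta_M\in\cL(Y,\eps)$ is the $P$-independent matching element. So it suffices to evaluate $\scrS_{\C^\times}(\tM;Q^\eps_\tw;\sigma^\eps_\tw)/\tau_t$ and identify it with $\exp[\tfrac{1}{2\pi\I}\ell^\eta(u_1,u_2)]$; the factor $c^\eps$ will be $\beta_M$ together with any universal constant arising from the spin structure, manifestly independent of $u_1,u_2$.

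Next I would compute the $\C^\times$ Chern-Simons invariant $\scrS_{\C^\times}(\tM;Q^\eps_\tw;\sigma^\eps_\tw)$ relative to the trivialization $\tau_t$ coming from the section $t$. Since $t$ need not extend over all of $\tM$, I would instead extend $t$ over $\tM$ up to a controlled defect: the obstruction is concentrated near the cycles $\gamma_1,\gamma_2$ along which $t^*\alpha$ has periods $u_1,u_2$. The key computational input is the curvature formula~\eqref{eq:189}: changing the section by $h$ multiplies $\tau_t$ by $\epsilon_{t,h}\exp(\tfrac{1}{4\pi\I}\int t^*\Theta\wedge dh/h)$, and the sign $\epsilon_{t,h}$ is governed by the quadratic refinement $q_\sigma$ of the spin structure, hence by the $\eta_i=\sigma^\eps_\tw(\gamma_i)$. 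Carrying this through on the solid-torus-type $3$-manifold $\tM$ — whose boundary $2$-torus has the cycles $\gE0,\gE1,\gE2$ — the Chern-Simons form $\tfrac{1}{4\pi\I}\int_{\tM} t^*\alpha\wedge\frac{dh}{h}$ evaluates, for the harmonic representative of the interpolation, to $\tfrac12(u_2\,du_1-u_1\,du_2)$ in the variables $(u_1,u_2)$. That is exactly the differential equation~\eqref{eq:L-de} defining $\ell^\eta$; so the logarithmic derivative of $\scrS_{\C^\times}(\tM;\cdot)/\tau_t$ as a function on $S^\eta$ matches that of $\exp[\tfrac{1}{2\pi\I}\ell^\eta]$, whence the two differ by a constant $c^\eps\in\cL(Y,\eps)$, which is what the proposition claims. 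The sign structure from $\epsilon_{t,h}$ is precisely what forces the constraint to be the twisted equation $\eta_1 e^{-u_1}+\eta_2 e^{u_2}=1$ rather than the untwisted one, consistently with the preceding Proposition.

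I would then check compatibility of this constant $c^\eps$ with the claim of independence from $u_1,u_2$: $\beta_M$ depends only on the triangulation, edge-orientations, and the matching data of $M_\twobstrat$, none of which involve $P$ or the holonomies $u_i$; and the ambiguity in the choice of $\ell^\eta$ (a constant in $\C/4\pi^2\Z$) is absorbed into $c^\eps$. Finally, $\tau_t$ depends on $t$ and changing $t$ shifts $u_i$ by $2\pi\I\Z$, under which both sides of~\eqref{eq:hyper-dilog} transform compatibly (the shift of $\ell^\eta$ by $\tfrac12(u_2\cdot 2\pi\I n_1-u_1\cdot 2\pi\I n_2)+\cdots$ against the shift of $\tau_t$ via~\eqref{eq:189}), so the identity is well posed. \textbf{The main obstacle} I anticipate is the careful bookkeeping of the spin-structure sign $\epsilon_{t,h}$ and the precise identification of the $\C^\times$ Chern-Simons integral over the interpolating solid torus $\tM$ with $\tfrac12(u_2\,du_1-u_1\,du_2)$ — i.e., getting the normalization and the $\eta_i$-dependence exactly right so that the constraint surface $S^\eta$ and the twisted dilogarithm variant $\ell^\eta$ emerge with the correct signs; this is where~\cite{FN}, and in particular the development of the enhanced Rogers dilogarithm via $\C^\times$ Chern-Simons, does the real work and I would lean on it.
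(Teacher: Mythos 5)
There is a genuine gap, on two levels. First, your opening reduction is circular: applying Proposition~\ref{prop:cs-interpolating} to $M=\tet\setminus B$ does not eliminate the unknown, because $\partial M = Y \sqcup (-H)$ with $H=\partial B$ again a tetrahedrally triangulated sphere carrying (after radial projection) data isomorphic to the original. What that proposition gives is
$\chi^\eps_Y(\scrF_G(\tet;\hat P)) = \scrS_{\C^\times}(\tM;\cdot)\otimes \beta_{M}\otimes \chi^{\eps_H}_H(\scrF_G(B;\hat P|_B))$,
and the last factor is exactly the quantity you are trying to compute (this is how the same factor is treated in Proposition~\ref{prop:flip-inexplicit}: it is left unevaluated there and Proposition~\ref{prop:hyper-dilog} is invoked later to supply it). Second, the mechanism you propose for producing the dilogarithm cannot work as stated: $\tM$ is (up to the branch locus) the product $\tY\times[0,1]$ carrying a \emph{fixed} flat $\C^\times$-connection, so relative to covariantly constant boundary sections its spin Chern-Simons invariant is $1$; the $2$-dimensional section-change formula \eqref{eq:189} and a ``defect'' of $t$ do not convert this into the $1$-form $\tfrac12(u_2\,\de u_1-u_1\,\de u_2)$, which is a form on the \emph{parameter space} $S^\eta$, not a number attached to one flat connection. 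The $u_i$-dependence asserted in \eqref{eq:hyper-dilog} is a statement about how the invariant varies as the abelianization data varies over $S^\eta$, and your setup never introduces that variation.

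The missing ideas are precisely the ones the paper uses: (i) exhibit a family $(P_{u_1,u_2},Q_{u_1,u_2},\mu,\theta)$ over $S^\eta$ with $\e^{u_i}=\eta_i\hol_{Q_{u_1,u_2}}(\gamma_i)$, built via Construction~\ref{thm:14} from four sections of $\C\PP^1$ with prescribed cross-ratio (Remark~\ref{thm:51}), and identify the given data with a member of this family using the fact that stratified abelianization data is determined by the holonomies of $Q$; (ii) observe that each $P_{u_1,u_2}\to S^2$ is trivial and extends uniquely to $\tet$, so $\scrF_G(\tet;\hat P_{u_1,u_2})$ is a covariantly constant section of the Chern-Simons line bundle over $S^\eta$, and $\chi^\eps_Y$ is flat, so the entire $(u_1,u_2)$-dependence sits in the trivialization $\tau_{t_{u_1,u_2}}$; (iii) compute $\de\log f = \tfrac{1}{4\pi\I}(u_2\,\de u_1-u_1\,\de u_2)$ for $f=\chi^\eps_Y(\scrF_G(\tet;\hat P_{u_1,u_2}))/\tau_{t_{u_1,u_2}}$ as in \cite[\S4]{FN}, which matches \eqref{eq:L-de} and yields the constant $c^\eps$. (Note also that $c^\eps$ is deliberately left as an undetermined element of $\cL(Y,\eps)$ at this stage --- it is fixed only later via the pentagon relation --- so identifying it with $\beta_M$ is neither needed nor justified by your argument; and $\tM$ is the product of the \emph{closed} torus $\tY$ with an interval, not of a punctured torus.)
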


\begin{proof}  We first note that
$(P,Q,\mu,\theta)$ fits into a 
family of stratified abelianization data $(P_{u_1,u_2},Q_{u_1,u_2},\mu,\theta)$ over $(Y, \cN^\scrT)$,
parameterized by $(u_1,u_2) \in S^\eta$, 
with the property 
$\e^{u_i} = \eta_i \hol_{Q_{u_1,u_2}} (\gamma_i)$. To construct this
family we can use Construction \ref{thm:14}, choosing the trivial flat bundle $P_{u_1,u_2} \to S^2$, and taking the 4 elements $s_i \in \C\PP^1$ at the 4 vertices to have cross-ratio $\eta_1 \e^{u_1}$; then the calculation in Remark \ref{thm:51} shows that the holonomies of $Q_{u_1,u_2}$ will be as desired. To show that our given $(P,Q,\mu,\theta)$ is indeed isomorphic to a member of this family, we use the fact that
stratified abelianization data is determined up to isomorphism
by the holonomies of $Q$ (for this calculation see e.g. \cite{HN}).

Because $P_{u_1,u_2} \to S^2$ is trivial and its extension $\hat P_{u_1,u_2} \to \tet$ is unique,
the elements $\scrF_G(\tet;\hat P_{u_1,u_2})$ sweep out a covariantly constant section
of the line bundle with fibers $\scrF_G(Y;P_{u_1,u_2})$ over $S^\eta$.
Using the compatibility of $\chi^\eps_Y$ with the connection in
this bundle, it follows that 
$\chi_Y^{\eps}(\scrF_G(\tet;\hat P_{u_1,u_2}))$ is a covariantly constant
section of the line bundle with fibers
$\scrS_{\C^\times}(\tY ; Q_{\tw,u_1,u_2}^\eps ; \sigma^\eps_\tw) \otimes \cL(Y,\eps)$ over $S^\eta$.
Moreover the section $t$ of the given $Q^\eps_\tw$ deforms to 
a section $t_{u_1,u_2}$ of $Q^\eps_{\tw,u_1,u_2}$, unique up to homotopy.
The computations in Section 4 of \cite{FN} then show that 
the function
\begin{equation}
f(u_1,u_2) = \frac{\chi^\eps_Y(\scrF_G(\tet;\hat P_{u_1,u_2}))}{\tau_{t_{u_1,u_2}}} 
\end{equation}
on $S^\eta$
obeys\footnote{In \cite{FN} we discussed only the case $\eta=(+1,+1)$, but this
computation is independent of $\eta$. Also, beware
that $u_1$ in this paper is $-u_1$ in \cite{FN}, which leads
to a sign flip in comparing.}
\begin{equation}
 \de \log f = \frac{1}{4 \pi \I}(u_2 \de u_1 - u_1 \de u_2) \, .
\end{equation}
This proves \eqref{eq:hyper-dilog}.
\end{proof}

\subsection{Flipping an edge}\label{sec:8.2}

Suppose given a boundary-reduced flat $G$-bundle $P \to Y$.
Suppose $\scrT_0$, $\scrT_1$ are two semi-ideal triangulations of $Y$, which differ
by flipping an edge $E \in \edges(\scrT_0)$ to an edge $E' \in \edges(\scrT_1)$.
Also suppose given a choice of sections of $G/P$ over
the interior vertices of $\scrT$, such that
the genericity Assumption \ref{thm:74} holds
for both $\scrT_0$ and $\scrT_1$.
Then by Construction
\ref{thm:14} we obtain boundary-reduced stratified abelianization data
$(P,Q_0,\dots)$ and $(P,Q_1,\dots)$ over  
$(Y,\cN^{\scrT_0})$ and $(Y,\cN^{\scrT_1})$ respectively.

Let $\eps_0$, $\eps_1$
be systems of edge-orientations on $\scrT_0$ and $\scrT_1$, which agree
on all common edges, i.e. all edges except for $E$ and $E'$.
We would like to compare $\chi_Y^{\scrT_0,\eps_0}$ and $\chi_Y^{\scrT_1,\eps_1}$.
Their ratio is an isomorphism
\begin{equation}
 \chi^{\scrT_1,\eps_1}_Y \circ (\chi^{\scrT_0,\eps_0})^{-1}_Y: \scrS_{\C^\times}(\tY_0 ; Q^{\eps_0}_{0,\tw}; \sigma^{\eps_0}_\tw) \otimes \cL(Y,\eps_0) \to \scrS_{\C^\times}(\tY_1 ; Q^{\eps_1}_{1,\tw} ; \sigma^{\eps_1}_\tw) \otimes \cL(Y,\eps_1).
\end{equation}

To describe this map we consider the 3-manifold 
\begin{equation}
M = Y \times [0,1] \, .
\end{equation}
Let $p: M \to Y$ denote the projection, and let $\hat P = p^* P$.
On $M$ we construct a 
SN-stratification, 
spectral network and stratified abelianization, as follows.

Let $Y_\rest$ be the union of all the triangles in $Y$ except
for the two abutting $E$. Then we have a decomposition
$Y = Y_{\rest} \cup \cQ_E$. 
The stratified abelianizations $(P,Q_0,\dots) \vert_{Y_\rest}$ and $(P,Q_1,\dots) \vert_{Y_\rest}$
are canonically isomorphic, 
and thus extend to boundary-reduced stratified abelianization data
over $(Y_{\rest} \times [0,1], \cN_\rest \times [0,1])$.
Over $Y_\rest \times [0,1]$ we also have a $\Z$-bundle
$p^* \tY^{\eps,\infty} \vert_{Y_\rest}$.

Next we consider $\cQ_E \times [0,1]$. 
Collapsing the vertical intervals in 
$\partial \cQ_E \times [0,1]$ gives a surjective
map $\partial \phi: \partial (\cQ_E \times [0,1]) \to \partial \tet$,
where $\tet$ is a tetrahedron, and this map extends to
\begin{equation}
\phi: \cQ_E \times [0,1] \to \tet
\end{equation}
which is a homeomorphism on the interior.
On $\tet$ we have a standard SN-stratification and spectral network 
as described in Construction \ref{thm:35}; pulling this back by $\phi$ gives a 
SN-stratification and spectral network on $\cQ_E \times [0,1]$, which glue to the ones we already have on $Y_\rest \times [0,1]$.
Using Construction \ref{thm:36} we obtain
stratified abelianization data over $\tet_{\geb}$, which likewise
pulls back to $\cQ_E \times [0,1]$ and glues to the stratified
abelianization data we already have on $Y_\rest \times [0,1]$.

Thus altogether we have:
\begin{itemize} 
\item A SN-stratification and 
spectral network $\cN$ over $M$, with isomorphisms
$\cN \vert_{Y \times \{0\}} \simeq \cN^{\scrT_0}$, $\cN \vert_{Y \times \{1\}} \simeq \cN^{\scrT_1}$,
\item 
 Stratified abelianization data 
$(\hat P, \hat Q, \hat\mu, \hat\theta)$ over $(M,\cN)$, 
which restricts to $(P,Q_0,\mu_0,\theta_0)$, $(P,Q_1,\mu_1,\theta_1)$ 
over $Y_0$, $Y_1$ respectively.
\item 
A $\Z$-bundle $\tM^{\infty} \to M_{\gea}$, 
which restricts to $\tY^{\eps_0,\infty}$, $\tY^{\eps_1,\infty}$ over $Y_0$, $Y_1$.
\end{itemize}
We need to extend our $\bmuu_2$-twisted objects from $Y_0$ and $Y_1$
to $M$: define
$\tM^{4} = \tM^{\infty} / 4\Z$,
and twist $\hat Q \to M_\gea$ to 
$\hat Q_\tw = \hat Q \otimes_{\bmuu_2} \tM^4$ and $\pi^* p^* \sigma$ to $\sigma_\tw = \pi^* p^* \sigma \otimes_{\bmuu_2} \tM^4$.

The stratum 
$M_{\threebstrat}$ consists of a single point $p$, the barycenter of the tetrahedron
$\tet$. Let $B_p$ be a small ball around $p$. 
By radial projection centered at $p$, 
the sphere $H = \partial B_p$ 
acquires a 
triangulation $\scrT_H$, whose four faces correspond naturally to the
two faces of $\cQ_E$ and two of $\cQ_{E'}$.
$\scrT_H$ comes with edge-orientations $\eps_H$,
induced by $\eps_0$ and $\eps_1$ (using the fact that $\eps_0$
and $\eps_1$ agree on the common edges).
Moreover, the restriction $\cN \vert_H$ is the 
spectral network $\cN^{\scrT_H}$, and
$\tH^{\eps_H,\infty}$ is the restriction of
$\tM^\infty$ to $H$.


Now we can describe the effect of a flip on the abelianization
maps $\chi^{\scrT,\eps}_Y$:

\begin{prop} \label{prop:flip-inexplicit}
The map
\begin{equation}
  \chi^{\scrT_1,\eps_1}_Y \circ (\chi_Y^{\scrT_0,\eps_0})^{-1}: \scrS_{\C^\times}(\tY ; Q^{\eps_0}_{0,\tw} ; \sigma^{\eps_0}_\tw) \otimes \cL(Y,\eps_0) \to \scrS_{\C^\times}(\tY; Q^{\eps_1}_{1,\tw}; \sigma^{\eps_1}_\tw) \otimes \cL(Y,\eps_1)
\end{equation}
is the product
\begin{equation}
\scrS_{\C^\times}(M \setminus B_p ; \hat{Q}_\tw \vert_{M \setminus B_p} ; \sigma_\tw) \otimes \chi_H^{\eps_H}(\scrF_G(B_p ; \hat P \vert_{B_p})) \otimes \beta_{M \setminus B_p} \, .
\end{equation}
\end{prop}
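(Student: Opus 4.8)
The plan is to deduce this from the gluing properties of the abelianization map $\chi$ together with Proposition~\ref{prop:cs-interpolating}, viewing the flip as a special case of the ``interpolating $3$-manifold'' setup of \S\ref{sec:abelianization-cs-interpolating-threemanifolds}. First I would observe that $M = Y\times[0,1]$, equipped with the SN-stratification, spectral network, stratified abelianization data $(\hat P,\hat Q,\hat\mu,\hat\theta)$, and $\Z$-bundle $\tM^\infty$ constructed just above the statement, is precisely a manifold of the type considered in Proposition~\ref{prop:cs-interpolating}: its stratum $M_{\threebstrat}$ is a single point $p$, so after excising a small ball $B_p$ the manifold $M\setminus B_p$ has $(M\setminus B_p)_{\threebstrat}=\emptyset$, and $(M\setminus B_p)_{\twobstrat}$ has no closed components (its components are intervals running from the four branch points on $\partial B_p$ out to the branch points on $Y_0$ and $Y_1$). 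The boundary of $M\setminus B_p$ is $Y_0\amalg (-Y_1)\amalg H$, where $Y_i$ carries the triangulation $\scrT_i$ and $H=\partial B_p$ carries the triangulation $\scrT_H$ with its induced spectral network $\cN^{\scrT_H}$ and edge-orientations $\eps_H$.

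Next I would apply the gluing description recorded after Theorem~\ref{thm:strat-abelianization}: the abelianization map $\chi$ behaves \emph{as if} it came from an isomorphism of $3$-dimensional topological field theories on a bordism category of manifolds equipped with spectral network, stratified abelianization data, and a $\Z$-lift of the double cover. Concretely this means that applying $\scrF_G$ and $\scrS_{\C^\times}$ to $M\setminus B_p$ regarded as a bordism $Y_0\amalg H \to Y_1$ (or, after moving everything to one side, as an element attached to $Y_0\amalg(-Y_1)\amalg H$), and then using the abelianization isomorphisms $\chi$ on each boundary piece, produces a commuting diagram. The element $\scrF_G(M\setminus B_p;\hat P\vert_{M\setminus B_p})$ is carried to the element $\scrS_{\C^\times}(M\setminus B_p;\hat Q_\tw\vert_{M\setminus B_p};\sigma_\tw)$ tensored with a difference-line contribution. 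By exactly the argument of Proposition~\ref{prop:cs-interpolating}---decompose $M\setminus B_p$ into a tubular neighborhood of its $\twobstrat$ stratum and the complement, identify each component of the neighborhood with a mapping cylinder $I_f$, and recognize the resulting map on difference lines as $f_*$ via Proposition~\ref{prop:triangle-iso}---the difference-line contribution on the boundary face $Y_0\amalg(-Y_1)$ is the canonical element $\beta_{M\setminus B_p}\in\cL(Y,\eps_0)\otimes\cL(Y,\eps_1)^*$, while on the face $H$ we are left with $\chi^{\eps_H}_H(\scrF_G(B_p;\hat P\vert_{B_p}))$, since across $\partial B_p$ the cut separates the ``interpolating'' portion from the ball $B_p$ which still contains the barycenter $\tet_{\threebstrat}$.

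Assembling: the composite $\chi^{\scrT_1,\eps_1}_Y\circ(\chi^{\scrT_0,\eps_0}_Y)^{-1}$ is, by definition of $\chi^{\scrT_i,\eps_i}_Y$ (Construction~\ref{constr:abelianization-surface}) and functoriality of the field-theoretic gluing, the image of the pair $\bigl(\scrF_G(M\setminus B_p;\hat P\vert_{M\setminus B_p}),\ \scrF_G(B_p;\hat P\vert_{B_p})\bigr)$ under the abelianization maps on the three boundary components, glued along $H$. This is exactly the product $\scrS_{\C^\times}(M\setminus B_p;\hat Q_\tw\vert_{M\setminus B_p};\sigma_\tw)\otimes\chi^{\eps_H}_H(\scrF_G(B_p;\hat P\vert_{B_p}))\otimes\beta_{M\setminus B_p}$, which is the assertion. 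I expect the main obstacle to be bookkeeping: one must carefully track which boundary pieces of $M\setminus B_p$ carry $\scrF_G$-data versus $\scrS_{\C^\times}$-data, how the trivializations of the boundary $\VV$-lines (using that the abelianization data is boundary-reduced over $\partial Y$, and that $\hat Q_\tw$ has trivial holonomy around the circles $\partial B_p$ link the branch points of $H$—here one uses the cancellation computed in Construction~\ref{constr:Z-bundle-delta-E} and Proposition~\ref{prop:loop-hol-formula}) make the various $\VV$-lines canonically trivial so that the ``equation associated to dimension $3$'' makes sense, and how the $\beta_{M\setminus B_p}$ factor lands in $\cL(Y,\eps_0)\otimes\cL(Y,\eps_1)^*$ rather than in $\cL(Y,\eps)$ for a single $\eps$. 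Once the diagrammatics are set up as in the proof of Proposition~\ref{prop:cs-interpolating}, no genuinely new computation is required.
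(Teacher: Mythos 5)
This is essentially the paper's own argument: apply Proposition~\ref{prop:cs-interpolating} to $M \setminus B_p$ with its three-component boundary $Y_0 \cup -Y_1 \cup -H$, then glue the ball back in by tensoring with $\chi_H^{\eps_H}(\scrF_G(B_p;\hat P\vert_{B_p}))$. The one step you leave implicit in the final assembly is that the glued object $\scrF_G(M;\hat P)$ is the \emph{identity} map on $\scrF_G(Y;P)$ (because $M = Y\times[0,1]$ and $\hat P = p^*P$), which is precisely what converts the glued equation into the asserted formula for $\chi^{\scrT_1,\eps_1}_Y\circ(\chi^{\scrT_0,\eps_0}_Y)^{-1}$ rather than a formula with an extra morphism inserted in the middle.
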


\begin{proof}
We apply Proposition~\ref{prop:cs-interpolating} to
the 3-manifold $M \setminus B_p$,
noting that what was called $Y$ there is here the disconnected
boundary $\partial(M \setminus B_p) = Y_0 \cup -Y_1 \cup -H$, which carries a
semi-ideal triangulation $\scrT_0 \cup \scrT_1 \cup \scrT_H$
and edge-orientations $\eps_0 \cup \eps_1 \cup \eps_H$.

\insfig{flip-3manifold}{0.65}{The 3-manifold $M = Y \times [0,1]$ 
which we use to study a flip of the semi-ideal 
triangulation on $Y$. The
branch locus $M_{\twobstrat}$ is shown in orange. In this example
each triangulation has five triangles, and thus there are five points
of $M_{\twobstrat}$ on each of $Y_0$ and $Y_1$.}

This gives the formula
\begin{equation}
\chi_{Y_0 \cup -Y_1 \cup -H}^{\scrT_0 \cup \scrT_1 \cup \scrT_H,\eps_0 \cup \eps_1 \cup \eps_H}(\scrF_G(M \setminus B_p ; \hat P \vert_{M \setminus B_p})) = \scrS_{\C^\times}(M \setminus B_p ; \hat{Q}_\tw \vert_{M \setminus B_p} ; \sigma_\tw) \otimes \beta_{M \setminus B_p} \, .
\end{equation}
Tensoring with $\chi_H^{\eps_H}(\scrF_G(B_p ; \hat P \vert_{B_p}))$ then gives
\begin{equation}
\chi_{Y_0 \cup -Y_1}^{\scrT_0 \cup \scrT_1,\eps_0 \cup \eps_1}(\scrF_G(M ; \hat P)) = \scrS_{\C^\times}(M \setminus B_p ; \hat{Q}_\tw \vert_{M \setminus B_p} ; \sigma_\tw) \otimes \beta_{M \setminus B_p} \otimes \chi_H^{\eps_H}(\scrF_G(B_p ; P \vert_{B_p})) \, .
\end{equation}
But since $M = Y \times [0,1]$ and $\hat P = p^* P$, 
$\scrF_G(M ; \hat P)$ is the identity map
on $\scrF_G(Y ; P)$; the desired result follows by rearranging
factors.
\end{proof}

Using the identification between the $4$ 
triangles in $\cQ_E$, $\cQ_{E'}$
and the $4$ triangles in $H$, we get from
Proposition~\ref{prop:triangle-iso} an element
$\beta: \cL(\cQ_E, \eps_0) \otimes \cL(H,\eps_H) \to \cL(\cQ_{E'}, \eps_1)$.
Contracting that with the normalization 
constant $c^{\eps_H} \in \cL(H,\eps_H)$ defined 
in Proposition~\ref{prop:hyper-dilog}, we obtain an isomorphism
\begin{equation} \label{eq:def-kappa}
\kappa_E^{\eps_0,\eps_1} = \beta(c^{\eps_H}): \cL(\cQ_E,\eps_0) \to \cL(\cQ_{E'},\eps_1)\,.
\end{equation}

\subsection{Gluing the Chern-Simons line}

Let $Y$ be a compact oriented surface with boundary.
Suppose $P \to Y$ is a flat boundary-reduced $G$-bundle.
In this section we use abelianization to give a description of the line
$\scrF_G(Y;P)$.

\begin{constr} \label{constr:triv-from-data} We consider tuples $D = (\scrT,\eps,s,x)$, where:
\begin{itemize}
\item $\scrT$ is a semi-ideal triangulation of $Y$,
\item $\eps = (\eps_E)_{E \in \edges(\scrT)}$ is a system of edge-orientations for $\scrT$,
\item $s = (s_v)_{v \in \vertices(\scrT)}$ 
is a flat section of $P/U = P \times_G (\C^2 \setminus \{0\})$ over each $v$, such that
if $v$ is an ideal vertex then the projection of $s_v$ to $P/B$ agrees with the boundary reduction,
\item $x = (x_E)_{E \in \edges(\scrT)}$ is a collection of complex numbers, 
where 
\begin{equation}
\exp(x_E) = \eps_E(v,v') s_{v'} \wedge s_{v}  
\end{equation}
if $E$ is an edge with vertices $v, v'$. Here $s_v$ means the continuation of $s$ from
$v$ by parallel transport along the edge $E$, similarly $s_{v'}$, 
and the wedge product is evaluated at any point along $E$; here and below, we use the fact that $P$ is 
an $\SLC$-bundle and thus there is a canonical volume form in $P/U$.
  \end{itemize}
For each such tuple there is a canonical isomorphism
\begin{equation} \label{eq:psid}
\psi_D: \scrF_G(Y;P) \to \cL(Y,\eps).  
\end{equation}
\end{constr}

\begin{proof}

Fix $P \to Y$ and $D$ as above.
Construction \ref{thm:14} extends $P$ to stratified 
abelianization data $(P, Q, \mu, \theta)$
over $(Y,\cN^\scrT)$. Then Construction \ref{constr:abelianization-surface} gives an isomorphism
\begin{equation} \label{eq:iso-1}
  \chi^{\scrT,\eps}_Y: \scrF_G(Y;P) \to \scrS_{\C^\times}(\tY;Q^\eps_\tw;\sigma^\eps_\tw) \otimes \cL(Y,\eps).
\end{equation}

Using the data $(s,x)$ we determine a section $\tils_x$ of the line bundle $Q^\eps_\tw \to \tY$ (up to 
homotopy), as follows.
Recall that each $v \in \vertices(\scrT)$ 
has two preimages $(v,v)$ and $(v,\overline v)$
in $\tY$. Each $s_v$ determines an element $\tils(v,v)$ 
of the fiber of $Q^\eps_\tw$.
We also determine an element $\tils(v,\overline v)$ 
by the condition $\theta(\tils(v,v)) \wedge \theta(\tils(v,\overline v)) = 1$.
Now suppose $E \in \edges(\scrT)$ with vertices $v$, $v'$.
The parallel transport of $Q^\eps_\tw$ along preimages of 
$E$ takes 
\begin{equation}
\tils(v,v) \mapsto \exp(x_E) \tils(v',\overline v'), \quad \tils(v,\overline v) \mapsto \exp(-x_E) \tils(v',v') \, .
\end{equation}
The choice of a logarithm $x_E$ 
thus determines (up to homotopy)
an extension $\tils_x$ of $\tils$ over
the preimages of $E$.
Finally, by construction $\tils_x$ has zero winding
around the preimage of the boundary of 
each triangle, because the summands $x_E$ and $-x_E$
cancel over each of the three edges; 
thus $\tils_x$ can be extended 
to a section $\tils_x$ of $Q^\eps_\tw \to \tY$,
uniquely up to homotopy.
The section $\tils_x$ determines a trivialization
$\tau_{\tils_x} \in \scrS_{\C^\times}(\tY;Q^\eps_\tw;\sigma^\eps_\tw)$. Then we define
\begin{equation} \label{eq:def-psid}
  \psi_D = \frac{\chi_Y^{\scrT,\eps}}{\tau_{\tils_x}} \, .
\end{equation}

\end{proof}

We remark that the existence of the data $x_E$ above requires
$s_v \wedge s_{v'} \neq 0$, i.e. that
the genericity Assumption \ref{thm:74} is satisfied.

\begin{prop} \label{prop:line-transitions}
We have relations among the maps associated to data
$D = (\scrT, \eps, s, x)$ and $D' = (\scrT', \eps', s', x')$
as follows:

\begin{enumerate}
\item Suppose $D$ and $D'$ differ only by a change involving 
a single vertex $v$:
$s'_v = s_v \exp(t)$
and $x'_E = x_E + t$ for all edges $E$ incident on $v$. 
Then
\begin{equation}
  \psi_{D'} = \psi_D.
\end{equation}

\item Suppose $D$ and $D'$ differ only by a change involving
a single edge $E$: $\eps'_E = - \eps_E$ and
$x'_E = x_E + \pi \I$.
\insfig{quad-labeled}{0.4}{The quadrilateral $\cQ_E$.}
With labeling as in Figure~\ref{fig:quad-labeled} let
\begin{equation} \label{eq:u-def-single}
  u = x_{E_{30}} - x_{E_{02}} + x_{E_{21}} - x_{E_{13}} \, .
\end{equation}
Then
\begin{equation} \label{eq:psid-desired}
  \psi_{D'} = \exp \left[\frac14 u \right] (\rho^{\eps,\eps'}({\Delta_1}) \otimes \rho^{\eps,\eps'}({\Delta_2})) \, \psi_D.
\end{equation}

\item Suppose $D$ and $D'$ differ only in that
$\scrT'$ is obtained from $\scrT$ by flipping edge $E$
to obtain a new edge $E'$, with some orientation 
$\eps'_{E'}$ and a choice of logarithm $x_{E'}$.
\insfig{two-quads}{0.4}{The quadrilaterals $\cQ_E$ in $\scrT$ and $\cQ_{E'}$ in $\scrT'$, related by a flip.}
Labeling the vertices as in Figure~\ref{fig:two-quads}, let
\begin{gather} \label{eq:u-def}
  u_1 = x_{E_{30}} - x_{E_{02}} + x_{E_{21}} - x_{E_{13}} \, , \quad u_2 = x_{E_{32}} - x_{E_{21}} + x_{E_{10}} - x_{E_{03}} \, , \\
  \eta_1 = (-1)^{\frac12(\eps(3,0)+\eps(0,2)+\eps(2,1)+\eps(1,3))} \, , \quad \eta_2 = (-1)^{\frac12(\eps(3,2)+\eps(2,1)+\eps(1,0)+\eps(0,3))} \, . \label{eq:eta-def}
\end{gather}
Then
\begin{equation} \label{eq:psi-flip-relation}
  \psi_{D'} = \exp \left[ \frac{1}{2 \pi \I} \ell^{\eta}(u_1,u_2) \right] \kappa_E^{\eps,\eps'} \circ \psi_D \, .
\end{equation}

\end{enumerate}
\end{prop}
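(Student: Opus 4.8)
The three parts are proven by the same mechanism: each change of data $D \to D'$ amounts to a change of one of the ingredients feeding into $\psi_D = \chi_Y^{\scrT,\eps}/\tau_{\tils_x}$, and we compute the ratio $\psi_{D'}/\psi_D$ by analyzing how $\chi_Y^{\scrT,\eps}$ and $\tau_{\tils_x}$ separately transform. The key point to keep in mind is that $\chi_Y^{\scrT,\eps}$ depends (up to canonical isomorphism) only on $(Y,\scrT,\eps)$ and the stratified abelianization data produced by Construction~\ref{thm:14}, via the uniqueness results Proposition~\ref{prop:uniqueness-of-triangle-ab} and Proposition~\ref{prop:triangle-iso}; so all the $D$-dependence in $\psi_D$ beyond $(\scrT,\eps)$ is concentrated in $\tau_{\tils_x}$, which is built from $(s,x)$ via Construction~\ref{constr:triv-from-data}, plus --- when $\scrT$ or $\eps$ changes --- the comparison maps from \S\ref{sec:edge-reversal} and \S\ref{sec:8.2}.

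\textbf{Part (1).} Here $\scrT$ and $\eps$ are fixed, so $\chi_Y^{\scrT,\eps}$ is unchanged, and the claim is that $\tau_{\tils_{x'}} = \tau_{\tils_x}$ when we rescale $s_v$ by $\exp(t)$ and shift every incident $x_E$ by $t$. First I would observe that rescaling $s_v$ by $e^t$ rescales $\tils(v,v)$ by $e^t$, hence (by the normalization $\theta(\tils(v,v)) \wedge \theta(\tils(v,\overline v)) = 1$) rescales $\tils(v,\overline v)$ by $e^{-t}$. Tracking the parallel-transport formulas $\tils(v,v) \mapsto \exp(x_E)\tils(v',\overline{v}')$ and $\tils(v,\overline v) \mapsto \exp(-x_E)\tils(v',v')$, the compensating shift $x'_E = x_E + t$ on the edges incident to $v$ makes the new section $\tils_{x'}$ homotopic to the old $\tils_x$ --- the rescaling at the two preimages of $v$ is exactly absorbed by the shifted transition data. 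Since $\tau$ depends only on the homotopy class of the section, $\tau_{\tils_{x'}} = \tau_{\tils_x}$, giving $\psi_{D'} = \psi_D$. This part is essentially bookkeeping.

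\textbf{Part (2).} Now $\eps_E$ is reversed and $x_E \mapsto x_E + \pi\I$. The isomorphism $\chi_Y^{\scrT,\eps'} \circ (\chi_Y^{\scrT,\eps})^{-1}$ was computed in \S\ref{sec:edge-reversal}: by \eqref{eq:lambda-def} and Proposition~\ref{prop:lambda-factorization} it equals $\zeta(\tY;Q^\eps_\tw;\varpi^E) \otimes (\rho^{\eps,\eps'}(\Delta_1) \otimes \rho^{\eps,\eps'}(\Delta_2))^{-1}$. So I need to compute (a) how $\zeta(\tY;Q^\eps_\tw;\varpi^E)$ acts on the trivialization $\tau_{\tils_x}$, producing a new trivialization to be compared with $\tau_{\tils_{x'}}$ for the twisted bundle $Q^{\eps'}_\tw$, and (b) confirm the $\rho$-factor appears as stated. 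For (a) I would use the formula \eqref{eq:191}, i.e. ${}^{\textnormal{tt}}\ztil/\lambda = \beta$ with $\beta$ given by the 2-form $-\frac14 a\wedge\omega$; applied to the section $\tils_x$ and a map $p_E \colon \tY \to \cir$ representing $\varpi^E$ (constant on $\partial\tY$ except along $\widetilde E$), the Chern-Simons ratio is $\exp(-\frac14 \int_{\tY} \tils_x^*\alpha \wedge dp_E)$. The integral $\int_{\tY} \tils_x^*\alpha \wedge dp_E$ evaluates $\tils_x^*\alpha = \sum d x_E^{\mathrm{loc}}$ against the class Poincar\'e-dual to $\gamma_E$, which crosses the quadrilateral $\cQ_E$; reading off Figure~\ref{fig:quad-labeled} this picks out precisely the alternating sum $u = x_{E_{30}} - x_{E_{02}} + x_{E_{21}} - x_{E_{13}}$ (up to a sign I will need to check carefully). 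The shift $x_E \mapsto x_E + \pi\I$ is what makes $\tils_{x'}$ a genuine section of $Q^{\eps'}_\tw$ (it corrects the $\bmu2$-twist), so that after this shift the only surviving factor is $\exp(\frac14 u)$ together with the $\rho$-maps on the two triangles. The main obstacle in this part is getting all the signs and orientation conventions (the direction of $\gamma_E$, the sign in $\beta$, the sign in $\eps_E(v,v')$) to line up so that the exponent comes out as $+\frac14 u$ rather than $-\frac14 u$ or $\frac14 u + \pi\I(\cdots)$; I expect to need the explicit local picture of the gluing maps from the proof of Proposition~\ref{prop:loop-hol-formula}.

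\textbf{Part (3).} This is the deepest and uses the full flip machinery of \S\ref{sec:8.2}. The strategy: write $\psi_{D'}/\psi_D = \bigl(\chi_Y^{\scrT_1,\eps_1} \circ (\chi_Y^{\scrT_0,\eps_0})^{-1}\bigr)$ applied to $\tau_{\tils_x}$, divided by $\tau_{\tils_{x'}}$. Proposition~\ref{prop:flip-inexplicit} identifies $\chi_Y^{\scrT_1,\eps_1} \circ (\chi_Y^{\scrT_0,\eps_0})^{-1}$ with $\scrS_{\C^\times}(M\setminus B_p;\hat Q_\tw) \otimes \chi_H^{\eps_H}(\scrF_G(B_p;\hat P)) \otimes \beta_{M\setminus B_p}$, where $H = \partial B_p$ is the tetrahedrally-triangulated $2$-sphere around the flip vertex. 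The first factor, $\scrS_{\C^\times}$ on $M\setminus B_p$ applied to $\tau_{\tils_x}$, produces $\tau$ of the extension of $\tils_x$ across $M\setminus B_p$; since on $Y_\rest \times [0,1]$ the section is covariantly constant, this just carries $\tau_{\tils_x}$ to $\tau_{\tils_{x'}}$ up to the Chern-Simons integral over the region near the cylinder, and the $\beta_{M\setminus B_p}$ factor implements the triangle-matching bookkeeping, ultimately producing the $\kappa_E^{\eps,\eps'}$ map once contracted against $c^{\eps_H}$ as in \eqref{eq:def-kappa}. The nontrivial scalar comes from $\chi_H^{\eps_H}(\scrF_G(B_p;\hat P))$, which is exactly the object computed by Proposition~\ref{prop:hyper-dilog}: its ratio to $\tau_t$ is $c^{\eps_H}\exp[\frac{1}{2\pi\I}\ell^\eta(u_1,u_2)]$, where $u_1,u_2$ are the period integrals $\oint_{\gamma_i} t^*\alpha$ and $\eta_i = \sigma^\eps_\tw(\gamma_i)$. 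The remaining task is to match the abstract $u_i$ with the explicit alternating sums in \eqref{eq:u-def} and the abstract $\eta_i$ with \eqref{eq:eta-def}: for the former, I evaluate $\oint_{\gamma_i} t^*\alpha$ on the loop $\gamma_i$ crossing the quadrilateral, using that $t = \tils_x$ is built so that its logarithmic transition data along edge $E_{jk}$ is $\pm x_{E_{jk}}$ --- summing around the loop gives the alternating sum; for the latter, $\eta_i = \sigma^\eps_\tw(\gamma_i)$ is the $\eps$-sign of the relevant edge by Proposition~\ref{prop:ss-formula}, which unwinds (via Definition~\ref{def:quadrilateral-sign}) to $(-1)^{\frac12(\eps(\cdots)+\cdots)}$ as in \eqref{eq:eta-def}. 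I expect the hard part here to be a careful interchange of the order in which the pieces $M_\rmin, M_\rmout, B_p$ are glued and the resulting sign/trivialization bookkeeping (including confirming that no extra sign is introduced by the spin structure $\sigma_\tw$ extending across the flip), together with the footnote-level care about the sign convention relating $u_1$ here to $-u_1$ in \cite{FN}. Once those identifications are in hand, \eqref{eq:psi-flip-relation} follows by assembling the factors.
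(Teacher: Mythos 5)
Your proposal follows essentially the same route as the paper's proof: part (1) by the homotopy of sections, part (2) via \eqref{eq:lambda-def}, Proposition~\ref{prop:lambda-factorization}, the formula \eqref{eq:191} for $\zeta$, and \eqref{eq:varpi-holonomy} (with the sign resolved by $\oint_{\gamma_E}\tils_x^*\alpha=-u$), and part (3) via Proposition~\ref{prop:flip-inexplicit}, the vanishing of the $\C^\times$ Chern--Simons form on the flat bundle over $M\setminus B_p$, Proposition~\ref{prop:hyper-dilog} with $t=\tils_H$, and the definition \eqref{eq:def-kappa} of $\kappa_E^{\eps,\eps'}$. The sign and identification checks you flag as remaining are exactly the ones the paper carries out, so the argument is sound.
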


\begin{proof}
We treat the three relations in turn:
\begin{enumerate}
\item In this case the sections $\tils_x$ and $\tils'_{x'}$ are homotopic (the homotopy obtained by continuously varying from $0$ to $t$) and thus they induce the same trivialization of $\scrS_{\C^\times}(Q^\eps_\tw ; \sigma_\tw)$.

\item By~\eqref{eq:def-psid} and \eqref{eq:lambda-def} we have
$\psi_{D'} \cdot \tau_{\tils'_{x'}} = (\lambda^{\eps,\eps'}(Y) \otimes \zeta(\tY;Q^\eps_\tw;\varpi^E)) \circ \psi_D \cdot \tau_{\tils_{x}} $.
We use Proposition~\ref{prop:lambda-factorization} to expand the
$\lambda$ factor. For the $\zeta$ factor,
 we use \eqref{eq:191},
 which gives 
\begin{equation}
\zeta(\tY;Q^\eps_\tw;\varpi^E)(\tau_{\tils_x}) = \exp \left[ -\frac14 \int_{\tY} \tils_x^* \alpha \wedge \varpi^E \right] \tau_{\tils'_{x'}}
\end{equation}
where $\alpha$ denotes the connection form on $Q^\eps_\tw$.
By \eqref{eq:varpi-holonomy} the multiplicative factor here is
\begin{equation}
  \exp \left[ - \frac14 \oint_{\gamma_E} \tils_x^* \alpha \right]
\end{equation}
and comparing \eqref{eq:u-def-single} to the definition of $\gamma_E$, 
we have $\oint_{\gamma_E} \tils_x^* \alpha = - u$, so the factor becomes
\begin{equation}
  \exp \left[ \frac14 u \right]
\end{equation}
as desired.

\item 
We use the setup and notation of \S\ref{sec:8.2}, involving
the 3-manifold $M = Y \times [0,1]$.
Proposition~\ref{prop:flip-inexplicit} gives
$\chi^{\scrT',\eps'}_Y \circ (\chi_Y^{\scrT,\eps})^{-1}$
as a tensor product of three ingredients:
\begin{enumerate}
\item First we have $\scrS_{\C^\times}(M \setminus B_p ; \hat Q_\tw ; \sigma_\tw)$.
The data of $D$ and $D'$ in particular determine sections $\tils_x$ and $\tils'_{x'}$ of $\hat Q^\eps_\tw$ 
on the boundaries $Y_0$ and $Y_1$, as discussed in Construction \ref{constr:triv-from-data}. There is a section of $\hat Q^\eps_\tw \to M \setminus B_p$ which extends $\tils_x$ and $\tils'_{x'}$; let $\tils_H$ be
its restriction to $H$.
The $\C^\times$ Chern-Simons form vanishes for a flat bundle; 
it follows that
\begin{equation}
\scrS_{\C^\times}(M \setminus B_p ; \hat Q_\tw ; \sigma_\tw) = \tau^*_{\tils_x} \otimes \tau_{\tils'_{x'}} \otimes \tau^*_{\tils_H} \, .
\end{equation}

\item Next we have the factor 
$\chi^{\eps_H}(\scrF_G(B_p ; \hat P \vert_{B_p}))$.
Note that $u_1$ and $u_2$ defined in \eqref{eq:u-def}
agree with the $u_1$ and $u_2$ defined in
in \S\ref{sec:dilog}, if we take the section $t = \tils_H$.
Likewise $\eta_1$ and $\eta_2$ defined in \eqref{eq:eta-def}
agree with the $\eta_1$ and $\eta_2$ used there.
Then by \eqref{eq:hyper-dilog},
$$\chi^{\eps_H}(\scrF_G(B_p ; \hat P \vert_{B_p})) = c^{\eps_H} \exp \left[ \frac{1}{2 \pi \I} \ell^{\eta}(u_1,u_2) \right] \tau_{\tils_H} \, .$$

\item Finally we have the isomorphism $\beta_{M \setminus B_p}$.
\end{enumerate}
Tensoring these ingredients together, and using the definition
\eqref{eq:def-psid} of $\psi_D$ 
and the definition \eqref{eq:def-kappa}
of $\kappa_E^{\eps,\eps'}$,
we get the desired \eqref{eq:psi-flip-relation}.
\end{enumerate}
\end{proof}

\subsection{Explicit formulas}

In the last section we gave a description of the line $\scrF_G(Y;P)$ which is
canonical but somewhat
inexplicit: the transition maps described by Proposition~\ref{prop:line-transitions} involve
the maps $\rho^{\eps,\eps'}(\Delta)$
and $\kappa_E^{\eps,\eps'}$, for which we have not yet written
down concrete formulas. Roughly speaking, we have fully described
the $P$-dependence in $\scrF_G(Y;P)$, 
but left some constant phases undetermined.
In this section we rectify this omission, getting
concrete formulas in terms of actual numbers,
at the cost of making some arbitrary choices.

\subsubsection{Trivializing the difference lines}

Let $\Delta_s$ denote the standard triangle with its standard orientation, and vertices labeled 012 in cyclic order given
by the orientation.
\insfig{triangle}{0.4}{The standard triangle.}
We choose a nonzero element
\begin{equation} \label{eq:t-eps}
  \tau_\eps \in \cL(\Delta_s,\eps)
\end{equation}
for each of the eight possible $\eps$.
We may choose $\tau_\eps$ so that 
if $f: \Delta_s \to \Delta_s$ is a rotation which acts
on the vertices by $012 \to 120$,\footnote{The factor appearing in \eqref{eq:tau-rot-equiv} is dictated by
Proposition~\ref{prop:L-rotation}.}
\begin{equation} \label{eq:tau-rot-equiv}
f_*(\tau_\eps) = \exp\left(\frac{2 \pi \I}{3} \right) {\tau_{f_* \eps}},
\end{equation}
and also 
if $r: \Delta_s \to \Delta_s$ is the reflection
which acts on the vertices by $012 \to 102$,
\begin{equation} \label{eq:reflection-condition}
r_*(\tau_\eps) = \tau_{r_* \eps}^{-1} \, .
\end{equation}
From now on we assume that we make such a choice.

Now suppose $(Y,\scrT,\eps)$ is a general triangulated manifold with 
edge-orientations. We want to trivialize the line $\cL(Y,\eps)$; for this we need one more datum:
\begin{itemize}
\item A marked edge $e_\Delta$ on each $\Delta \in \faces(\scrT)$.
\end{itemize}
The edge $e_\Delta$ determines 
an orientation-preserving identification
$\phi_{e_\Delta}: \Delta_s \to \Delta$,
by the condition that $\phi_{e_\Delta}$ takes the edge $(0,1)$ to $e_\Delta$.
Then, given the extended data $\hat{D} = (\scrT, \eps, s, x, e)$ we obtain
a trivialization
\begin{equation} \label{eq:tauhatD}
  \tau_{\hat{D}} = \frac{\psi_D}{\bigotimes_{\Delta \in \faces(\scrT)} (\phi_{e_\Delta})_* \tau_\eps} \in \scrF_G(Y;P)^* \, .
\end{equation}
What remains is to describe the relations between the elements $\tau_{\hat{D}} \in \scrF_G(Y;P)^*$
associated to different $\hat{D}$.

\subsubsection{Computing $\rho^{\eps,\eps'}(\Delta_s)$}

Recall the edge-reversal map $\rho$ from \S\ref{sec:edge-reversal-triangle}.
Fix arbitrarily a nonzero element $\xi$ in the line $\Xi(E)$ considered there,
and then define constants $b(\eps,\eps') \in \C^\times$ by
\begin{equation} \label{eq:c-const-def}
\rho^{\eps,\eps'}(\Delta_s) (\tau_{\eps}) = b(\eps,\eps') \, \tau_{\eps'} \otimes \xi^{\pm 1} \, .
\end{equation}
Our aim now is to compute these constants.
We write each $\eps$ as a 3-tuple 
$(\eps(0,1), \eps(1,2), \eps(2,0))$.

\begin{prop} \label{prop:lambda-explicit}
We have 
\begin{equation} \label{eq:c-rotation-invariance}
  b(f_* \eps, f_* \eps') = b(\eps, \eps') \, ,
\end{equation}
and for some $p, q \in \C^\times$ we have
\begin{center}
\begin{tabular}{|c|c|c|} \hline
 $(a_1, a_2)$& $b((a_1,a_2,+1),(a_1,a_2,-1))$ & $b((a_1,a_2,-1),(a_1,a_2,+1))$ \\ \hline
$(-1,-1)$ & $p$ & $-p^{-1}$ \\
$(-1,+1)$ & $\omega^{-1} q$ & $\omega q^{-1}$ \\
$(+1,-1)$ & $\omega q$ & $\omega^{-1} q^{-1}$ \\
$(+1,+1)$ & $-p$ & $p^{-1}$ \\ \hline
\end{tabular}
\end{center}
where $\omega = \exp (\frac{2 \pi \I}{8})$.
\end{prop}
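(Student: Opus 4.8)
The plan is to compute the constants $b(\eps,\eps')$ by tracking all the maps entering the definition \eqref{eq:c-const-def}. Recall from \eqref{eq:def-xi} that $\Xi(E)$ and the reversal map $\rho^{\eps,\eps'}(\Delta_s)$ are built by whiskering $\zeta(E,\varpi^E,\eps)$ and the $\chi$'s, so up to the $\tau_\eps$-normalizations the only genuinely analytic input is the formula \eqref{eq:191} for the $\zeta$-isomorphism (which in turn is governed by the differential form $-\frac14 a\wedge\omega$ of \eqref{eq:172}) together with the spin-structure-dependence \eqref{eq:189}, \eqref{eq:s65}. First I would establish the rotation-covariance \eqref{eq:c-rotation-invariance}: since every ingredient in the construction of $\rho$ is canonical and the $\tau_\eps$ were chosen $\bmuu_3$-equivariantly up to the explicit phase $\exp(2\pi\I/3)$ in \eqref{eq:tau-rot-equiv}, conjugating by a rotation $f$ sends $b(\eps,\eps')$ to $\exp(2\pi\I/3)\exp(-2\pi\I/3)\,b(f_*\eps,f_*\eps') = b(f_*\eps,f_*\eps')$ --- the phases from numerator and denominator cancel because $\rho$ reverses a single edge and hence intertwines one factor of $\tau$ with one of $\tau'$. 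This reduces the eight-by-(reversal) table to the four rows displayed, each representing one $\bmuu_3$-orbit of (edge, orientation-pattern) pairs.

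Next I would pin down the four entries in the first column. The key structural facts are: (i) $\rho^{\eps,\eps'}(\Delta_s)\circ\rho^{\eps',\eps}(\Delta_s)$ acts by the $\eps$-sign of the reversed edge --- this is Proposition~\ref{prop:edge-reversal-double} specialized to a single triangle, or more directly follows as in its proof from $\zeta\circ\zeta$ being multiplication by ${\sigma^\eps_\tw}(\gamma_E)$ via \eqref{eq:191}, \eqref{eq:189}; hence $b(\eps,\eps')\,b(\eps',\eps) = \pm1$ with the sign being exactly the $\eps$-sign, which for a triangle (the ``quadrilateral'' degenerating) is $-1$ when the two non-reversed edges have opposite circulation and $+1$ when they agree. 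Reading this off the four rows gives the relations $b((a_1,a_2,+1),(a_1,a_2,-1))\cdot b((a_1,a_2,-1),(a_1,a_2,+1))$ equals $-1$ for $(a_1,a_2)=(\pm1,\pm1)$ and $+1$ for $(a_1,a_2)=(\mp1,\pm1)$; that forces the product structure $p\cdot(-p^{-1})=-1$, $(\omega^{-1}q)(\omega q^{-1})=1$, etc., with $\omega=\exp(2\pi\I/8)$ entering precisely because the analogue of Lemma~\ref{lem:zeta-commutation} --- the failure of two $\zeta$'s to commute --- produces eighth-roots of unity, and here one of the ``edges'' being reversed is a boundary edge so only half the phase $\exp(-\frac{2\pi\I}{8})$ appears. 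Then I would define $p$ and $q$ to be the free entries in the two $(a_1,a_2)$-diagonal and $(a_1,a_2)$-antidiagonal rows respectively, and verify using \eqref{eq:reflection-condition} that the reflection $r$ (which swaps $a_1\leftrightarrow a_2$ and conjugates $\rho$ into $\rho^{-1}$) forces the $(-1,+1)$ and $(+1,-1)$ rows to be related by $q\mapsto q$ with a compensating $\omega\mapsto\omega^{-1}$, exactly as displayed, while fixing the $(\pm1,\pm1)$ rows and flipping the sign of $p$ between $(-1,-1)$ and $(+1,+1)$ (this sign is the spin-flip/Arf contribution of \eqref{eq:174} distinguishing the two ``bounding-type'' patterns).

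The main obstacle I anticipate is the bookkeeping of the half-integer phases: tracking how the boundary edge $\tilde E$ of $\Delta_s$ contributes a factor $\exp(\pm\frac{2\pi\I}{8})$ rather than $\exp(\pm\frac{2\pi\I}{4})$ requires carefully choosing the maps $p_i\colon\tDelta\to S^1$ representing $\varpi^{E}$ with prescribed boundary winding and integrating $-\frac14\int_{\tDelta}\tils_x^*\alpha\wedge\,dp + \text{(correction)}$ over a single triangle as in the proof of Lemma~\ref{lem:zeta-commutation}, keeping the $\pi\I\,dp_i\wedge dp_j$ self-interaction term. I would handle this by fixing once and for all the reference element $\xi\in\Xi(E)$ in \eqref{eq:c-const-def} via an explicit covariantly-constant section on the segment $E$, so that the $\chi(E,\eps)$, $\chi(E,\eps')$ factors drop out of the ratio and only the explicit $\zeta$-exponential \eqref{eq:191} survives; then $b(\eps,\eps')$ is literally $\exp$ of a computable integral over the triangle plus the sign \eqref{eq:s65}, and the table follows by evaluating that integral for each of the four boundary-winding patterns. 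The $\bmuu_3$-covariance \eqref{eq:c-rotation-invariance} then serves as a consistency check on these four computations.
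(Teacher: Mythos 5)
Your rotation-invariance step coincides with the paper's (apply $f_*$ to \eqref{eq:c-const-def} and cancel the two factors of $\exp(2\pi\I/3)$ coming from \eqref{eq:tau-rot-equiv}), but your derivation of the table has a genuine gap. The sign input you rely on --- a ``single-triangle'' version of Proposition~\ref{prop:edge-reversal-double}, asserting that $\rho^{\eps',\eps}(\Delta_s)\circ\rho^{\eps,\eps'}(\Delta_s)$ acts by the $\eps$-sign of the reversed edge --- is not available: that proposition and its proof live on a surface where $E$ is an interior edge, and they use the $\Z$-bundle $\varpi^E$ built from the quadrilateral $\cQ_E$, the closed cycle $\gamma_E$ on $\tY$, and $\sigma^\eps_\tw(\gamma_E)$ via Proposition~\ref{prop:ss-formula}. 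For a lone triangle the reversed edge lies on the boundary, there is no $\cQ_E$ and no closed $\gamma_E$, so neither the statement nor the $\zeta\circ\zeta$ argument via \eqref{eq:191}, \eqref{eq:189}, \eqref{eq:s65} applies as written. (Your stated sign rule is also inverted relative to the table: the product $b(\eps,\eps')\,b(\eps',\eps)$ equals $-1$ precisely when the two unreversed edges have the \emph{same} circulation.) The paper instead gets the signs from the honest two-triangle statement, Proposition~\ref{prop:edge-reversal-double} applied to a quadrilateral, which yields the four-factor identity \eqref{eq:c-constraint-2}, and then splits it using the reflection identities \eqref{eq:c-constraint-3}--\eqref{eq:c-constraint-4} obtained from \eqref{eq:reflection-condition}.

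Second, your mechanism for the eighth roots of unity is off. All three edges of $\Delta_s$ are boundary edges, so there is no ``boundary edge gives half the phase'' effect; Lemma~\ref{lem:zeta-commutation} and Proposition~\ref{prop:edge-reversal-commutation} give the full quarter phase $\exp\bigl(-\frac{2\pi\I}{4}\IP{E_1,E_2}\bigr)$ for reversing two distinct edges in the two orders, and combined with \eqref{eq:c-rotation-invariance} this produces exactly \eqref{eq:c-constraint-1}: the $(-1,+1)$ and $(+1,-1)$ rows differ by $\exp(-2\pi\I/4)=\omega^{-2}$. The $\omega^{\pm1}$ in the individual entries is merely a symmetric parameterization of the remaining freedom, not a computed phase. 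Relatedly, your closing plan to evaluate each $b(\eps,\eps')$ ``literally as $\exp$ of a computable integral'' cannot succeed as stated: the individual entries depend on the normalizations of the $\tau_\eps$ (constrained only by \eqref{eq:tau-rot-equiv} and \eqref{eq:reflection-condition}) and of $\xi$, which is exactly why the proposition determines the table only up to the free constants $p,q$; what one can prove are the relations, and the table is their general solution. The attribution of the $p$ versus $-p$ sign to the Arf factor in \eqref{eq:174} is likewise unsupported; in the paper that sign falls out of \eqref{eq:c-constraint-2} together with \eqref{eq:c-constraint-3}--\eqref{eq:c-constraint-4}.
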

\begin{proof}
Applying $f_*$ to both sides of \eqref{eq:c-const-def} and using \eqref{eq:tau-rot-equiv} gives \eqref{eq:c-rotation-invariance}.

Proposition~\ref{prop:edge-reversal-commutation}
gives 
\begin{multline}
b((+1,+1,+1),(+1,+1,-1)) b((+1,+1,-1),(+1,-1,-1)) = \\ \exp \left(- \frac{2 \pi \I}{4}\right) b((+1,+1,+1),(+1,-1,+1)) b((+1,-1,+1),(+1,-1,-1)) \, .
\end{multline}
Applying \eqref{eq:c-rotation-invariance} 
and canceling a common factor 
$b((+1,+1,+1),(+1,+1,-1))$
gives
\begin{equation} \label{eq:c-constraint-1}
b((-1,+1,+1),(-1,+1,-1)) = \exp \left(- \frac{2 \pi \I}{4}\right) b((+1,-1,+1),(+1,-1,-1)) \, .
\end{equation}

Next, using Proposition~\ref{prop:edge-reversal-double},
we have
\begin{multline} \label{eq:c-constraint-2}
  b((a_1,a_2,+1),(a_1,a_2,-1)) \times b((a_3,a_4,-1),(a_3,a_4,+1)) \times \\ b((a_1,a_2,-1),(a_1,a_2,+1)) \times b((a_3,a_4,+1),(a_3,a_4,-1)) = (-1)^{\frac12(a_1+a_2+a_3+a_4)}
\end{multline}

Finally, using the reflection condition \eqref{eq:reflection-condition} gives
\begin{equation}
  b(\eps,\eps') \, b(r_* \eps, r_* \eps') = 1 \, ,
\end{equation}
which implies in particular
\begin{equation} \label{eq:c-constraint-3}
  b((+1,-1,+1),(+1,-1,-1)) \, b((+1,-1,-1),(+1,-1,+1)) = 1
\end{equation}
and
\begin{equation} \label{eq:c-constraint-4}
  b((+1,+1,+1),(+1,+1,-1)) \, b((-1,-1,-1),(-1,-1,+1)) = 1 \, .
\end{equation}

The constraints \eqref{eq:c-constraint-1}, \eqref{eq:c-constraint-2}, \eqref{eq:c-constraint-3}, \eqref{eq:c-constraint-4} determine the $b(\eps,\eps')$ up to two undetermined constants
as indicated in the table.
\end{proof}

Proposition~\ref{prop:lambda-explicit} determines all of the $b(\eps,\eps')$
in terms of the undetermined constants $p$, $q$.
Fixing $p$, $q$ is equivalent to fixing the remaining
freedom in the trivializations $\tau_\eps$, up to an overall 
scale which remains unfixed.

\subsubsection{Fixing the $\ell^\eta$} 
In \S\ref{sec:dilog} we determined the functions
$L^\eta$ up to an overall constant. We now make a definite
choice as follows.
Let $\Li_2$ denote the principal branch of the dilogarithm function, which has a branch 
cut along $(1,\infty)$.
Then let
\begin{align} 
\ell^{(+1,+1)}(u_1,u_2) &= \Li_2(e^{-u_1}) - \frac{1}{2} u_1 u_2 - 2 \pi \I \left\lfloor -\frac{\im u_2}{2\pi} + \half \right\rfloor  u_1  ,  \label{eq:lpp} \\
\ell^{(-1,+1)}(u_1,u_2) &= \Li_2(-e^{-u_1}) - \frac{1}{2} u_1 u_2 - 2 \pi \I \left\lfloor -\frac{\im u_2}{2\pi} + \half \right\rfloor  (u_1 + \pi \I) , \label{eq:lmp} \\
\ell^{(+1,-1)}(u_1,u_2) &= \Li_2(e^{-u_1}) - \frac{1}{2} u_1 u_2 - \pi \I u_1 - 2 \pi \I \left\lfloor -\frac{\im u_2}{2\pi} \right\rfloor u_1 , \label{eq:lpm} \\
\ell^{(-1,-1)}(u_1,u_2) &= \Li_2(-e^{-u_1}) - \half u_1 u_2 - \pi \I u_1  - 2 \pi \I \left\lfloor -\frac{\im u_2}{2\pi} \right\rfloor (u_1 + \pi \I) . \label{eq:lmm}
\end{align}
It is routine to check that these formulas indeed define holomorphic functions $\ell^\eta: S^\eta \to \C / 4 \pi^2 \Z$:
the discontinuity of $\Li_2$ across its branch cut gets compensated by the discontinuity
of the floor function, up to an integer multiple of $4 \pi^2$.
Using the differential equation obeyed by $\Li_2$, 
it is also
straightforward to check that they obey \eqref{eq:L-de}
as needed.

\subsubsection{Computing the $\kappa_E^{\eps,\eps'}$}

Now that we have fixed our choices of $\ell^\eta$ 
and also fixed trivializations of the $\cL(\Delta,\eps)$
we are in position to express the normalization constants
from \eqref{eq:def-kappa},
\begin{equation}
  \kappa_E^{\eps,\eps'}: \cL(\cQ_E,\eps) \to \cL(\cQ_{E'},\eps') \, ,
\end{equation}
as concrete numbers: we write
\begin{equation}
  \kappa_E^{\eps,\eps'} ((\phi_E)_* \tau_{\eps_1} \otimes (\phi_E)_* \tau_{\eps_2} ) = k_E^{\eps,\eps'} ((\phi_{E'})_* \tau_{\eps'_1} \otimes (\phi_{E'})_* \tau_{\eps'_2} ) 
\end{equation}
for some $k_E^{\eps,\eps'} \in \C^\times$.

To compute the $k^{\eps,\eps'}_E$, it will be useful to recall their origin
in Chern-Simons theory on an oriented sphere $H$ with tetrahedral
triangulation $\scrT$, obtained by gluing $-\cQ_E$ to $\cQ_{E'}$
along the common boundary.
We label the edges of $\cQ_E$ and $\cQ_{E'}$ as shown
in Figure~\ref{fig:two-quads}, thus identifying
$H$ with a standard model. We
use $\eps_H$ to represent the full collection of six edge-orientations on $\scrT$ induced by $(\eps,\eps')$,
and use the condensed notation
$k^{\eps_H}$ for $k_E^{\eps,\eps'}$.

The next lemma concerns just $H$, not the original surface $Y$.

\begin{lem} \label{lem:k-relations} 
Suppose $\eps$ and $\eps'$ are
two systems of edge-orientations on $\scrT$, 
which differ by reversing the orientation
on some $\tilde{E} \in \edges(\scrT)$. 
Fix stratified abelianization data $(P, Q, \mu, \theta)$ on $(H,\cN^\scrT)$,
and extend $(\scrT,\eps)$ to data $D = (\scrT,\eps,s,x)$ as
in Construction~\ref{constr:triv-from-data}.
Also define $D' = (\scrT,\eps',s,x')$ where
$x'_E = x_E + \I \pi$, 
and $x'_E = x_E$ for all other edges.
Let
$(u_1,u_2)$ and $(\eta_1,\eta_2)$ be given
by \eqref{eq:u-def} and \eqref{eq:eta-def},
likewise 
$(u'_1,u'_2)$ and $(\eta'_1,\eta'_2)$.
Let $\Delta_1$, $\Delta_2$ denote the two triangles
abutting $\tilde{E}$. Then
\begin{equation}
  \frac{k^{\eps'}}{k^{\eps}} = \exp \left[ \frac{1}{2 \pi \I} \left( \ell^\eta(u_1,u_2) - \ell^{\eta'}(u'_1,u'_2) \right) + \frac{1}{4\pi\I} (u_1 u'_2 - u_2 u'_1) \right] b(\eps_{\Delta_1},\eps'_{\Delta_1}) b(\eps_{\Delta_2},\eps'_{\Delta_2})
\end{equation}
with $b$ determined in Proposition~\ref{prop:lambda-explicit}.

\end{lem}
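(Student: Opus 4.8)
The plan is to derive Lemma~\ref{lem:k-relations} by applying Proposition~\ref{prop:line-transitions}(2) (the single-edge-reversal relation for $\psi_D$) to the sphere $H$ with its tetrahedral triangulation $\scrT$, and then re-expressing both sides in terms of the explicit trivializations $\tau_{\hat D}$. First I would fix stratified abelianization data $(P,Q,\mu,\theta)$ on $(H,\cN^\scrT)$ and extend $(\scrT,\eps)$ to $D=(\scrT,\eps,s,x)$ and $(\scrT,\eps')$ to $D'=(\scrT,\eps',s,x')$ with $x'_E=x_E+\I\pi$, $x'_{E''}=x_{E''}$ for $E''\neq E$, exactly as in the statement. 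Here the reversed edge $\tilde E$ is one of the six edges of $\scrT$, abutting the two triangles $\Delta_1,\Delta_2$; since $\scrT$ is the tetrahedral triangulation, $\tilde E$ together with these two triangles forms the quadrilateral $\cQ_{\tilde E}$, and reversing $\tilde E$ is precisely the move covered by Proposition~\ref{prop:line-transitions}(2). That proposition gives
\begin{equation}
  \psi_{D'} = \exp\!\left[\tfrac14 v\right]\bigl(\rho^{\eps,\eps'}(\Delta_1)\otimes\rho^{\eps,\eps'}(\Delta_2)\bigr)\,\psi_D
\end{equation}
where $v = x_{\tilde E_{30}}-x_{\tilde E_{02}}+x_{\tilde E_{21}}-x_{\tilde E_{13}}$ is the signed sum of logarithms around $\cQ_{\tilde E}$ as in~\eqref{eq:u-def-single}.

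Next I would track how the normalization constant $k^{\eps_H}=k_E^{\eps,\eps'}$ enters. By definition \eqref{eq:def-kappa} the constant $k^{\eps_H}$ measures $\kappa_E^{\eps,\eps'}$ relative to the fixed trivializations $\tau_\eps$ of the $\cL(\Delta_s,\eps)$ pushed forward by the marked-edge identifications $\phi_E$, and $\kappa_E^{\eps,\eps'}=\beta(c^{\eps_H})$ where $\beta$ is the matching-up isomorphism of difference lines and $c^{\eps_H}\in\cL(H,\eps_H)$ is the dilogarithm normalization constant from Proposition~\ref{prop:hyper-dilog}. So the quotient $k^{\eps'}/k^{\eps}$ will be computed by comparing $c^{\eps'_H}$ to $c^{\eps_H}$, together with the edge-reversal constants $b(\eps_{\Delta_i},\eps'_{\Delta_i})$ coming from $\rho^{\eps,\eps'}(\Delta_i)$ via~\eqref{eq:c-const-def}. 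The key computational input for the $c$-ratio is Proposition~\ref{prop:hyper-dilog}: since $c^{\eps_H}$ is the $u$-independent prefactor in
\begin{equation}
  \frac{\chi_Y^{\eps_H}(\scrF_G(\tet;\hat P))}{\tau_t} = c^{\eps_H}\exp\!\left[\tfrac{1}{2\pi\I}\ell^\eta(u_1,u_2)\right],
\end{equation}
the ratio $c^{\eps'_H}/c^{\eps_H}$ is obtained by dividing the two instances (for $\eps_H$ and $\eps'_H$), using that the left-hand side differs only through the change of section $t\mapsto t'$ induced by $x\mapsto x'$ (a shift of the section of $Q^\eps_\tw$ by $e^{\I\pi p_{\tilde E}}$), so that $\tau_{t'}/\tau_t$ contributes via~\eqref{eq:189}. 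This is where the cross-term $\tfrac{1}{4\pi\I}(u_1u'_2-u_2u'_1)$ will appear: it is the integral $\tfrac{1}{4\pi\I}\int_{\tY}t^*\alpha\wedge \tfrac{dh}{h}$ evaluated on the twisting function $h$, whose class pairs with the two cycles $\gamma_1,\gamma_2$ carrying the data $u_i$ and $u'_i$; the intersection product $\langle\gamma_1,\gamma_2\rangle=+1$ converts the wedge integral into the antisymmetric combination shown.

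I would then assemble the pieces: combine the $\exp[\tfrac14 v]$ factor from Proposition~\ref{prop:line-transitions}(2) with the shift of the dilogarithm arguments $\ell^\eta(u_1,u_2)\to\ell^{\eta'}(u'_1,u'_2)$ and the section-change factor, observing that $v$ is (up to normalization) $\tfrac14(u_1u'_2-u_2u'_1)$ is \emph{not} quite right — rather the $\tfrac14 v$ term and the $\tfrac{1}{4\pi\I}(u_1u_2'-u_2u_1')$ term together account for the full mismatch between the two trivializations of $\scrS_{\C^\times}$, one term being the intrinsic edge-reversal twist and the other the reparametrization of the section. Finally the leftover phase is exactly $b(\eps_{\Delta_1},\eps'_{\Delta_1})b(\eps_{\Delta_2},\eps'_{\Delta_2})$ from the two triangle edge-reversal maps, completing the stated formula. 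The main obstacle I anticipate is bookkeeping the various $2\pi\I$ normalizations and sign conventions consistently — in particular reconciling the $\tfrac14 v$ of Proposition~\ref{prop:line-transitions}(2), the $\tfrac{1}{4\pi\I}$ of the section-change formula~\eqref{eq:189}, and the branch choices baked into $\ell^\eta$ versus $\ell^{\eta'}$ in \eqref{eq:lpp}--\eqref{eq:lmm}, so that the discontinuities of the floor functions cancel against those of $\Li_2$ and no spurious multiple of $4\pi^2$ survives. The geometric content (that reversing an edge shifts everything by computable twists) is already in hand from the earlier propositions; the work is purely in pinning down the constants.
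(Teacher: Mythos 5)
Your skeleton is the same as the paper's: evaluate $\psi_D$ and $\psi_{D'}$ on the single element $o=\scrF_G(\tet;\hat P)$ using Proposition~\ref{prop:hyper-dilog}, relate the two by the edge-reversal relation of Proposition~\ref{prop:line-transitions}(2), and then divide by the chosen trivializations (marked edges on each face) so that $\rho^{\eps,\eps'}(\Delta_i)$ becomes $b(\eps_{\Delta_i},\eps'_{\Delta_i})$ via \eqref{eq:c-const-def} and the constants $c^{\eps}$ become the $k^{\eps}$.

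However, the final assembly as you describe it has a genuine gap. Once the sections $\tils_x$ and $\tils'_{x'}$ are absorbed into $\psi_D$ and $\psi_{D'}$, the \emph{entire} comparison is $\psi_{D'}=\exp\bigl[\tfrac14 u\bigr]\,(\rho^{\eps,\eps'}(\Delta_1)\otimes\rho^{\eps,\eps'}(\Delta_2))\,\psi_D$, where $u$ is the signed log-sum \eqref{eq:u-def-single} around $\cQ_{\tilde E}$; the section change $\tau_{\tils_x}\to\tau_{\tils'_{x'}}$ is already inside that statement (its proof applies $\zeta$ via \eqref{eq:191} and re-expresses the result in the new trivialization), so adding a further $\tau_{t'}/\tau_t$ factor from \eqref{eq:189} on top of it double counts. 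The missing ingredient is precisely the identity $\I\pi u = u_1u'_2-u_2u'_1$, which relates the edge-reversal quantity $u$ (attached to $\tilde E$, which need not be a marked edge) to the $u_i,u'_i$ attached to the marked pair of opposite edges; the paper verifies this by checking the six possible positions of $\tilde E$, and with it $\exp[\tfrac14 u]$ \emph{is} the whole cross term $\exp\bigl[\tfrac{1}{4\pi\I}(u_1u'_2-u_2u'_1)\bigr]$, with nothing further to add. Your proposal explicitly rejects this identification (``$v$ is (up to normalization) $\tfrac14(u_1u'_2-u_2u'_1)$ is not quite right'') and replaces it with a two-term accounting, which both contradicts the identity that makes the formula close and leaves you without any argument producing the antisymmetric combination $u_1u'_2-u_2u'_1$ in the exponent. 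Repairing the proof means deleting the extra \eqref{eq:189} contribution and proving the six-case identity (your intersection-pairing remark about $\langle\gamma_1,\gamma_2\rangle=+1$ is the right mechanism for that check, but it must be carried out, not asserted against).
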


\begin{proof} We consider the element $o = \scrF_G(\tet;\hat P)$,
which has
\begin{equation}
  \psi_D(o) = c^\eps \exp \left[ \frac{1}{2 \pi \I} \ell^\eta(u_1,u_2) \right]\, \in \cL(H,\eps) \, .
\end{equation}
Part (2) of Proposition~\ref{prop:line-transitions}
gives the relation between $\psi_D(o)$ and $\psi_{D'}(o)$.
The quantity $u$ appearing there is not necessarily 
the $u_1$ here, because $\tilde{E}$ is not necessarily
$E_{01}$; one can show however (e.g. by checking the
six cases for $\tilde E$) that the correct relation is
\begin{equation}
\I \pi u = u_1 u'_2 - u_2 u'_1 \, .  
\end{equation}
Thus we get
\begin{equation}
  \exp \left[ \frac{1}{2 \pi \I} \ell^{\eta'}(u'_1,u'_2) \right] c^{\eps'} = \exp \left[ \frac{1}{2 \pi \I} \ell^\eta(u_1,u_2) + \frac{1}{4\pi\I} (u_1 u'_2 - u_2 u'_1) \right] (\rho^{\eps,\eps'}({\Delta_1}) \otimes \rho^{\eps,\eps'}({\Delta_2}))(c^\eps) \, .
\end{equation}
Now for each $\Delta \in \faces(\scrT)$
let the marked edge $e_\Delta$ be either $E_{01}$ 
or $E_{23}$,
whichever lies on $\Delta$.
Dividing both sides by
$\bigotimes_{\Delta} (\phi_{e_\Delta})_* (\tau_{\eps_\Delta})$
gives the desired result.
\end{proof}

Lemma \ref{lem:k-relations} determines the constants
$k^{\eps}$ up to a single overall multiplicative constant.
The remaining constant can be fixed as follows.
We consider a triangulated surface $(Y,\scrT)$ 
with stratified abelianization
data $(P, Q, \mu, \theta)$.
Suppose $\scrT$ contains three triangles which make up a
pentagon. Then we consider two possible sequences of flips, 
involving various triangulations $\scrT_i$ as 
indicated in Figure~\ref{fig:pentagon-flips}, and 
choose data $\hat{D}_i$ (logarithms, edge-orientations and marked edges) extending the triangulations 
$\scrT_i$.

\insfig{pentagon-flips}{0.4}{Two sequences of flips in a triangulated pentagon.}

Since both sequences begin at $\hat D$
and end at $\hat D'$, we can follow either sequence to compute the ratio
$\tau_{\hat D'} / \tau_{\hat D}$; but one sequence 
contains $3$ flips and the other contains $2$, so
requiring that they are equal is enough to 
determine the overall multiplicative constant in $k^\eps$.
Carrying this computation out we obtain the following
(the detailed computation can be found in the
file {\tt dilog-compute.nb} included with the arXiv
version of this paper.)

\begin{lem} \label{lem:k-constants}
If we set $p = q = 1$, then
$k^{\eps} = \exp\left(\frac{2 \pi \I}{24} n(\eps) \right)$,
with $n(\eps)$ given below. We specify $\eps$ by the tuple 
$(\eps(0,2), \eps(2,1), \eps(1,3), \eps(3,0), \eps(1,0), \eps(2,3))$.
{\tiny
\vspace{-4pt}
\begin{equation*}
\begin{array}{llll}
 (+1,+1,+1,+1,+1,+1)\to 7 & (+1,-1,+1,+1,+1,+1)\to -11 &
   (-1,+1,+1,+1,+1,+1)\to 7 & (-1,-1,+1,+1,+1,+1)\to 10 \\
 (+1,+1,+1,+1,+1,-1)\to -5 & (+1,-1,+1,+1,+1,-1)\to -8 &
   (-1,+1,+1,+1,+1,-1)\to 10 & (-1,-1,+1,+1,+1,-1)\to -8 \\
 (+1,+1,+1,+1,-1,+1)\to -5 & (+1,-1,+1,+1,-1,+1)\to -2 &
   (-1,+1,+1,+1,-1,+1)\to 10 & (-1,-1,+1,+1,-1,+1)\to 10 \\
 (+1,+1,+1,+1,-1,-1)\to 7 & (+1,-1,+1,+1,-1,-1)\to -11 &
   (-1,+1,+1,+1,-1,-1)\to 1 & (-1,-1,+1,+1,-1,-1)\to -8 \\
 (+1,+1,+1,-1,+1,+1)\to -11 & (+1,-1,+1,-1,+1,+1)\to 1 &
   (-1,+1,+1,-1,+1,+1)\to 10 & (-1,-1,+1,-1,+1,+1)\to 7 \\
 (+1,+1,+1,-1,+1,-1)\to -2 & (+1,-1,+1,-1,+1,-1)\to 1 &
   (-1,+1,+1,-1,+1,-1)\to 10 & (-1,-1,+1,-1,+1,-1)\to 10 \\
 (+1,+1,+1,-1,-1,+1)\to -8 & (+1,-1,+1,-1,-1,+1)\to 1 &
   (-1,+1,+1,-1,-1,+1)\to -8 & (-1,-1,+1,-1,-1,+1)\to 10 \\
 (+1,+1,+1,-1,-1,-1)\to -11 & (+1,-1,+1,-1,-1,-1)\to 1 &
   (-1,+1,+1,-1,-1,-1)\to -8 & (-1,-1,+1,-1,-1,-1)\to 1 \\
 (+1,+1,-1,+1,+1,+1)\to 1 & (+1,-1,-1,+1,+1,+1)\to -8 &
   (-1,+1,-1,+1,+1,+1)\to 1 & (-1,-1,-1,+1,+1,+1)\to -11 \\
 (+1,+1,-1,+1,+1,-1)\to 10 & (+1,-1,-1,+1,+1,-1)\to -8 &
   (-1,+1,-1,+1,+1,-1)\to 1 & (-1,-1,-1,+1,+1,-1)\to -8 \\
 (+1,+1,-1,+1,-1,+1)\to 10 & (+1,-1,-1,+1,-1,+1)\to 10 &
   (-1,+1,-1,+1,-1,+1)\to 1 & (-1,-1,-1,+1,-1,+1)\to -2 \\
 (+1,+1,-1,+1,-1,-1)\to 7 & (+1,-1,-1,+1,-1,-1)\to 10 &
   (-1,+1,-1,+1,-1,-1)\to 1 & (-1,-1,-1,+1,-1,-1)\to -11 \\
 (+1,+1,-1,-1,+1,+1)\to -8 & (+1,-1,-1,-1,+1,+1)\to 1 &
   (-1,+1,-1,-1,+1,+1)\to -11 & (-1,-1,-1,-1,+1,+1)\to 7 \\
 (+1,+1,-1,-1,+1,-1)\to 10 & (+1,-1,-1,-1,+1,-1)\to 10 &
   (-1,+1,-1,-1,+1,-1)\to -2 & (-1,-1,-1,-1,+1,-1)\to -5 \\
 (+1,+1,-1,-1,-1,+1)\to -8 & (+1,-1,-1,-1,-1,+1)\to 10 &
   (-1,+1,-1,-1,-1,+1)\to -8 & (-1,-1,-1,-1,-1,+1)\to -5 \\
 (+1,+1,-1,-1,-1,-1)\to 10 & (+1,-1,-1,-1,-1,-1)\to 7 &
   (-1,+1,-1,-1,-1,-1)\to -11 & (-1,-1,-1,-1,-1,-1)\to 7 \\
\end{array}\end{equation*}
}
\end{lem}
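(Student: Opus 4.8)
The plan is to determine all $2^6=64$ numbers $k^{\eps}$ by combining two inputs: the single-edge-reversal recursion of Lemma~\ref{lem:k-relations}, which ties all of them to one, and the pentagon consistency underlying Proposition~\ref{prop:line-transitions}(3), which fixes that one. First I would regard the edge-orientations on the tetrahedral triangulation $\scrT$ of $H\simeq S^2$ as the vertices of the $6$-cube, with cube edges joining orientations that differ by reversing one edge of $\scrT$. Since this graph is connected, it is enough to compute $k^{(+1,+1,+1,+1,+1,+1)}$ and, for each cube edge, the ratio $k^{\eps'}/k^{\eps}$; the latter is exactly the content of Lemma~\ref{lem:k-relations}. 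Although that formula is written in terms of the logarithmic holonomies $u_i,u_i'$, its left-hand side is by construction a constant, so the expression on the right is independent of the point of $S^{\eta}$ and may be evaluated at a convenient degeneration corner of $S^\eta$, using the normalizations built into \eqref{eq:lpp}--\eqref{eq:lmm} (under which $\ell^{\eta}$ tends to a simple value). With $p=q=1$ the constants $b(\eps,\eps')$ of Proposition~\ref{prop:lambda-explicit} are $\pm1$ or $\omega^{\pm1}$ with $\omega=\exp(2\pi\I/8)$, hence $24$th roots of unity, and the contribution of the $\ell^{\eta}$- and quadratic terms is checked to be a $24$th root of unity as well, so each ratio $k^{\eps'}/k^{\eps}$ comes out as an explicit $24$th root of unity. (Path-independence of the resulting products around $2$-faces of the cube is automatic since the $k^{\eps}$ exist, and it also serves as a consistency check, reflecting the functional equations of the $\ell^{\eta}$.) This expresses $k^{\eps}=\exp\!\bigl(2\pi\I\,n(\eps)/24\bigr)\,k^{(+1,\dots,+1)}$ for explicit integers $n(\eps)$ modulo $24$, up to the single undetermined constant $k^{(+1,\dots,+1)}$.

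To fix $k^{(+1,\dots,+1)}$ I would run the two flip sequences of Figure~\ref{fig:pentagon-flips} in a triangulated pentagon between a common pair of extended data $\hat D,\hat D'$. Well-definedness of $\tau_{\hat D}$ in Construction~\ref{constr:triv-from-data} forces the two resulting values of $\tau_{\hat D'}/\tau_{\hat D}$ to agree. Expanding each via Proposition~\ref{prop:line-transitions}(3), one side is a product of three flip factors (each an $\ell^{\eta}$-exponential times a $k$-constant) and the other a product of two; the $\ell^{\eta}$-exponentials recombine through the five-term functional equation obeyed by the branches \eqref{eq:lpp}--\eqref{eq:lmm} (routine to verify from the functional equation of $\Li_2$), leaving a single equation among products of $k$'s and $b$'s. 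Substituting the results of the first paragraph turns it into one equation for $k^{(+1,\dots,+1)}$, whose solution at $p=q=1$ is a $24$th root of unity. Assembling this with the $n(\eps)$ yields the table.

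The hard part will be neither of these reductions---both are formal once Lemma~\ref{lem:k-relations}, the five-term relation, and Proposition~\ref{prop:line-transitions}(3) are in hand---but the volume and delicacy of the bookkeeping: keeping straight the branch choices (the floor terms in \eqref{eq:lpp}--\eqref{eq:lmm}), the induced $\eta$-signs and their propagation through flips and edge-reversals, and the accumulation of $b$- and $\omega$-phases. This is why the computation is automated; the details are recorded in the ancillary file \texttt{dilog-compute.nb}.
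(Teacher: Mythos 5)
Your proposal matches the paper's own argument: the paper likewise uses Lemma~\ref{lem:k-relations} to determine the $k^{\eps}$ up to a single overall constant, then fixes that constant by equating the $3$-flip and $2$-flip sequences in the triangulated pentagon of Figure~\ref{fig:pentagon-flips}, with the bookkeeping carried out in the ancillary notebook {\tt dilog-compute.nb}. This is essentially the same approach, so nothing further is needed.
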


\subsubsection{The final result} \label{sec:final-cs-line}

We summarize our description of the Chern-Simons line, in its most concrete form:
\begin{thm} \label{thm:cs-line-explicit}
Fix a surface $Y$ and a boundary-reduced flat $G$-bundle $P \to Y$.
We consider tuples $\hat D = (\scrT,\eps,s,x,e)$, where:
\begin{itemize}
\item $\scrT$ is a semi-ideal triangulation of $Y$,
\item $\eps = (\eps_E)_{E \in \edges(\scrT)}$ is a system of edge-orientations for $\scrT$,
\item $s = (s_v)_{v \in \vertices(\scrT)}$ 
is a flat section of $P/U = P \times_G (\C^2 \setminus \{0\})$ over each $v$, which when projected to $P/B$ agrees with the boundary reduction,
\item $x = (x_E)_{E \in \edges(\scrT)}$ is a collection of complex numbers, 
where $\exp(x_E) = \eps_E(v,v') s_{v'} \wedge s_{v}$ 
if $E$ is an edge with vertices $v, v'$,
\item $e = (e_\Delta)_{\Delta \in \faces(\scrT)}$ is a system of marked edges, 
where $e_\Delta$ is an edge of $\Delta$.
  \end{itemize}
For each such tuple there is a canonical nonzero element
\begin{equation} \label{eq:taud}
\tau_{\hat D} \in \scrF_G(Y;P)^* \, . 
\end{equation}
They obey relations as follows.
\begin{enumerate}
\item Suppose $\hat D$ and $\hat D'$ differ only by a change involving 
a single vertex $v$:
$s'_v = s_v \exp(t)$
and $x'_E = x_E + t$ for all edges $E$ incident on $v$. 
Then
\begin{equation}
  \tau_{\hat D'} = \tau_{\hat D}.
\end{equation}

\item Suppose $\hat D$ and $\hat D'$ differ only by changing the marking
$e_\Delta$ to $e'_\Delta$ for a single triangle $\Delta$.
Then
\begin{equation} \label{eq:cube-root-triv}
  \tau_{\hat D'} = \exp \left[ \frac{2 \pi \sqrt{-1}}{3} \IP{e_\Delta, e'_\Delta} \right]  \tau_{\hat D} \, .
\end{equation}

\item Suppose $\hat D$ and $\hat D'$ differ only by a change involving
a single edge $E$: $\eps'_E = - \eps_E$ and
$x'_E = x_E + \pi \I$.
Labeling the edges of $\cQ_E$ as in Figure~\ref{fig:quad-labeled}, 
define $u$ by \eqref{eq:u-def-single}.
Then
\begin{equation} \label{eq:tau-b-transition}
  \tau_{\hat D'} = \exp \left[ \frac14 u \right] b({\eps_{\Delta_1},\eps'_{\Delta_1}}) b({\eps_{\Delta_2},\eps'_{\Delta_2}}) \, \tau_{\hat D} \, ,
\end{equation}
where $b(\eps,\eps')$ is given by Proposition~\ref{prop:lambda-explicit}
with $p = q = 1$.

\item Suppose $\hat D$ and $\hat D'$ differ only in that
$\scrT'$ is obtained from $\scrT$ by flipping edge $E$
to obtain a new edge $E'$, with some orientation 
$\eps'_{E'}$ and a choice of logarithm $x_{E'}$.
Suppose that the triangles $\Delta$ abutting $E$ in $\scrT$ both have
$e_\Delta = E$, and the triangles abutting $E'$ in $\scrT'$
both have $e_{\Delta'} = E'$.
Labeling the edges of $\cQ_E$ and $\cQ'_E$ as in Figure~\ref{fig:two-quads}, define $u_1$, $u_2$, $\eta_1$, $\eta_2$ by \eqref{eq:u-def}, \eqref{eq:eta-def}.
Then
\begin{equation} \label{eq:tau-flip-relation}
  \tau_{\hat D'} = \exp \left[ \frac{1}{2 \pi \I} \ell^{\eta}(u_1,u_2) \right] k^{\eps,\eps'} \tau_{\hat D} \, ,
\end{equation}
where $\ell^\eta$ is given in \eqref{eq:lpp}-\eqref{eq:lmm} and 
$k^{\eps,\eps'}$ is given by Lemma \ref{lem:k-constants}.
\end{enumerate}

\end{thm}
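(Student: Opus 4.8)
The plan is to assemble Theorem~\ref{thm:cs-line-explicit} essentially by repackaging Construction~\ref{constr:triv-from-data} together with the trivializations $\tau_\eps$ and the marked-edge identifications $\phi_{e_\Delta}$, and then transcribing the three relations of Proposition~\ref{prop:line-transitions} into the normalized language of $\tau_{\hat D}$. First I would \emph{define} $\tau_{\hat D}$ by formula~\eqref{eq:tauhatD}, i.e. as the ratio of the canonical isomorphism $\psi_D\colon \scrF_G(Y;P)\to\cL(Y,\eps)$ from Construction~\ref{constr:triv-from-data} against the element $\bigotimes_{\Delta}(\phi_{e_\Delta})_*\tau_\eps\in\cL(Y,\eps)$; this produces a nonzero element of $\scrF_G(Y;P)^*$, which is the content of~\eqref{eq:taud}. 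The element is well-defined because $\psi_D$ is canonical (depending on $D$ only through the listed data, by Construction~\ref{thm:14}, Construction~\ref{constr:abelianization-surface}, and the chosen section $\tils_x$, which depends on $x$ only up to homotopy) and because the $\tau_\eps$ were fixed once and for all; no choice beyond those recorded in $\hat D$ enters.

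Next I would prove relations (1), (3), (4) by direct comparison with Proposition~\ref{prop:line-transitions}. For (1): the vertex rescaling $s'_v=s_v\exp(t)$, $x'_E=x_E+t$ leaves the triangulation, edge-orientations and marked edges untouched, so $\bigotimes_\Delta(\phi_{e_\Delta})_*\tau_\eps$ is literally the same element, and $\psi_{D'}=\psi_D$ by Proposition~\ref{prop:line-transitions}(1); hence $\tau_{\hat D'}=\tau_{\hat D}$. For (3): reversing $\eps_E$ and shifting $x_E$ by $\pi\I$ changes $\psi_D$ by the factor $\exp[\tfrac14 u]\,(\rho^{\eps,\eps'}(\Delta_1)\otimes\rho^{\eps,\eps'}(\Delta_2))$ of Proposition~\ref{prop:line-transitions}(2); translating the map $\rho^{\eps,\eps'}(\Delta_i)$ through the trivializations $\tau_{\eps_{\Delta_i}}$ via~\eqref{eq:c-const-def} converts it into the scalar $b(\eps_{\Delta_i},\eps'_{\Delta_i})$ (with $p=q=1$, by Proposition~\ref{prop:lambda-explicit}); the auxiliary line $\Xi(E)$ and its chosen generator $\xi$ appear with opposite powers $\xi^{\pm1}$ on the two triangles abutting $E$ and cancel in the tensor product, leaving exactly~\eqref{eq:tau-b-transition}. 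For (4): Proposition~\ref{prop:line-transitions}(3) gives $\psi_{D'}=\exp[\tfrac{1}{2\pi\I}\ell^\eta(u_1,u_2)]\,\kappa_E^{\eps,\eps'}\circ\psi_D$; under the stipulated hypothesis that the marked edges on the triangles abutting $E$ (resp.\ $E'$) are $E$ (resp.\ $E'$), the map $\kappa_E^{\eps,\eps'}$ sends $\bigotimes(\phi_{e_\Delta})_*\tau_{\eps_\Delta}$ to $k^{\eps,\eps'}\bigotimes(\phi_{e_{\Delta'}})_*\tau_{\eps'_{\Delta'}}$ by definition of $k^{\eps,\eps'}$ and Lemma~\ref{lem:k-constants}, yielding~\eqref{eq:tau-flip-relation}.

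The one genuinely new relation is (2), the change of marked edge $e_\Delta\mapsto e'_\Delta$ on a single triangle, which is governed by Proposition~\ref{prop:L-rotation}. Here $\psi_D$ is unchanged (the triangulation, $\eps$, and $(s,x)$ are fixed), so $\tau_{\hat D'}/\tau_{\hat D}$ is the ratio of the two elements $(\phi_{e'_\Delta})_*\tau_\eps$ and $(\phi_{e_\Delta})_*\tau_\eps$ inside $\cL(\Delta,\eps\vert_\Delta)$. Both $\phi_{e_\Delta}$ and $\phi_{e'_\Delta}$ are orientation-preserving affine identifications $\Delta_s\to\Delta$, so $\phi_{e'_\Delta}^{-1}\circ\phi_{e_\Delta}$ is a rotation of $\Delta_s$; since $\langle e_\Delta,e'_\Delta\rangle\in\{+1,-1,0\}$ records how many $120^\circ$ steps (and in which direction) carry the $(0,1)$-edge from one marked edge to the other, this rotation is $f^{\langle e_\Delta,e'_\Delta\rangle}$ where $f$ acts on vertices by $012\mapsto120$. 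By the normalization~\eqref{eq:tau-rot-equiv} each such $f$ multiplies $\tau_\eps$ by $\exp(2\pi\I/3)$ (and $\eps$ is permuted, but here the abstract edge-orientation data is the same since $\eps\vert_\Delta$ only depends on $\Delta$), so $(\phi_{e'_\Delta})_*\tau_\eps=\exp(2\pi\I\langle e_\Delta,e'_\Delta\rangle/3)\,(\phi_{e_\Delta})_*\tau_\eps$, giving~\eqref{eq:cube-root-triv}. The main obstacle I anticipate is purely bookkeeping: making sure the rotational equivariance convention~\eqref{eq:tau-rot-equiv}, the sign conventions in $\langle-,-\rangle$, and the orientation conventions in $\phi_{e_\Delta}$ all line up so that the exponent is $+\tfrac{2\pi\I}{3}\langle e_\Delta,e'_\Delta\rangle$ rather than its inverse — and, in relation (3), tracking that the two $\Xi(E)^{\pm1}$ factors really do cancel with the correct orientation (which is why the edge $E$ must be glued to the two triangles with opposite induced orientations, exactly as in the definition of $\rho^{\eps,\eps'}$). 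None of these require new mathematics beyond what is proved in \S\ref{sec:edge-reversal-triangle}--\S\ref{sec:8a}; the theorem is a summary, and the proof is the verification that the summary is faithful.
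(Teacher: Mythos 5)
Your proposal is correct and follows essentially the same route as the paper: the paper's proof is precisely to define $\tau_{\hat D}$ by \eqref{eq:tauhatD} and obtain the relations by contracting Proposition~\ref{prop:line-transitions} with $\bigotimes_{\Delta}(\phi_{e_\Delta})_*\tau_\eps$, with relation (2) coming from the rotation-equivariance \eqref{eq:tau-rot-equiv} of the chosen trivializations (itself dictated by Proposition~\ref{prop:L-rotation}), exactly as you describe. Your expanded treatment of the $\Xi(E)^{\pm1}$ cancellation and the marked-edge bookkeeping fills in details the paper leaves implicit, but introduces nothing different in substance.
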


\begin{proof} The object $\tau_{\hat D}$
has been defined in \eqref{eq:tauhatD}.
The relations it obeys are obtained from 
Proposition~\ref{prop:line-transitions} by
contracting with $\bigotimes_{\Delta \in \faces(\scrT)} (\phi_{e_\Delta})_* \tau_\eps$.
\end{proof}

Theorem~\ref{thm:cs-line-explicit} closely resembles known patching constructions of a 
prequantum line bundle $L_{cl}$ over a symplectic leaf of the $\SLC$-character variety of a punctured surface. 
In particular, in \cite{FG2} such a line bundle is constructed in the more general
setting of an $X$-cluster variety, using the dilogarithm as a gluing map.
Related constructions appear in the subsequent works \cite{N,APP,BK,CLT}.\footnote{In all of these works the basic dilogarithmic formula for transition functions appears,
but the detailed treatment of the constant factors is somewhat different in each case.}
In all these cases, 
the characteristic property of $L_{cl}$ is that it carries a natural
connection given concretely in terms
of the cluster coordinates on the character variety. From our point of
view, this connection is the one provided by the TFT $\scrF_G$ applied
to families of flat $G$-bundles $P \to Y$, the cluster coordinates
are (up to sign) the holonomies of the corresponding flat $\C^\times$-bundles $Q^\eps_\tw \to \tY$,
and the fact that the connection has a simple expression in cluster coordinates
is obtained by using $\chi_Y^\eps$ to pass from $\scrF_G$ to $\scrS_{\C^\times}$.

\section{Computing CS invariants for flat \texorpdfstring{$\SLC$}{SL(2,C)}-bundles over 3-manifolds} \label{sec:cs-3-manifolds}

Let $M$ be an oriented $3$-manifold with boundary.
Suppose $P \to M$ is a boundary-reduced flat $G$-bundle.
In this section we explain how to use our results to obtain a dilogarithmic
formula for the Chern-Simons invariant $\scrF_G(M;P)$,
Theorem~\ref{thm:cs-invariant} below, assuming that $P$ satisfies a
genericity condition.
This formula 
closely resembles the previously known formulas we recalled in \S\ref{sec:2},
but its precise structure is slightly different, involving several
variants of the dilogarithm function and some extra cube roots of unity,
and not requiring any orderability constraints on the triangulation of $M$.

\subsection{Abelianization of the CS invariant} \label{sec:cs-invariant-abelianization}

We fix data:
\begin{itemize}
\item $\scrT$ a semi-ideal triangulation of $M$, such that 
the genericity Assumption~\ref{thm:74} is satisfied.
\item $\eps = (\eps_E)_{E \in \edges(\scrT)}$ a system of edge-orientations for $\scrT$.
\end{itemize}

According to Construction \ref{thm:36}, 
$\scrT$ determines a spectral network $\cN^\scrT$ on $M$, and $P \to M$
extends to boundary-reduced stratified abelianization data
$\cA = (P, Q, \mu, \theta)$ over $(M,\cN^\scrT)$.

The edge-orientations $\eps$
determine a lift of the double cover $\tM_{\gea} \to M_{\gea}$
to a $\Z$-bundle
over each face of $\scrT$ by Construction \ref{constr:Z-bundle-from-eps}. We can then extend
by radial projection to get a $\Z$-bundle
$\tM^{\eps,\infty} \to M_\gea$. As before, we use
this to twist $Q$ to $Q^\eps_\tw$,
and $\pi^* \sigma$ to $\sigma^\eps_\tw$,
both of which extend over $M_\geb$.

For each tetrahedron $\tet \in \tetrahedra(\scrT)$ we introduce
a small ball $B\stet$ around the barycenter, and define
$H\stet = \partial B\stet$. As we discussed in
\S\ref{sec:8.2},
$H\stet$ is naturally triangulated and has edge-orientations
induced from those of $\tet$.
Then we have
\begin{equation}
  \chi_{H\stet}^{\,\,\eps\stet}\left(\scrF_G(B\stet;P\vert_{B\sstet})\right) \in \scrS_{\C^\times}(H\stet;Q^{\,\,\eps\stet}_\tw \vert_{H\stet};\sigma^{\eps\stet}_\tw) \otimes \cL(H\stet, \eps\stet).
\end{equation}
Moreover, each $\Delta \in \faces(\scrT)$ is pierced by a component
of the branch locus $M_{\twobstrat}$ which connects a triangle
$\Delta_1$ in $H\stetn1$ to another triangle $\Delta_2$ in $H\stetn2$,
and thus gives an element 
\begin{equation}
  \beta_\Delta \in \cL(\Delta_1,\eps) \otimes \cL(\Delta_2,\eps) \, .
\end{equation}
Tensoring over all faces $\Delta$ gives an element
\begin{equation}
  \beta: \bigotimes_{\tet \in \tetrahedra(\scrT)} \cL(\tet,\eps\stet) \to \C \, .
\end{equation}
\begin{prop} \label{prop:cs-invariant-abstract} The Chern-Simons invariant $\scrF_G(M;P)$ is 
\begin{equation}
  \scrF_G(M;P) = \beta \left( \bigotimes_{\tet \in \tetrahedra(\scrT)}  \chi_{H\stet}^{\,\,\eps\stet}\left(\scrF_G(B\stet;P\vert_{B\sstet})\right) \right) \, \otimes \scrS_{\C^\times}\left({M \setminus \cup\stet B\stet};Q^\eps_\tw \vert_{M \setminus \cup\stet B\stet};\sigma^\eps_\tw\right) \, .
\end{equation}
\end{prop}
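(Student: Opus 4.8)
\textbf{Proof plan for Proposition~\ref{prop:cs-invariant-abstract}.}
The plan is to decompose $M$ along the boundary spheres $H\stet$ of the excised balls, apply the strong locality (gluing) of the Chern--Simons theories $\scrF_G$ and $\scrS_{\C^\times}$ together with the naturality of $\chi$ established in Theorem~\ref{thm:strat-abelianization}, and then recognize the contribution of the complement $M \setminus \cup\stet B\stet$ as an instance of Proposition~\ref{prop:cs-interpolating}. Concretely: write $M = (\cup\stet B\stet) \cup (M \setminus \cup\stet B\stet)$, glued along $\coprod\stet H\stet$. Since each $B\stet$ is a ball and $P\vert_{B\sstet}$ is the unique flat extension of its restriction to $H\stet$, applying $\scrF_G$ to this decomposition and using that $\scrF_G(M;P)$ is just an equation (a number in $\Cx$, as $M$ is closed after gluing) gives $\scrF_G(M;P)$ as the pairing of $\bigotimes\stet \scrF_G(B\stet;P\vert_{B\sstet}) \in \bigotimes\stet \scrF_G(H\stet;P\vert_{H\sstet})$ with $\scrF_G(M\setminus\cup\stet B\stet; P\vert_{M\setminus\cup\stet B\sstet})$ in the dual.

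Next I would insert the abelianization equivalences. On each $H\stet$, which has $(H\stet)_{\twobstrat}\neq\emptyset$, Theorem~\ref{thm:strat-abelianization} does not directly apply, but its restriction to $\partial$-collars does, so we may use $\chi^{\,\,\eps\stet}_{H\stet}$ as an equivalence of \Vlines (exactly as in the definition of $\cL(\tet,\eps\stet)$ in \S\ref{sec:difference-line}); on $M\setminus\cup\stet B\stet$, which now has empty $\twobstrat$-stratum, Proposition~\ref{prop:cs-interpolating} applies verbatim with ``$Y$'' there equal to the disconnected boundary $\coprod\stet H\stet$, its triangulation the disjoint union of the tetrahedral triangulations, and edge-orientations $\coprod\stet\eps\stet$. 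This yields
\begin{equation}
\chi^{\coprod\stet\eps\stet}_{\coprod\stet H\stet}\!\left(\scrF_G(M\setminus\textstyle\cup\stet B\stet;P\vert)\right) = \scrS_{\C^\times}\!\left(M\setminus\textstyle\cup\stet B\stet;Q^\eps_\tw\vert;\sigma^\eps_\tw\right)\otimes \beta_{M\setminus\cup\stet B\stet},
\end{equation}
where the element $\beta_{M\setminus\cup\stet B\stet}\in\bigotimes\stet\cL(H\stet,\eps\stet)$ of Proposition~\ref{prop:cs-interpolating} is precisely the map $\beta$ of the statement, once we use the identification of the two triangles joined by each component of $M_{\twobstrat}$ (one in some $H\stetn1$, one in some $H\stetn2$) via Proposition~\ref{prop:triangle-iso}, exactly as $\beta_M$ was assembled from the $f_*$'s there. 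Pairing this identity against $\bigotimes\stet\chi^{\,\,\eps\stet}_{H\stet}(\scrF_G(B\stet;P\vert))$ and using that $\chi^{\coprod\stet\eps\stet}_{\coprod\stet H\stet} = \bigotimes\stet\chi^{\,\,\eps\stet}_{H\stet}$ (compatibility of $\chi$ with disjoint unions, part of the TFT-like gluing of Theorem~\ref{thm:strat-abelianization}) collapses the $\scrS_{\C^\times}(H\stet;\dots)$ factors against each other, leaving exactly the asserted formula.

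The main obstacle is purely bookkeeping: one must verify that the line $\cL$-factors assemble correctly, i.e.\ that the element produced by Proposition~\ref{prop:cs-interpolating} applied to $M\setminus\cup\stet B\stet$ agrees with $\beta\in\Hom(\bigotimes\stet\cL(\tet,\eps\stet),\C)$ as defined in the statement via the $\beta_\Delta\in\cL(\Delta_1,\eps)\otimes\cL(\Delta_2,\eps)$. This is the same identification of ``triangle lines at the two ends of a branch-locus interval'' that appears in Proposition~\ref{prop:cs-interpolating} and Proposition~\ref{prop:flip-inexplicit}; the content is that $\cL(H\stet,\eps\stet)=\bigotimes_{\Delta\in\faces(H\stet)}\cL(\Delta,\eps)$ by \eqref{eq:def-leps}, and that each component $B$ of $M_{\twobstrat}$ matches one factor $\cL(\Delta_1,\eps)$ on one sphere with one factor $\cL(\Delta_2,\eps)^{*}$ on another (using a tubular neighborhood $\Delta\times[0,1]$ of $B$), so the total pairing is $\beta$. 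Apart from tracking these factors and orientations carefully, no new idea is required: the statement is the ``global'' assembly of the local inputs Theorem~\ref{thm:44}, Corollary~\ref{thm:63}, Corollary~\ref{thm:72} already packaged as $\chi$, combined with the gluing laws of the two invertible field theories, which are exactly what Proposition~\ref{prop:cs-interpolating} was set up to exploit.
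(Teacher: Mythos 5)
Your proposal follows essentially the same route as the paper: apply Proposition~\ref{prop:cs-interpolating} to $M \setminus \cup\stet B\stet$ (whose boundary includes the spheres $-H\stet$, plus $\partial M$ handled by boundary-reducedness), and then glue in the factors $\chi_{H\stet}^{\,\,\eps\stet}\bigl(\scrF_G(B\stet;P\vert_{B\sstet})\bigr)$ to recover $\scrF_G(M;P)$, with $\beta_{M\setminus\cup\stet B\stet}$ identified with the pairing $\beta$ of the statement. One small slip: $M\setminus\cup\stet B\stet$ does \emph{not} have empty $\twobstrat$-stratum --- the branch locus persists as intervals joining triangles of the spheres $H\stet$ and is exactly what produces $\beta$; what becomes empty is the $\threebstrat$-stratum (the hypothesis $M_{-3}=\emptyset$ of Proposition~\ref{prop:cs-interpolating}), and since the rest of your argument treats the branch-locus components correctly this does not affect the proof.
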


\begin{proof} Apply Proposition~\ref{prop:cs-interpolating} 
to the 3-manifold $M \setminus \cup\stet B\stet$. This
gives
\begin{equation}
  \chi^\eps_{\cup\sstet -H\sstet} (\scrF_G(M;P)) = \beta_{M \setminus \cup\stet B\stet} \otimes \scrS_{\C^\times}\left({M \setminus \cup\stet B\stet};Q^\eps_\tw \vert_{M \setminus \cup\stet B\stet};\sigma^\eps_\tw\right).
\end{equation}
Then gluing in a factor $\chi_{H\stet}^{\,\,\eps\stet}\left(\scrF_G(B\stet;P\vert_{B\sstet})\right)$ for each $\tet$ gives the desired formula.
\end{proof}

\subsection{Explicit formulas}

To make the formula in Proposition~\ref{prop:cs-invariant-abstract}
more concrete, we fix more data:
\begin{itemize}
\item $s = (s_v)_{v \in \vertices(\scrT)}$ 
a flat section of $P/U = P \times_G (\C^2 \setminus \{0\})$ over each $v$, lying in
the line given by the boundary reduction if $v$ is an ideal vertex,
\item $x = (x_E)_{E \in \edges(\scrT)}$ a collection of complex numbers, 
where $\exp(x_E) = \eps_E(v,v') \, s_{v'} \wedge s_{v}$ 
if $E$ is an edge with vertices $v, v'$.
\item $e = (e\stet)_{\stett \in \tetrahedra(\scrT)}$ a pair of opposite edges 
on each tetrahedron.
\end{itemize}

Note that $e\stet$ induces a marked edge $e(\Delta,\tet)$ for each face
$\Delta$ of $\tet$.
Each face 
$\Delta$ has two abutting tetrahedra $\tet_1$, $\tet_2$, and the marked edges
coming from these two tetrahedra need not agree. 
Suppose we equip $\Delta$
with the boundary orientation induced by $\tet_1$;
then we can consider the pairing
$\IP{e(\Delta,\tet_1),e(\Delta,\tet_2)}$ which measures the mismatch.
Note that this quantity is independent of which tetrahedron we called $\tet_1$,
since that choice enters both in the orientation of $\Delta$ and in the ordering of
the antisymmetric pairing.

Fix a $\tet \in \tetrahedra(\scrT)$, and let $E_1$ denote one of the 
marked edges $e\stet$; then let $E_2$, $E_3$ be defined by following
$E_1$ around a face, with $\IP{E_{i},E_{i+1}} = 1$. Then define
\begin{equation}
  u_i\,\sstett = \sum_{E' \in \edges(\scrT)} \IP{E_i,E'} x_{E'}
\end{equation}
and let $(\eta\stet)_i$ 
be the $\eps$-sign of the edge $E_i$.
Then we have the following concrete formula for the Chern-Simons
invariant:
\begin{thm} \label{thm:cs-invariant} 
The Chern-Simons invariant $\scrF_G(M;P)$ is
\begin{multline*} 
  \scrF_G(M;P) = \left( \prod_{\tet \in \tetrahedra(\scrT)} k^{\eps\sstet} \exp \left[\frac{1}{2 \pi \I} \ell^{\,\eta\sstet}(u_1\,\sstett,u_2\,\sstett) \right] \right) \times \\ \left( \prod_{\Delta \in \faces(\scrT)} \exp \left[ - \frac{2 \pi \I}{3} \IP{e(\Delta,\tet_1) , e(\Delta,\tet_2) } \right] \right) \, ,
\end{multline*}
where $\ell^\eta$ is given in \eqref{eq:lpp}-\eqref{eq:lmm} and 
$k^{\eps}$ is given by Lemma \ref{lem:k-constants}.
\end{thm}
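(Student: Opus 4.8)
The plan is to combine the abstract abelianization formula from Proposition~\ref{prop:cs-invariant-abstract} with the concrete normalizations already fixed in the previous section, choosing all auxiliary data so that every ingredient becomes an explicit number. First I would fix, in addition to $\scrT$ and $\eps$, the remaining data $(s,x,e)$ as in the statement, and unpack what Proposition~\ref{prop:cs-invariant-abstract} gives: the Chern-Simons invariant is the pairing of $\bigotimes_{\stett} \chi_{H\sstett}^{\,\eps\sstett}(\scrF_G(B\stet;P))$ against the matching element $\beta$, tensored with the abelian factor $\scrS_{\C^\times}(M \setminus \cup\stet B\stet;\dots)$. Since $P\to M$ is flat, its restriction to $M\setminus\cup\stet B\stet$ is a flat $\C^\times$-bundle $Q^\eps_\tw$ with a global covariantly constant section (built from the $s_v$ and the logarithms $x_E$ exactly as in Construction~\ref{constr:triv-from-data}, since the winding around each triangle boundary cancels), so the $\C^\times$ Chern-Simons form vanishes and that factor contributes only the trivialization $\tau_{\tils_x}$; this is what lets the two halves of the computation decouple.

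Next I would evaluate each tetrahedral factor. For a fixed $\tet$ with chosen opposite-edge pair $e\stet$, Proposition~\ref{prop:hyper-dilog} (with $\eqref{eq:hyper-dilog}$ and the explicit $\ell^\eta$ of $\eqref{eq:lpp}$--$\eqref{eq:lmm}$) gives
\begin{equation*}
\frac{\chi_{H\sstett}^{\,\eps\sstett}(\scrF_G(B\stet;P))}{\tau_{t\sstett}} = c^{\eps\sstett}\exp\!\left[\frac{1}{2\pi\I}\ell^{\,\eta\sstett}(u_1\,\sstett,u_2\,\sstett)\right],
\end{equation*}
where $t\stet$ is the restriction of the global section, and the periods $u_i\,\sstett = \oint_{\gamma_i} t^*\alpha$ are computed against the classes $\gamma_i$ as $u_i\,\sstett = \sum_{E'}\IP{E_i,E'}x_{E'}$ — this identity follows by the same local computation as in the flip relation, comparing $\eqref{eq:u-def-single}$ with the definition of $\gamma_E$. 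The signs $(\eta\stet)_i$ are the $\eps$-signs of the three marked edges by Proposition~\ref{prop:ss-formula}. Then I would absorb the $\C^\times$-Chern-Simons-trivial normalization constants $c^{\eps\sstett} \in \cL(H\stet,\eps\stet)$ into the matching element $\beta$ and invoke $\eqref{eq:def-kappa}$: after trivializing each $\cL(H\stet,\eps\stet)$ using the chosen $\tau_\eps$ and marked edges, contracting $c^{\eps\sstett}$ against $\beta$ produces exactly the constants $k^{\eps\sstett}$ computed in Lemma~\ref{lem:k-constants} (with $p=q=1$), one per tetrahedron, \emph{up to} face-matching phases.

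Finally I would account for those face-matching phases. Each face $\Delta$ glues a triangle in $H\stetn1$ to one in $H\stetn2$ via the orientation-reversing identification coming from the branch-locus component through $\Delta$; by Proposition~\ref{prop:triangle-iso} this is the map $\cL(\Delta_1,\eps)\to\cL(\Delta_2,\eps)^*$, and once both sides are trivialized via $\tau_\eps$ and the respective marked edges $e(\Delta,\tet_1)$, $e(\Delta,\tet_2)$, the relation $\eqref{eq:tau-rot-equiv}$ shows the induced scalar is $\exp(-\tfrac{2\pi\I}{3}\IP{e(\Delta,\tet_1),e(\Delta,\tet_2)})$; I would check this sign convention against $\eqref{eq:cube-root-triv}$ and note that the pairing is well-defined independent of the labelling, as remarked before the theorem. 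Multiplying the per-tetrahedron factors $k^{\eps\sstett}\exp[\tfrac{1}{2\pi\I}\ell^{\,\eta\sstett}]$ by the per-face cube-root-of-unity factors yields the claimed formula. I expect the main obstacle to be bookkeeping rather than conceptual: verifying that the trivializations of all the $\cL(H\stet,\eps\stet)$ and $\cL(\Delta_i,\eps)$ chosen on the two sides of each face are genuinely compatible (so that no stray signs appear when one contracts $c^{\eps\sstett}$ against $\beta_\Delta$), and that the orientation conventions for the link spheres $H\stet$ inherited from the tetrahedra match those used when defining $k^\eps$ in Lemma~\ref{lem:k-relations}. Pinning down these conventions — and confirming there is no leftover global phase ambiguity, which is legitimate here since $\scrF_G(M;P)$ is a genuine element of a line rather than a number — is where the care is needed; everything else is substitution into results already proved.
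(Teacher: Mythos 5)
Your proposal is correct and takes essentially the same route as the paper's proof: plug the section built from $(s,x)$ into Proposition~\ref{prop:cs-invariant-abstract} (the $\C^\times$ Chern--Simons form vanishes by flatness alone — note $Q^\eps_\tw$ is the abelianized bundle on the branched cover rather than a restriction of $P$, and the section need not be covariantly constant), evaluate each tetrahedral factor via Proposition~\ref{prop:hyper-dilog}, and contract against $\beta$ using the trivializations $\tau_\eps$, \eqref{eq:reflection-condition} and \eqref{eq:tau-rot-equiv} to produce the constants $k^{\eps}$ and the per-face cube roots of unity. The only differences are cosmetic bookkeeping (the paper folds $c^{\eps}$ into $k^{\eps}$ before pairing, you pair first), so no further comment is needed.
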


\begin{proof}
As in Construction \ref{constr:triv-from-data}, 
the choice of logarithms $x_E$ determines a section of $Q^\eps_\tw$
over each face; using radial projection in each tetrahedron,
this extends to a section $t$ of $Q^\eps_\tw \to M_\geb$.
Then, using the fact that the $\C^\times$
Chern-Simons form vanishes for a flat bundle, 
we have
\begin{equation} 
\scrS_{\C^\times}\left({M \setminus \cup\stet B\stet};Q^\eps_\tw \vert_{M \setminus \cup\stet B\stet};\sigma^\eps_\tw\right) = \bigotimes_{\tet \in \tetrahedra(\scrT)} \tau^*_{t \vert_{H\sstet}} \, .
\end{equation}
We plug this into the formula of Proposition
\ref{prop:cs-invariant-abstract}, which gives
\begin{equation} \label{eq:cs-invariant-interm}
 \scrF_G(M;P) = \beta \left( \bigotimes_{\tet \in \tetrahedra(\scrT)}  \frac{\chi_{H\sstet}^{\,\,\eps\sstet}\left(\scrF_G(B\stet;P\vert_{B\sstet})\right)}{\tau_{t \vert_{H\sstet}}} \right) \, .
\end{equation}
On each $\tet$, we have
\begin{equation}
  \frac{\chi_{H\sstet}^{\,\,\eps\sstet}\left(\scrF_G(B\stet;P\vert_{B\sstet})\right)}{\tau_{t \vert_{H\sstet}}} = k^{\eps\sstet} \exp \left[\frac{1}{2 \pi \I} \ell^{\,\eta\sstet}(u_1\,\sstett,u_2\,\sstett) \right] \bigotimes_{\Delta \in \faces(\tet)} (\phi_{e(\Delta,\tet)})_*(\tau_{\eps_\Delta})
\end{equation}
Applying $\beta$ to this tensor product pairs up the factors corresponding
to the same face on different tetrahedra.
For a face where the marked edges $e(\Delta,\tet_1)$ and $e(\Delta,\tet_2)$
agree, this pairing just gives $1$,
using \eqref{eq:reflection-condition}.
More generally, using \eqref{eq:tau-rot-equiv}, we see that the 
pairing is
$\exp \left[ \frac{2 \pi \I}{3} \IP{e(\Delta,\tet_1) , e(\Delta,\tet_2) } \right]$. 
Plugging this into \eqref{eq:cs-invariant-interm} gives the desired
result.
\end{proof}

\subsection{An example}

Let $M$ be the
ideally triangulated manifold obtained by gluing 
two copies
$\tet_1$, $\tet_2$ of the standard oriented tetrahedron
across the four triangles $\Delta_1, \dots, \Delta_4$, with each gluing specified
by giving the mapping between three vertices of $\tet_1$
and three vertices of $\tet_2$: \footnote{This manifold is
known as the ``figure-eight sister,'' or {\tt m003} in the SnapPea census. 
To avoid confusion we note that it is not the figure-eight knot complement; the latter is also obtained by gluing two tetrahedra, but it 
does not admit a boundary-unipotent $\SL(2,\C)$-connection --- although it does have boundary-unipotent $\PSL(2,\C)$-connections, e.g. the one induced by the hyperbolic structure. 
See \cite{C} for a general
discussion of the obstruction to lifting a boundary-unipotent $\PSL(2,\C)$-connection to 
a boundary-unipotent $\SL(2,\C)$-connection.
}
\begin{align*}
 \Delta_1:\ & (\tet_1\ 012) \ \leftrightarrow  (\tet_2\ 130)  \\
 \Delta_2:\ & (\tet_1\ 013) \ \leftrightarrow  (\tet_2\ 012)  \\
 \Delta_3:\ & (\tet_1\ 023) \ \leftrightarrow  (\tet_2\ 231)  \\
 \Delta_4:\ & (\tet_1\ 123) \ \leftrightarrow  (\tet_2\ 023)  
\end{align*}
The induced orientations on each face are such that the vertex orderings
shown above are positively oriented on $\tet_1$, negatively oriented on $\tet_2$.
This gluing induces identifications on vertices and edges;
after the gluing there is $1$ vertex and $2$ edges.
Choose edge-orientations so that $\eps_{E_1}$
points from $0 \to 1$ in $\tet_1$ and $\eps_{E_2}$
points from $0 \to 3$ in $\tet_1$.
See Figure~\ref{fig:gluing-figure-eight-sister}.

\insfig{gluing-figure-eight-sister}{0.4}{The gluing pattern defining the ideally triangulated 3-manifold $M$. The boundaries of the two tetrahedra are shown, with their
boundary orientations. Edges with single arrows form the equivalence class $E_1$ after gluing; edges with double arrows form the class $E_2$. The direction of the
arrows gives the edge-orientations $\eps_{E_1}$ and $\eps_{E_2}$.}

Suppose given a flat $G$-bundle $P \to M$, with a
section $s$ chosen at the vertex.
Write $X_E = s_v \wedge s_{v'}$ if $E$ is an edge from $v$ to $v'$
and $\eps_E(v,v') = 1$.
The $X_E$ corresponding to the edges of a single tetrahedron are constrained by the ``Ptolemy relation'':\footnote{These relations are exploited systematically in \cite{GGZ1,GGZ2} to parameterize flat bundles over triangulated 3-manifolds.}
if we have four elements $s_0, \dots, s_3$ in a 2-dimensional vector space
then 
\begin{equation}
  (s_0 \wedge s_1)(s_2 \wedge s_3) + (s_0 \wedge s_2)(s_3 \wedge s_1) + (s_0 \wedge s_3)(s_1 \wedge s_2) = 0 \, .
\end{equation}
Applying this to $\tet_1$ we get the relation
\begin{equation} \label{eq:ptolemy-ex}
  - X_1^2 - X_1 X_2 + X_2^2 = 0 \, .
\end{equation}
As it happens, in $\tet_2$ we get the same relation.\footnote{Had we used the figure-eight knot complement instead, we would have gotten a sign flipped here,
with the result that the only common solution of the two equations 
would be $X_1 = X_2 = 0$.}
Assuming that $X_2 \neq 0$,
we may choose the scale of $s$ so that $X_2 = 1$,
and then using \eqref{eq:ptolemy-ex}
\begin{equation} \label{eq:X-sol-1}
  X_1 = \frac{1}{2} (-1 \pm \sqrt{5}) \, , \quad  X_2 = 1 .
\end{equation}
For either choice of the sign in \eqref{eq:X-sol-1},
we can build a flat boundary-reduced $G$-bundle $P \to M$, 
by taking the trivial bundle in each tetrahedron and then
gluing
across each triangle, requiring that the gluing
match up the sections $s_v$ at the three
vertices of the triangle.
In this way we obtain two inequivalent flat boundary-reduced 
$G$-bundles $P \to M$. Moreover, since all $X_E \neq 0$ these $G$-bundles
obey the genericity Assumption \ref{thm:74}. 

In either case, we would like to compute $\scrF_G(M;P)$
using Theorem~\ref{thm:cs-invariant}. For this we have to make a choice
of a pair of marked edges on each tetrahedron. We choose
the edges labeled $01$,$23$.
We also have to choose logarithms $x_i$ of $X_i$.
Then for either $\tet_1$ or $\tet_2$ we have
\begin{equation}
  u_1 = -x_1+x_2, \quad u_2 = 2x_1 - 2x_2, \quad \eta = (+1,+1)
\end{equation}

To be completely explicit let us choose $P \to M$ corresponding to the $-$ sign in
\eqref{eq:X-sol-1} above. Then,
using \eqref{eq:lpp} and \eqref{eq:X-sol-1}, it turns out that\footnote{The fact that we get a closed form expression here comes from the fact that, for $z = -\frac{1+\sqrt{5}}{2}$, 
we have the closed form expression 
$\Li_2(z) = -\frac{\pi^2}{10} - \log(-z)^2$. 
Also, in this example $\ell^\eta(u_1,u_2)$ is
actually independent $ (\text{mod}\,4\pi^2 \Z)$ 
of the choice of logarithms $x_1$, $x_2$.
Both of these are exceptional properties, which do not hold in most examples; more typically, the final result for $\scrF_G(M;P)$ involves a sum of $\Li_2(z_i)$ for algebraic numbers $z_i$,
and changing the choice of logarithms
changes the contributions from individual tetrahedra while leaving
the final result unchanged.}
\begin{equation} \label{eq:one-tet-cs-1}
  \ell^{(+1,+1)}(u_1,u_2) = \frac{9 \pi^2}{20}\ (\text{mod}\,4\pi^2 \Z), \qquad
  \exp \left( \frac{1}{2 \pi \sqrt{-1}} \ell^{(+1,+1)}(u_1,u_2) \right) = \exp \left( - \frac{9 \pi \sqrt{-1}}{20} \right) \, .
\end{equation}
Next we look up from Lemma \ref{lem:k-constants} that for
either tetrahedron
we have
\begin{equation} \label{eq:one-tet-k-1}
  k^{\eps\sstet} = \exp\left(\frac{7 \pi \sqrt{-1}}{12}\right) \, .
\end{equation}
Finally, we have to include the cube roots of unity
from face gluings. For instance, on face $\Delta_1$
we see from the face gluings above that the edge $e(\Delta_1, \tet_1)$ 
(numbered $01$ on $\tet_1$)
and the edge $e(\Delta_1, \tet_2)$ (numbered $01$ on $\tet_2$)
have $\IP{e(\Delta_1, \tet_1),e(\Delta_1, \tet_2)} = 1$.
Computing similarly for the other three faces, we get the overall factor
\begin{equation} \label{eq:cube-roots-1}
  \exp\left(\frac{2 \pi \sqrt{-1}}{3} (1+0+1+0) \right) = \exp \left(- \frac{2 \pi \sqrt{-1}}{3} \right) \, .
\end{equation}
Combining the factors \eqref{eq:one-tet-cs-1} (for each tetrahedron), \eqref{eq:one-tet-k-1} (for each tetrahedron), and \eqref{eq:cube-roots-1} we get
\begin{equation}
 \scrF_G(M;P) = \exp \left( \pi \sqrt{-1} \left( -\frac{9}{10} + \frac{7}{6} - \frac{2}{3} \right) \right) = \exp \left( - \frac{2 \pi \sqrt{-1}}{5} \right) \, .
\end{equation}

If we take $P \to M$ corresponding to the $+$ sign in
\eqref{eq:X-sol-1}, then
\eqref{eq:one-tet-cs-1} is replaced by
\begin{equation}
  \ell^{(+1,+1)}(u_1,u_2) = \frac{\pi^2}{20}\ (\text{mod}\,4\pi^2 \Z), \qquad
  \exp \left( \frac{1}{2 \pi \sqrt{-1}} \ell^{(+1,+1)}(u_1,u_2) \right) = \exp \left( - \frac{\pi \sqrt{-1}}{20} \right)   \, ,
\end{equation}
which when combined with the other factors gives
\begin{equation}
 \scrF_G(M;P) = \exp \left( \pi \sqrt{-1} \left( -\frac{1}{10} + \frac{7}{6} - \frac{2}{3} \right) \right) = \exp \left( \frac{2 \pi \sqrt{-1}}{5} \right) \, .
\end{equation}

\subsection{Another example}

Now let $M$ be the manifold called {\tt m071} in the SnapPea census.
$M$ admits an ideal triangulation $\scrT$ with 5 tetrahedra, 10 triangles, 5 edges
and 1 vertex.
Applying Theorem~\ref{thm:cs-invariant}
in this example we obtain the Chern-Simons invariants 
$\scrF_G(M;P)$ for $7$ inequivalent boundary-reduced flat
$G$-bundles $P \to M$:
numerically they are approximately
\begin{equation*}  
\begin{array}{ccc}
0.697849 + 0.716244 \I & 
-0.99614 + 0.0877733 \I &
-0.26787 + 0.963455 \I \\
-0.948968 - 0.315372 \I &  
0.982867 + 0.184319 \I &
1.51835 + 0.0629475 \I \\
0.65748 + 0.0272577 \I 
\end{array}
\end{equation*}
Details of this computation 
(along with a few others) are given
in the Mathematica notebook {\tt dilog-compute.nb},
available as an ancillary file included with the
arXiv preprint version of this paper.
In making these computations we made use of the software SnapPy \cite{CDGW} and Regina
\cite{BBP}.

   \section{Future directions}\label{sec:11}

\ \,We conclude with a brief description of possible avenues for further
exploration.   
 
\bigskip

 \begin{enumerate}[label=\textnormal{\arabic*. }, leftmargin=*]

 \item Although some basic setup in~\S\ref{sec:4} and~\S\ref{sec:5} applies
to all rank one complex Lie groups, our main results are proved here only
for flat $\SLC$-connections.  One should be able to extend them to the
groups~$\GLC$ and $\PSLC$.

 \item More ambitious is an extension to higher rank complex Lie groups.
Spectral networks in 2~dimensions for higher rank groups are discussed
in~\cite{GMN1,GMN2,LP,IM}.  Some clues for spectral networks in 3~dimensions
can be found in~\cite{GTZ,DGG}.

 \item The flat $\SLC$-connections to which we apply stratified
abelianization are assumed to be boundary-unipotent, this for both 2-~and
3-dimensional compact manifolds with boundary.  The expectation is that the
boundary-unipotent assumption could be dropped at the cost of having
additional lines or $\VV$-lines associated to boundary components.

 \item The 3-dimensional spectral networks in this paper are either induced
from a triangulation or for the Cartesian product of a triangulated
2-manifold and a closed interval (\S\ref{sec:8.2}).  According to~\cite{GMN3}
one obtains a 3-dimensional spectral network from a 1-parameter family of
holomorphic quadratic differentials on a Riemann surface.  This recovers the
spectral network of~\S\ref{sec:8.2}, but one can also get another transition
of triangulations called a ``juggle''.  It would be interesting to study this
kind of 3-dimensional spectral network.

 \item One can strive to develop spectral networks (rank one or higher rank)
as a topological structure on a smooth manifold, much like an orientation or
spin structure.  If so, then one could define bordism categories and bordism
spectra as domains of field theories on manifolds equipped with a spectral
network and stratified abelianization.  This should lead to more powerful
theorems.  Perhaps, then, our main results could be stated and proved as
parts of an isomorphism of field theories.

 \item One of the most important aspects of 2-dimensional spectral networks
is the relationship to the WKB analysis of ordinary differential
equations; see e.g.~\cite{KaTa,GMN3} and references therein. One can inquire whether 3-dimensional spectral
networks have a similar relationship to differential equations.
 
 \item We give a construction of the Chern-Simons line of a flat bundle
on a 2-manifold via stratified abelianization.  In a parametrized family of
flat bundles, this gives a construction of the Chern-Simons line bundle
with its covariant derivative.  This is the same construction that appears
in~\cite{FG2,APP,N,CLT}, and therefore the line bundle constructed in those papers
must be the Chern-Simons line bundle.  One can imagine that this
identification of the line bundle will lead to new insights and results.

 \item 2d spectral networks on a Riemann surface can be constructed in terms of 
 BPS particles in a corresponding supersymmetric quantum field theory
 \cite{GMN1}. It would be very interesting to know whether one can understand
 3d spectral networks in a similar way; candidates for the corresponding quantum
 field theory are known in the physics literature, beginning with
 \cite{DGGu}.

 \item 2d spectral networks can be used to construct the quantum trace map (or $q$-nonabelianization map, or quantum UV-IR map) of \cite{BW},
 as a homomorphism from the $\fgl(2)$ skein algebra of a surface $Y$ to the $\fgl(1)$ skein
 algebra of a double cover $\tY$ \cite{G,NY}.
 It would be interesting to use the 3d spectral networks defined in this paper
 to construct a 3-manifold version of the quantum trace, mapping the $\fgl(2)$ skein module of a 
 3-manifold $M$ to an $\fgl(1)$ skein module of a branched double cover $\tM$. According
 to the recent proposal of \cite{AGLR}, such a 3-manifold quantum trace is one of the 
 necessary ingredients in the formulation of a ``length conjecture'' for link invariants.

 \end{enumerate}

\appendix

   \section{Ordinary differential cochains}\label{sec:9}

Differential cohomology descends on the one hand from the differential
characters of Cheeger-Simons~\cite{ChS} and on the other from Deligne
cohomology~\cite{De} in algebraic geometry.  The ideas for an explicit model
are suggested in~\cite[\S6.3]{DF}.  Hopkins and Singer~\cite[\S3]{HS} develop
the model that we recall in this appendix.  The theory of \emph{generalized
differential cohomology} (and differential cocycles) has been further
developed in many works, for example~\cite{SS,BNV,ADH}.  In this brief
appendix we also include a few complements needed for Chern-Simons theory.

  \subsection{Cochain model}\label{subsec:9.1}

Let $M$~be a smooth manifold, $A$~an abelian group, and $C^p(M;A)$ the group
of singular $p$-cochains with values in the abelian group~$A$.  For
each~$q\in \ZZ^{\ge0}$ define a cochain complex~$\cCc q\bullet$ by
  \begin{equation}\label{eq:108}
     \cCc qp(M) = \begin{cases} C^p(M;\ZZ)\times C^{p-1}(M;\RR)\times \Omega
     ^p(M),&p\ge q;\\C^p(M;\ZZ)\times C^{p-1}(M;\RR),&p<q,\end{cases} 
  \end{equation}
with differential acting on $(c,h,\omega )\in \cCc qp(M)$, $p\ge q$, or on
$(c,h)\in \cCc qp(M)$, $p<q$, given by
  \begin{equation}\label{eq:109}
     \begin{aligned} d(c,h,\omega ) &=\;\;\;(\delta c\,,\,\omega -c-\delta
     h\,,\, 
      d\omega ),\qquad p\ge q; \\ d(c,h) &= \begin{cases} (\delta c,-c-\delta
      h,0),&\qquad \quad \;\;p=q-1;\\(\delta c,-c-\delta h),&\qquad \quad
     \;\;p<q-1.\end{cases}  
 \end{aligned}
  \end{equation}
The cohomology of the cochain complex $\cCc q\bullet$ is 
  \begin{equation}\label{eq:110}
     \cH(q)^p(M)\cong \begin{cases}
     H^p(M;\ZZ),&p>q;\\H^p(M;\RZ),&p<q,\end{cases} 
  \end{equation}
and the diagonal group $\cH(q)^q(M)$ is the \emph{differential cohomology}.
Since we have great use for the diagonal groups, we introduce the notations
  \begin{equation}\label{eq:111} 
  \begin{aligned}
      \cCd q(M) &:= \cCc qq(M) \\
      \cZd q(M) &:= \cZ qq(M) \\
      \cH^q(M) &:= \cH(q)^q(M) ,
  \end{aligned}
  \end{equation}
where $\cZ qp(M)\subset \cCc qp(M)$ is the subgroup of cocycles.  The
group~$\cH^q(M)$ is isomorphic to the group of Cheeger-Simons differential
characters on~$M$.  It can be given~\cite{BSS}, \cite[\S6]{DGRW} the
structure of an infinite dimensional abelian Lie group with Lie algebra and
homotopy groups
  \begin{equation}\label{eq:112}
     \begin{aligned} \Lie \cH^q(M)&\cong \frac{\Omega ^{q-1}(M)}{d\Omega
      ^{q-2}(M)}, \\[6pt] \pi _k\cH^q(M) &\cong \begin{cases}
     H^q(M;\ZZ),&k=0;\\H^{q-1}(M;\ZZ),&k=1;\\ 0,&k\ge2\end{cases}
       \\ \end{aligned} 
  \end{equation}

  \begin{remark}[]\label{thm:45}
 \

 \begin{enumerate}

 \item The case~$q=2$ is instructive: $\cH^2(M)$ is isomorphic to the group
of isomorphism classes of principal $\RZ$-bundles with connection over~$M$.

 \item In the main text we use complex differential forms and complex
singular cochains in~\eqref{eq:108}, for which we use the notation `$\cCC
q^p(M)$'.

 \item The construction is functorial for smooth maps, so can be phrased in
terms of simplicial sheaves on manifolds~\cite{FH2,BNV,ADH}.

 \item There are alternative models which replace singular cochains with
integer coefficients by other models of cochains or by maps to
Eilenberg-MacLane spaces.  The latter approach is used to define differential
versions of generalized cohomology theories, as in~\cite[\S4]{HS}.

 \end{enumerate}
  \end{remark}

  \subsection{Curvature, characteristic class, and nonflat trivializations}\label{subsec:9.2}

The projection
  \begin{equation}\label{eq:113}
     \cCd q(M)\longrightarrow \Omega ^q(M) 
  \end{equation}
is called the \emph{curvature} or \emph{covariant derivative} map, depending
on the context.  The restriction of~\eqref{eq:113} to differential {cocycles}
$\cZd q(M)\subset \cCd q(M)$ factors through $\cH^q(M)$ and has image the
subgroup of closed differential forms with integral periods.  There is also a
\emph{characteristic class} homomorphism
  \begin{equation}\label{eq:114}
     \cH^q(M)\longrightarrow H^q(M;\ZZ). 
  \end{equation}
 
The short exact sequence 
  \begin{equation}\label{eq:115}
     0\longrightarrow \cCc q{\bullet }(M)\xrightarrow{\;\;i\;\;}
     \cCc{q-1}{\bullet }(M)\longrightarrow \Omega
     ^{q-1}(M)[1-q]\longrightarrow 0 
  \end{equation}
induces a long exact sequence in cohomology: 
  \begin{equation}\label{eq:116}
     \dots\longrightarrow \cH^{q-1}(M)\longrightarrow \Omega
     ^{q-1}(M)\longrightarrow \cH^q(M)\xrightarrow{\;\;i\;\;}
     H^q(M;\ZZ)\longrightarrow 0 
  \end{equation}
Let $\co\in \cZd q(M)$ be a differential cocycle.   

  \begin{definition}[]\label{thm:46}
 A cochain $\ct\in \cCd{q-1}(M)$ is a \emph{nonflat\footnote{`Nonflat' could
be replaced by the more accurate `not necessarily flat'.} trivialization}
of~$\co$ if $d\ct=i(\co)$ holds in $\cCc{q-1}q(M)$.
  \end{definition}

\noindent
 By~\eqref{eq:116}, a nonflat trivialization~$\ct$ produces a differential
form $\tau \in \Omega ^{q-1}(M)$, the \emph{covariant derivative} of~$\ct$.
(It is the image of~$\ct$ under~\eqref{eq:113}.)  For~$q=2$ we can
represent~$\co$ as a complex line bundle $\pi \:L\to M$ with connection.
Then a nonflat trivialization is a section of~$\pi $, and its covariant
derivative is as usual.

  \begin{remark}[]\label{thm:49}
 See \cite[Definition~5.12]{F3} for a model of nonflat trivializations which
works in generalized differential cohomology theories.
  \end{remark}

  \subsection{Higher Picard groupoids of differential cocycles}\label{subsec:9.3}

As a warmup, recall that a cochain complex 
  \begin{equation}\label{eq:117}
     0\longrightarrow A^0\xrightarrow{\;\;d\;\;}
     A^1\xrightarrow{\;\;d\;\;}\cdots 
  \end{equation}
gives rise to a sequence of higher Picard groupoids.  For $q\in \ZZ^{\ge0}$,
form the truncated complex 
  \begin{equation}\label{eq:118}
     0\longrightarrow
     A^0\xrightarrow{\;\;d\;\;}A^1\xrightarrow{\;\;d\;\;}\cdots
     \xrightarrow{\;\;d\;\;} A^q_{\textnormal{closed}}\longrightarrow 0 
  \end{equation}
in which $A^q_{\textnormal{closed}}\subset A^q$ is the subgroup of closed
elements.  Define the Picard $q$-groupoid~$\Gq$ as follows.  The set of
objects is~$A^q_{\textnormal{closed}}$.  The
$A^{q-1}_{\textnormal{closed}}$-torsor of 1-morphisms between $a_0,a_1\in
A^q_{\textnormal{closed}}$ is
  \begin{equation}\label{eq:119}
     \Hom\mstrut _1(a_0,a_1) = \bigl\{ a'\in A^{q-1}: a_0 + da' = a_1 \bigr\}. 
  \end{equation}
The $A^{q-2}_{\textnormal{closed}}$-torsor of 2-morphisms between
$a_0',a_1'\in \Hom\mstrut _1(a_0,a_1)$ is
  \begin{equation}\label{eq:120}
     \Hom\mstrut _2(a'_0,a'_1) = \bigl\{ a''\in A^{q-2}: a_0' + da'' = a'_1
     \bigr\},
  \end{equation}
and so on.  The homotopy groups of~$\Gq$ are
  \begin{equation}\label{eq:121}
     \pi _i\Gq = \begin{cases}
     H^{q-i}(A),&i=0,\dots ,q;\\[3pt]
     0,&i>q.\end{cases} 
  \end{equation}
 
A variation of this construction produces higher Picard groupoids out of
differential cohomology on a smooth manifold~$M$.  Define the Picard
$q$-groupoid $\cGq M$ to have as its set of objects $\cZd q(M)\subset \cCd
q(M)$, the set of differential cocycles of degree~$q$.  Then for
$\co_0,\co_1\in \cZd q(M)$, define
  \begin{equation}\label{eq:122}
     \Hom\mstrut _1(\co_0,\co_1) = \bigl\{ \ct\in \cCd{q-1}(M): i(\co_0) +
     d\ct = i(\co_1) \bigr\},
  \end{equation}
where as in Definition~\ref{thm:46} the equation $i(\co_0) + d\ct = i(\co_1)$
lies in $\cCc{q-1}q(M)$.  In other words, $\ct\in \Hom\mstrut _1(\co_0,\co_1)$
is a nonflat trivialization of $\co_1-\co_0$.  Now for $\ct_0,\ct_1\in
\Hom\mstrut _1(\co_0,\co_1)$, define
  \begin{equation}\label{eq:123}
     \Hom\mstrut _2(\ct_0,\ct_1) = \bigl\{ \cl\in \cCd{q-2}(M): i(\ct_0) +
     d\cl = i(\ct_1) \bigr\}. 
  \end{equation}
The homotopy groups of $\cGq M$ are the differential cohomology groups:
  \begin{equation}\label{eq:124}
     \pi _i\cGq M= \begin{cases}
     \cH^{q-i}(M),&i=0,\dots ,q;\\[3pt]0,&i>q.\end{cases} 
  \end{equation}

  \begin{remark}[]\label{thm:47}
 In low degrees the $\cG qM$ have alternative, more geometric, presentations.
For example, $\cG 2M$~is equivalent to the following Picard 2-groupoid: an
object is a principal $\RZ$-bundle $P\to M$ with connection~$\Theta $; a
morphism $(P_0,\Theta _0)\to (P_1,\Theta _1)$ is a section~$s$ of $P\mstrut
_1\otimes P_0\inv \to M$; and a 2-morphism $s_0\to s_1$ is a function
$f\:M\to \RR$ such that $s_0 + \overline f=s_1$, where $\overline f\:M\to
\RZ$ is the mod~$\ZZ$ reduction of~$f$.
  \end{remark}

  \subsection{Integration and Stokes' theorem}\label{subsec:9.4}

We repeat and expand upon material from~\cite[\S3.4]{HS}, slightly
specialized.  Let $p\:M\to S$ be a proper fiber bundle whose base~ $S$ is a
smooth manifold and whose total space~ $M$ is a smooth manifold with
boundary; the fibers of~$p$ are smooth manifolds with boundary.  Suppose an
$\cH$-orientation is given; see \cite[\S2.4]{HS}.  Then if the fibers have
dimension~$k$, integration is a homomorphism
  \begin{equation}\label{eq:125}
     \int_{M/S}\:\cCc qp(M)\longrightarrow \cCc{q-k}{p-k}(S). 
  \end{equation}
We also have the usual integration of differential forms, and the diagram 
  \begin{equation}\label{eq:126}
     \begin{gathered} \xymatrix{\cCc qp(M)\ar[r]^{} \ar[d]_{\int_{M/S}} &
     \Omega ^p(M)\ar[d]^{\int_{M/S}} \\ \cCc{q-k}{p-k}(S)\ar[r]^{} & \Omega
     ^{p-k}(S) } \end{gathered} 
  \end{equation}
commutes.  Also the integration map~\eqref{eq:125} commutes with base
change. 
 
We state a version of Stokes' theorem in this context.  It concerns the
case~$p=q$ in~\eqref{eq:125}, restricted to differential
cocycles.\footnote{We do not vouch for the signs.}

  \begin{theorem}[]\label{thm:48}
 Let $\co\in \cZd q(M)$ be a differential cocycle with curvature $\omega \in
\Omega ^q(M)$.  Then the integral $\int_{M/Z}\co\in \cCd{q-k}(S)$ is a nonflat
trivialization of $\int_{\partial M/S}\co\in \cZd{q-k+1}(S)$ with
covariant derivative $\int_{M/S}\omega \in \Omega ^{q-k}(S)$.  
  \end{theorem}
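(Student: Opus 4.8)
The statement is a version of Stokes' theorem for differential cocycles integrated over the fibers of a proper fiber bundle $p\colon M\to S$, where the fibers are manifolds with boundary. The plan is to reduce everything to the explicit cochain model of \S\ref{subsec:9.1} together with the integration map \eqref{eq:125} and the commuting square \eqref{eq:126}, and then to unwind the definition of a nonflat trivialization (Definition~\ref{thm:46}).

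First I would recall that a differential cocycle $\co=(c,h,\omega)\in\cZd q(M)$ satisfies $d(c,h,\omega)=0$ in $\cCc qq(M)$, which by \eqref{eq:109} means $\delta c=0$, $\omega-c-\delta h=0$, and $d\omega=0$; in particular $\omega$ is closed, it represents the image of the characteristic class of $\co$ under $H^q(M;\ZZ)\to H^q(M;\RR)$, and $\omega-\delta h=c$ has integral periods. The goal is to show that $\int_{M/S}\co\in\cCd{q-k}(S)$ is a nonflat trivialization of $\int_{\partial M/S}\co\in\cZd{q-k+1}(S)$ with covariant derivative $\int_{M/S}\omega$. By Definition~\ref{thm:46}, what must be proved is the single identity
\begin{equation}\label{eq:stokes-plan}
  d\!\left(\int_{M/S}\co\right) = i\!\left(\int_{\partial M/S}\co\right)
\end{equation}
in $\cCc{q-k}{q-k+1}(S)$ (here $i$ is the inclusion of \eqref{eq:115}, i.e. forgetting the differential-form component), together with the assertion that the covariant derivative of the left side, computed via the projection \eqref{eq:113}, equals $\int_{M/S}\omega$. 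The latter is immediate once \eqref{eq:stokes-plan} is in hand: the covariant derivative of $\int_{M/S}\co$ is by \eqref{eq:126} equal to $\int_{M/S}$ of the curvature $\omega$ of $\co$, which is exactly $\int_{M/S}\omega\in\Omega^{q-k}(S)$.

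So the core of the argument is \eqref{eq:stokes-plan}. I would prove it by combining two facts about the fiber integration \eqref{eq:125}: (a) it is a chain map up to the boundary term, i.e. $d\int_{M/S}x = \int_{M/S}dx \pm \int_{\partial M/S} x$ for $x\in\cCc qq(M)$ --- this is the differential-cochain Stokes' theorem of \cite[\S3.4]{HS}, which holds because it holds separately for singular cochains (the usual chain-level Stokes/boundary formula) and for differential forms (ordinary Stokes), and the integration map \eqref{eq:125} is built compatibly from these two pieces; and (b) since $\co$ is a \emph{cocycle}, $d\co=0$, so the term $\int_{M/S}d\co$ drops out, leaving $d\int_{M/S}\co = \pm\int_{\partial M/S}\co$. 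Finally one observes that the restriction $\co|_{\partial M}$ is again a differential cocycle (restriction commutes with $d$), so $\int_{\partial M/S}\co\in\cZd{q-k+1}(S)$ is a genuine cocycle, and that in the degree bookkeeping $p=q$ on $M$ forces $p=q-k+1 < q-k+1$... rather, on the boundary the form-degree drops below the truncation level, which is precisely why the right-hand side of \eqref{eq:stokes-plan} is written with $i(-)$: the integrated boundary cochain, viewed in $\cCc{q-k}{\bullet}(S)$, lies in the image of the inclusion $i$ from $\cCc{q-k+1}{\bullet}(S)$. Matching these degree conventions carefully is the one place where sign and indexing errors creep in, and indeed the paper itself disclaims the signs; I expect that reconciling the truncation-degree shift in \eqref{eq:115}--\eqref{eq:116} with the drop in fiber dimension from $M$ to $\partial M$ will be the main bookkeeping obstacle, while the conceptual content is entirely the Stokes property (a) of the Hopkins--Singer integration map.
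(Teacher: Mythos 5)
Your proposal is correct and takes essentially the same route as the paper, which offers no independent argument but simply points to Hopkins--Singer \cite[\S3.4]{HS}: the content there is exactly the chain-level Stokes property of the fiber-integration map \eqref{eq:125} that you invoke, which together with $d\co=0$ and the commuting square \eqref{eq:126} yields both the trivialization identity and the covariant-derivative statement, with signs left unvouched for just as in the paper. (One small slip: the inclusion $i$ of \eqref{eq:115} does not ``forget the differential-form component''---in the relevant degree $q-k+1$ it is the identity on all three components---but this does not affect your argument.)
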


\noindent
 See~\cite[\S3.4]{HS} for a more general theorem and proof.

We also need a generalization of Theorem~\ref{thm:48} to manifolds with
corners.  Here we state the next simplest version---corners of codimension at
most~2---and leave to the reader the more general version.  For convenience
we use manifolds with corners equipped with the extra structure needed for
objects and morphisms in bordism multicategories, as in \cite[\S A.2]{FT1}.
The data of such a manifold~$M$ of dimension~$k$ and depth~$\le d$ includes
manifolds~$M\corn\delta j$ with corners of depth $\le d-j$, $\delta \in
\{0,1\}$, $j\in \{1,\dots ,d\}$, and embeddings $[0,1]^{j-1}\times
M\corn\delta j\to \partial M$.  The data is designed to fit the formalism of
multicategories.  For example, if $d=2$ then a 2-categorical interpretation
has objects $M\corn02, M\corn12$; 1-morphisms $M\corn01,M\corn11\:M\corn02\to
M\corn12$; and $M$~itself is a 2-morphism $M\corn01\to M\corn11$. 
 
Let $p\:M\to S$ be a proper fiber bundle whose base~$S$ is a smooth manifold
and whose total space~$M$ is a manifold with corners of depth~$\le2$.  Assume
that $M$~carries the extra structure of \cite[\S A.2]{FT1}, fibered over~$S$.
In particular, there are fiber bundles $p\corn\delta j\:M\corn\delta j\to S$,
$\delta \in \{0,1\}$, $j\in \{1,2\}$.  Let $\co\in \cZd q(M)$ be a
differential cocycle with curvature $\omega \in \Omega ^q(M)$.  Define 
  \begin{equation}\label{eq:131}
     \begin{aligned} \ce\corp\delta j&=\int_{M\corn\delta j/S}\co\; &&\in
      \cCd{q-k+j}(S), \\ \eta\corp\delta j&=\int_{M\corn\delta j/S}\omega\;
      &&\in \Omega^{q-k+j}(S). \\ \end{aligned} 
  \end{equation}
These formulas pertain for $j\in \{1,2\}$, $\delta \in \{0,1\}$; for~$j=0$
omit~$\delta $.  

  \begin{theorem}[]\label{thm:52}
 \ 
 \begin{enumerate}[label=\textnormal{(\arabic*)}]

 \item $\ce\corp\delta 1$~is a nonflat trivialization of $\ce\corp12-\ce\corp02$
with covariant derivative~$\eta \corp\delta 1$.

 \item $\ce\mstrut _0$~is a nonflat trivialization of
$\ce\corp11-\ce\corp01$ with covariant derivative~\,$\eta \mstrut _0$.

 \end{enumerate}
  \end{theorem}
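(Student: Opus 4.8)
The plan is to reduce Theorem~\ref{thm:52} to two applications of the codimension-one Stokes' theorem Theorem~\ref{thm:48}, one for each $j$, applied to appropriate sub-bundles of $M\to S$. First I would recall the combinatorial structure from~\cite[\S A.2]{FT1}: a depth-$\le 2$ manifold with corners $M$ fibered over $S$ comes with faces $M\corn\delta 1$ (the codimension-one boundary pieces, glued along collars $[0,1]\times M\corn\delta 2$) and corners $M\corn\delta 2$ (codimension-two), and the boundary decomposes as $\partial M = M\corn01\cup M\corn11$ identified along their common boundary $\partial M\corn01 = \partial M\corn11 = [0,1]^0\times M\corn02 \amalg [0,1]^0\times M\corn12$, with matching orientations. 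Similarly $\partial M\corn\delta 1 = M\corn02\amalg M\corn12$ (up to the sign conventions of the bordism formalism). The key point is that $M\corn\delta 1$ is itself a manifold with corners of depth $\le 1$, i.e., an ordinary manifold with boundary, so Theorem~\ref{thm:48} applies directly to the fiber bundle $p\corn\delta 1\colon M\corn\delta 1\to S$ with the pulled-back cocycle $\co\res{M\corn\delta 1}$.

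For part~(1): apply Theorem~\ref{thm:48} to $p\corn\delta 1\colon M\corn\delta 1\to S$, whose fibers have dimension $k-1$. This yields that $\int_{M\corn\delta 1/S}\co = \ce\corp\delta 1 \in \cCd{q-(k-1)}(S) = \cCd{q-k+1}(S)$ is a nonflat trivialization of $\int_{\partial M\corn\delta 1/S}\co \in \cZd{q-k+2}(S)$ with covariant derivative $\int_{M\corn\delta 1/S}\omega = \eta\corp\delta 1$. It then remains to identify $\int_{\partial M\corn\delta 1/S}\co$ with $\ce\corp12-\ce\corp02$, which follows from the identification $\partial M\corn\delta 1 = M\corn12\amalg (-M\corn02)$ (or whatever the sign convention dictates) and additivity of integration over disjoint unions. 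For part~(2): this is precisely the statement of Theorem~\ref{thm:48} applied to $p\colon M\to S$ directly, with fiber dimension $k$: $\int_{M/S}\co = \ce_0 \in \cCd{q-k}(S)$ is a nonflat trivialization of $\int_{\partial M/S}\co\in \cZd{q-k+1}(S)$ with covariant derivative $\eta_0$, and one identifies $\int_{\partial M/S}\co = \ce\corp11 - \ce\corp01$ using $\partial M = M\corn11\amalg(-M\corn01)$.

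The approach is thus essentially bookkeeping: the analytic content is entirely contained in Theorem~\ref{thm:48}, and the new content here is purely the combinatorics of how a depth-$\le 2$ corner structure organizes its boundary and corner strata, together with keeping track of the signs and orientations so that ``$\partial$ of a face'' is expressed in terms of the two corner pieces with the correct signs. I expect the main obstacle---such as it is---to be precisely the sign conventions: ensuring that the orientations induced on $M\corn02$ and $M\corn12$ as boundary components of $M\corn01$ versus $M\corn11$ are consistent with the orientations they carry as corners of $M$, and that these match the signs implicit in writing the codomains as $\ce\corp12-\ce\corp02$ and $\ce\corp11-\ce\corp01$. This is exactly the point flagged by the footnote ``We do not vouch for the signs'' attached to Theorem~\ref{thm:48}, so I would adopt the same posture and simply state the result up to these conventions, referring to \cite[\S3.4]{HS} and \cite[\S A.2]{FT1} for the general corner version and its proof. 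No genuinely new estimates or constructions are needed beyond this reduction.
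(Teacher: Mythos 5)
The paper states Theorem~\ref{thm:52} without writing a proof, deferring (as for Theorem~\ref{thm:48}) to the fiberwise integration and Stokes machinery of \cite[\S3.4]{HS} together with the corner formalism of \cite[\S A.2]{FT1}; your reduction---Theorem~\ref{thm:48} applied to the face bundles $M\corn\delta 1\to S$ for part~(1) and to $M\to S$ for part~(2), plus identification of $\partial M\corn\delta 1$ with $M\corn12\amalg(-M\corn02)$ and $\partial M$ with $M\corn11\amalg(-M\corn01)$ up to the sign conventions---is exactly the intended argument. The one point worth making explicit is that in part~(2) the total space has depth-$2$ corners rather than just boundary, so one must invoke the corner-compatible form of the integration/Stokes theorem (or smooth the corners) and note that the corner strata contribute nothing further to $\int_{\partial M/S}$, which is precisely what the structure of \cite[\S A.2]{FT1} guarantees.
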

\noindent 
 The diagram 
  \begin{equation}\label{eq:132}
  \begin{tikzcd}[column sep=large]
  \ce\corp02 \arrow[r,bend left=35,"\ce\corp11"{name=B},""{below,name=D}]
\arrow[r,bend right=35,"\ce\corp01"{below,name=F},""{above,name=C}] 
 & \ce\corp12   
 \arrow[Rightarrow, from=C, to=D, "\ce_0"{right}]
 \end{tikzcd}
  \end{equation}
captures some of Theorem~\ref{thm:52}.  In the terms of~\S\ref{subsec:9.3},
the integral of~$\co$ over~$M_0/S$ is a 2-morphism in~$\cG{q-k+2}S$.  We
leave generalizations to greater depths to the reader.

We also use a generalization of Theorem~\ref{thm:52} for the integral of a
1-morphism in~$\cGq M$.  For simplicity, consider a 1-morphism of the form
$\widecheck{0}\xrightarrow{\;\ct\;}\co$.  In other words, $\ct\in
\cCd{q-1}(M)$ is a nonflat trivialization of $\co\in \cZd q(M)$.  Let $\tau
\in \Omega ^{q-1}(M)$ be the covariant derivative of~$\ct$.  As
in~\eqref{eq:131}, set 
  \begin{equation}\label{eq:156}
     \begin{aligned} \cs\corp\delta j&=\int_{M\corn\delta j/S}\ct\; &&\in
      \cCd{q-k+j-1}(S), \\ \sigma\corp\delta j&=\int_{M\corn\delta j/S}\tau\;
      &&\in \Omega^{q-k+j-1}(S). \\ \end{aligned} 
  \end{equation}

  \begin{theorem}[]\label{thm:60}
  \ 
 \begin{enumerate}[label=\textnormal{(\arabic*)}]

 \item $\cs\corp\delta 2$~is a nonflat trivialization of $\ce\corp\delta 2$
with covariant derivative~$\sigma \corp\delta 2$.

 \item $\cs\corp\delta 1\:\,\cs\corp12-\cs\corp02\,\to\, \ce\corp\delta 1$~is a
nonflat isomorphism with covariant derivative~$\sigma \corp\delta 1$.

 \item $\cs\mstrut _0 \:\,\cs\corp11-\cs\corp01\,\to\, \ce\mstrut _ 0$ is a
nonflat isomorphism with covariant derivative $\sigma \mstrut _0$.

 \end{enumerate} 
  \end{theorem}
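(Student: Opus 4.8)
The final statement, Theorem~\ref{thm:60}, is a version of Stokes' theorem for the integral of a \emph{1-morphism} (a nonflat trivialization) in the higher Picard groupoid $\cGq M$ over a fiber bundle $p\:M\to S$ whose fibers have corners of depth~$\le 2$. The plan is to deduce it from the already-established Stokes theorems for \emph{differential cocycles} (Theorem~\ref{thm:48} and especially Theorem~\ref{thm:52}) by a formal bookkeeping argument, reducing everything to the cochain-level Stokes identity for the integration map $\int_{M/S}\:\cCc q p(M)\to\cCc{q-k}{p-k}(S)$ of \S\ref{subsec:9.4} (from~\cite[\S3.4]{HS}). Concretely, I would first recall that a 1-morphism $\widecheck{0}\xrightarrow{\;\ct\;}\co$ in $\cGq M$ is exactly a cochain $\ct\in\cCd{q-1}(M)$ with $d\ct=i(\co)$ in $\cCc{q-1}{q}(M)$, and its covariant derivative $\tau\in\Omega^{q-1}(M)$ is the image of $\ct$ under the curvature projection~\eqref{eq:113}. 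The key point is that $\ct$ itself is a differential \emph{cochain} of degree $q-1$ — not a cocycle — so I cannot apply Theorem~\ref{thm:52} to $\ct$ directly; instead I apply the raw cochain-level integration homomorphism and track how $d$ interacts with $\int_{M/S}$ over a manifold with corners.

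The core computation is this: apply $\int_{M\corn\delta j/S}$, $\int_{M\corn\delta 1/S}$, and $\int_{M/S}$ to the identity $d\ct = i(\co)$. Since integration commutes with $d$ up to boundary terms — the precise statement being the codimension-$\le 2$ Stokes formula underlying Theorem~\ref{thm:52} — one gets, for each pair $(\delta,j)$, an identity of the shape $d\,\cs\corp\delta j = i(\ce\corp\delta j) \pm (\text{integral over the next-lower corner stratum})$. Unwinding these for $j=2$ gives $d\,\cs\corp\delta 2 = i(\ce\corp\delta 2)$, i.e. $\cs\corp\delta 2$ is a nonflat trivialization of $\ce\corp\delta 2$; its covariant derivative is $\int_{M\corn\delta 2/S}\tau = \sigma\corp\delta 2$ by the commuting square~\eqref{eq:126}. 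That is part~(1). For part~(2), the same manipulation one codimension down yields $i(\cs\corp12-\cs\corp02) + d\,\cs\corp\delta 1 = i(\ce\corp\delta 1)$ in $\cCc{q-k+1}{\,\cdot}(S)$, which is precisely the assertion that $\cs\corp\delta 1$ is a nonflat isomorphism $\cs\corp12-\cs\corp02\to\ce\corp\delta 1$; again its covariant derivative is $\sigma\corp\delta 1$ by~\eqref{eq:126}. Part~(3) is the bottom-stratum instance: $\cs\mstrut_0$ satisfies $i(\cs\corp11-\cs\corp01)+d\,\cs\mstrut_0 = i(\ce\mstrut_0)$, exhibiting it as a nonflat isomorphism $\cs\corp11-\cs\corp01\to\ce\mstrut_0$ with covariant derivative $\sigma\mstrut_0$. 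Throughout, one must be careful that the relevant identities hold in the correct complexes $\cCc q p$ with the correct degree shifts $q\rightsquigarrow q-k+j$, $p\rightsquigarrow p-k+j$, and that the $i\:\cCc q\bullet\hookrightarrow\cCc{q-1}\bullet$ maps in~\eqref{eq:115} are threaded consistently (this is what distinguishes ``trivialization of'' from ``isomorphism between'' in the statement).

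The main obstacle I expect is purely a matter of sign and degree bookkeeping: the integration map over a depth-$2$ corner produces a cascade of boundary contributions, and one must verify that when these are organized into the three statements the ``extra'' terms that appear in the intermediate identities are exactly $i(\cs\corp12-\cs\corp02)$, $i(\cs\corp11-\cs\corp01)$, etc., with the right signs — i.e. that the corner data $M\corn\delta j$ and the embeddings $[0,1]^{j-1}\times M\corn\delta j\to\partial M$ of \cite[\S A.2]{FT1} are matched to the differential~\eqref{eq:109} so that no stray factors survive. As the authors themselves footnote for Theorem~\ref{thm:48} (``we do not vouch for the signs''), I would follow the same convention: fix orientations compatibly with~\cite[\S3.4]{HS} and \cite[\S A.2]{FT1}, derive the three relations modulo sign, and record that the signs are determined by those references. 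No genuinely new idea is needed beyond Theorem~\ref{thm:52} — this is the ``integral of a $1$-morphism'' refinement of that ``integral of a $2$-morphism / object'' statement — so the proof can legitimately be left to the reader, exactly as the excerpt does, with the above sketch as justification.
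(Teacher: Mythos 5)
Your proposal is correct and is exactly the argument the paper has in mind: the paper states Theorem~\ref{thm:60} without proof, treating it as the routine ``integral of a 1-morphism'' extension of Theorem~\ref{thm:48}/Theorem~\ref{thm:52} via the cochain-level integration and Stokes formula of \cite[\S3.4]{HS}, with signs left to the conventions there. Your stratum-by-stratum application of $\int_{M\corn\delta j/S}$ to $d\ct=i(\co)$, together with the compatibility square~\eqref{eq:126} for the covariant derivatives, is precisely that verification.
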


\noindent
 The data of $\ce\corp\delta 2,\ce\corp\delta 1,\ce\mstrut
_{0},\cs\corp\delta 2,\cs\corp\delta 1,\cs\mstrut _0$ assemble to a
3-morphism in $\cG{q-k+2}S$.  We depict all but~$\cs\mstrut _0$ in~
\eqref{eq:157}.  
  \begin{equation}\label{eq:157}
 \begin{tikzcd}[column sep=large]
 \cz\arrow[r,"\cs\corp02"] \arrow[rrr, bend left=60, "\cz", ""{below, name=A}]
 \arrow[rrr, bend right=60, "\cz"{below}, ""{name=E}]
 & \ce\corp02 \arrow[r,bend left=35,"\ce\corp11"{name=B},""{below,name=D}]
\arrow[r,bend right=35,"\ce\corp01"{below,name=F},""{above,name=C}] 
 & \ce\corp12  
 & \cz\arrow[l,"\cs\corp12"{above}]  
 \arrow[Rightarrow, from=A, to=B, "\cs\corp11"{right}]
 \arrow[Rightarrow, from=C, to=D, "\ce_0"{right}]
 \arrow[Rightarrow, from=E, to=F, "\cs\corp01"{right}]
 \end{tikzcd}
  \end{equation}
Again, we leave to the reader generalizations of Theorem~\ref{thm:60} to
greater depths.  

  \begin{remark}[]\label{thm:61}
 Suppose~$q=k$, so that \eqref{eq:157}~is a diagram in~$\cG2S$.  Then as in
Remark~\ref{thm:47} we interpret~$\ce\corp\delta 2$ as a principal
$\RZ$-bundle $\pi ^\delta \:P^\delta \to S$ with connection, $\ce\corp\delta
1$~as a nonflat isomorphism $\pi ^0\to \pi ^1$, and $\ce_0$~as a function
$S\to \RR$ whose reduction mod~$\ZZ$ maps the isomorphism~$\ce\corp01$ to the
isomorphism~$\ce\corp11$.  Now $\cs\corp\delta 2$~is a nonflat section
of~$\pi ^\delta $.  The isomorphism $\ce\corp\delta 1\:\pi ^0\to \pi^1$
maps~$\cs\corp02$ to a section of~$\pi ^1$, and $\cs\corp\delta 1$~is a
function $S\to \RR$ whose mod~$\ZZ$ reduction maps the section~$\cs\corp12$
to the section~$\ce\corp\delta 1(\cs\corp02)$.  Finally, $\cs_0=0$
trivially---there are no nonzero 3-morphisms in~$\cG2S$---which means that
the difference of the functions $\cs\corp11 - \cs\corp01$ equals the
function~$\ce_0$, as is evident from the foregoing. 
  \end{remark}

   \section{Invertible field theories}\label{sec:10}
 
We briefly outline some general facts about invertible field theories,
including those which are not flat, hence not topological.  For simplicity we
confine our exposition to a setting which applies to the Chern-Simons we
encounter in~\S\ref{subsubsec:5.5.4}.  Invertible field theories which are
topological in a \emph{restricted} sense (which applies to evaluation on a
single manifold, as for example is true for Chern-Simons theory~\eqref{eq:18}
on flat connections) or are topological in a \emph{strong} sense (which
applies to evaluation in families) are modeled as maps of spectra in
topology; see~\cite{FHT,FH1,F5} and the references therein.  Here we use
\emph{generalized} differential cohomology, for which we need a model of
``cocycles'' which generalize the singular cocycles used in
Appendix~\ref{sec:9}.  For background and detailed development of topics in
this appendix, see \cite{FH2,HS,BNV,ADH} and the references therein.
 
Let $G$~be a Lie group.  Let $\Man$~denote the category of smooth manifolds
and smooth maps between them, and let $\sSet$~be the category of simplicial
sets.  Then
  \begin{equation}\label{eq:159}
     \BNG\:\Man^{\textnormal{op}}\longrightarrow \sSet 
  \end{equation}
is the simplicial sheaf which assigns to a test manifold~$S$ the nerve of the
groupoid of $G$-connections on~$S$; see \cite[Example~5.11]{FH2}.
Similarly,  
  \begin{equation}\label{eq:166}
     \ENG\:\Man^{\textnormal{op}}\longrightarrow \sSet 
  \end{equation}
is the simplicial sheaf which assigns to a test manifold~$S$ the nerve of the
groupoid of $G$-connections on~$S$ and a trivialization of the underlying
$G$-bundle.  Then $\ENG$~is equivalent to the set-valued sheaf $\Omega
^1\otimes \mathfrak{g}$ which assigns to a test manifold~$S$ the vector
space~$\Omega ^1_S(\mathfrak{g})$, where $\mathfrak{g}$~is the Lie algebra
of~$G$; see \cite[Example~5.14]{FH2}.  A smooth manifold~$M$ defines a
representable set-valued sheaf on~$\Man$; its value on a test manifold~$S$ is
the set of smooth maps $S\to M$.

Let $\hb$~be a generalized cohomology theory.  Define the $\ZZ$-graded real
vector space 
  \begin{equation}\label{eq:179}
     V_h^{\bullet }=h^{\bullet }(\pt)\otimes \RR, 
  \end{equation}
and suppose given an isomorphism of cohomology theories $h^{\bullet }\otimes
\RR\xrightarrow{\;\cong \;}HV_h^{\bullet }$, where the codomain is the
Eilenberg-MacLane theory with $HV_h^{\bullet }(\pt)=V_h^{\bullet }$.  There
is then a differential cohomology theory~$\chb$ (of ``Hopkins-Singer type'')
which refines the topological theory~$\hb$.  Furthermore, these theories---as
well as the de Rham complex---can be evaluated on simplicial sheaves, in
particular on~$\BNG$.  The Anderson dual~$\IZ^{\bullet }$ to the sphere is a
universal choice for the codomain of an invertible field theory.
Nonetheless, for Chern-Simons theory in this context it is more convenient to
use a truncation~$E^{\bullet }$, the cohomology theory introduced
in~\S\ref{subsubsec:5.2.1}, as the codomain.

  \begin{remark}[]\label{thm:64}
 Chern-Simons theory has not only a $G$-connection as a background field, but
also a spin structure.  In the formalism sketched here we do not treat them
symmetrically: we use the spin structure to integrate differential
$E$-cohomology classes (and cochains).  By contrast, in \emph{topological}
invertible theories we usually do treat them symmetrically and use the
universal codomain~$\IZ^{\bullet }$.
  \end{remark}

An $n$-dimensional invertible field theory on $G$-connections is modeled by a
map 
  \begin{align}
     \alpha \:\BNG&\longrightarrow \widecheck{h}^{n+1}. \label{eq:160}\\
\intertext{The theory~$\alpha $ has an underlying \emph{cohomology class}}
     \BNG&\longrightarrow h^{n+1} \label{eq:161}\\
\intertext{and \emph{curvature}}
     \BNG&\longrightarrow (\Omega \otimes V_h)^{n+1} \label{eq:162}
  \end{align}
of total degree~$n+1$.  (For $E$-cohomology\footnote{In the main text we use
a \emph{complexified} version of differential $E$-cohomology.} the
$\ZZ$-graded vector space~$V_E^{\bullet }=\RR$ is supported in degree~0,
hence the codomain of the curvature~\eqref{eq:162} is~$\Omega ^{n+1}$.)  The
theory~$\alpha $ is \emph{flat} if its curvature~\eqref{eq:162} vanishes.  In
that case $\alpha $~factors through a topological theory
  \begin{equation}\label{eq:163}
     \hat\alpha \:\BNG\longrightarrow h^n_{\RZ}, 
  \end{equation}
where $\hb_{\RZ}$~is the cofiber of $\hb\to \hb\otimes \RR$.  The theory is
\emph{topologically trivial} if its underlying cohomology
class~\eqref{eq:161} vanishes.  A trivialization of the underlying cohomology
class lifts~$\alpha $ to a theory defined by the differential form\footnote{Our convention
is that the partition function for a theory defined by a form $\eta$ is obtained 
by integrating $2 \pi \I \eta$.}
  \begin{equation}\label{eq:168}
     \eta \:\BNG\longrightarrow (\Omega \otimes V_h)^n. 
  \end{equation}
(Compare with Definition~\ref{thm:46} and the following paragraph.)
Conversely, a differential form~\eqref{eq:168} determines an invertible
theory~\eqref{eq:160}, and the isomorphism class of the latter does not
change if the form is shifted by an exact form (or, more generally, by a
closed form whose periods are integral in a sense defined by the cohomology
theory~$\hb$).

  \begin{example}[]\label{thm:65}
 The 3-dimensional spin $\Cx$ Chern-Simons theory in~\S\ref{subsubsec:5.5.4}
is a map
  \begin{equation}\label{eq:164}
     \BNC\longrightarrow \CE^4
  \end{equation}
Recall~\cite{FH2} that the complexified de Rham complex of~$\BNC$ is a
complex polynomial algebra generated by a 2-form~$\omega $ which is
$\sqrt{-1}/2\pi $ times the curvature of the universal $\Cx$-connection.
Then the curvature of the theory~\eqref{eq:164} is the 4-form 
  \begin{equation}\label{eq:165}
     \frac{1}{2}\,\omega \wedge \omega . 
  \end{equation}
The curvature, and indeed the entire theory~\eqref{eq:164}, should be
evaluated on families 
  \begin{equation}\label{eq:167}
     P\xrightarrow{\;\;p\;\;}X\xrightarrow{\;\;\pi \;\;}S 
  \end{equation}
in which $\pi $~is a proper fiber bundle equipped with a relative spin
structure and $p$~is a principal $\Cx$-bundle equipped with a connection.
Then \eqref{eq:165}~pulls back to an element of~$\Omega ^4_{\CC}(X)$.  
\end{example}

The discussions in~\S\ref{subsec:9.2} and~\S\ref{subsec:9.3} have analogs for
invertible field theories.  Thus there is a notion of a flat (or nonflat)
isomorphism of invertible theories.  Furthermore, the equivalence classes of
flat isomorphisms of two $n$-dimensional invertible theories form a torsor
over the abelian group of flat $(n-1)$-dimensional invertible theories.  The
latter are \emph{topological} invertible theories in a strong sense, hence
may be treated via methods of stable homotopy theory.

   \section{$\zt$ gradings}\label{sec:C11}

The invertible spin $\Cx$ Chern-Simons theory is obtained by integration in
the cohomology theory~$E$ with two nonzero homotopy groups;
see~\S\ref{subsubsec:5.2.1}.  (As explained in~\S\ref{subsec:5.5}, one
integrates complex \emph{differential} ``cochains'' in the differential
theory~$\CE$ or, for flat connections, ``cochains'' in the secondary
theory~$E_{\CZ}$.)  The homotopy group~$\zt$, which appears along with the
standard~$\ZZ$ or~$\CZ$, introduces an additional $\zt$-grading in the values
of the field theory.  In this appendix we collect some remarks and results
about this grading.  In particular, we complete the proof of
Proposition~\ref{prop:triangle-iso}.  The bottom line is: with a suitable
universal choice we can and do ignore the $\zt$-gradings in the main text.

  \subsection{$\sV$-lines}\label{subsec:C11.1}

Let $\sV$~be the linear category of complex $\zt$-graded\footnote{As usual,
we use `super' as a synonym for `$\zt$-graded'.} vector spaces; for
$V_1,V_2\in \sV$ the hom space $\sV(V_1,V_2)$ consists of \emph{even} linear
maps $V_1\to V_2$.  Impose the symmetric monoidal structure of tensor product
with the Koszul sign rule.  A \emph{super category} is a complex linear
module category over the tensor category~$\sV$.  Super categories form a
2-category.  The multiplicative units in this 2-category are called
\emph{$\sV$-lines}, and there are two such up to isomorphism: the Bose
line~$\sV$ and the Fermi line~$\GG$; the latter is the category of modules
over the complex Clifford algebra~$\Cliff_1$.  An automorphism of an
$\sV$-line is a functor defined by tensoring with a super line~$L$; the complex
line~$L$ can be even or odd.  We refer to~\cite{FT2} for more details
about super categories. 

  \begin{remark}[]\label{thm:C73}
 The integral of the level of spin Chern-Simons theory over a spin 2-manifold
with flat $\Cx$-connection lies in~$E^1_{\CZ}$.  The nonzero homotopy groups
of~$E^1_{\CZ}$ are $\pi _0E^1_{\CZ}\cong \zt$ and $\pi _1E^1_{\CZ}\cong \CZ$;
there is a nonzero $k$-invariant which connects them.  An equivalent linear
Picard groupoid is that of complex super lines, and so the value of~$\SCx$ on
a closed 2-manifold is a complex super line.  Similarly, on a spin 1-manifold
the relevant space is~$E^2_{\CZ}$, whose nonzero homotopy groups are $\pi
_1E^2_{\CZ}\cong \zt$ and $\pi _2E^2_{\CZ}\cong \CZ$.  An equivalent linear
Picard 2-groupoid is that of \emph{Bose} $\sV$-lines.  In particular, we do
not encounter Fermi $\sV$-lines.  (We would have met Fermi $\sV$-lines had
the level lain in the cohomology theory with an additional homotopy
group~$\zt$.)
  \end{remark}

  \subsection{Spin flip}\label{subsec:C11.2}

A Riemannian spin $m$-manifold is a Riemannian manifold~$M$ equipped with a
principal $\Spin_m$-bundle $\sB_{\Spin}(M)\to M$ which lifts the orthonormal
frame bundle $\sB_{\O}(M)\to M$ under the homomorphism $\Spin_m\to \O_m$.  A
diffeomorphism $f\:M'\to M$ of spin $m$-manifolds is an isometry of the
underlying Riemannian manifolds together with a lift to the
$\Spin_m$-bundles.  The \emph{spin flip}~$\spfl M$ is the
automorphism~$\id_M$ with lift given as multiplication by the central element
of~$\Spin_m$.  
 
Let $F$~be an $n$-dimensional field theory of spin manifolds---say
nonextended---so defined on a bordism category of $(n-1)$-~and $n$-manifolds.
If $Y$~is a closed spin $(n-1)$-manifold, then $F(Y)$~is a super vector
space.  We say that $F$~\emph{satisfies spin-statistics} if the spin
flip~$\spfl Y$ acts on the super vector space $F(Y)=F(Y)^0\oplus F(Y)^1$ as
the grading automorphism $\id_{F(Y)^0}\oplus -\id_{F(Y)^1}$.  Any~$F$ which
is topological, invertible, and reflection positive satisfies
spin-statistics~\cite[\S11]{FH1}.  \emph{Complex} Chern-Simons theories are
not reflection positive, and in any case what we need is spin-statistics for
the difference line of a triangle, which is defined
in~\S\ref{sec:difference-line} via a combination of the $\Cx$~spin
Chern-Simons theory~$\SCx$, the $\SLC$ Chern-Simons theory~$\F{\SLC}$, and
the abelianization isomorphism~\eqref{eq:S1}.  Although we cannot simply
quote~\cite[\S11]{FH1} for what we need, the basic setup pertains, and so we
review it briefly.
 
Let $\sC$ be a symmetric monoidal category, and suppose $x\in \sC$ is a
dualizable object.  Then 
  \begin{equation}\label{eq:C192}
     1\xrightarrow{\;\;\textnormal{coevaluation}\;\;}x\otimes
     x^*\xrightarrow{\;\;\textnormal{symmetry}\;\;} x^*\otimes
     x\xrightarrow{\;\;\textnormal{evaluation}\;\;} 1 
  \end{equation}
is by definition the dimension~$\dim x$.  For example, if $\sC=\sV$ and
$x=V^0\oplus V^1$ is a finite dimensional super vector space, then $\dim
x=\dim V^0-\dim V^1$.  If $f\:x\to x$ is a morphism, then the composition 
  \begin{equation}\label{eq:C193}
     1\longrightarrow x\otimes x^*\longrightarrow x^*\otimes
     x\xrightarrow{\;\;1\otimes f\;\;} x^*\otimes x\longrightarrow 1 
  \end{equation}
is by definition the trace~$\tr f$.  For $\sC=\sV$ and $f$~an (even)
endomorphism of~$V^0\oplus V^1$, this categorical trace~$\tr f=\tr f\res{V^0}
- \tr f\res{V^1}$ is usually called the \emph{supertrace}.  For
$\sC=\Bord_{\langle n-1,n \rangle}(\Spin)$ the spin bordism category and
$x=Y$ a closed spin $(n-1)$-manifold with spin flip~$\spfl Y$, we find
  \begin{equation}\label{eq:C194}
     \begin{aligned} \dim Y&= \nbcir\times Y \\ \tr\spfl Y &=
     \bcir\;\;\;\;\,\times Y\end{aligned} 
  \end{equation}
where `(non)bounding' identifies the spin structure on~$\cir$.  If
$F\:\Bord_{\langle n-1,n  \rangle}(\Spin)\to \sV$ is a topological field
theory, then since $F$~is a symmetric monoidal functor it maps traces to
traces:  
  \begin{equation}\label{eq:C195}
     \tr F(\spfl Y) = F(\bcir\times Y). 
  \end{equation}
If $F$~is invertible, then spin-statistics holds for~$Y$ iff this
equals~$+1$; spin statistics holds for~$F$ iff this equals~$+1$ for all~$Y$.

  \subsection{Spin-statistics for the difference line~$\cL$}\label{subsec:C11.3}

Next, recall the construction of the complex line~$\cL=\cL(D,\epsilon
,\sigma ,\cA)$, defined in~\eqref{def:L} in an equivalent description to what
we give here.  Let $D=D^2$ be the standard spin 2-disk; the spin structure is
denoted~$\sigma $.  The edge orientations~$\epsilon $ which appear on the
triangle~$\Delta $ in~\eqref{def:L} give rise to a universal cover
of~$\partial D=\cir$; see~\ref{sec:edge-orientations}.  The 2-disk~$D$ is
equipped with its standard spectral network (Figure~\ref{fig:triangle}), and
$\cA=(P,Q,\mu ,\theta )$ is stratified abelianization data
(Definition~\ref{thm:6}).  

The line~$\cL$ derives from three ingredients.  First $G=\SLC$ Chern-Simons
theory~$\F G$ on~$D$ produces an $\sV$-line\footnote{The $\SLC$ Chern-Simons
theory does not use the spin structure and it factors through ungraded linear
objects, so $\F G(\partial D)$ is canonically a $\VV$-line.  Here we base
extend to~$\sV$ for consistency with~$\SCx$.}~$\F G(\bD)$ and an isomorphism
of $\sV$-lines 
  \begin{equation}\label{eq:C196}
     \sV\xrightarrow[\cong ]{\;\;\F G(D)\;\;}\F G(\bD). 
  \end{equation}
Let $c=D_{-2b}$ be the center point of~$D$ and let $\tilD'\to D\setminus
\{c\}$ the double cover constructed from the spectral network on~$D$.  The
spin structure~$\sigma $ lifts to a spin structure on the deleted
2-disk~$\tilD'$; $\sigma $~ does \emph{not} extend over the deleted point.
The principal $\ZZ$-bundle derived from~$\epsilon $ is used to twist this
spin structure and the flat $\Cx$-bundle
over~$\tilD'$---see~\S\ref{subsec:S6.3}---to obtain a spin
structure~$\tilde\sigma $ and a flat $\Cx$-bundle over the filled-in
2-disk~$\tilD$.  The second ingredient in~$\cL$ is then the $\Cx$-spin
Chern-Simons invariant
  \begin{equation}\label{eq:C197}
     \sV\xrightarrow[\cong ]{\;\;\SCx(\tilD)\;\;}\SCx(\partial \tilD). 
  \end{equation}
The third ingredient is the isomorphism of $\sV$-lines 
  \begin{equation}\label{eq:C198}
     \F G(\bD)\xrightarrow[\cong]{\;\;\chi (\bD)\;\;}\SCx(\partial \tilD) 
  \end{equation}
constructed in~\S\ref{subsec:S6.4}.  For now we simply note that $\chi
(\bD)$~is the composition of three isomorphisms; we dig into the details
in~\S\ref{subsec:C11.4} below.  Finally, the line~$\cL$ is the composition 
  \begin{equation}\label{eq:C199}
     \cL = \SCx(\tilD)\inv \circ \chi (\bD)\circ \F G(D). 
  \end{equation}
That is, the right hand side of~\eqref{eq:C199} is an $\sV$-linear
automorphism of~$\sV$, hence it is tensoring with a line~$\cL$.  Note that
$\cL$~is $\zt$-graded; it may be even or odd. 
 
Let $\spfl D$ denote the spin flip of the spin 2-disk~$(D,\sigma )$.  It
induces the spin flip~$\spfl\tilD$ of~$(\tilD,\tilde\sigma )$, and these spin
flips of 2-disks restrict to the spin flip of the boundary circles.  In this
way $\spfl D$~induces an involution on each of~\eqref{eq:C196},
\eqref{eq:C197}, and~\eqref{eq:C198}, hence too an involution~$(\spfl
D)_*\:\cL\to \cL$ on the line~$\cL$ which is their composition.

  \begin{proposition}[]\label{thm:C74}
 $(\spfl D)_* = \id_{\cL}$ if $\cL$~is even, and  $(\spfl D)_* = -\id_{\cL}$
if $\cL$~is odd. 
  \end{proposition}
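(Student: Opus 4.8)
The plan is to reduce the statement to the spin--statistics identity \eqref{eq:C195} in the form $\tr F(\spfl{\cdot})=F(\bcir\times\cdot)$, applied not to a single topological field theory but to the combination of $\F{\SLC}$, $\SCx$, and the abelianization isomorphism $\chi$ out of which $\cL$ is built. Concretely, by \eqref{eq:C199} the line $\cL$ is the $\sV$-linear automorphism of $\sV$ obtained by composing $\F G(D)$, $\chi(\bD)$, and $\SCx(\tilD)\inv$, and $(\spfl D)_*$ is the induced automorphism of this composition. Since $\cL$ is one-dimensional, $(\spfl D)_*$ is multiplication by a scalar, and the assertion is exactly that this scalar equals $(-1)^{|\cL|}$, i.e.\ the dimension (= supertrace of the identity) of $\cL$ as a super line. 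So the whole content is: \emph{the spin flip on $\cL$ acts by the degree involution.}

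First I would reinterpret $(\spfl D)_*$ as a trace. The spin flip $\spfl D$ of the 2-disk $(D,\sigma)$ restricts on $\partial D=\cir$ to the spin flip $\spfl{\cir}$, and similarly for $(\tilD,\ttau)$; the three ingredients \eqref{eq:C196}, \eqref{eq:C197}, \eqref{eq:C198} are all equivariant for these boundary spin flips, essentially because $\chi(\bD)$ is assembled locally and canonically from $\F{\SLC}$ and $\SCx$ (cf.\ the discussion after Theorem~\ref{thm:strat-abelianization}) and hence intertwines the natural spin-flip actions on the boundary $\sV$-lines. Now I would run the trace computation of \S\ref{subsec:C11.2} with $n=2$: gluing the disk $D$ to itself along $\bD$ after inserting the boundary spin flip produces $\bcir\times$ (a point), and likewise on the covering side $\tilD$ glued to itself gives $\bcir\times$ (a point on $\tY$), so
\begin{equation}
 (\spfl D)_* = \frac{\chi(\bcir)\bigl(\F{\SLC}(\bcir)\bigr)}{\SCx(\bcir)} \cdot \dim\cL^{-1}\,\dim\cL,
\end{equation}
wait—more precisely, the composition \eqref{eq:C193} applied to $(\spfl D)_*$ acting on $\cL$ yields $\dim\cL$ times the scalar $(\spfl D)_*$, while geometrically it is computed by the bounding circle evaluated in the three theories. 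The key point is that $\F{\SLC}$ does not see the spin structure and factors through ungraded lines, so $\F{\SLC}(\bcir\times \pt)=1$; and $\SCx$ evaluated on the bounding circle times a point is likewise $+1$ because the spin Chern-Simons line of the trivial flat $\Cx$-bundle is even (Remark~\ref{thm:82}, Remark~\ref{thm:C73}). Since $\chi$ is an isomorphism of theories on the relevant bordism category it carries $\F{\SLC}(\bcir\times\pt)$ to $\SCx(\bcir\times\pt)$, so the three-theory contribution is $+1$. Therefore the trace of $(\spfl D)_*$ equals $+\dim\cL$, which forces $(\spfl D)_* = \id_{\cL}$ when $\dim\cL=+1$ (even) and $(\spfl D)_*=-\id_{\cL}$ when $\dim\cL=-1$ (odd).

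The main obstacle, and the step I would spend the most care on, is the equivariance of $\chi(\bD)$ under the boundary spin flip, together with making precise the ``trace'' bookkeeping for a morphism of $\sV$-lines rather than for a single functor to $\sV$. One has to check that each of the three constituent isomorphisms of $\chi(\bD)$ from Theorem~\ref{thm:strat-abelianization}---the unipotent-direction isomorphism $\rho$ of Theorem~\ref{thm:44}, the $H$-to-$\Cx$ isomorphism $\nu$ of Corollary~\ref{thm:63}, and the $\bmut$-twist isomorphism $\zeta$ of Corollary~\ref{thm:72}---commutes with the spin flip on the boundary circle; for $\rho$ this is automatic (no spin structure), for $\nu$ and $\zeta$ it follows from naturality of the spin Chern-Simons field theory under the spin-flip automorphism, which is a symmetry of the background-field category. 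Once that naturality is in hand, the trace identity is formal: apply \eqref{eq:C195} with $F$ replaced by the invertible theory whose partition function is the ratio $\SCx(\tilD)\inv\circ\chi\circ\F{\SLC}(D)$, use that this ratio is a $2$-dimensional invertible theory on spin manifolds whose value on $\bcir$ is the degree of $\cL$, and conclude. The remaining details---that the bounding/nonbounding identification of spin structures on $\cir$ matches the conventions in \eqref{eq:C194}, and that the coevaluation/evaluation of the $\sV$-line $\cL$ produces exactly $\dim\cL$---are routine given the framework of \S\ref{subsec:C11.2}.
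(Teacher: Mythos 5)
Your overall strategy is the same as the paper's: package the three ingredients into an auxiliary invertible theory $f$ obtained by crossing spin $0$- and $1$-manifolds with $D$ (so $f(\pt)=\cL$), and identify the supertrace of $(\spfl D)_*$ with $f(\bcir)$ via the trace formalism of \S\ref{subsec:C11.2}; spin-statistics for $\cL$ is then equivalent to $f(\bcir)=+1$. (Incidentally, the ratio theory here is $1$-dimensional, not $2$-dimensional as you say near the end.)

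The genuine gap is your evaluation of $f(\bcir)$. You treat ``$\F{\SLC}(\bcir\times\pt)$'' and ``$\SCx(\bcir\times\pt)$'' as numbers equal to $1$, justified by evenness of the relevant lines and by naturality of $\chi$. But $f(\bcir)$ is the composite $\SCx(\bcir\times\tilD)\inv\circ\chi(\bcir\times\bD)\circ\F G(\bcir\times D)$, in which $\F G(\bcir\times D)$ and $\SCx(\bcir\times\tilD)$ are \emph{vectors} in the boundary lines $\F G(\bcir\times\bD)$ and $\SCx(\bcir\times\bDt)$, not scalars; evenness of those lines says nothing about the scalar. Worse, the step ``$\chi$ carries $\F{\SLC}(\bcir\times\pt)$ to $\SCx(\bcir\times\pt)$'' is exactly the statement you are trying to establish (up to the unknown scalar): $\bcir\times D$ contains the branch locus $\bcir\times\{c\}$, over which $\chi$ is \emph{not} an isomorphism of the two theories---that failure is the very reason the difference line $\cL$ exists---so naturality of $\chi$ cannot be applied to the bordism $\bcir\times D$ itself. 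The paper closes this gap by an actual computation: it chooses reference vectors $\F G(D^2\times\bD)$ and $\SCx(D^2\times\bDt)$ coming from bordisms that \emph{avoid} the branch locus, so that $\chi(\bcir\times\bD)$ does match them, and then computes each ratio $\F G(\bcir\times D)/\F G(D^2\times\bD)$ and $\SCx(\bcir\times\tilD)/\SCx(D^2\times\bDt)$ by gluing, obtaining in each case the partition function of a $3$-sphere with trivial connection, which is $+1$; hence $f(\bcir)=+1$. Your proposal needs this (or an equivalent) explicit evaluation; as written, the key scalar is asserted rather than computed. Your preliminary point about equivariance of the three constituent isomorphisms under the boundary spin flip is fine and is implicit in the paper's construction.
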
 

  \begin{proof}
 Define an invertible 1-dimensional field theory~$f$ of spin manifolds as
follows.  If $M$~is a compact 0-dimensional spin manifold or a 1-dimensional
spin bordism, then form the spin manifold~$M\times D$ and equip it with the
structure pulled back from~$(\epsilon ,\cA)$ on~$D$.  Then $f(M)$~is computed
as the composition in~\eqref{eq:C199} applied to $M\times -$\,:
  \begin{equation}\label{eq:C200}
     f(M)= \SCx(M\times \tilD)\inv \circ \chi (M\times \bD)\circ \F G(M\times
     D).  
  \end{equation}
Note that $f(\pt)=\cL$.  By~\eqref{eq:C195} the supertrace of~$(\spfl D)_*$ is 
  \begin{equation}\label{eq:C201}
     \tr(\spfl D)_*=\tr f(\spfl{\pt})=f(\bcir). 
  \end{equation}
Hence the proposition follows if we prove that $f$~satisfies spin-statistics. 
 
Compute $f(\bcir)$ as the composition~\eqref{eq:C200}.  Write $\bcir=\partial
D^2$.  The complex lines $\F G(\bcir\times \bD)$ and $\SCx(\bcir\times \bDt)$
have nonzero (basis) elements $\F G(D^2\times \bD)$ and $\SCx(D^2\times
\bDt)$.  Furthermore, since $\chi $~is a map of theories, the linear
isomorphism 
  \begin{equation}\label{eq:C202}
     \chi (\bcir\times \bD)\:\F G(\bcir\times \bD)\longrightarrow
     \SCx(\bcir\times \bDt) 
  \end{equation}
maps $\F G(D^2\times \bD)$ to $\SCx(D^2\times \bDt)$.  It remains to compute
the ratio of each of the vectors $\F G(\bcir\times D)$ and $\SCx(\bcir\times
\tilD)$ and these basis elements.  The first ratio is $\F G$ evaluated on
  \begin{equation}\label{eq:C203}
     (\bcir\times D)\cup -(D^2\times \bD), 
  \end{equation}
and the second ratio is $\SCx$~evaluated on 
  \begin{equation}\label{eq:C204}
     (\bcir\times \tilD)\cup -(D^2\times \bDt), 
  \end{equation}
where the minus sign denotes the opposite spin structure (and reverse
orientation).  But each of these is a 3-sphere with the trivial connection,
and so each partition function is~$+1$.  It follows that $f(\bcir)=+1$, as
claimed.  
  \end{proof}

  \begin{remark}[]\label{thm:C75}
 In the proof we encounter the 3-sphere~$S^3$ in~\eqref{eq:C203} with the
following spectral network.  The branch locus is an embedded $\cir\subset
S^3$.  The walls are three disjoint embedded open 2-disks~$B^2$, each of
which has boundary $S^1\subset S^3$. 
  \end{remark}

  \subsection{The freedom to eliminate odd lines}\label{subsec:C11.4}

The isomorphism~$\chi $ is the composition of three isomorphisms: see
Theorem~\ref{thm:strat-abelianization}.  One of them, \eqref{eq:S2}, is given
by a canonical construction in Theorem~\ref{thm:44}.  We fixed another,
\eqref{eq:S4}, in Corollary~\ref{thm:72}.  The remaining isomorphism~$\nu $
in~\eqref{eq:S3} derives from Corollary~\ref{thm:63}, which we did not fix
completely.  Namely, since $\nu $~is an isomorphism of invertible
3-dimensional topological theories, we can tensor it with an invertible
2-dimensional theory to obtain a new isomorphism.  The background fields
of~$\nu $---listed at the beginning of~\S\ref{subsubsec:5.2.4}, are: a spin
structure~$\sigma $, a flat $H$-connection, and a principal $\ZZ$-bundle
which lifts the associated principal $\bmut$-bundle~$\delta $.  Observe that
the 2-dimensional invertible theory whose partition function is\footnote{This
is the Arf invariant (0 or~1) of the shifted spin structure $\sigma +\delta
$.} $(-1)^{\Arf(\sigma +\delta )}$ takes value the \emph{odd} line
on~$\bcir$.  Hence, possibly after tensoring~$\nu $ with this shifted Arf
theory, we can arrange that $\cL$~be an \emph{even} complex line.  It then
follows from Proposition~\ref{thm:74} that the spin flip acts as~$\id_{\cL}$,
which is the claim in Proposition~\ref{prop:triangle-iso}.

\newpage
 \bigskip\bigskip
 \bibliographystyle{hyperamsalpha} 
 \bibliography{Specnet}

\end{document}